\newtheorem{theorem}{Theorem}[section]
\newtheorem{lemma}[theorem]{Lemma}
\newtheorem{proposition}[theorem]{Proposition}
\newtheorem{conjecture}{Conjecture}[section]
\newtheorem{remark}{Remark}[section]
\numberwithin{equation}{section}
\let\oldtocsection=\tocsection
\let\oldtocsubsection=\tocsubsection
\let\oldtocsubsubsection=\tocsubsubsection
\renewcommand{\tocsection}[2]{\hspace{0em}\oldtocsection{#1}{#2}}
\renewcommand{\tocsubsection}[2]{\hspace{1em}\oldtocsubsection{#1}{#2}}
\renewcommand{\tocsubsubsection}[2]{\hspace{2em}\oldtocsubsubsection{#1}{#2}}
\begin{document}

\title[Spacetime estimates for Boltzmann]{Small data global well-posedness for a Boltzmann equation via bilinear spacetime estimates}

\author{Thomas Chen}
\address{T. Chen,  
Department of Mathematics, University of Texas at Austin.}
\email{tc@math.utexas.edu}
\author{ Ryan Denlinger}
\address{R. Denlinger,  
Department of Mathematics, University of Texas at Austin.}
\email{denlinger@math.utexas.edu}
\author{Nata\v{s}a Pavlovi\'c}
\address{N. Pavlovi\'c,  
Department of Mathematics, University of Texas at Austin.}
\email{natasa@math.utexas.edu}

\begin{abstract}
We provide a new analysis of the Boltzmann equation with constant collision kernel in two
space dimensions. The scaling-critical Lebesgue space is $L^2_{x,v}$; we prove global well-posedness
and a version of scattering, assuming that the data $f_0$ is sufficiently
 smooth and localized, and the
$L^2_{x,v}$ norm of $f_0$ is sufficiently small. The proof relies upon a new
scaling-critical
 bilinear spacetime estimate for the collision ``gain'' term in
Boltzmann's equation, combined with a novel application of the Kaniel-Shinbrot
iteration.
\end{abstract}

\maketitle

\tableofcontents

\section{Introduction and main results}
\label{sec:intro}

\subsection{Background.}
\label{ssec:background}

Boltzmann's equation describes the time-evolution of the
 phase-space density $f(t,x,v)$ of a dilute gas, accounting for both dispersion under the
free flow and dissipation as the result of collisions. We will be interested in the
Boltzmann equation with \emph{constant collision kernel} in the plane,
$\mathbb{R}^2_x \times \mathbb{R}^2_v$, which is written as follows:
\begin{equation}
\label{eq:MMboltzd2000}
\begin{aligned}
& \left( \partial_t + v \cdot \nabla_x \right) f (t,x,v) = \\
& \qquad \qquad =\int_{\mathbb{S}^1} d\omega \int_{\mathbb{R}^2}
du \left\{ f(t,x,v^*) f(t,x,u^*) - f (t,x,v) f(t,x,u) \right\} 
\end{aligned}
\end{equation}
with prescribed initial data $f (0,x,v) = f_0 (x,v)$, and
$(t,x,v) \in [0,\infty) \times \mathbb{R}^2 \times \mathbb{R}^2$.
Here the symbols $u^*, v^*$ are defined by the \emph{collisional change of variables}
$$
u^* = u + (\omega \cdot (v - u) ) \omega
$$
$$
v^* = v -  ( \omega \cdot (v-u) ) \omega
$$
and $\omega \in \mathbb{S}^1 \subset \mathbb{R}^2$ is a unit vector.
We may also write
\begin{equation} \label{eqB-Q}
\left( \partial_t + v \cdot \nabla_x \right) f = Q(f,f) = Q^+ (f,f) - Q^- (f,f)
\end{equation}
where
\begin{equation} \label{eqQ+} 
Q^+ (f,g) (x,v) = \int_{\mathbb{S}^1} d\omega \int_{\mathbb{R}^2} du
f (x,v^*) g (x,u^*)
\end{equation} 

\begin{equation} \label{eqQ-} 
Q^- (f,g) (x,v) = 2\pi f (x,v) \rho_g (x)
\end{equation} 
and
$$
\rho_f (x) = \int_{\mathbb{R}^2} dv f(x,v)
$$
The PDE (\ref{eq:MMboltzd2000}) is scaling-critical, independently in $x$ and $v$, for the
$L^2 \left( \mathbb{R}^2_x \times \mathbb{R}^2_v \right)$ norm of $f_0$.

The Cauchy problem for (\ref{eq:MMboltzd2000}), specifically with the constant collision kernel,
 is by now a mature subject and many different techniques are available. One of the oldest known
techniques is the Kaniel-Shinbrot iteration \cite{KS1984}, which will be explained in detail in
Section \ref{sec:techprelim}; this is a monotonicity-based
 technique for producing a non-negative solution of
Boltzmann's equation. 
Strichartz estimates have been used in \cite{Ar2011} to solve equations related to (\ref{eq:MMboltzd2000}) but containing
a cut-off in the interaction at large velocities. Scattering was subsequently addressed in
\cite{HeJiang2017}, again using Strichartz estimates.
 Global well-posedness has been proven
near equilibrium by a variety of techniques \cites{GS2011,AMUXY2011,Uk1974,Gu2003}, 
all of which rely somehow on a notion of Dirichlet
form (and sometimes requiring the long-range version of (\ref{eq:MMboltzd2000}), 
e.g.,
\emph{true Maxwell molecules}). 
For more background on Boltzmann's equation we refer the reader to \cite{CIP1994}. Weaker notions
of solution are available globally in time due to DiPerna and Lions \cite{DPL1989}, but uniqueness
remains an open problem for such solutions.

The difficulty with solving (\ref{eq:MMboltzd2000}) at critical regularity
is actually more challenging than appears to be customarily acknowledged, because though the two terms on the
right hand side 
(known as ``gain'' $Q^+$ and ``loss'' $Q^-$ respectively)
 both scale the same way, they do \emph{not} share the
same estimates. In fact, the gain term exhibits a convolutive effect (similar
to $f *_v g$) which is not observed with the loss term.
This problem was acknowledged in \cite{Ar2011} and dealt with by introducing
a cutoff in the collision kernel at large velocities, thereby breaking the
scale-invariance of the problem.

In the
present work, we take the point of view that the data $f_0$ should be \emph{sufficiently localized and regular enough} (in the
sense of weighted $L^2$-based Sobolev spaces) to makes sense of both ``gain'' 
and ``loss'' terms, \emph{but}
that the theorem should 
only depend on the smallness of the critical norm, in this case $L^2$. The advantage
of this approach is that the local iteration relies purely upon energy estimates in $L^2$-based spaces.
In particular, we will prove a bilinear estimate of the form
$$
L^2_{x,v} \times L^2_{x,v} \rightarrow L^1_{t \in \mathbb{R}} L^2_{x,v}
$$
for the $Q^+$ operator (acting on the free flow), which is new to the best of our knowledge.
Once this bilinear estimate is in hand, any
 space of \emph{mixed} integrability in $x,v$, e.g. $L^p_x L^r_v$ with $p \neq r$, arises only
as the result of Sobolev embedding applied to an $L^2$-based Sobolev norm. 
 
In our analysis, we will invoke the approach that we introduced in \cite{CDP2017,CDP-DCDS2019}, based on the Wigner transform of the Boltzmann equation, which makes the problem naturally accessible to a combination of techniques from both kinetic theory, and  dispersive nonlinear PDEs.

\subsection{Summary of the present work.}
\label{ssec:summary}

The subject of this paper is a new treatment of the Boltzmann equation with constant collision
kernel in $d=2$, which is scaling-critical for the space $L^2_{x,v}$. We prove global well-posedness and
scattering for solutions with small norm in the critical space $L^2_{x,v}$, whenever 
$\left\Vert \left< v \right>^{\frac{1}{2}+}
 \left< \nabla_x \right>^{\frac{1}{2}+}
 f_0 \right\Vert_{L^2_{x,v}}$ is \emph{finite} but
not necessarily small. 

Our proof relies on the Kaniel-Shinbrot iteration,
as recommended in the introduction to \cite{Ar2011}. 
As far as we are aware, this is the first time that the Kaniel-Shinbrot iteration
has been implemented outside Maxwellian-weighted $L^\infty$ spaces.
Moreover, a uniqueness result will be proven which
does not require either non-negativity or Sobolev regularity of solutions. 
Therefore, the existence of a non-negative solution from
Kaniel-Shinbrot will imply that any other local solution in the correct \emph{integrability}
 class is automatically
non-negative and coincides with the Kaniel-Shinbrot solution. From there, the extra regularity
is propagated \emph{a posteriori}, 
globally in time (with possibly large growth rate), by constructing sufficiently regular
local solutions and employing standard commutation rules.

Our proof relies on the Wigner transform
and endpoint Strichartz estimates due to Keel-Tao 
\cite{KT1998} for hyperbolic Schr{\" o}dinger equations in the 
\emph{doubled} dimension $2 d = 4$. We point out 
that endpoint kinetic Strichartz estimates are \emph{false} \cite{Bennettetal2014}
in all dimensions. For this reason, there is no obvious analogue of our proof which employs the
kinetic picture exclusively.

\subsection{Main results.}
\label{ssec:mainres}

Our main results are summarized in the following:
\begin{theorem}
\label{thm:MainResults}
There exists a number $\eta_0 > 0$ such that all the following is simultaneously true:

Suppose $f_0 ( x,v ) : \mathbb{R}^2 \times \mathbb{R}^2 \rightarrow \mathbb{R}$ is
a \emph{non-negative}, measurable,  locally integrable function such that
\begin{equation}
\label{eq:Halphadata}
\Big\Vert \left< v \right>^{\frac{1}{2}+}
 \left< \nabla_x \right>^{\frac{1}{2}+}
f_0 (x,v) \Big\Vert_{L^2 \left( \mathbb{R}^2 \times \mathbb{R}^2 \right)}
< \infty
\end{equation}
and
\begin{equation}
\Big\Vert f_0 (x,v) \Big\Vert_{L^2 \left( \mathbb{R}^2 \times \mathbb{R}^2 \right)} 
< \eta_0
\end{equation}
Then there exists a globally defined (for $t \geq 0$) non-negative
mild solution $f \in C \left( [0,\infty), L^2_{x,v} \right)$
of Boltzmann's equation
\begin{equation} \label{eq:MMboltzd200}
\left( \partial_t + v \cdot \nabla_x \right) f (t,x,v) = Q(f,f)
\end{equation}
where 
$$
Q(f,f) = Q^+ (f,f) - Q^- (f,f),
$$
with $Q^+$ and $Q^-$ given respectively in \eqref{eqQ+} and \eqref{eqQ-},  
such that $f(0) = f_0$ and the following
bounds \eqref{eq:MRbd1},\eqref{eq:MRbd2},\eqref{eq:MRbd3}
 hold for any $T \in \left( 0, \infty \right]$ (noting that $T = +\infty$ is included):
\begin{equation}
\label{eq:MRbd1}
\left< v \right>^{\frac{1}{2}+} Q^+ (f,f) \in 
L^1_{t \in [0,T]} L^2_{x,v} 
\end{equation}
\begin{equation}
\label{eq:MRbd2}
\rho_f \in 
L^2_{t \in [0,T]} L^\infty_x \bigcap L^2_{t \in [0,T]} L^4_x 
\end{equation}
\begin{equation}
\label{eq:MRbd3}
\left< v \right>^{\frac{1}{2}+} f \in 
 L^\infty_{t \in [0,T]} L^2_{x,v} \bigcap L^\infty_{t \in [0,T]} L^4_x L^2_v 
\end{equation}
The solution $f(t)$ is unique in the class of all mild solutions,
with the same initial data, satisfying all the
bounds \eqref{eq:MRbd1},\eqref{eq:MRbd2},\eqref{eq:MRbd3})  for each $T \in (0,\infty)$.
In particular, any mild solution with data $f_0$
 satisfying \eqref{eq:MRbd1},\eqref{eq:MRbd2},\eqref{eq:MRbd3} 
is automatically non-negative (since it is equal to $f$). 

The solution $f(t)$ also satisfies:
\begin{equation}
\label{eq:L2longtimebd}
\left\Vert f \right\Vert^2_{L^\infty_{t \geq 0} L^2_{x,v}}
+ \left\Vert Q^+ (f,f) \right\Vert_{L^1_{t \geq 0} L^2_{x,v}} \leq C
\left\Vert f_0 \right\Vert_{L^2_{x,v}}^2
\end{equation}
Moreover, $f(t)$ scatters in $L^2_{x,v}$ as $t \rightarrow +\infty$; equivalently, 
$f_{+\infty} = \lim_{t \rightarrow +\infty} T(-t) f(t)$ exists in the norm topology
in $L^2_{x,v}$. 

Finally, $f(t)$ carries (a posteriori) the same regularity as the initial data:
\begin{equation}
\label{eq:propreg}
\forall T > 0,\qquad \Big\Vert \left< v \right>^{\frac{1}{2}+}
\left< \nabla_x \right>^{\frac{1}{2}+}
 f (t) \Big\Vert_{L^\infty_{t \in [0,T]} L^2_{x,v}}
< \infty
\end{equation}
\end{theorem}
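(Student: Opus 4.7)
The proof pivots on a scaling-critical bilinear spacetime estimate for the gain operator applied to free-flow data. Writing $T(t)$ for the transport group $(T(t)g)(x,v) = g(x-tv,v)$, the goal is
\[
\bigl\| Q^+ (T(\cdot) g, T(\cdot) h) \bigr\|_{L^1_t L^2_{x,v} (\mathbb{R} \times \mathbb{R}^2 \times \mathbb{R}^2)}
\lesssim \|g\|_{L^2_{x,v}} \|h\|_{L^2_{x,v}},
\]
together with its weighted variant carrying $\langle v \rangle^{\frac{1}{2}+}$ on the left. To prove it, I would follow the Wigner-transform framework of \cite{CDP2017,CDP-DCDS2019}: the pointwise-in-$x$ product $f(t,x,v)h(t,x,u)$ underlying $Q^+$ (after the collisional change of variables) is recast, after Wigner transforming, as a product of solutions to a hyperbolic Schr\"odinger equation on the doubled space $\mathbb{R}^4$. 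The $(q,r) = (2,4)$ endpoint Strichartz estimate of Keel--Tao in $\mathbb{R}^4$ is exactly scaling-critical, and a $TT^*$/bilinear extraction on the doubled side yields the required $L^1_t L^2_{x,v}$ output; corresponding bounds for the loss term $Q^-$ follow by direct use of the $L^2_t L^4_x$ Strichartz estimate for $\rho_f$.

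With the bilinear estimate in hand, the plan is to run the Kaniel--Shinbrot iteration to generate a \emph{non-negative} solution. I would set $\ell_0 = 0$ and construct an upper solution $u_0$ in the weighted class by a contraction argument built on the weighted bilinear estimate and the smallness hypothesis, and then iterate
\[
(\partial_t + v \cdot \nabla_x + 2\pi \rho_{u_n}) \ell_{n+1} = Q^+(\ell_n,\ell_n),
\qquad
(\partial_t + v \cdot \nabla_x + 2\pi \rho_{\ell_n}) u_{n+1} = Q^+(u_n,u_n),
\]
both with data $f_0$. The \emph{beginning condition} $\ell_0 \leq \ell_1 \leq u_1 \leq u_0$ propagates monotonicity along the iteration, and the bilinear estimate gives uniform-in-$n$ control, with the smallness $\|f_0\|_{L^2_{x,v}} < \eta_0$ absorbing the quadratic loss. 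The common limit $f$ is then a mild solution satisfying (1.4)--(1.7). Scattering in $L^2_{x,v}$ as $t \to \infty$ is an immediate consequence, since (1.4) together with the analogous bound for $Q^-$ forces the Duhamel tail $\int_t^\infty T(-s) Q(f,f)(s)\,ds$ to be Cauchy in $L^2_{x,v}$.

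Uniqueness and automatic non-negativity come next. Given two solutions $f,g$ in the class (1.4)--(1.6) with the same data, the difference $w = f-g$ satisfies a linear Duhamel equation with inhomogeneity of the form $Q^+(f+g,w) + Q^+(w,f+g) - w \rho_{f+g} - (f+g)\rho_w$; using the bilinear estimate on each of these with prefactor $\|\langle v \rangle^{\frac{1}{2}+}(f+g)\|_{L^\infty_t L^2_{x,v}}$, a short-time contraction forces $w \equiv 0$, and iteration in time propagates uniqueness globally. Any mild solution in the class therefore agrees with the non-negative Kaniel--Shinbrot solution. Propagation of regularity (1.8) is then achieved \emph{a posteriori}: one builds a short-time solution in $\langle v \rangle^{\frac{1}{2}+}\langle \nabla_x \rangle^{\frac{1}{2}+} L^2_{x,v}$ by running the same scheme with $\langle \nabla_x \rangle^{\frac{1}{2}+}$ commuted through the transport (which is harmless) and the weight $\langle v \rangle^{\frac{1}{2}+}$ carried by the weighted bilinear estimate; uniqueness forces this regular solution to coincide with $f$, and since the local existence time for the regular norm depends only on the globally-controlled critical norm, one can iterate in time to cover any $[0,T]$, possibly with large growth.

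The principal obstacle is the bilinear spacetime estimate itself: the gain operator's sphere integration and collisional change of variables do not immediately expose the doubled-space Schr\"odinger picture, and the endpoint must be hit at $\mathbb{R}^4$ precisely because kinetic endpoint Strichartz estimates fail \cite{Bennettetal2014}. A secondary difficulty is genuinely novel to this paper: adapting the Kaniel--Shinbrot scheme outside its traditional Maxwellian-weighted $L^\infty$ habitat, which requires reconstructing the beginning condition and verifying monotonicity inside a weighted $L^2$ framework compatible with the bilinear estimate.
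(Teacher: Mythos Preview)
Your overall architecture matches the paper's, but two steps are genuinely underspecified or wrong as stated.

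First, the upper barrier $u_0$ in Kaniel--Shinbrot: you say you would construct it ``by a contraction argument built on the weighted bilinear estimate,'' but you do not say what equation $u_0$ solves. The paper's key move is to take $u_0$ to be the solution of the \emph{gain-only} equation $(\partial_t + v\cdot\nabla_x)\,g_1 = Q^+(g_1,g_1)$ with data $f_0$; this is solvable globally at the pure $L^2$ level by the bilinear estimate alone. Then, since $\ell_0 = 0$ kills the absorption term $\rho_{\ell_0}$, the next iterate $u_1$ solves the \emph{same} gain-only equation, so $u_1 \equiv u_0$ and the beginning condition is an equality, not an inequality to be verified. This is the whole point of the novel Kaniel--Shinbrot implementation: the gain-only solution furnishes the upper envelope for free, and all the integrability in (1.4)--(1.6) for the eventual solution $f$ is inherited from $0 \leq f \leq g_1$ together with the $H^{\frac12+,\frac12+}$ regularity theory for $g_1$.

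Second, your treatment of $Q^-$ and scattering is incorrect. There is \emph{no} $L^1_t L^2_{x,v}$ bilinear estimate for $Q^-$ on free flows at the $L^2$ level; $\rho_f$ is not even defined for generic $f \in L^2_{x,v}$, and the Strichartz bound for $\rho_f$ you invoke requires $\langle v\rangle^{\frac12+}$ moments and yields only $Q^- \in L^2_t L^2_{x,v}$, not $L^1_t$. For scattering the paper does not place $Q^-$ in $L^1_t L^2_{x,v}$ globally at all; instead it exploits positivity: from Duhamel and $f \geq 0$ one has the pointwise bound $\int_0^t T(-\sigma)Q^-(f,f)\,d\sigma \leq f_0 + \int_0^\infty T(-\sigma)Q^+(f,f)\,d\sigma$, and taking the $L^2_{x,v}$ norm of both sides (Minkowski only on the right) gives $T(-\cdot)Q^-(f,f) \in L^2_{x,v} L^1_{t\geq 0}$ with the norm order reversed. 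Dominated convergence then shows the Duhamel tail vanishes in $L^2_{x,v}$. A smaller point on uniqueness: the bilinear estimate is for free flows and does not by itself give a small constant on short intervals with data of fixed size; the paper extracts short-time smallness via a separate dominated-convergence lemma showing $\sup_{\|g_0\|_{L^2}=1}\|Q^+(T(\cdot)f_0, T(\cdot)g_0)\|_{L^1_{[-T,T]}L^2} \to 0$ as $T\to 0$, and splits the $Q^+$ terms in the difference equation accordingly.
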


\begin{remark}
We note that no claim is made regarding the injectivity or
non-injectivity for the map
$f_0 \mapsto f_{+\infty}$.
Moreover, no claim is made as to whether or not the bound in 
(\ref{eq:propreg}) is uniform
as $T \rightarrow \infty$.
\end{remark}

\begin{remark}
The constant $C$ appearing in (\ref{eq:L2longtimebd}) is absolute, requiring only
the imposed condition that $\left\Vert f_0 \right\Vert_{L^2} < \eta_0$ for another
absolute constant $\eta_0$.
The existence of such an absolute
 $C$ indicates that the behavior of Boltzmann's equation
is effectively linear on long timescales if the $L^2_{x,v}$ norm of $f_0$ is 
sufficiently small. Note that the bound (\ref{eq:L2longtimebd}) appears to be new.
\end{remark}

\begin{remark}
It is an easy consequence of the $Q^+ (f,f)$ estimate
 (\ref{eq:L2longtimebd}), of Duhamel's formula, and Minkowski's inequality,
along with the \emph{homogeneous} Strichartz estimates, that the
solution of (\ref{eq:MMboltzd200}) 
satisfies $f \in L^q_t L^r_x L^p_v \left( [0,\infty) \times \mathbb{R}^2 \times \mathbb{R}^2 \right)$,
whenever $p,r \geq 1$,
 $q > 2$, $\frac{1}{r}+\frac{1}{p} = 1$, and $\frac{1}{q} = \frac{1}{p} - \frac{1}{r}$. This is
the full range of homogeneous Strichartz estimates expected for $L^2$ solutions of the free transport
equation in $d=2$. We do not mention estimates of this form in Theorem \ref{thm:MainResults} because
they are not relevant to the method of the proof.
\end{remark}

\subsection{The local well-posedness theorem.}

We will also prove the following local well-posedness theorem, 
following a similar line of reasoning. We point out that while
the data is required to have $\frac{1}{2}+$ regularity, the 
\emph{time of existence} depends only on regularity at the
$s$ level for an arbitrary $s \in \left( 0, \frac{1}{2}
\right)$. We are not aware of any analogous theorem in the literature
which works at arbitrarily small fractional (but non-zero) regularities for
any Boltzmann equation; the proof relies on a novel interpolation
strategy which would be difficult to implement in the usual framework
of inhomogeneous Strichartz estimates. We also remark that the
theorem is optimal because $s = 0$ is scaling critical, so we
cannot expect a
 local theorem depending only on the size of the $L^2$
norm of the data.

\begin{theorem}
\label{thm:lwp}
Fix a number $s \in \left( 0, \frac{1}{2}\right)$;
then there exists a function 
$\lambda_s ( \cdot ) : \mathbb{R}^{\geq 0} \rightarrow
\mathbb{R}^{\geq 0}$ such that all the following is true:

Suppose $f_0 : \mathbb{R}^2 \times \mathbb{R}^2 \rightarrow
\mathbb{R}$ is a \emph{non-negative}, locally integrable function
such that
\begin{equation}
\left\Vert \left< v \right>^{\frac{1}{2}+}
\left< \nabla_x \right>^{\frac{1}{2}+} f_0 (x,v)
\right\Vert_{L^2 \left( \mathbb{R}^2 \times \mathbb{R}^2 \right)}
< \infty
\end{equation}
Then for some $T_0$ satisfying
\begin{equation}
T_0 > \lambda_s \left( 
\left\Vert \left< v \right>^s
\left< \nabla_x \right>^s f_0 
\right\Vert_{L^2 \left( \mathbb{R}^2 \times \mathbb{R}^2 \right)}
\right)
\end{equation}
 there exists a  non-negative
mild solution $f \in C 
\left( [0,T_0), L^2_{x,v} \right)$
of Boltzmann's equation
\begin{equation} \label{eq:MMboltzd2001}
\left( \partial_t + v \cdot \nabla_x \right) f (t,x,v) = Q(f,f)
\end{equation}
where 
$$
Q(f,f) = Q^+ (f,f) - Q^- (f,f),
$$
with $Q^+$ and $Q^-$ given respectively in \eqref{eqQ+} and \eqref{eqQ-},  
such that $f(0) = f_0$ and the following
bounds \eqref{eq:MRbd11},\eqref{eq:MRbd21},\eqref{eq:MRbd31}
hold for any $T \in \left( 0, T_0 \right)$:
\begin{equation}
\label{eq:MRbd11}
\left< v \right>^{\frac{1}{2}+} Q^+ (f,f) \in 
L^1_{t \in [0,T]} L^2_{x,v} 
\end{equation}
\begin{equation}
\label{eq:MRbd21}
\rho_f \in 
L^2_{t \in [0,T]} L^\infty_x \bigcap L^2_{t \in [0,T]} L^4_x 
\end{equation}
\begin{equation}
\label{eq:MRbd31}
\left< v \right>^{\frac{1}{2}+} f \in 
 L^\infty_{t \in [0,T]} L^2_{x,v} \bigcap L^\infty_{t \in [0,T]} L^4_x L^2_v 
\end{equation}
The solution $f(t)$ is unique in the class of all mild solutions,
with the same initial data, satisfying all the
bounds (\ref{eq:MRbd11},\ref{eq:MRbd21},\ref{eq:MRbd31})  for each $T \in (0,T_0)$.
In particular, any mild solution with data $f_0$
 satisfying (\ref{eq:MRbd11},\ref{eq:MRbd21},\ref{eq:MRbd31}) 
is automatically non-negative (since it is equal to $f$). 
\end{theorem}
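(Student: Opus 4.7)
The plan is to adapt the Kaniel-Shinbrot iteration argument used for Theorem \ref{thm:MainResults} to a local, subcritical setting. The essential new ingredient is a time-truncated variant of the critical bilinear estimate $L^2_{x,v} \times L^2_{x,v} \to L^1_t L^2_{x,v}$ for $Q^+$ composed with the free flow which, at the cost of placing a small amount of additional regularity of order $s \in (0,1/2)$ on the arguments, gains a positive power of the time-window length. Granted such an estimate, one closes a contraction on $[0,T_0]$ for $T_0$ small enough that $T_0^{\beta(s)} \|\langle v\rangle^s \langle \nabla_x\rangle^s f_0\|_{L^2}^2 \ll 1$; the function $\lambda_s$ in the statement is then extracted from this smallness condition.

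The technical heart of the argument is a subcritical bilinear estimate of the schematic form
\begin{equation*}
\|\langle v\rangle^{\frac{1}{2}+} Q^+(T(\cdot)F, T(\cdot)G)\|_{L^1_{t\in[0,T]} L^2_{x,v}} \lesssim T^{\beta(s)}\, \mathcal{N}_s(F)\, \mathcal{N}_s(G),
\end{equation*}
where $\beta(s)>0$ and $\mathcal{N}_s(\cdot)$ is a geometric interpolant between the critical norm $\|\langle v\rangle^{\frac{1}{2}+} \cdot\|_{L^2}$ and the subcritical norm $\|\langle v\rangle^s \langle \nabla_x\rangle^s \cdot\|_{L^2}$. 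I would obtain this by interpolating between two extremes: the scaling-critical estimate from Theorem \ref{thm:MainResults} (which corresponds to $s=0$, $\beta=0$ and carries no $T$-gain) and a crude high-regularity bound (at $s\geq \frac{1}{2}+$ and $\beta=1$) produced by combining H\"older in time with Sobolev embedding in $x$. Since the dispersive mechanism underlying the critical estimate passes through the Wigner transform and the endpoint hyperbolic Strichartz estimate of Keel-Tao in the doubled dimension $2d=4$, the interpolation must be performed in a dyadic, frequency-localized fashion, both to keep constants under control and to verify that $\beta(s)\to 0$ as $s\to 0$, consistent with the criticality of the $L^2_{x,v}$ scaling. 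The weight $\langle v\rangle^{\frac{1}{2}+}$ is absorbed into the arguments using the elementary inequality $\langle v\rangle^{\frac{1}{2}+} \lesssim \langle v^*\rangle^{\frac{1}{2}+} + \langle u^*\rangle^{\frac{1}{2}+}$ coming from conservation of energy under the collisional change of variables.

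With this subcritical bilinear estimate in hand, the rest of the proof runs parallel to Theorem \ref{thm:MainResults}: I build the monotone Kaniel-Shinbrot beginning condition inside the space defined by the required bounds on $\langle v\rangle^{\frac{1}{2}+}Q^+$, $\rho_f$, and $\langle v\rangle^{\frac{1}{2}+} f$, iterate to produce a non-negative mild solution $f$ on $[0,T_0)$, and verify the three bounds. Uniqueness in the correct integrability class follows from the same uniqueness lemma already used for Theorem \ref{thm:MainResults} (which requires neither non-negativity nor higher Sobolev regularity), so that any local mild solution satisfying the bounds is automatically non-negative and coincides with the Kaniel-Shinbrot solution. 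The main obstacle, already flagged by the authors in the remarks preceding the statement, is the interpolation step itself: the scaling-critical bilinear estimate admits no free $T$-gain, and standard inhomogeneous Strichartz estimates do not yield a $T^{\beta(s)}$-factor for arbitrarily small $s>0$, so the interpolation must be engineered dyadically by hand while carefully preserving the bilinear $L^1_t L^2_{x,v}$ structure of the critical estimate.
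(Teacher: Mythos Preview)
Your overall architecture is right and matches the paper: first prove a local well-posedness result for the \emph{gain-only} equation in $H^{\frac{1}{2}+,\frac{1}{2}+}$ whose existence time depends only on the $H^{s,s}$ norm, then feed that solution as the upper barrier $g_1$ into the Kaniel-Shinbrot iteration exactly as in Section \ref{sec:KanielShinbrot}, and invoke the uniqueness theorem (Theorem \ref{thm:unique}). So the reduction to a subcritical $Q^+$ estimate with a positive power of $T$ is correct.

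Where you diverge from the paper, and where your sketch has a genuine gap, is the interpolation step. You propose to interpolate between the critical $L^1_t$ estimate and a ``crude'' $L^\infty_t$ bound (so $\beta=1$) obtained from H\"older in time plus Sobolev in $x$; but such an $L^\infty_t$ estimate for $Q^+$ composed with the free flow is not available from Sobolev embedding alone, because $T(t)$ does not preserve $L^4_x L^2_v$ and the only pointwise-in-time control one has on $\rho_{T(t)f_0}$ is the $L^2_t$-type Strichartz bound of Lemma \ref{lem:A1lem}. The paper instead uses as the second endpoint the genuinely bilinear $L^2_t H^{\alpha,\alpha}$ estimate
\[
\left\Vert Q^+\!\left(T(t)f_0,T(t)g_0\right)\right\Vert_{L^2_t H^{\alpha,\alpha}}\leq C\left\Vert f_0\right\Vert_{H^{\alpha,\alpha}}\left\Vert g_0\right\Vert_{H^{\alpha,\alpha}},\qquad \alpha>\tfrac12,
\]
proven in \cite{CDP2017}, and interpolates it against the $\varepsilon$-weighted critical bound of Proposition \ref{prop:gain-eq-bound-kin-alpha}. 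The interpolation is not dyadic or by hand: it is complex (multilinear Riesz--Thorin) interpolation between the scales $H^{\alpha,\alpha}_\varepsilon$ and $H^{\alpha,\alpha}$, producing the intermediate norms $H^{\alpha,\alpha}_{\varepsilon,\theta}=(H^{\alpha,\alpha}_\varepsilon,H^{\alpha,\alpha})_\theta$ and an $L^{p_\theta}_t$ estimate with $p_\theta=\left(1-\tfrac{\theta}{2}\right)^{-1}>1$, with constant \emph{independent of $\varepsilon$}. One then localizes in time to pick up $T^{\theta/2}$, and sends $\varepsilon\to 0$ so that $\|f_0\|_{H^{\alpha,\alpha}_{\varepsilon,\theta}}\to\|f_0\|_{H^{s,s}}$ with $s=\alpha\theta$. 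That $\varepsilon$-weighted-norm device is the novel mechanism here; your dyadic proposal is plausible in spirit but you have not identified a valid high-regularity endpoint, and without it the interpolation cannot be closed.
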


We are not able to show that the $\frac{1}{2}+$ regularity
assumed at $t=0$ is propagated, but we expect this to be true
and state it is a conjecture:
\begin{conjecture}
In the notation of Theorem \ref{thm:lwp}, the local solution
$f(t)$ carries the regularity of the data up to time $T_0$.
More precisely, for any $T \in \left( 0, T_0 \right)$, there
holds
\begin{equation}
\left\Vert \left< v \right>^{\frac{1}{2}+}
\left< \nabla_x \right>^{\frac{1}{2}+}
f(t) \right\Vert_{L^\infty_{t \in [0,T]} L^2_{x,v}} < \infty
\end{equation}
\end{conjecture}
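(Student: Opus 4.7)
\medskip
\noindent\textbf{Proof proposal (sketch).}
The plan is to construct a solution $\tilde f$ at the higher regularity by a separate iteration, identify $\tilde f$ with the critical-level solution $f$ of Theorem \ref{thm:lwp} using the uniqueness clause there, and then extend the window on which the stronger norm is known from a short initial interval all the way up to the maximal time $T_0$ by a continuation argument driven only by lower-regularity quantities.

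First I would run a parallel Kaniel-Shinbrot iteration controlling the norm
$$
\|g\|_{X_T}:=\Bigl\|\langle v\rangle^{\frac12+}\langle\nabla_x\rangle^{\frac12+}g\Bigr\|_{L^\infty_{t\in[0,T]}L^2_{x,v}}
$$
on some short initial interval. The essential input would be a bilinear commutator estimate of the schematic form
$$
\Bigl\|\langle v\rangle^{\frac12+}\langle\nabla_x\rangle^{\frac12+}Q^+(g,h)\Bigr\|_{L^1_{t\in[0,T]}L^2_{x,v}}\lesssim\|g\|_{X_T}\|h\|_{Y_T}+\|g\|_{Y_T}\|h\|_{X_T},
$$
where $\|\cdot\|_{Y_T}$ stands for the collection of critical-level norms already controlled in Theorem \ref{thm:lwp}, namely \eqref{eq:MRbd11}--\eqref{eq:MRbd31}. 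Two facts justify this estimate: (i) $Q^+$ acts diagonally in $x$, so a Kato-Ponce-type fractional Leibniz rule distributes $\langle\nabla_x\rangle^{\frac12+}$ onto one factor at a time; (ii) conservation of energy $|v|^2+|u|^2=|v^*|^2+|u^*|^2$ yields the pointwise bound $\langle v\rangle^{\frac12+}\lesssim\langle v^*\rangle^{\frac12+}+\langle u^*\rangle^{\frac12+}$, which lets the velocity weight migrate onto one of the two arguments. Combined with the bilinear $L^2\times L^2\to L^1_tL^2$ estimate for $Q^+$ developed earlier in the paper, a standard energy iteration then produces a higher-regularity mild solution $\tilde f$ on some short interval $[0,T_*)$.

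Since $\tilde f$ automatically lies in the class covered by the uniqueness clause of Theorem \ref{thm:lwp}, we must have $\tilde f=f$ wherever both are defined, so $f$ carries the higher regularity at least up to $T_*$. To upgrade $T_*$ to $T_0$, I would argue by contradiction: suppose $T_*<T_0$ is maximal. For $t_1$ slightly below $T_*$, the data $f(t_1)$ has finite higher norm \emph{and} satisfies the critical bounds \eqref{eq:MRbd11}--\eqref{eq:MRbd31} uniformly on $[0,T_0)$. Restarting the higher-regularity iteration at $f(t_1)$, the length of the new window depends only on the critical norms of $f(t_1)$, because the commutator estimate above is linear in $X$ with coefficient in $Y$; thus this length does not shrink to zero as $t_1\to T_*^-$. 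Within each such window, a Gronwall-type inequality
$$
\|\tilde f(t)\|_X\leq\|f(t_1)\|_X\,\exp\Bigl(C\int_{t_1}^t\bigl(1+\|f(s)\|_Y\bigr)\,ds\Bigr)
$$
keeps the higher norm finite up to the window endpoint. Iterating strictly past $T_*$ contradicts maximality, forcing $T_*=T_0$.

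The main obstacle is the first step: establishing the bilinear commutator estimate with the asymmetric split structure $X\times Y+Y\times X$ rather than $X\times X$. This split is exactly what enables the continuation without requiring the higher norm to be small. The Kato-Ponce rule handles $x$-derivatives cleanly, but the $v$-weight interacts nontrivially with the $u$-integration in $Q^+$ and with the Wigner / doubled-dimension Strichartz machinery of the paper, so one has to track carefully which pieces of the right-hand side actually arise at the critical level. In particular the borderline Strichartz endpoint $\rho_f\in L^2_tL^\infty_x$ is the natural carrier of the lower-regularity factor, and one must confirm that no genuinely supercritical term slips through at that endpoint.
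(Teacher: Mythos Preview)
The statement you are attempting to prove is listed in the paper as an open \emph{conjecture}; the authors explicitly say they are unable to establish it (see the remark immediately following the conjecture). So there is no proof in the paper to compare against, and your sketch must stand on its own.

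The decisive gap is the continuation step. You assert a Gronwall inequality
\[
\|\tilde f(t)\|_X \le \|f(t_1)\|_X \exp\Bigl(C\int_{t_1}^{t}\bigl(1+\|f(s)\|_Y\bigr)\,ds\Bigr),
\]
but nothing in the paper yields a pointwise-in-time bound of the form $\|Q^+(f,f)(t)\|_X\lesssim\|f(t)\|_Y\,\|f(t)\|_X$ that would justify it. The only control on $Q^+$ at this regularity is the \emph{spacetime} $L^1_t L^2_{x,v}$ bilinear estimate of Proposition~\ref{prop:kin-gain-bound} and its weighted versions (Proposition~\ref{prop:gain-eq-bound-kin-alpha}, Lemma~\ref{lem:GainPP}). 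Working instead with the integrated form, your asymmetric commutator estimate on $[t_1,t_1+s]$ reads schematically
\[
\|Q^+(f,f)\|_{L^1_{[t_1,t_1+s]} X}\lesssim \|f\|_{Y_{[t_1,t_1+s]}}\cdot\|f\|_{X_{[t_1,t_1+s]}},
\]
and the $Y$-factor necessarily contains $\|f\|_{L^\infty_t L^2_{x,v}}$ (cf.\ Lemma~\ref{lem:usefullemma}), which neither tends to zero as $s\to 0$ nor is assumed small in Theorem~\ref{thm:lwp}. Hence the right-hand side cannot be absorbed, and the restart window may shrink to zero as $t_1\to T_*^-$. This is exactly the obstruction the paper faces: the global propagation argument (Theorem~\ref{thm:fullreg}) absorbs the $Q^+$ contribution only because $\|f\|_{L^\infty_t L^2_{x,v}}<\eta$ is small there, and that smallness is absent in the local setting. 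The short-time mechanism of Proposition~\ref{prop:shorttime} does provide smallness on short intervals, but at a rate depending on the \emph{profile} of $f(t_1)$ rather than its norm, so it does not give a uniform step size either. In short, the ``linear in $X$ with coefficient in $Y$'' structure is not enough: you also need the coefficient to be small, and at the scaling-critical $L^1_t$ level for $Q^+$ there is no known way to manufacture that smallness without a smallness hypothesis on the data.
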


\begin{remark}
It is possible to show that the regularity is propagated for
a time that depends on the size of the $\frac{1}{2}+$ norm
at time $t=0$. The point of the conjecture is that the 
$\frac{1}{2}+$ regularity persists for a time depending on
a \emph{lower} regularity norm, namely the $s$ norm.
\end{remark}

\begin{remark}
In view of Theorem \ref{thm:lwp}, where the time of existence
depends on a norm which is very close to $L^2$, it is natural 
to ask whether it is possible to prove local well-posedness in
a space like $L^2$ or $L^2 \cap L^1$ (note that the $L^1$ norm is
conserved for Boltzmann's equation). Since $L^2$ is a critical norm
for the Boltzmann equation with constant collision kernel, the best we can
hope for is a local well-posedness time which depends on the profile
of the initial data. Unfortunately, so far we have not been able to extract
such a result using our method, though there is no obvious obstruction.
Several \emph{a priori} estimates are available in complete generality
for $L^2$ solutions on a short time interval
 (assuming that a certain spacetime integral is finite in which case it is
bounded quantatively),
and they are presented in Appendix \ref{app:apriori}.
\end{remark}

\section*{Acknowledgements}
\label{sec:acknowledgements}

T.C. gratefully acknowledges support by the NSF through grants DMS-1151414 (CAREER) and DMS-1716198.
R.D. gratefully acknowledges support from a postdoctoral fellowship
at the University of Texas at Austin. N.P. gratefully acknowledges support from NSF grant DMS-1516228.
\\

\section{Technical preliminary: The Kaniel-Shinbrot iteration}
\label{sec:techprelim}

In this section, we present a brief review of the Kaniel-Shinbrot iteration method (see \cite{KS1984}) 
for proving existence of solutions for Boltzmann equations, and describe its typical use. Then we 
give a short preview of the new approach based on of the Kaniel-Shinbrot iteration method that we introduce in this paper. 

\subsection{The method of Kaniel and Shinbrot in a nutshell} 
The method of Kaniel and Shinbrot is based on three main steps:
\begin{enumerate}
\item Construct a pair of functions
 satisfying the so-called \emph{beginning condition}.
\item Develop  sequences of functions which act as \emph{barriers} (above and below) which converge
monotonically to upper and lower envelopes of a (hypothetical) true solution.
\item Prove that the upper and lower envelopes coincide, hence defining a solution to the Boltzmann
equation itself.
\end{enumerate}
We note that there is no claim of uniqueness in the Kaniel-Shinbrot iteration, though the third step
(convergence) is typically as hard to prove as uniqueness. Usually, one views Kaniel-Shinbrot as a
proof of \emph{existence by construction}, followed by a separate proof of uniqueness in a class of
solutions containing the Kaniel-Shinbrot solution.

We start with two functions $g_1, h_1$, which are supposed to be upper 
and lower bounds (respectively) for a true solution of Boltzmann's equation. 
The first iterates $g_2,h_2$ are defined by the formulas
\begin{equation}
\begin{aligned}
\left( \partial_t + v \cdot \nabla_x + \rho_{h_1} \right) g_2 & = Q^+ (g_1,g_1) \\
\left( \partial_t + v \cdot \nabla_x + \rho_{g_1} \right) h_2 & = Q^+ ( h_1,h_1) \\
g_2 (t=0) = h_2 (t=0) & = f_0
\end{aligned}
\end{equation}
Kaniel and Shinbrot \cite{KS1984} assume that
$g_1, h_1$ are chosen to guarantee the following inequalities
(for all times on the interval of interest):
\begin{equation}
\label{eq:KSBegin1}
0 \leq h_1 \leq h_2 \leq g_2 \leq g_1
\end{equation}
and this is the so-called \emph{beginning condition} of the Kaniel-Shinbrot iteration.

The beginning condition (\ref{eq:KSBegin1}) secured, Kaniel and Shinbrot define the rest of the
iteration (here $n \geq 2$):
\begin{equation}
\begin{aligned}
\left( \partial_t + v \cdot \nabla_x + \rho_{h_n} \right) g_{n+1} & = Q^+ (g_n,g_n) \\
\left( \partial_t + v \cdot \nabla_x + \rho_{g_n} \right) h_{n+1} & = Q^+ ( h_n,h_n) \\
g_{n+1} (t=0) = h_{n+1} (t=0) & = f_0
\end{aligned}
\end{equation}
They prove by induction that, as long as the beginning condition (\ref{eq:KSBegin1}) is satisfied,
the following inequalities hold for each $n$:
\begin{equation}
0 \leq h_n \leq h_{n+1} \leq g_{n+1} \leq g_n \leq g_1
\end{equation}
In other words, there is a sequence $h_n$ increasing from below and a decreasing sequence
$g_n$, all bounded above by the fixed function $g_1$. This allows us to apply monotone convergence
pointwise and conclude the existence (under mild regularity assumption) of limits
$g,h$ with $0 \leq h \leq g \leq g_1$ satisfying the following equations:
\begin{equation}
\label{eq:KSsystem}
\begin{aligned}
\left( \partial_t + v \cdot \nabla_x + \rho_h \right) g & = Q^+ (g,g) \\
\left( \partial_t + v \cdot \nabla_x + \rho_g \right) h & = Q^+ (h,h) \\
g(t=0) = h (t=0) & = f_0
\end{aligned}
\end{equation}
This system is satisfied, of course, if $g=h=f$ is the (supposedly unique) solution of Boltzmann's
equation; hence, if the system has a unique solution $(g,h=g)$, then that solution is exactly the unique
solution of Boltzmann's equation. Thus the question of convergence of the Kaniel-Shinbrot scheme
is closely related to a uniqueness question.

\begin{remark} 
The method  of Kaniel-Shinbrot  \cite{KS1984} is applicable to the Boltzmann equation under an angular cutoff condition (Grad cut-off). 
We note that the Boltzmann equation with constant collision kernel satisfies Grad's cut-off 
(it is enough to note that $Q^+$ and $Q^- = f \rho_f$ each make sense taken separately, if $f$ is nice enough).
\end{remark}

Usually we do not prove that the system (\ref{eq:KSsystem}) has a unique solution, since this requires
more effort than is actually necessary. In fact, if we can only prove that $g \equiv h$ (for instance by
a Gronwall argument), then the function $g$ (or equivalently $h$) is itself a solution of Boltzmann's
equation, but there is no guarantee of uniqueness. In that case, uniqueness is usually proven by
an independent argument. This is indeed the strategy employed in the present work.

The Kaniel-Shinbrot iteration has been applied to ``large'' initial conditions
which are ``squeezed'' between two nearby Maxwellian distributions. This was
first achieved by Toscani \cite{To1988}, using a clever choice of (locally Maxwellian)
functions $g_1, h_1$ satisfying the beginning condition of Kaniel and Shinbrot.
The approach was later adapted to soft potentials (with Grad cut-off) by Alonso and Gamba.
\cite{Alonso2009}. Unfortunately, it is not clear to us how to adapt Toscani's proof to the scaling-critical
($L^2_{x,v}$) setting; the lower envelope $h_1$ should presumably be a Maxwellian, but
the upper envelope $g_1$ must be some $L^2$ function which tracks the singularities of
the data. There does not appear to be an obvious choice for upper envelope $g_1$ (satisfying
the beginning condition) when the
data $f_0$ is not small.

\subsection{The method of Kaniel and Shinbrot revisited}

The beginning conditions for Kaniel-Shinbrot is traditionally satisfied
by taking $g_1$ to be a Maxwellian
distribution which bounds $f_0$ from above, with $h_1 \equiv 0$; or, by ``squeezing''
$f$ between two Maxwellians $g_1,h_1$
 which need not be small (but must be close to each other).
However these ideas
do not work in our setting
 since $f_0$ does not need to be bounded above pointwise; indeed, the only 
 \emph{quantitative}
 estimate we are allowed is that $f_0 \in L^2_{x,v}$.

Instead, our strategy is to solve the \textbf{gain-term-only}
 Boltzmann equation using
a bilinear estimate, and subsequently apply the Kaniel-Shinbrot iteration
to the solution of the gain-only equation in order to develop a solution
of the full Boltzmann equation. Thus, for us, $h_1$ is identically zero
and $g_1$ satisfies
$$
\left( \partial_t + v \cdot \nabla_x \right) g_1 =
Q^+ (g_1,g_1)
$$
with initial data $g_1 (t=0) = f_0$. 
It would seem that the Kaniel-Shinbrot iteration gains us nothing, since
we are initiating the iteration with the solution to a nonlinear equation.
However, it turns out that at critical regularity, the gain-only equation
is easier to solve than the full Boltzmann equation, as was observed
by D. Arsenio, \cite{Ar2011} In particular, the gain term $Q^+$ satisfies
bilinear estimates which are not available for the loss term.

\begin{remark}
The suggestion to apply Kaniel-Shinbrot at low regularities is due to
Arsenio in \cite{Ar2011}, who discussed the possibility in the introduction.
However, Arsenio did \emph{not} implement the Kaniel-Shinbrot iteration,
instead relying on a compactness argument,
apparently due to the lack of uniqueness in his formulation.
We have overcome this limitation by propagating some auxiliary
 regularity and
moment bounds for the gain-only equation, to the point that a uniqueness 
theorem for the full Boltzmann equation is indeed available, thereby
allowing us to prove convergence of the Kaniel-Shinbrot iteration.
\end{remark}

\section{An Abstract Well-Posedness Theorem}
\label{sec:LWPthm}

In this section we present an abstract well-posedness theorem, 
which is inspired by ``space-time" methods 
that are often used in the context of dispersive PDEs. 

Let $\mathcal{H}$ be a separable Hilbert space over $\mathbb{R}$ or $\mathbb{C}$,
and let $k \geq 2$ be an integer.
  Suppose we have a map
\begin{equation}
\mathcal{A} :  \mathcal{H}^{\times k} \rightarrow L^1 \left( \mathbb{R}, \mathcal{H} \right)
\end{equation}
such that $\mathcal{A}$ is linear with respect to each factor
of $\mathcal{H}$ (keeping the others fixed), and an estimate of the following
form holds:
\begin{equation}
\label{eq:Abd}
\left\Vert \mathcal{A} (x_1,\dots,x_k) (t) \right\Vert_{L^1_t \mathcal{H}}
\leq C_0 \prod_{j=1}^k \left\Vert x_j \right\Vert_{\mathcal{H}}
\qquad \qquad x_1,\dots,x_k \in \mathcal{H}
\end{equation}
We will say that $\mathcal{A}$ is a bounded $k$-linear
map $\mathcal{H}^{\times k} \rightarrow L^1_t \mathcal{H}$, and we will generally
write it equivalently as $\mathcal{A} (t,x_1,\dots,x_k)$.
We are interested in properly defining, and then solving, the equation 
\begin{equation}
\label{eq:dxdtA}
\frac{dx}{dt} = \mathcal{A} (t,x(t),\dots,x(t))
\end{equation}
 when
$x(0)=x_0 \in \mathcal{H}$ is a given element of $\mathcal{H}$ with small norm.
As we will see, the bound (\ref{eq:Abd}) along with the $k$-linearity is
\emph{sufficient} to solve (\ref{eq:dxdtA}) globally in time for small data; scattering will also
follow automatically, in the sense that $\lim_{t \rightarrow +\infty} x(t)$ exists in
the norm topology of $\mathcal{H}$. We will find that
$x(t) \in W^{1,1} \left( (0,T), \mathcal{H} \right)$ for any $T>0$, so equation
(\ref{eq:dxdtA}) holds in a strong sense. The theorem, along with its proof,
 is inspired by certain methods due to
Klainerman and Machedon for solving dispersive PDE. \cite{KM1993,KM2008}

\begin{remark}
In the complex case, it
 is acceptable for $\mathcal{A}$ to be conjugate linear with respect to some
or all entries; the changes to the proof are trivial so we only discuss the linear case.
\end{remark}

Note that \emph{a priori} we can only evaluate $\mathcal{A} (t,x_1,\dots,x_k)$ for
$\textnormal{a.e. } t$ given \emph{fixed} elements $x_1,\dots,x_k$
of $\mathcal{H}$; in particular, the exceptional set in $t$ may depend on
$x_1,\dots,x_k$. However, if $x(t)$ is a $C^1$ curve, then near any given time
$t_0$, $x$ is almost a constant. This observation motivates the following result:
\begin{lemma}
\label{lem:abswplem}
Let $\mathcal{H}$ be a separable Hilbert space and 
suppose $\mathcal{A}: \mathcal{H}^{\times k} \rightarrow L^1_t \mathcal{H}$ is
a   mapping which is linear or conjugate linear 
in each entry; furthermore, suppose that
the estimate (\ref{eq:Abd}) holds. Then for any $T \in (0,\infty)$
 there exists a unique  $k$-linear map
\begin{equation}
\tilde{\mathcal{A}} : \Big( W^{1,1} \left( (0,T), \mathcal{H} \right) \Big)^{\times k}
\rightarrow L^1 \left( (0,T) , \mathcal{H} \right)
\end{equation}
which satisfies
\begin{equation}
\tilde{\mathcal{A}} \left(t, f_1  x_1, \dots, f_k  x_k \right)
= \left( \prod_{j=1}^k f_j (t) \right) \mathcal{A} ( t,x_1,\dots,x_k )
\end{equation}
for any $x_1,\dots,x_k \in \mathcal{H}$ and any smooth bounded real-valued
functions $f_1,\dots,f_k$ on $[0,T]$; here, $f_j x_j$ denotes the
function $(f_j x_j) (t) = f_j (t) x_j$. The following estimate
holds as well:
\begin{equation}
\begin{aligned}
& \left\Vert \tilde{\mathcal{A}} \left( t, x_1 (\cdot), \dots, x_k (\cdot) \right)
\right\Vert_{L^1_{t \in (0,T)} \mathcal{H}} \leq \\
& \qquad \qquad \qquad \leq  (1+k) C_0 \prod_{j=1}^k \left(
\left\Vert x_j (t) \right\Vert_{L^\infty_{t \in (0,T)} \mathcal{H}}
+ \left\Vert \frac{dx_j}{dt} \right\Vert_{L^1_{t \in (0,T)} \mathcal{H}} \right)
\end{aligned}
\end{equation}
for any $x_1 (\cdot), \dots, x_j (\cdot) \in W^{1,1} \left( (0,T), \mathcal{H} \right)$.
\end{lemma}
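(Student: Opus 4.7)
The plan is to construct $\tilde{\mathcal{A}}$ explicitly by an integral formula dictated by $k$-linearity and the required compatibility with elementary tensors, and then verify the bound and uniqueness. Every $x_j \in W^{1,1}((0,T), \mathcal{H})$ has an absolutely continuous representative $x_j(t) = c_j + \int_0^t y_j(s)\,ds$ with $c_j = x_j(0) \in \mathcal{H}$ and $y_j = x_j' \in L^1_{s \in (0,T)} \mathcal{H}$. Substituting this into each slot and expanding $k$-linearly forces the candidate definition
\begin{equation*}
\tilde{\mathcal{A}}(t, x_1(\cdot), \ldots, x_k(\cdot)) := \sum_{S \subseteq \{1, \ldots, k\}} \int_{[0,t]^S} \mathcal{A}\bigl(t, \xi_1^S(s), \ldots, \xi_k^S(s)\bigr) \prod_{j \in S} ds_j,
\end{equation*}
where $\xi_j^S = c_j$ for $j \notin S$ and $\xi_j^S = y_j(s_j)$ for $j \in S$.

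First I would show that each of the $2^k$ summands is a well-defined element of $L^1((0,T), \mathcal{H})$. For fixed $S$, the bound (\ref{eq:Abd}) makes $s \mapsto \mathcal{A}(\cdot, \xi^S(s))$ a bounded $|S|$-linear map from $L^1_s \mathcal{H}^{\times |S|}$ into $L^1_t \mathcal{H}$ (the other slots being fixed at $c_j$); approximating each $y_j$ by step functions and invoking separability of $\mathcal{H}$ upgrades this boundedness to strong Bochner measurability of the integrand. The Bochner--Fubini theorem then justifies the definition, and combining the triangle inequality with (\ref{eq:Abd}) and Fubini yields
\begin{equation*}
\bigl\| \tilde{\mathcal{A}}(t, x_1(\cdot),\dots,x_k(\cdot)) \bigr\|_{L^1_{t} \mathcal{H}} \leq C_0 \sum_{S} \prod_{j \notin S} \|c_j\|_\mathcal{H} \prod_{j \in S} \|y_j\|_{L^1_s \mathcal{H}} = C_0 \prod_{j=1}^k \bigl( \|c_j\|_\mathcal{H} + \|y_j\|_{L^1_s \mathcal{H}} \bigr).
\end{equation*}
Since $\|c_j\|_\mathcal{H} \leq \|x_j\|_{L^\infty_t \mathcal{H}}$ and $\|y_j\|_{L^1_s \mathcal{H}} = \|x_j'\|_{L^1_t \mathcal{H}}$, this is in fact sharper than the lemma asserts; the factor $(1+k)$ in the statement provides ample room to spare.

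Next I would check the algebraic requirements. The $k$-linearity of $\tilde{\mathcal{A}}$ is manifest from the formula, since each subset contribution is $k$-linear in the pairs $(c_j, y_j)$, and the map $x_j(\cdot) \mapsto (c_j, y_j)$ is linear. For the product formula, plugging $x_j(\cdot) = f_j x_j$ with $f_j(t) = f_j(0) + \int_0^t f_j'(s)\,ds$ converts the subset expansion into the algebraic expansion of $\prod_j f_j(t)$, and the linearity of $\mathcal{A}$ in each slot factors the scalar out, yielding $\prod_{j=1}^k f_j(t)\cdot \mathcal{A}(t, x_1, \ldots, x_k)$. Uniqueness of $\tilde{\mathcal{A}}$ follows from density in $W^{1,1}((0,T), \mathcal{H})$ of finite sums $\sum_i f_i(\cdot) x_i$ with $f_i \in C^\infty([0,T])$ and $x_i \in \mathcal{H}$ (a consequence of separability of $\mathcal{H}$ and standard mollification in $t$), combined with the continuity of any $k$-linear candidate that satisfies the displayed bound.

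I expect the main obstacle to be the measurability step, because \emph{a priori} the exceptional time-set on which $\mathcal{A}(\cdot, x_1, \ldots, x_k)$ fails to be defined can depend on the inputs. The fix is to invoke Pettis measurability: separability of $L^1((0,T), \mathcal{H})$ (inherited from that of $\mathcal{H}$) together with the weak measurability implied by the multilinear bound (\ref{eq:Abd}) promotes the vector-valued map $s \mapsto \mathcal{A}(\cdot,\xi^S(s))$ to strong measurability, after which Bochner--Fubini and the triangle-inequality estimates above run routinely.
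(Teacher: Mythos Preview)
Your argument is correct and in fact yields the sharper constant $C_0$ in place of $(1+k)C_0$. The paper explicitly acknowledges your route (``It is possible to prove this result by expanding each $x_j$ via Duhamel's formula and using $k$-linearity'') but opts for a shortcut: rather than the full $2^k$-term subset expansion, it introduces a single auxiliary parameter $\sigma$, writes
\[
\frac{\partial}{\partial\sigma}\,\mathcal{A}\bigl(t,x_1(\sigma),\dots,x_k(\sigma)\bigr)
= \sum_{j=1}^k \mathcal{A}\bigl(t,x_1(\sigma),\dots,\zeta_j(\sigma),\dots,x_k(\sigma)\bigr),
\]
and integrates in $\sigma$ from $0$ to $t$. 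This produces only $k+1$ terms (one initial term plus $k$ one-dimensional integrals), each of which is estimated directly by~(\ref{eq:Abd}) and Minkowski, at the cost of the looser prefactor $(1+k)$. Your expansion is more systematic and factors exactly, which is why your constant is better; the paper's trick trades that sharpness for brevity. Both approaches handle the measurability issue at the same level of rigor (the paper's proof is also labeled a sketch), and your Pettis/separability remark is an appropriate way to close that gap.
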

\begin{proof} \emph{(Sketch.)}
It is possible to prove this result by expanding each $x_j$ via Duhamel's formula
and using $k$-linearity. However, it is much easier to simply differentiate $\mathcal{A}$
directly as follows, denoting $\zeta_j = \frac{dx_j}{dt}$:
\begin{equation}
\begin{aligned}
& \frac{\partial}{\partial \sigma}
\mathcal{A} \left( t, x_1 (\sigma), \dots, x_k (\sigma) \right) = 
\mathcal{A} \left( t, \zeta_1 (\sigma), x_2 (\sigma), \dots, x_k (\sigma) \right) + \\
& \qquad \qquad \qquad \qquad \qquad \qquad
 + \dots + \mathcal{A} \left( t, x_1 (\sigma), \dots, x_{k-1} (\sigma),
\zeta_k (\sigma) \right)
\end{aligned}
\end{equation}
We can integrate both sides in $\sigma$ from $0$ to $t$, in order to relate the diagonal
$\sigma = t$ in terms of quantities off the diagonal:
\begin{equation}
\begin{aligned}
& \mathcal{A} \left( t, x_1 (t), \dots, x_k (t) \right) = 
\mathcal{A} \left( t, x_1 (0), \dots, x_k (0)\right) + \\
& \qquad \qquad \qquad
 + \int_0^t \mathcal{A} \left( t, \zeta_1 (\sigma), x_2 ( \sigma ), \dots, x_k ( \sigma ) \right) d\sigma + \\
& \qquad \qquad \qquad
+ \dots + \int_0^t \mathcal{A} \left( t, x_1 (\sigma), \dots, x_{k-1} (\sigma),
\zeta_k (\sigma) \right) d\sigma
\end{aligned}
\end{equation}
The first term is obviously bounded in $L^1_t \mathcal{H}$ due to (\ref{eq:Abd}). We demonstrate
how to estimate the first integral term (the others are treated similarly):
\begin{equation*}
\begin{aligned}
& \left\Vert \int_0^t \mathcal{A} \left( t, \zeta_1 (\sigma), x_2 (\sigma), \dots,
x_k (\sigma) \right) d\sigma \right\Vert_{L^1_{t \in (0,T)} \mathcal{H}} \leq \\
& \qquad \qquad \qquad
 \leq \left\Vert \int_0^t \left\Vert \mathcal{A} \left( t, \zeta_1 (\sigma), x_2 (\sigma), \dots,
x_k ( \sigma ) \right) \right\Vert_{\mathcal{H}} d\sigma \right\Vert_{L^1_{t \in (0,T)}} \\
& \qquad \qquad \qquad
 \leq \left\Vert \int_0^T \left\Vert \mathcal{A} \left( t, \zeta_1 (\sigma), x_2 (\sigma), \dots,
x_k ( \sigma ) \right) \right\Vert_{\mathcal{H}} d\sigma \right\Vert_{L^1_{t \in (0,T)}} \\
& \qquad \qquad \qquad
 \leq \int_0^T \left\Vert \mathcal{A} \left( t, \zeta_1 (\sigma), x_2 (\sigma), \dots,
x_k ( \sigma ) \right) \right\Vert_{L^1_{t \in (0,T)} \mathcal{H}} d\sigma  \\
& \qquad \qquad \qquad \leq
C_0 \int_0^T \left\Vert \zeta_1 ( \sigma ) \right\Vert_{\mathcal{H}}
\left\Vert x_2 ( \sigma ) \right\Vert_{\mathcal{H}} \ldots
\left\Vert x_k ( \sigma ) \right\Vert_{\mathcal{H}} d\sigma \\
& \qquad \qquad \qquad \leq
C_0 \left\Vert \zeta_1 \right\Vert_{L^1_{t \in (0,T)} \mathcal{H}}
\left\Vert x_2 \right\Vert_{L^\infty_{t \in (0,T)} \mathcal{H}} \ldots
\left\Vert x_k \right\Vert_{L^\infty_{t \in (0,T)} \mathcal{H}}
\end{aligned}
\end{equation*}
Gathering terms together, we are able to conclude.
\end{proof}

\begin{remark}
The map $\tilde{\mathcal{A}}$ clearly extends $\mathcal{A}$, in the sense that we can
view any $x_0 \in \mathcal{H}$ as a function of time by calling it a constant function. 
Since there is no ambiguity, we will refer to both operators using the common
notation $\mathcal{A}$.
\end{remark}

\begin{theorem}
\label{thm:abswp}
Let $\mathcal{H}$ be a separable Hilbert space, fix an integer $k \geq 2$, and
let $\mathcal{A} : \mathcal{H}^{\times k} \rightarrow L^1 \left( \mathbb{R}, \mathcal{H} \right)$
be a mapping which is linear or conjugate linear in each entry, and satisfies the estimate
\begin{equation}
\left\Vert \mathcal{A} (t, x_1, \dots, x_k) \right\Vert_{L^1_t \mathcal{H}}
\leq C_0 \prod_{j=1}^k \left\Vert x_j \right\Vert_{\mathcal{H}} \qquad \qquad
x_1, \dots, x_k \in \mathcal{H}
\end{equation}
Then, defining
\begin{equation*}
M = \left( \frac{1}{4^k k (1+k) C_0} \right)^{1 / (k-1)}
\end{equation*}
we find that for any $x_0 \in \mathcal{H}$ with $\left\Vert x_0 \right\Vert_{\mathcal{H}} \leq M$
there exists a global solution $x(t) \in \bigcap_{T > 0}
W^{1,1} \left( (0,T), \mathcal{H} \right)$ of the integral equation
\begin{equation}
x(t) = x_0 + \int_0^t \mathcal{A} \left( \sigma, x(\sigma), \dots, x(\sigma) \right) d\sigma
\end{equation}
and this solution is unique in the regularity class 
$\bigcap_{T>0} W^{1,1}_{t \in [0,T]} \mathcal{H}$. 
Moreover, for the solutions arising in this way, the following estimate holds:
\begin{equation}
\label{eq:absest}
\left\Vert x (t) \right\Vert^k_{L^\infty_{t \geq 0} \mathcal{H}}
+ \left\Vert \mathcal{A} \left( t, x(t), \dots, x(t) \right)
\right\Vert_{L^1_{t \geq 0} \mathcal{H}} \leq
C_1 \left\Vert x_0 \right\Vert^k_{\mathcal{H}}
\end{equation}
for some constant $C_1$ depending only on $k$ and $C_0$. In particular, (\ref{eq:absest})
implies that $\lim_{t \rightarrow +\infty} x(t)$ exists strongly in $\mathcal{H}$
(i.e., the solution scatters).
\end{theorem}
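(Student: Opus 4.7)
The plan is to solve the integral equation by a Picard-type fixed point argument in the natural Banach space
\[
Y_T := \{x \in C([0,T),\mathcal{H}) : \partial_t x \in L^1_{t\in(0,T)}\mathcal{H}\}, \qquad \|x\|_{Y_T} := \|x\|_{L^\infty_{t\in(0,T)}\mathcal{H}} + \|\partial_t x\|_{L^1_{t\in(0,T)}\mathcal{H}},
\]
for $T\in(0,\infty]$, applied to the solution map
\[
\Phi(x)(t) = x_0 + \int_0^t \mathcal{A}\bigl(\sigma,x(\sigma),\dots,x(\sigma)\bigr)\,d\sigma.
\]
The key observation is that $\partial_t \Phi(x) = \mathcal{A}(\cdot,x,\dots,x)$, so Lemma \ref{lem:abswplem} evaluated on the diagonal yields
\[
\|\partial_t \Phi(x)\|_{L^1_t\mathcal{H}} \leq (1+k)C_0 \|x\|_{Y_T}^k,
\]
and, by the fundamental theorem of calculus, $\|\Phi(x)\|_{L^\infty_t\mathcal{H}} \leq \|x_0\|_\mathcal{H} + (1+k)C_0\|x\|_{Y_T}^k$. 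Altogether $\|\Phi(x)\|_{Y_T} \leq \|x_0\|_\mathcal{H} + 2(1+k)C_0\|x\|_{Y_T}^k$.

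For the contraction bound I would use the standard multilinear telescoping identity
\[
\mathcal{A}(x,\dots,x) - \mathcal{A}(y,\dots,y) = \sum_{j=1}^k \mathcal{A}(y,\dots,y,x-y,x,\dots,x),
\]
with $x-y$ in the $j$-th slot, and apply Lemma \ref{lem:abswplem} term by term to obtain
\[
\|\Phi(x)-\Phi(y)\|_{Y_T} \leq 2k(1+k)C_0 \,\max(\|x\|_{Y_T},\|y\|_{Y_T})^{k-1}\, \|x-y\|_{Y_T}.
\]
Setting $R := 2\|x_0\|_\mathcal{H}$, the threshold $M = (1/(4^k k(1+k)C_0))^{1/(k-1)}$ in the statement is arranged precisely so that, whenever $\|x_0\|_\mathcal{H} \leq M$, both $2(1+k)C_0 R^{k-1}$ and $2k(1+k)C_0 R^{k-1}$ are at most $1/2$, making $\Phi$ a self-map of the closed ball $\overline{B}_R \subset Y_T$ and a strict contraction on it. The Banach fixed point theorem then produces a unique $x \in \overline{B}_R$. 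Since every estimate is uniform in $T$, taking $T\to\infty$ and invoking uniqueness to patch yields a global solution with $\|x\|_{Y_\infty}\leq R$, from which \eqref{eq:absest} follows with $C_1$ depending only on $k$ and $C_0$.

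Scattering comes for free: the bound $\partial_t x \in L^1_{t\geq 0}\mathcal{H}$ implies $t\mapsto x(t)$ is Cauchy in $\mathcal{H}$ as $t\to\infty$, hence converges strongly. To upgrade uniqueness from the ball $\overline{B}_R$ to the full class $\bigcap_{T>0} W^{1,1}_{t\in[0,T]}\mathcal{H}$, I would use a continuation argument: for any two such solutions agreeing at $t=0$, continuity of $\delta\mapsto \|\cdot\|_{Y_\delta}$ forces both to lie in $\overline{B}_R$ on a short interval $[0,\delta]$, where the contraction already proved identifies them; the set of $t$ on which the two solutions agree is then open (by reapplication of the contraction at each point of coincidence) and closed (by continuity), hence equal to $[0,\infty)$. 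The only real obstacle is bookkeeping the constants to match $M$ exactly; all the genuinely delicate point---reinterpreting the a.e.-defined map $\mathcal{A}$ when its inputs are themselves time-dependent curves---has already been handled by Lemma \ref{lem:abswplem}.
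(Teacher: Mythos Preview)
Your proposal is correct and follows essentially the same approach as the paper: both set up a contraction for the Duhamel map in the norm $\|x\|_{L^\infty_t\mathcal{H}}+\|\partial_t x\|_{L^1_t\mathcal{H}}$, derive the self-mapping and Lipschitz estimates from Lemma~\ref{lem:abswplem}, and close via the Banach fixed point theorem. The only cosmetic differences are that the paper takes the ball of radius $2M$ rather than your $R=2\|x_0\|_{\mathcal{H}}$, and writes the Lipschitz prefactor with $(\|x\|+\|y\|)^{k-1}$ instead of $\max(\|x\|,\|y\|)^{k-1}$; your choices are slightly sharper and make \eqref{eq:absest} more immediate, and your continuation argument for uniqueness in the full class fills in a point the paper's sketch leaves implicit.
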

\begin{proof} \emph{(Sketch)}
We will use the following norm on $W^{1,1} \left( (0,T), \mathcal{H} \right)$:
\begin{equation}
\left\Vert x ( \cdot ) \right\Vert_{\mathcal{W}^{1,1}_{t \in (0,T)} \mathcal{H}} =
\left\Vert x ( t ) \right\Vert_{L^\infty_{t \in (0,T)} \mathcal{H}}
+ \left\Vert \frac{dx}{dt} ( t ) \right\Vert_{L^1_{t \in (0,T)} \mathcal{H}}
\end{equation}
This norm is equivalent to the usual norm on $W^{1,1}$ for fixed
finite $T$ by Sobolev
embedding, but exhibits better
scaling properties in this context for large  $T$.

Define the map $\mathfrak{F} : W^{1,1} \left( (0,T), \mathcal{H} \right) 
\rightarrow W^{1,1} \left( (0,T), \mathcal{H} \right)$ by the formula
\begin{equation}
\left[ \mathfrak{F} (x(\cdot)) \right] (t) = x_0 +
\int_0^t \mathcal{A} \left( \sigma, x(\sigma), \dots, x(\sigma) \right) d\sigma
\end{equation}
This is well-defined by Lemma \ref{lem:abswplem}.

Using Lemma \ref{lem:abswplem}, we easily derive the following boundedness and
locally Lipschitz estimates:

\begin{equation*}
\left\Vert \left[ \mathfrak{F} ( x ( \cdot ) ) 
\right] (t) \right\Vert_{\mathcal{W}^{1,1}_{t \in (0,T)} \mathcal{H}}
\leq \left\Vert x_0 \right\Vert_{\mathcal{H}} + 2 (1+k) C_0 
\left\Vert x ( \cdot ) \right\Vert^k_{\mathcal{W}^{1,1}_{t \in (0,T)} \mathcal{H}}
\end{equation*}

and

\begin{equation*}
\begin{aligned}
& \left\Vert \left[ \mathfrak{F} ( x_2 ( \cdot ) ) 
- \mathfrak{F} ( x_1 ( \cdot ) )
\right] (t) \right\Vert_{\mathcal{W}^{1,1}_{t \in (0,T)} \mathcal{H}} \leq \\
& \leq 2 k (1+k) C_0 \left( \sum_{i=1,2}
\left\Vert x_i ( t ) \right\Vert_{\mathcal{W}^{1,1}_{t \in (0,T)} \mathcal{H}}
\right)^{k-1}
\left\Vert x_2 ( t ) - x_1 ( t ) \right\Vert_{\mathcal{W}^{1,1}_{t \in (0,T)} \mathcal{H}}
\end{aligned}
\end{equation*}

Therefore, defining the closed ball
\begin{equation*}
\mathfrak{B} = \left\{ x ( \cdot ) \in W^{1,1} \left( (0,T), \mathcal{H} \right)
\; \left| \; \left\Vert x ( t ) \right\Vert_{\mathcal{W}^{1,1}_{t \in (0,T)} \mathcal{H}}
\leq 2 M \right. \right\}
\end{equation*}
with $M$ as in the statement of the theorem,
we find that $\mathfrak{F} \mathfrak{B} \subset \mathfrak{B}$ and $\mathfrak{F}$ is a
strict contraction of $\mathfrak{B}$. Hence, we may apply the Banach fixed point theorem
and thereby extract a unique fixed point of $\mathfrak{F}$ within $\mathfrak{B}$.
\end{proof}

\begin{theorem}
\label{thm:absScattering}
Let $\mathcal{H}, \mathcal{A}, M, k, C_0$ be as in the statement of Theorem \ref{thm:abswp}.
Consider the integral equation
\begin{equation}
x (t) = x_0 + \int_0^t \mathcal{A} \left( \sigma, x ( \sigma ), \dots, x ( \sigma ) \right) d\sigma
\end{equation}
with unique solutions $x \in \bigcap_{T > 0} W^{1,1} \left( (-T,T),
 \mathcal{H} \right)$, $x (0) = x_0$,
as given by Theorem \ref{thm:abswp} for any $x_0 \in \mathcal{H}$ such that
$\left\Vert x_0 \right\Vert_{\mathcal{H}} \leq M$. Define the map
\begin{equation}
\mathcal{S} : \overline{B_M^{\mathcal{H}} (0)}
\rightarrow \bigcap_{T > 0} W^{1,1} \left( (-T,T),
 \mathcal{H} \right)
\end{equation} 
such that 
\begin{equation}
\left[ \mathcal{S} (x_0) \right] ( t )
 = x_0 + \int_0^t \mathcal{A} \left( \sigma, 
\left[ \mathcal{S} ( x_0 ) \right] ( \sigma ), \dots, 
\left[ \mathcal{S} ( x_0 ) \right] ( \sigma ) \right) d\sigma
\end{equation}
The map $\mathcal{S}$ is well-defined  by the statement and proof of Theorem \ref{thm:abswp}.
For any $r \in (0, M)$ define the maps $\mathfrak{S}^{+}_r$, $\mathfrak{S}^{-}_r$,
\begin{equation}
\mathfrak{S}^{\pm}_r : B_r^{\mathcal{H}} (0) \rightarrow \mathcal{H}
\end{equation}
\begin{equation}
\mathfrak{S}^{\pm}_r (x_0) = \lim_{t \rightarrow \pm \infty} \left[ \mathcal{S} ( x_0 ) \right] ( t )
\end{equation}
where the limit is taken in the norm topology of $\mathcal{H}$; this is possible by
Theorem \ref{thm:abswp}. Let
$\mathfrak{U}^{\pm}_r$ denote the image of $\mathfrak{S}^{\pm}_r$, and note that
$0 \in \mathfrak{U}^{+}_r \bigcap \mathfrak{U}^{-}_r$.

Then, there exists $r_0 = r_0 (k, C_0) > 0$ such that 
if $0 < r < r_0$ then $\mathfrak{U}^{+}_r , \mathfrak{U}^{-}_r$ are each open in the norm
topology of $\mathcal{H}$, and $\mathfrak{S}^{+}_r , \mathfrak{S}^{-}_r$ are each
bijective and bi-Lipschitz. As a consequence, the composite maps
\begin{equation}
\mathfrak{S}_r^{+} \circ \left( \mathfrak{S}_r^{-} \right)^{-1} \; : \;
\mathfrak{U}_r^{-} \rightarrow \mathfrak{U}_r^{+}
\end{equation} 
\begin{equation}
\mathfrak{S}_r^{-} \circ \left( \mathfrak{S}_r^{+} \right)^{-1} \; : \;
\mathfrak{U}_r^{+} \rightarrow \mathfrak{U}_r^{-}
\end{equation} 
are bijective and bi-Lipschitz.
\end{theorem}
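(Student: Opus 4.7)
The plan is to construct the inverse of $\mathfrak{S}^{+}_r$ explicitly as a backward-in-time solution map (a ``wave operator''), by running the contraction argument of Theorem \ref{thm:abswp} starting from the terminal data $x_\infty$ at $t=+\infty$ instead of the initial data $x_0$ at $t=0$. All three claims (openness, bijectivity, and the bi-Lipschitz property) will then follow by combining this wave operator with the multilinear Lipschitz estimate already provided by Lemma \ref{lem:abswplem}. The case of $\mathfrak{S}^{-}_r$ is identical by time reversal, and the bi-Lipschitz conclusions for the composites $\mathfrak{S}^{+}_r \circ (\mathfrak{S}^{-}_r)^{-1}$ reduce to the bi-Lipschitz property of each $\mathfrak{S}^{\pm}_r$ separately.

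First I would derive two-sided Lipschitz bounds for $\mathfrak{S}^{+}_r$. From the integral representation
\begin{equation*}
\mathfrak{S}^{+}_r (x_0) - x_0 = \int_0^{+\infty} \mathcal{A}\bigl(\sigma, [\mathcal{S}(x_0)](\sigma), \dots, [\mathcal{S}(x_0)](\sigma)\bigr)\, d\sigma,
\end{equation*}
which is meaningful by (\ref{eq:absest}), I would telescope the difference of nonlinear terms and invoke Lemma \ref{lem:abswplem} together with the Lipschitz dependence of $\mathcal{S}(x_0)$ on $x_0$ already encoded in the contraction argument for Theorem \ref{thm:abswp}, obtaining
\begin{equation*}
\bigl\Vert \mathfrak{S}^{+}_r(x_0) - \mathfrak{S}^{+}_r(y_0) - (x_0 - y_0) \bigr\Vert_{\mathcal{H}} \leq C(k, C_0)\, r^{k-1}\, \Vert x_0 - y_0 \Vert_{\mathcal{H}}
\end{equation*}
for all $x_0, y_0 \in B^{\mathcal{H}}_r(0)$. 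Choosing $r$ below a first threshold $r_1 = r_1(k, C_0)$, the prefactor $C(k,C_0)r^{k-1}$ is at most $\tfrac{1}{2}$, and the triangle inequality immediately gives $\tfrac{1}{2}\Vert x_0 - y_0\Vert_{\mathcal{H}} \leq \Vert \mathfrak{S}^{+}_r(x_0) - \mathfrak{S}^{+}_r(y_0)\Vert_{\mathcal{H}} \leq \tfrac{3}{2}\Vert x_0 - y_0\Vert_{\mathcal{H}}$. The lower bound yields injectivity of $\mathfrak{S}^{+}_r$ and a Lipschitz inverse on its image; combined with the upper bound, this is the desired bi-Lipschitz property.

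Next, for openness of $\mathfrak{U}^{+}_r$, I would construct a wave operator $\mathfrak{W}^{+}$ on a ball of scattering states. Fix a threshold $\rho > 0$; for each $y_\infty \in B^{\mathcal{H}}_\rho(0)$, define
\begin{equation*}
\bigl[\mathfrak{F}^{+}_{y_\infty}(y(\cdot))\bigr](t) = y_\infty - \int_t^{+\infty} \mathcal{A}\bigl(\sigma, y(\sigma), \dots, y(\sigma)\bigr)\, d\sigma
\end{equation*}
on a closed ball in $W^{1,1}((0,\infty), \mathcal{H})$ carrying the norm introduced in the proof of Theorem \ref{thm:abswp}. The $L^1_t \mathcal{H}$ estimate of Lemma \ref{lem:abswplem} makes no use of boundedness of the time interval, so the very same Banach fixed-point argument closes here: for $\rho$ below a second threshold $\rho_0 = \rho_0(k, C_0)$, a unique fixed point $y(\cdot)$ exists, and $\mathfrak{W}^{+}(y_\infty) := y(0)$ defines a Lipschitz map $B^{\mathcal{H}}_\rho(0) \to \mathcal{H}$ by the standard stability of Banach fixed points. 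By the forward uniqueness of Theorem \ref{thm:abswp}, $\mathfrak{W}^{+} \circ \mathfrak{S}^{+}_r$ is the identity on $B^{\mathcal{H}}_r(0)$ provided $\mathfrak{U}^{+}_r \subset B^{\mathcal{H}}_\rho(0)$, which holds because the upper Lipschitz bound places $\mathfrak{U}^{+}_r$ inside $B^{\mathcal{H}}_{(3/2)r}(0)$. Consequently
\begin{equation*}
\mathfrak{U}^{+}_r = (\mathfrak{W}^{+})^{-1}\bigl(B^{\mathcal{H}}_r(0)\bigr) \cap B^{\mathcal{H}}_\rho(0),
\end{equation*}
which is open as the preimage of an open set under the continuous map $\mathfrak{W}^{+}$, intersected with an open set.

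The main obstacle is purely quantitative bookkeeping: one must choose a single $r_0 = r_0(k, C_0)$ so that, for all $r < r_0$, one has simultaneously (i) the forward solution map of Theorem \ref{thm:abswp} defined on $B^{\mathcal{H}}_r(0)$, (ii) the perturbation estimate of the first step closing with constant at most $\tfrac{1}{2}$, and (iii) the backward contraction closing on a ball of radius at least $\tfrac{3}{2}r$ so that $\mathfrak{W}^{+}$ is defined on an open neighborhood of all of $\mathfrak{U}^{+}_r$. Each of these conditions has the form ``$C(k, C_0)\, r^{k-1} \leq \mathrm{const}$,'' so a sufficiently small universal $r_0$ can be extracted by combining the constants. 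Once this is done, $\mathfrak{U}^{+}_r$ is open, $\mathfrak{S}^{+}_r$ is a bi-Lipschitz bijection onto $\mathfrak{U}^{+}_r$ with $(\mathfrak{S}^{+}_r)^{-1} = \mathfrak{W}^{+}\big|_{\mathfrak{U}^{+}_r}$, and the composite statements follow.
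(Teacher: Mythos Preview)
Your proposal is correct and follows essentially the same approach as the paper: both obtain the bi-Lipschitz property from the multilinear estimate of Lemma \ref{lem:abswplem} applied to the difference of two solutions, and both establish openness of the image by running the fixed-point argument of Theorem \ref{thm:abswp} backward from terminal (scattering) data. The only organizational difference is that you frame the bi-Lipschitz estimate as ``$\mathfrak{S}^+_r$ is a small Lipschitz perturbation of the identity,'' whereas the paper derives the lower Lipschitz bound $\Vert x_0 - y_0\Vert_{\mathcal{H}} \leq 2\Vert x_{+\infty} - y_{+\infty}\Vert_{\mathcal{H}}$ by estimating at finite time $T$ and letting $T\to\infty$; these are equivalent rearrangements of the same inequality.
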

\begin{proof} \emph{(Sketch.)} The key estimate states that $x_0$ expresses a Lipschitz depencence on
$x_{+\infty} = \lim_{t \rightarrow +\infty} \left[ \mathcal{S} (x_0) \right] (t)$,
at least within sufficiently small neighborhoods of $0 \in \mathcal{H}$.

Let $T>0$ and consider the solution $x(t) = \left[ \mathcal{S} (x_0) \right] (t)$ for
$t \in (0,T)$. As long as $\left\Vert x_0 \right\Vert_{\mathcal{H}}$
 is sufficiently small (depending only on $k, C_0$), we can guarantee that
$\left\Vert x(T) \right\Vert_{\mathcal{H}} < M$, so that Theorem \ref{thm:abswp} can be applied
\emph{backwards} in time with data $x(T)$. Considering two solutions 
$x(t) = \left[ \mathcal{S} (x_0) \right] (t)$, $y(t) = \left[ \mathcal{S} (y_0) \right] (t)$,
we can apply this procedure to each of them and derive the following identity:
\begin{equation*}
\begin{aligned}
& x(t) - y(t) = x(T) - y(T) - \int_t^T \mathcal{A} \left( \sigma, x(\sigma), \dots, x(\sigma) \right) d\sigma
+ \\
& \qquad \qquad \qquad \qquad \qquad \qquad \qquad
 + \int_t^T \mathcal{A} \left( \sigma, y(\sigma), \dots, y(\sigma) \right) d\sigma
\end{aligned}
\end{equation*}
Hence, using the norms defined in the proof of Theorem \ref{thm:abswp}, along with Lemma
\ref{lem:abswplem}, we have:
\begin{equation}
\begin{aligned}
& \left\Vert x(t) - y(t) \right\Vert_{\mathcal{W}^{1,1} \left( (0,T), \mathcal{H} \right)}
\leq \left\Vert x(T) - y(T) \right\Vert_{\mathcal{H}} + \\
& + 2 k (1+k) C_0
\left(\sum_{z \in \left\{ x, y \right\} }
\left\Vert z(t) \right\Vert_{\mathcal{W}^{1,1} \left( (0,T), \mathcal{H} \right)} \right)^{k-1}
\left\Vert x(t) - y(t) \right\Vert_{\mathcal{W}^{1,1} \left( (0,T), \mathcal{H} \right)}
\end{aligned}
\end{equation}
In view of the statement and proof of Theorem \ref{thm:abswp}, under the above assumptions
we can deduce the quantitative estimate,
\begin{equation}
\left\Vert x(t) - y(t) \right\Vert_{\mathcal{W}^{1,1} \left( (0,T), \mathcal{H} \right)}
\leq 2 \left\Vert x(T) - y(T) \right\Vert_{\mathcal{H}}
\end{equation}
as long as $\left\Vert x_0 \right\Vert_{\mathcal{H}}, \left\Vert y_0 \right\Vert_{\mathcal{H}}$ are
sufficiently small (depending on only $k,C_0$). This immediately implies
\begin{equation}
\left\Vert x_0 - y_0 \right\Vert_{\mathcal{H}} \leq 2
\left\Vert x(T) - y(T) \right\Vert_{\mathcal{H}}
\end{equation}
Taking strong limits in $\mathcal{H}$ as $T \rightarrow +\infty$, we obtain
\begin{equation}
\left\Vert x_0 - y_0 \right\Vert_{\mathcal{H}}
\leq 2 \left\Vert x_{+\infty} - y_{+\infty} \right\Vert_{\mathcal{H}}
\end{equation}
which is the desired Lipschitz estimate.

The last claim is the following: for all $q \in ( 0 , M) $,
$\mathfrak{S}^{\pm}_q \left[ B_q^{\mathcal{H}} (0) \right]$ contains a neighborhood of
$0 \in \mathcal{H}$. This is routine to check by adapting the proof of Theorem \ref{thm:abswp}.
\end{proof}

\begin{remark}
\label{rem:locwp}
If, instead of the ``critical'' estimate (\ref{eq:Abd}), $\mathcal{A}$ satisfies a
``subcritical'' estimate of the form
\begin{equation}
\label{eq:spacetimeLp}
\left\Vert \mathcal{A} (t,x_1, \dots, x_k)\right\Vert_{L^p_t \mathcal{H}} \leq
\tilde{C} \prod_{j=1}^k \left\Vert x_j \right\Vert_{\mathcal{H}} \qquad \qquad
x_1,\dots,x_k \in \mathcal{H} 
\end{equation}
for some $p > 1$,
then we can always convert $\mathcal{A}$ into a form suitable for the application of
Theorem \ref{thm:abswp} by multiplying $\mathcal{A}$ by a bump function in time which
is equal to one on an interval $[0,T]$. In that case, the constant $C_0$ in the
theorem would be $C_0 \approx \tilde{C} T^{\frac{1}{p^\prime}}$, so that the allowable size of the
data tends to infinity as $T$ tends to zero. Hence, Theorem \ref{thm:abswp} can be used to
prove a wide range of local well-posedness results in the large for the
strictly scaling-subcritical case.
\end{remark}

\begin{remark}
There is a version of Theorem \ref{thm:abswp} when $k=1$, i.e. linear equations, but only
if $C_0 < \frac{1}{4}$.
\end{remark}

\begin{remark}
Local well-posedness for arbitrary $x_0 \in \mathcal{H}$ is \emph{\textbf{not}} recovered under the
sole assumption (\ref{eq:Abd}); this is because the equation for $\tilde{x} (t) = x(t) - x_0$
contains linear terms, and we can only solve the linear case when $C_0 < \frac{1}{4}$ per the
previous remark. (The forcing term $\mathcal{A} (t,x_0,\dots,x_0)$
 can always be made negligible, for fixed $x_0$, by
localizing to a small time interval depending on $x_0$.) 
If, for any $T > 0$ and any $x_0 \in \mathcal{H}$,
 estimates of the following form are satisfied for open intervals 
$I \subset \left( -T, T \right)$,
\begin{equation*}
\limsup_{\delta \rightarrow 0^+}
\sup_{I \subset \left( -T, T \right) \; : \; |I| \leq \delta}
\sup_{y_0 \in \mathcal{H} \backslash \left\{ 0 \right\}}
\frac{1}{\left\Vert y_0 \right\Vert_{\mathcal{H}}}
\left\Vert \mathcal{A} \left( t, y_0, x_0, \dots, x_0 \right)
\right\Vert_{L^1_{t \in I} \mathcal{H}} = 0
\end{equation*}
(and similarly for the other entries of $\mathcal{A}$), then large data LWP can be recovered in the
limited sense that the time of existence depends on $x_0 \in \mathcal{H}$ instead
of $\left\Vert x_0 \right\Vert_{\mathcal{H}}$.
\end{remark}

\section{Example: Cubic NLS in $d=2$}
\label{sec:cubNLSd2}

In this section we illustrate how Theorem  \ref{thm:abswp}
can be used to recover small data global well-posedness and scattering for 
the $L^2$ critical nonlinear Schr\"{o}dinger equation in spatial dimension $d=2$. 
Furthermore, we illustrate an approach to study propagation of regularity for the same equation.
Although these results themselves are well known, we illustrate how they can be recovered using the 
tools of Section \ref{sec:LWPthm}. This will form a footprint for our study of the Boltzmann equation in subsequent sections. 

Consider the  nonlinear Schr\"{o}dinger equation (NLS)
\begin{equation}
\label{eq:cbnlsd2}
\left( i \partial_t + \Delta \right) \varphi = |\varphi|^2 \varphi \qquad \quad
\varphi (t,x) : \mathbb{R} \times \mathbb{R}^2 \rightarrow \mathbb{C}
\end{equation}
where $\Delta \equiv \Delta_x$ and $\varphi (0, x) = \varphi_0 (x) \in L^2 (\mathbb{R}^2 )$. The
nonlinearity can be written $\varphi \overline{\varphi} \varphi$, so it is either linear
or conjugate linear in each entry.

\subsection{Small data global existence and scattering} 
We wish to solve this equation for \emph{small data} $\varphi_0 (x)$
 in the scaling-critical space $L^2 (\mathbb{R}^2)$. We pont out that the method
as formulated in the statement of Theorem \ref{thm:abswp}
 yields \emph{no conclusion} for (\ref{eq:cbnlsd2})
given initial data outside a small ball of the origin in $L^2$;
 this is expected due to the fact that
(\ref{eq:cbnlsd2}) is $L^2$-critical with respect to scaling.

We impose the unitary change of variables
\begin{equation}
\psi (t) = e^{- i t \Delta} \varphi (t) 
\end{equation}
which implies $\psi (0) = \varphi (0) = \varphi_0$ and
\begin{equation}
\partial_t \psi (t) = -i e^{-i t \Delta} g \left( e^{i t \Delta} \psi \right)
\end{equation}
where $g (u) = u \overline{u} u$. Let us define the more general nonlinearity
$$g (u,v,w) = u \overline{v} w$$ 
and estimate for given $u_0, v_0, w_0 \in L^2 \left( \mathbb{R}^2 \right)$:
\begin{equation}
\begin{aligned}
& \left\Vert e^{-i t \Delta} g \left( e^{i t \Delta} u_0,
e^{i t \Delta} v_0, e^{i t \Delta} w_0 \right) \right\Vert_{L^1_t L^2_x} = 
\left\Vert  g \left( e^{i t \Delta} u_0,
e^{i t \Delta} v_0, e^{i t \Delta} w_0 \right) \right\Vert_{L^1_t L^2_x} \\
& \qquad \qquad =  \left\Vert  \left( e^{i t \Delta} u_0 \right)
\left( \overline{e^{i t \Delta} v_0} \right)
\left(  e^{i t \Delta} w_0 \right) \right\Vert_{L^1_t L^2_x} \\
& \qquad \qquad \leq \left\Vert e^{i t \Delta} u_0 \right\Vert_{L^3_t L^6_x}
\left\Vert e^{i t \Delta} v_0 \right\Vert_{L^3_t L^6_x}
\left\Vert e^{i t \Delta} w_0 \right\Vert_{L^3_t L^6_x} \\
& \qquad \qquad \leq C \left\Vert u_0 \right\Vert_{L^2_x} \left\Vert v_0 \right\Vert_{L^2_x}
\left\Vert w_0 \right\Vert_{L^2_x}
\end{aligned}
\end{equation} 
We have used unitarity, the H{\" o}lder, and the Strichartz estimates, in that order. In other words, we have shown:
\begin{equation}
\label{eq:cubNLSd2L2}
\left\Vert  e^{-i t \Delta} g \left( e^{i t \Delta} u_0,
e^{i t \Delta} v_0, e^{i t \Delta} w_0 \right) \right\Vert_{L^1_t L^2_x} \leq
C \left\Vert u_0 \right\Vert_{L^2_x} \left\Vert v_0 \right\Vert_{L^2_x}
\left\Vert w_0 \right\Vert_{L^2_x}
\end{equation}
Applying Theorem \ref{thm:abswp} with
\begin{equation}
\label{eq:cubNLSd2A}
\mathcal{A} (t, u_0, v_0, w_0) = 
-i e^{-i t \Delta} g \left( e^{i t \Delta} u_0,
e^{i t \Delta} v_0, e^{i t \Delta} w_0 \right)
\end{equation}
 we find that solutions of (\ref{eq:cbnlsd2}) are globally well-posed and scatter, as long as the
data $\varphi_0 \in L^2 \left( \mathbb{R}^2 \right)$ has sufficiently small norm.
 Theorem \ref{thm:abswp} guarantees that, at the very least,
 uniqueness of \emph{small} solutions holds within the class of \emph{all} mild solutions 
 satisfying the bound $g(\varphi) \in L^1_{t \in [0,T]} L^2_x$;
 this uniqueness criterion can be equivalently written
$\varphi \in L^3_{t \in [0,T]} L^6_x$ by definition of $g$.

\begin{theorem}
There exists a number $\eta > 0$ such that, for any
$\varphi_0 \in L^2 \left( \mathbb{R}^2 \right)$ satisfying
$$
\left\Vert \varphi_0 \right\Vert_{L^2 \left( \mathbb{R}^2 \right)} < \eta
$$
it follows that equation (\ref{eq:cbnlsd2}) has a global solution which scatters in
$L^2 \left( \mathbb{R}^2 \right)$. The solution is unique in the class of
all $L^2$ mild solutions for which $\varphi \in L^3_{t,\textnormal{loc}} L^6_x$.
\end{theorem}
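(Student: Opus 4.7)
The plan is to apply Theorem \ref{thm:abswp} to the profile equation obtained by the unitary change of variables $\psi(t) = e^{-it\Delta}\varphi(t)$, which transforms (\ref{eq:cbnlsd2}) into the fixed-point problem governed by the trilinear operator $\mathcal{A}$ of (\ref{eq:cubNLSd2A}). The key input is the trilinear spacetime estimate (\ref{eq:cubNLSd2L2}), which has already been established via the (non-endpoint) $L^3_t L^6_x$ Strichartz pair followed by H\"older's inequality. Setting $\mathcal{H} = L^2(\mathbb{R}^2)$ and $k = 3$, this estimate is exactly the hypothesis (\ref{eq:Abd}) of Theorem \ref{thm:abswp}, so for $\|\varphi_0\|_{L^2}$ below the threshold $M = (4^3 \cdot 3 \cdot 4 \cdot C_0)^{-1/2}$ furnished by that theorem, I obtain a unique $\psi \in \bigcap_{T > 0} W^{1,1}((0,T), L^2)$ solving the associated integral equation; take $\eta = M$.

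Undoing the change of variables yields $\varphi(t) = e^{it\Delta}\psi(t) \in C([0,\infty), L^2)$, which by unitarity has conserved $L^2$ norm and satisfies the mild (Duhamel) form of (\ref{eq:cbnlsd2}). Scattering in $L^2$ as $t \to +\infty$ is equivalent to strong convergence of $\psi(t)$ in $L^2$, which is part of the conclusion of Theorem \ref{thm:abswp} and is quantified by (\ref{eq:absest}); the scattering datum is $\varphi_{+\infty} = \lim_{t \to +\infty} e^{-it\Delta}\varphi(t) = \psi_{+\infty}$.

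For the uniqueness clause, I must transfer uniqueness from the class $\psi \in W^{1,1}_{t,\mathrm{loc}} L^2_x$ of Theorem \ref{thm:abswp} to the class $\varphi \in L^3_{t,\mathrm{loc}} L^6_x$ stated in the theorem. Given any mild $L^2$ solution $\varphi$ with $\varphi \in L^3_{t \in [0,T]} L^6_x$ for every $T > 0$, set $\psi = e^{-it\Delta}\varphi$. Unitarity gives $\psi \in L^\infty_{t \in [0,T]} L^2_x$, while applying H\"older directly to the nonlinearity in the form $\varphi \cdot \overline{\varphi} \cdot \varphi$ yields $|\varphi|^2 \varphi \in L^1_{t \in [0,T]} L^2_x$; hence $\partial_t \psi = -i e^{-it\Delta}(|\varphi|^2 \varphi) \in L^1_{t \in [0,T]} L^2_x$ by unitarity again. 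Thus $\psi \in W^{1,1}((0,T), L^2_x)$ for every $T$, and the uniqueness clause of Theorem \ref{thm:abswp} forces coincidence with the solution constructed above.

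The step I expect to be least routine is precisely this last matching of uniqueness classes: the abstract theorem delivers uniqueness in a $W^{1,1}_t \mathcal{H}$ class, whereas the natural uniqueness class at critical regularity for cubic NLS is the Strichartz class $L^3_t L^6_x$. The technical obstacle is that one cannot simply invoke Strichartz on the inputs of $\mathcal{A}$ (that would be circular, since we do not yet know the candidate $\psi$ scatters); the resolution is to observe that three copies of $\varphi \in L^3_{t,\mathrm{loc}} L^6_x$ suffice on their own, via H\"older's inequality with exponents $(3,3,3) \to 1$ and $(6,6,6) \to 2$, to place $\partial_t \psi$ in $L^1_{t,\mathrm{loc}} L^2_x$. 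With this observation the argument closes, and the same transfer strategy will serve as a blueprint for the corresponding uniqueness transfer in the Boltzmann setting.
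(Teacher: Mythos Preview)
Your proof is correct and follows essentially the same approach as the paper: the change of variables $\psi = e^{-it\Delta}\varphi$, the trilinear estimate (\ref{eq:cubNLSd2L2}), and the application of Theorem \ref{thm:abswp} are exactly what the paper does in the discussion preceding the theorem statement. Your treatment of the uniqueness transfer is in fact more explicit than the paper's, which simply remarks that the uniqueness criterion $g(\varphi) \in L^1_{t,\textnormal{loc}} L^2_x$ is equivalent to $\varphi \in L^3_{t,\textnormal{loc}} L^6_x$ by definition of $g$; you spell out the H\"older step and the resulting $W^{1,1}$ membership of $\psi$, which is the right way to connect the Strichartz class to the abstract regularity class of Theorem \ref{thm:abswp}.
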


\begin{remark}
It is crucial to remember that the space $W^{1,1}$ (in time) appearing in Theorem \ref{thm:abswp} is
\emph{not} the usual Sobolev norm of the solution.
This is because we only have $W^{1,1}$ \emph{after} intertwining with the
free evolution. For this reason, to avoid confusion,
in practice
it is often better to use unitarity in order to state the uniqueness criterion in terms of an equivalent
estimate on the nonlinearity, cf. (\ref{eq:absest}).
\end{remark}

\subsection{Regularity.}
\label{ssec:cubNLSd2reg}

Regularity is a subtle question because it hides two \emph{separate} questions. 

\begin{itemize} 
\item The first, which is
easy to answer, is whether any $\varphi_0 \in H^1 \left( \mathbb{R}^2 \right)$, say, yields a global
solution when the $H^1$ norm is small enough. The answer is \emph{yes} because, by Leibniz' rule
and standard commutation formulae,
and $\mathcal{A}$ as in (\ref{eq:cubNLSd2A}), we have
\begin{equation}
\label{eq:cubNLSd2H1}
\left\Vert \mathcal{A} \left( t, u_0, v_0, w_0 \right) \right\Vert_{L^1_t H^1_x}
\leq \tilde{C} \left\Vert u_0 \right\Vert_{H^1_x}
\left\Vert v_0 \right\Vert_{H^1_x} \left\Vert w_0 \right\Vert_{H^1_x}
\end{equation}
Now as long as $\left\Vert
\varphi_0 \right\Vert_{H^1}$ is smaller than some number which depends explicitly on
$\tilde{C}$, the cubic NLS will have a global solution which scatters in 
$H^1 \left( \mathbb{R}^2 \right)$, as a
direct consequence of Theorem \ref{thm:abswp} and Theorem \ref{thm:absScattering}.

\item The second, more difficult, question is whether $H^1$ regularity is propagated for
smooth solutions which are \emph{only small in $L^2$}. This can be seen as a persistence of regularity
question, since we know that any small $L^2$ data will lead to a global $L^2$ solution.
The answer, perhaps surprisingly, is \emph{yes}, as we now show.

\end{itemize}

The key is to introduce a new norm, $H_\varepsilon^1$, parameterized by $\varepsilon \in (0,1]$,
which is equivalent to $H^1$ up to an $\varepsilon$-dependent factor, but tends to the $L^2$ norm
as $\varepsilon \rightarrow 0^+$. The goal is to prove a bound
of the form
\begin{equation}
\label{eq:cubNLSd2Heps}
\left\Vert \mathcal{A} \left( t, u_0, v_0, w_0 \right) \right\Vert_{L^1_t H^1_\varepsilon}
\leq \tilde{C} \left\Vert u_0 \right\Vert_{H^1_\varepsilon}
\left\Vert v_0 \right\Vert_{H^1_\varepsilon} \left\Vert w_0 \right\Vert_{H^1_\varepsilon}
\end{equation}
where the constant $\tilde{C}$ is independent of $\varepsilon$. Now as long as $\varphi_0 \in H^1$
has $L^2$ norm smaller than some constant depending explicitly on $\tilde{C}$ (\emph{not} the original $C$
from (\ref{eq:cubNLSd2L2})), we can pick a value of $\varepsilon$ depending on $\varphi_0$ so that
the $H_\varepsilon^1$ norm is small enough. The key here is that the constants appearing in
Theorems \ref{thm:abswp} and \ref{thm:absScattering} are quantitative.

The simplest norm which makes the above argument work seems to be the following one:
\begin{equation}
\label{eq:cubNLSd2Hepsno}
\left\Vert \varphi_0 \right\Vert_{H^1_\varepsilon}^2 =
\left\Vert \varphi_0 \right\Vert_{L^2}^2 +
\varepsilon^2 \left\Vert \varphi_0 \right\Vert_{\dot{H}^1}^2
\end{equation}
Now if $\left\Vert \varphi_0 \right\Vert_{L^2} < \eta$ and $\varphi_0 \in H^1$, then there exists
a value of $\varepsilon$ (depending explicitly on $\left\Vert \varphi_0 \right\Vert_{L^2}$
and $\left\Vert \varphi_0 \right\Vert_{\dot{H}^1}$) such that
$\left\Vert \varphi_0 \right\Vert_{H^1_\varepsilon} < \eta$. We have only to choose $\eta$ according
to the constant $\tilde{C}$ instead of the constant $C$; unfortunately, the ``gap'' between
$C$ and $\tilde{C}$ seems to be unrecoverable by this approach.

In order to establish (\ref{eq:cubNLSd2Heps}) for the norm (\ref{eq:cubNLSd2Hepsno}), we estimate
the $L^2$ and $\dot{H}^1$ norms separately, tracking the location of $\varepsilon$ throughout. 
The important observation is a power of $\varepsilon$ is always accompanied by a single derivative
on one of the factors ($u_0$, $v_0$ or $w_0$), while the remaining factors remain in $L^2$.
Thus we may estimate as follows, where $\lesssim$ allows an arbitrary constant which is
\emph{independent of $\varepsilon$}:
\begin{equation}
\begin{aligned}
& \left\Vert \mathcal{A} \left( t, u_0, v_0, w_0 \right) \right\Vert_{L^1_t H^1_\varepsilon} \\
& \qquad \lesssim
\left\Vert \mathcal{A} \left( t, u_0, v_0, w_0 \right) \right\Vert_{L^1_t L^2} +
\varepsilon \left\Vert \mathcal{A} \left( t, u_0, v_0, w_0 \right) \right\Vert_{L^1_t \dot{H}^1} \\
& \qquad \lesssim
\left\Vert u_0 \right\Vert_{L^2} \left\Vert v_0 \right\Vert_{L^2} \left\Vert w_0 \right\Vert_{L^2}
+ \varepsilon
\left\Vert u_0 \right\Vert_{\dot{H}^1} \left\Vert v_0 \right\Vert_{L^2} \left\Vert w_0 \right\Vert_{L^2}  \\
& \qquad \quad + \varepsilon
\left\Vert u_0 \right\Vert_{L^2} \left\Vert v_0 \right\Vert_{\dot{H}^1} \left\Vert w_0 \right\Vert_{L^2} +
\varepsilon
\left\Vert u_0 \right\Vert_{L^2} \left\Vert v_0 \right\Vert_{L^2} \left\Vert w_0 \right\Vert_{\dot{H}^1} \\
& \qquad \lesssim
\left\Vert u_0 \right\Vert_{H^1_\varepsilon} \left\Vert v_0 
\right\Vert_{H^1_\varepsilon} \left\Vert w_0 \right\Vert_{H^1_\varepsilon}
\end{aligned}
\end{equation}

As a result of this calculation, we can conclude the following:
\begin{theorem}
\label{thm:cubNLSH1}
There exists a number $\tilde{\eta} > 0$ such that all the following is true:

Let $\varphi_0 \in H^1 \left( \mathbb{R}^2 \right)$ be such that
$$
\left\Vert \varphi_0 \right\Vert_{L^2 \left( \mathbb{R}^2 \right)} < \tilde{\eta} \,.
$$
Then equation (\ref{eq:cbnlsd2}) has a global solution which scatters in
$H^1 \left( \mathbb{R}^2 \right)$. The solution is unique in the class of
all $L^2$ mild solutions for which $\varphi \in L^3_{t,\textnormal{loc}} L^6_x$.
\end{theorem}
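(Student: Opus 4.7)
The plan is to apply the abstract framework of Theorems \ref{thm:abswp} and \ref{thm:absScattering} with the separable Hilbert space $\mathcal{H} = H^1_\varepsilon(\mathbb{R}^2)$ (equipped with the inner product whose norm is given by (\ref{eq:cubNLSd2Hepsno})), where $\varepsilon \in (0,1]$ will be tuned to the particular datum $\varphi_0$. The key input is already in hand: the trilinear estimate (\ref{eq:cubNLSd2Heps}) for the map $\mathcal{A}$ defined in (\ref{eq:cubNLSd2A}), with a constant $\tilde{C}$ that is \emph{independent of $\varepsilon$}. Because of this uniformity, Theorem \ref{thm:abswp} applied in $\mathcal{H} = H^1_\varepsilon$ with $k=3$ produces a critical radius $M = M(\tilde{C}) > 0$, also independent of $\varepsilon$, such that any datum satisfying $\|\cdot\|_{H^1_\varepsilon} < M$ generates a global scattering solution.

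The tuning step would go as follows. Set $\tilde{\eta} = M/\sqrt{2}$ and suppose $\varphi_0 \in H^1$ satisfies $\|\varphi_0\|_{L^2} < \tilde{\eta}$. Choose $\varepsilon \in (0,1]$ small enough that
\[
\varepsilon \, \|\varphi_0\|_{\dot{H}^1} < \frac{M}{\sqrt{2}},
\]
which is possible because $\|\varphi_0\|_{\dot{H}^1}$ is merely \emph{finite}. Then (\ref{eq:cubNLSd2Hepsno}) gives $\|\varphi_0\|_{H^1_\varepsilon}^2 < M^2/2 + M^2/2 = M^2$, so Theorem \ref{thm:abswp} delivers a global solution $\varphi \in \bigcap_{T>0} W^{1,1}((0,T), H^1_\varepsilon)$. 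Since for any fixed $\varepsilon > 0$ the norm $\|\cdot\|_{H^1_\varepsilon}$ is equivalent to the standard $H^1$ norm, this is a global $H^1$ solution; scattering in $H^1$ then follows from Theorem \ref{thm:absScattering} applied in the same Hilbert space.

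Uniqueness within the declared class---$L^2$ mild solutions obeying $\varphi \in L^3_{t,\textnormal{loc}} L^6_x$---is inherited verbatim from the previously established $L^2$ well-posedness theorem, because the solution produced above is an $L^2$ mild solution with $\|\varphi_0\|_{L^2} < \tilde{\eta} < \eta$ whose Strichartz membership $\varphi \in L^3_{t,\textnormal{loc}} L^6_x$ is hypothesized on any competitor. The only point requiring genuine care---and really the heart of the matter---is that the threshold $\tilde{\eta}$ must be calibrated against $\tilde{C}$, the trilinear constant in $H^1_\varepsilon$, rather than the possibly smaller constant $C$ governing (\ref{eq:cubNLSd2L2}); this is the unavoidable ``gap'' flagged in the discussion preceding the theorem, and it is why the $H^1$-persistence statement imposes a strictly stronger smallness constraint than the raw $L^2$ existence theorem does. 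Beyond this calibration, no further obstacle appears: the proof is essentially a bookkeeping application of the abstract machinery together with the uniform-in-$\varepsilon$ estimate (\ref{eq:cubNLSd2Heps}) already derived.
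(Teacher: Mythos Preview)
Your proposal is correct and follows essentially the same approach as the paper: apply Theorem~\ref{thm:abswp} (and Theorem~\ref{thm:absScattering}) in $\mathcal{H}=H^1_\varepsilon$ using the $\varepsilon$-uniform trilinear bound (\ref{eq:cubNLSd2Heps}), then tune $\varepsilon$ so that the $H^1_\varepsilon$ norm of the given $H^1$ datum falls below the fixed threshold $M(\tilde{C})$. Your explicit choice $\tilde\eta=M/\sqrt{2}$ and the observation that $H^1_\varepsilon$ and $H^1$ are equivalent for fixed $\varepsilon$ merely make the paper's sketch more precise; the identification of the $C$--$\tilde C$ gap and the appeal to the $L^2$ uniqueness statement are likewise exactly what the paper intends.
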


\section{The gain-only Boltzmann equation}
\label{sec:dispest}


In this section, we focus on the gain-only Boltzmann equation.\footnote{The gain-only Boltzmann equation refers to the Boltzmann equation having the $Q^{+}$ term only.}  We employ the inverse Wigner transform which converts this kinetic equation into a hyperbolic Schr\"{o}dinger equation, a technique we explored in \cite{CDP2017,CDP-DCDS2019}. Subsequently,
we can prove a certain bilinear Strichartz estimate (stated in Proposition \ref{prop:gain-eq-bound}), 
based on which we can use Theorem \ref{thm:abswp} to establish small data global well-posedness for this hyperbolic Schr\"{o}dinger equation.
The bilinear Strichartz estimate is obtained from a certain bilinear estimate based on Lorentz spaces, and the validity of the endpoint Strichartz estimate 
for the hyperbolic Schr\"{o}dinger equation (which is crucial for our argument, since the endpoint Strchartz estimate fails on the kinetic side). 
However, once we obtain the bilinear Strichartz estimate on the dispersive side, we can convert it to a {\it bilinear} Strichartz estimate on the kinetic side, see
Proposition \ref{prop:kin-gain-bound}. Consequently, this proposition combined with Theorem \ref{thm:abswp} provide us with small data global well-posedness for the gain-only Boltzmann equation, which is the main result of this section.

Everything below only applies to the gain-only Boltzmann equation
with constant collision kernel in dimension $d=2$. 


\subsection{Hyperbolic Schr\"{o}dinger equation associated with the gain-only Boltzmann equation}
\label{ssec:gainL2}

We will require the Wigner transform, which we shall now define.
Given a function $f \in L^2_{x,v}$, the Wigner (or Wigner-Weyl) transformation is
defined by the following formula:
\begin{equation}
\gamma \left( x,x^\prime \right) = \int_{\mathbb{R}^d}
f \left( \frac{x+x^\prime}{2},v \right) e^{i v \cdot (x-x^\prime)} dv
\end{equation}
Up to a linear change of variables, this is equivalent to a partial Fourier transform
accounting for only the velocity variable. The inverse transformation is defined by:
\begin{equation}
f \left( x,v \right) = \frac{1}{(2\pi)^d}
\int_{\mathbb{R}^d} \gamma \left( x + \frac{y}{2}, x-\frac{y}{2} \right)
e^{-i v \cdot y} dy
\end{equation}
One of the main interests driving the use of the Wigner transform is that it converts
the free transport generator $-v \cdot \nabla_x$ into the hyperbolic Schr{\" o}dinger
generator $i \Delta_x - i \Delta_{x^\prime}$. Aside from being the starting point for
semiclassical limits (up to scaling), the Wigner transform allows for the transfor of
ideas from the literature of nonlinear Schr{\" o}dinger equations (NLS) into the
kinetic realm. For the present study, the big ideas which we wish to adapt are largely
related to $X^{s,b}$ spaces (also known as Bourgain spaces),
 which are well-studied for NLS and hyperbolic-NLS,
but have not been fully utilized in the kinetic theory literature. We note that the
spaces used in this paper are not actually  Bourgain spaces, but rather, they are
scale-invariant spaces inspired by Bourgain spaces. (See Section \ref{sec:LWPthm}.)

In our situation, namely the Boltzmann equation with constant collision kernel in $d=2$,
$L^2_{x,x^\prime}$ is a scaling critical space for $\gamma$, and corresponds
to $L^2_{x,v}$ for $f$.

\begin{remark}
The use of the Wigner transform is \emph{necessary} for the type of proof used
here. Indeed, if one were to execute the corresponding steps on the kinetic side
(and thereby produce the needed bilinear bound for $Q^+$ acting on the freely transported
solution),
the proof would \emph{fail} because the endpoint kinetic Strichartz estimates are
false in all dimensions. \cite{Bennettetal2014} By contrast, we will be using
the usual endpoint Strichartz estimates for the free hyperbolic Schr{\" o}dinger equation
 in $d=4$ (note the dimension doubling!), which are indeed \emph{true} by Keel-Tao, \cite{KT1998}.
\end{remark}

We use the notation $\eta_\Vert = P_\omega \eta$ and 
$\eta_\bot = \eta - P_\omega \eta$.

\begin{equation}
Q^+ (f,g) (v) = \int_{\mathbb{S}^1} d\omega \int_{\mathbb{R}^2} du
f ( v^*) g (u^*)
\end{equation}

\begin{equation}
Q^- (f,g) (v) = \int_{\mathbb{S}^1} d\omega \int_{\mathbb{R}^2} du
f ( v) g (u)
\end{equation}

\begin{equation}
\left( Q^+ (f,g) \right)^\wedge (\eta)
= \int_{\mathbb{S}^1} d\omega
\hat{f} \left( \eta_\bot \right)
\hat{g} \left( \eta_\Vert \right)
\end{equation}

The Wigner transform of the Boltzmann gain operator $Q^+$ is
\begin{equation}
\begin{aligned}
& B^+ (\gamma_1, \gamma_2) (x,x^\prime) = i \int_{\mathbb{S}^1} d\omega \times \\
& \times \gamma_1 \left( x - \frac{1}{2} P_\omega \left( x-x^\prime \right),
x^\prime + \frac{1}{2} P_\omega \left( x-x^\prime \right) \right) \times \\
& \times \gamma_2 \left( \frac{x+x^\prime}{2} + \frac{1}{2}
P_\omega \left( x-x^\prime \right), \frac{x+x^\prime}{2} -
\frac{1}{2} P_\omega \left( x-x^\prime \right) \right)
\end{aligned}
\end{equation}

\begin{theorem}
\label{thm:gain-eq-gwp}
For any $\gamma_0 \in L^2_{x,x^\prime} \left( \mathbb{R}^2 \times \mathbb{R}^2 \right)$
 with sufficiently small
$L^2_{x,x^\prime}$ norm, there exists a unique global mild solution to the equation
\begin{equation}
\left( i \partial_t + \Delta_x - \Delta_{x^\prime} \right) \gamma (t)
= B^+ \left( \gamma(t), \gamma(t) \right)
\end{equation}
with $\gamma (0) = \gamma_0$
such that $\gamma \in C_t L^2_{x,x^\prime}$ and  $B^+ \left( \gamma , \gamma \right) \in
L^1_{t,\textnormal{loc}} L^2_{x,x^\prime}$. For this solution, it holds that
$\gamma \in L^\infty_{t \in \mathbb{R}} L^2_{x,x^\prime}$ and
$B^+ (\gamma,\gamma) \in L^1_{t \in \mathbb{R}}
L^2_{x,x^\prime}$, and the solution scatters in $L^2_{x,x^\prime}$
as $t \rightarrow \pm \infty$.
\end{theorem}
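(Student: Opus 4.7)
The plan is to reduce Theorem \ref{thm:gain-eq-gwp} to a direct application of Theorem \ref{thm:abswp} with Hilbert space $\mathcal{H} = L^2_{x,x'}(\mathbb{R}^2 \times \mathbb{R}^2)$ and multilinearity degree $k = 2$. Writing $U(t) = e^{it(\Delta_x - \Delta_{x'})}$ for the unitary group generated by the hyperbolic Schr\"{o}dinger operator on $\mathbb{R}^{2+2}$, I would pass to the interaction-picture variable $\tilde{\gamma}(t) = U(-t)\gamma(t)$ and define the symmetric bilinear map
\begin{equation*}
\mathcal{A}(t, \gamma_1, \gamma_2) = -i\, U(-t)\, B^+\!\left(U(t)\gamma_1,\, U(t)\gamma_2\right).
\end{equation*}
By the $L^2_{x,x'}$-unitarity of $U(t)$, verifying the hypothesis (\ref{eq:Abd}) reduces to the scaling-critical bilinear spacetime estimate
\begin{equation*}
\left\| B^+\!\left(U(t)\gamma_1,\, U(t)\gamma_2\right) \right\|_{L^1_t L^2_{x,x'}} \lesssim \|\gamma_1\|_{L^2_{x,x'}} \|\gamma_2\|_{L^2_{x,x'}}
\end{equation*}
announced at the opening of Section \ref{sec:dispest}. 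Granted this bound, Theorem \ref{thm:abswp} yields the unique small-data global mild solution $\tilde{\gamma} \in \bigcap_{T>0} W^{1,1}((0,T), L^2_{x,x'})$ together with $B^+(\gamma,\gamma) \in L^1_{t \in \mathbb{R}} L^2_{x,x'}$, while Theorem \ref{thm:absScattering} supplies $L^2_{x,x'}$-scattering as $t \to \pm\infty$.

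For the bilinear spacetime estimate I would proceed in two stages. First, apply the endpoint Strichartz inequality of Keel--Tao \cite{KT1998} to the Schr\"{o}dinger-type operator $i\partial_t + \Delta_x - \Delta_{x'}$ on $\mathbb{R}^4$, yielding
\begin{equation*}
\|U(t)\gamma_j\|_{L^2_t L^4_{x,x'}} \lesssim \|\gamma_j\|_{L^2_{x,x'}};
\end{equation*}
the dispersive bound $\|U(t)\|_{L^1 \to L^\infty} \lesssim |t|^{-2}$ factorizes as the product of two $d=2$ Schr\"{o}dinger kernels, so the genuine four-dimensional endpoint is available, and (as the authors emphasize, citing \cite{Bennettetal2014}) no kinetic analogue exists, which is precisely why the Wigner detour is mandatory. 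Second, I would exploit the geometric structure of $B^+$: its $\omega$-fixed integrand is a pointwise product of $\gamma_1$ and $\gamma_2$ each composed with an affine map of $(x,x')$ coming from the collisional change of variables, so H\"{o}lder gives an $\omega$-uniform bound of the form $\|U(t)\gamma_1\|_{L^2_t L^4} \|U(t)\gamma_2\|_{L^2_t L^4}$. The final $\int_{\mathbb{S}^1} d\omega$ is then absorbed by a Lorentz-space refinement of Strichartz, $\|U(t)\gamma_j\|_{L^2_t L^{4,2}_{x,x'}} \lesssim \|\gamma_j\|_{L^2_{x,x'}}$, together with the bilinear embedding $L^{4,2} \cdot L^{4,2} \hookrightarrow L^{2,1} \hookrightarrow L^2$, which supplies exactly the extra Lorentz summability needed to commute $\int_{\mathbb{S}^1}$ through the spatial norm.

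The main technical obstacle is precisely this $\omega$-integration at the scaling-critical exponent. Because the projections $P_\omega(x-x')$ couple the sampling loci of $\gamma_1$ and $\gamma_2$ in an $\omega$-dependent way (and, for the $\gamma_2$ argument, even send $(x,x')$ into a degenerate $3$-dimensional subvariety of $\mathbb{R}^4$), a naive Minkowski exchange through the $L^2_{x,x'}$ norm either loses the critical scaling or generates an uncontrolled logarithm. The Lorentz-upgrade device appears to be the only way to close the estimate while preserving scaling invariance, and it is available only because Keel--Tao furnishes the \emph{endpoint} $(q,r) = (2,4)$ in dimension four rather than some off-endpoint substitute. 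Everything else in the theorem then follows mechanically from Theorems \ref{thm:abswp} and \ref{thm:absScattering} applied to $\mathcal{A}$: the mild formulation of the integral equation, uniqueness within the stated integrability class, the global $L^\infty_t L^2_{x,x'}$ bound, and scattering in both time directions.
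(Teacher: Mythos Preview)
Your overall strategy is correct and coincides with the paper's: pass to the interaction picture, set $\mathcal{A}(t,\gamma_1,\gamma_2)=-iU(-t)B^+(U(t)\gamma_1,U(t)\gamma_2)$, and feed the bilinear spacetime bound into Theorem~\ref{thm:abswp}. (Scattering already comes from (\ref{eq:absest}); Theorem~\ref{thm:absScattering} is not needed.) You also correctly identify that the endpoint Strichartz estimate must be taken in the Lorentz-refined form $\|e^{it\Delta_\pm}\gamma\|_{L^2_t L^{4,2}_{x,x'}}\lesssim\|\gamma\|_{L^2}$.

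Where your sketch goes off is the mechanism for the fixed-time bound $\|B^+(\gamma_1,\gamma_2)\|_{L^2_{x,x'}}\lesssim\|\gamma_1\|_{L^{4,2}}\|\gamma_2\|_{L^{4,2}}$. The embedding $L^{4,2}\cdot L^{4,2}\hookrightarrow L^{2,1}\hookrightarrow L^2$ is a statement about \emph{pointwise} products, but the $\omega$-integrand of $B^+$ is not a pointwise product in $(x,x')$; as you yourself observe, the $\gamma_2$ factor is sampled on a degenerate three-dimensional set, so no H\"older inequality (Lebesgue or Lorentz) applies uniformly in $\omega$. The paper's route is different: in the variables $w=\tfrac{x+x'}{2}$, $z=\tfrac{x-x'}{2}$ one has, on the Fourier side in $z$, the factorized formula $\widehat{Q^+}(\eta)=\int_{\mathbb{S}^1}\hat f(\eta_\perp)\hat g(\eta_\parallel)\,d\omega$. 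Minkowski, then Fubini (the $\eta_\perp,\eta_\parallel$ factors decouple for fixed $\omega$), then Cauchy--Schwarz in $\omega$, and the polar-coordinate identity $\|\hat f(\eta_\perp)\|_{L^2_\omega L^2_{\eta_\perp}}^2=C\int|\eta|^{-1}|\hat f(\eta)|^2\,d\eta$ reduce matters to the Hardy-type inequality $\||\eta|^{-1/2}\hat f\|_{L^2_\eta}\lesssim\|\hat f\|_{L^{4,2}_\eta}$, which holds because $|\eta|^{-1}\in L^{2,\infty}(\mathbb{R}^2)$. That is where $L^{4,2}$ actually enters. A separate embedding $L^{4,2}_{w,z}\subset L^4_w L^{4,2}_z$ then handles the $w$-integration. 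So the Lorentz upgrade is indeed the key, but it absorbs a singular \emph{weight} arising from the $\omega$-integral, not a bilinear product.
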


Theorem \ref{thm:gain-eq-gwp} follows from
Theorem \ref{thm:abswp} along with
 the following estimate for the
gain term $B^+$:
\begin{proposition}
\label{prop:gain-eq-bound}
There is a constant $C > 0$ such that for any
$\gamma_{0,1}, \gamma_{0,2} \in L^2_{x,x^\prime} \left( \mathbb{R}^2 \times \mathbb{R}^2 \right)$,
\begin{equation}
\left\Vert B^+ \left( 
e^{i t \Delta_{\pm}} \gamma_{0,1},
e^{i t \Delta_{\pm}} \gamma_{0,2} \right)
\right\Vert_{L^1_t L^2_{x,x^\prime} \left( \mathbb{R} \times \mathbb{R}^2
\times \mathbb{R}^2 \right) }
\leq C \prod_{i=1,2}
\left\Vert \gamma_{0,i} \right\Vert_{L^2_{x,x^\prime}
\left( \mathbb{R}^2 \times \mathbb{R}^2 \right)}
\end{equation}
where $\Delta_{\pm} = \Delta_x - \Delta_{x^\prime}$.
\end{proposition}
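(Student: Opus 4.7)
The plan is to establish the bilinear estimate by working on the Wigner (dispersive) side of the problem, where the Keel-Tao endpoint Strichartz inequality is available for the free hyperbolic Schr\"odinger evolution $e^{it\Delta_\pm}$ on the doubled space $\mathbb{R}^4 = \mathbb{R}^2_x \times \mathbb{R}^2_{x'}$. Throughout write $\tilde\gamma_i(t) = e^{it\Delta_\pm}\gamma_{0,i}$ and let $L_1^\omega, L_2^\omega : \mathbb{R}^4 \to \mathbb{R}^4$ denote the two linear substitutions appearing in the definition of $B^+$.

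First I would apply Minkowski's integral inequality to pull the sphere integral outside the $L^1_t L^2_{x,x'}$ norm, reducing matters to a uniform-in-$\omega$ bilinear bound
$$\bigl\|\tilde\gamma_1\bigl(L_1^\omega(x,x')\bigr)\,\tilde\gamma_2\bigl(L_2^\omega(x,x')\bigr)\bigr\|_{L^1_t L^2_{x,x'}} \lesssim \|\gamma_{0,1}\|_{L^2}\|\gamma_{0,2}\|_{L^2},$$
followed by integration against the finite measure $d\omega$ on $\mathbb{S}^1$. The naive route would be to split by H\"older as $L^2_tL^4_{x,x'}\cdot L^2_tL^4_{x,x'}\to L^1_tL^2_{x,x'}$ and apply the $d=4$ endpoint Strichartz $\|\tilde\gamma_i\|_{L^2_tL^4_{x,x'}}\lesssim\|\gamma_{0,i}\|_{L^2}$ to each factor.

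The main obstacle is that $L_1^\omega$ and $L_2^\omega$ are rank-deficient. In the $\omega$-adapted orthogonal decomposition $x=(x_\parallel,x_\perp)$, one checks that $L_1^\omega$ identifies the two parallel coordinates ($(L_1^\omega x)_\parallel = (L_1^\omega x')_\parallel = \tfrac12(x_\parallel + x'_\parallel)$) while $L_2^\omega$ identifies the two perpendicular coordinates ($(L_2^\omega x)_\perp = (L_2^\omega x')_\perp = \tfrac12(x_\perp + x'_\perp)$). Hence each map has a one-dimensional kernel, the pullback $\tilde\gamma_i\circ L_i^\omega$ is constant along a line in $\mathbb{R}^4$, and in particular is not individually in $L^4(\mathbb{R}^4_{x,x'})$. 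To sidestep this, I would introduce $\omega$-adapted unit-Jacobian coordinates $(Y_1,Y_2,Z_1,Z_2)$ given by $Y_1=\tfrac12(x_\parallel+x'_\parallel)$, $Y_2=x_\parallel-x'_\parallel$, $Z_1=\tfrac12(x_\perp+x'_\perp)$, $Z_2=x_\perp-x'_\perp$. In these variables $\tilde\gamma_1\circ L_1^\omega$ depends only on $(Y_1,Z_1,Z_2)$ while $\tilde\gamma_2\circ L_2^\omega$ depends only on $(Y_1,Y_2,Z_1)$, so the transverse variables $Y_2$ and $Z_2$ decouple between the two factors and the spatial $L^2$ norm of the product reduces to $\int G_1(t,Y_1,Z_1)\,G_2(t,Y_1,Z_1)\,dY_1\,dZ_1$, where each $G_i(t,Y_1,Z_1)$ is a squared one-dimensional $L^2$ trace of $\tilde\gamma_i(t)$ on an affine slice of $\mathbb{R}^4$ parametrized by $(Y_1,Z_1)$.

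The hardest step is then to pair these trace objects back against a global Strichartz-type bound. Here I would invoke a Lorentz-space refinement of the Keel-Tao endpoint, namely $\|\tilde\gamma_i\|_{L^2_tL^{4,2}_{x,x'}}\lesssim\|\gamma_{0,i}\|_{L^2}$, together with H\"older in Lorentz spaces $L^{4,2}\cdot L^{4,2}\hookrightarrow L^{2,1}\subset L^2$, applied after a Cauchy-Schwarz in $(Y_1,Z_1,t)$; the resulting bound is uniform in $\omega\in\mathbb{S}^1$. This geometric-plus-Lorentz coupling is precisely what forces the use of the doubled-dimension Schr\"odinger picture: on the kinetic side the corresponding endpoint Strichartz estimate is known to fail in every dimension, so no analogous kinetic route is available.
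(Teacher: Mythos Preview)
Your identification of the rank-deficiency of $L_1^\omega, L_2^\omega$ is correct and important, but the resolution you propose does not close. The Lorentz-refined Strichartz bound $\|e^{it\Delta_\pm}\gamma_{0,i}\|_{L^2_t L^{4,2}_{x,x'}}\lesssim\|\gamma_{0,i}\|_{L^2}$ applies to $\tilde\gamma_i$ itself, not to the pullback $\tilde\gamma_i\circ L_i^\omega$; as you yourself observed, the pullback is constant along a line in $\mathbb{R}^4$ and hence fails to lie in any $L^p_{x,x'}$ with $p<\infty$, so the product H\"older step $L^{4,2}\cdot L^{4,2}\hookrightarrow L^{2,1}$ has nothing to act on in $\mathbb{R}^4$. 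After your Cauchy--Schwarz in $(Y_1,Z_1,t)$, what remains to be bounded is essentially a trace norm $\|\tilde\gamma_i(t)|_{\{z_\parallel=0\}}\|_{L^2_t L^4_w L^2_{z_\perp}}$ on a codimension-one hyperplane; this does not follow from Keel--Tao and you give no argument for it. In short, you have correctly diagnosed why the naive $L^4\cdot L^4$ splitting fails, but the same obstruction blocks the $L^{4,2}\cdot L^{4,2}$ version applied pointwise in $\omega$.

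The paper's proof avoids seeking any uniform-in-$\omega$ bound. In the coordinates $w=(x+x')/2$, $z=(x-x')/2$ (equivalently your $(Y_1,Z_1)$ and $(Y_2,Z_2)$) one has $B^+(\gamma_1,\gamma_2)\sim\int_{\mathbb{S}^1}\tilde\gamma_1(w,z_\perp)\,\tilde\gamma_2(w,z_\parallel)\,d\omega$. After Minkowski in $\omega$ and the same tensorization you describe, the decisive move is \emph{Cauchy--Schwarz in $\omega$} (not in $(w,t)$): this produces $\|\tilde\gamma_1(w,\cdot)\|_{L^2_\omega L^2_{z_\perp}}\|\tilde\gamma_2(w,\cdot)\|_{L^2_\omega L^2_{z_\parallel}}$, and polar coordinates $(\omega,r)\leftrightarrow z$ convert each factor into the full two-dimensional integral $\||z|^{-1/2}\tilde\gamma_i(w,z)\|_{L^2_z}$. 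The Lorentz gain is then used precisely to absorb this singular weight: since $|z|^{-1}\in L^{2,\infty}(\mathbb{R}^2)$, one has $\||z|^{-1/2}h\|_{L^2_z}\lesssim\|h\|_{L^{4,2}_z}$, and after H\"older in $w$ and $t$ the endpoint Strichartz estimate finishes the job. So the $\omega$-integration is not merely a finite measure to be paid at the end; it is the mechanism that exchanges the hyperplane traces for a weight that the Lorentz refinement can actually handle.
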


We will need the Lorentz spaces $L^{p,q}$ defined by the following quasi-norm, for any
function $h(\xi) : \mathbb{R}^n \rightarrow \mathbb{C}$,
\begin{equation}
\left\Vert h \left( \xi \right)
\right\Vert_{L^{p,q}_* (\mathbb{R}^n)} =
p^{\frac{1}{q}}
\left\Vert \lambda \left|
\left\{ \xi \in \mathbb{R}^n \; : \;
\left| h (\xi) \right| \geq \lambda \right\} \right|^{\frac{1}{p}}
\right\Vert_{L^q \left( \mathbb{R}^+, \frac{d\lambda}{\lambda}\right)}
\end{equation}
Note that $L^{p,p} = L^p$ for $1 < p < \infty$. In all cases of interest here, the Lorentz quasi-norm
above can be shown to be equivalent to a Banach space norm.

\begin{lemma}
\label{lem:L42bd}
For any Schwartz functions $f,g : \mathbb{R}^2 \rightarrow \mathbb{C}$, there holds
\begin{equation}
\label{eq:QplusL42}
\left\Vert \left( Q^+ (f,g) \right)^\wedge (\eta)
\right\Vert_{L^2_\eta (\mathbb{R}^2)} \leq
C \left\Vert \hat{f} (\eta) \right\Vert_{L^{4,2}_{\eta} (\mathbb{R}^2)}
\left\Vert \hat{g} (\eta) \right\Vert_{L^{4,2}_{\eta} (\mathbb{R}^2)}
\end{equation}
Also, if $\gamma_{0,1}, \gamma_{0,2} \in L^{4,2}_{x,x^\prime} \left( \mathbb{R}^2 \times
\mathbb{R}^2 \right)$, there holds
\begin{equation}
\label{eq:BplusL42}
\left\Vert B^+ \left( \gamma_{0,1}, \gamma_{0,2} \right)
\right\Vert_{L^2_{x,x^\prime} \left( \mathbb{R}^2 \times \mathbb{R}^2 \right)}
\leq C \prod_{i=1,2}
\left\Vert \gamma_{0,i}
\right\Vert_{L^{4,2}_{x,x^\prime} \left( \mathbb{R}^2 \times \mathbb{R}^2 \right)}
\end{equation}
\end{lemma}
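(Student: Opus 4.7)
For estimate (\ref{eq:QplusL42}), the plan is to argue by duality combined with a rotation-adapted change of variables. Using the Fourier-side formula $(Q^+(f,g))^\wedge(\eta) = \int_{\mathbb{S}^1} \hat f(\eta_\bot) \hat g(\eta_\Vert)\, d\omega$, I would pair with a test function $H \in L^2_\eta(\mathbb{R}^2)$ of unit norm and interchange the order of integration in $(\eta,\omega)$. For each fixed $\omega \in \mathbb{S}^1$, I pass to the rotated coordinates $\eta = a\omega + b\omega^\bot$, so that $\eta_\Vert = a\omega$ and $\eta_\bot = b\omega^\bot$. Cauchy-Schwarz first in $a$ (with $b$ frozen) and then in $b$ bounds the inner integral by $\|\hat f(\cdot\omega^\bot)\|_{L^{2}_{b}}\, \|\hat g(\cdot\omega)\|_{L^{2}_{a}}\, \|H\|_{L^{2}_{a,b}}$, where the last factor equals $1$ by rotation invariance of Lebesgue measure; the first two are $L^2$ traces on lines through the origin. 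A final Cauchy-Schwarz in $\omega$ then reduces everything to bounding $\int_{\mathbb{S}^1}\|F(\cdot\omega^\bot)\|_{L^2}^{2}\, d\omega$ for a generic $F \in L^{4,2}(\mathbb{R}^2)$.

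The key identity at this point is
\begin{equation*}
\int_{\mathbb{S}^1} \|F(\cdot\omega^\bot)\|_{L^2(\mathbb{R})}^{2}\, d\omega = 2 \int_{\mathbb{R}^2} |F(\xi)|^2\, \frac{d\xi}{|\xi|},
\end{equation*}
which follows from polar coordinates: the map $(b,\omega) \mapsto b\omega^\bot$ has Jacobian $|b|=|\xi|$ and is two-to-one. Since $|\xi|^{-1} \in L^{2,\infty}(\mathbb{R}^2)$ (distribution function $\pi\lambda^{-2}$) and the Lorentz power identity gives $\||F|^{2}\|_{L^{2,1}}= \|F\|_{L^{4,2}}^{2}$, the Lorentz H\"older inequality yields
\begin{equation*}
\int_{\mathbb{R}^2} |F|^{2}\, \frac{d\xi}{|\xi|} \leq \||F|^{2}\|_{L^{2,1}}\, \||\xi|^{-1}\|_{L^{2,\infty}} \lesssim \|F\|_{L^{4,2}}^{2},
\end{equation*}
completing (\ref{eq:QplusL42}).

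For (\ref{eq:BplusL42}) my plan is to reduce to (\ref{eq:QplusL42}) by recognizing the same bilinear structure in physical variables. I would pass to midpoint-difference coordinates $X = (x+x')/2$, $r = x-x'$ (unit Jacobian, so all Lorentz norms are preserved), and set $\Gamma_i^{X}(r) := \gamma_i(X+r/2,\, X-r/2)$. A direct computation from the defining formula for $B^+$ rewrites
\begin{equation*}
B^+(\gamma_1,\gamma_2)(X,r) = i\int_{\mathbb{S}^1} \Gamma_1^{X}(r_\bot)\, \Gamma_2^{X}(r_\Vert)\, d\omega,
\end{equation*}
which has exactly the same structural form as the Fourier-side expression for $Q^+$, with $r$ playing the role of $\eta$. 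Running the argument of the first two paragraphs pointwise in $X$ yields $\|B^+(X,\cdot)\|_{L^{2}_{r}} \lesssim \|\Gamma_1^{X}\|_{L^{4,2}_{r}}\, \|\Gamma_2^{X}\|_{L^{4,2}_{r}}$; squaring, integrating in $X$, and applying Cauchy-Schwarz in $X$ gives
\begin{equation*}
\|B^+(\gamma_1,\gamma_2)\|_{L^{2}_{X,r}} \lesssim \|\Gamma_1\|_{L^{4}_{X}(L^{4,2}_{r})}\, \|\Gamma_2\|_{L^{4}_{X}(L^{4,2}_{r})}.
\end{equation*}
It remains to invoke the mixed-norm Lorentz embedding $L^{4,2}(\mathbb{R}^{4}_{X,r}) \hookrightarrow L^{4}_{X}(L^{4,2}_{r})$, which follows from Minkowski's integral inequality applied to the layer-cake representation $\|f(X,\cdot)\|_{L^{4,2}_{r}}^{2} = 4\int_0^\infty \lambda\, |\{r : |f(X,r)|>\lambda\}|^{1/2}\, d\lambda$; the exponent condition $q=2 \leq 4 = p$ is precisely what makes Minkowski available.

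I expect the main obstacle to be the reduction in the second paragraph to the line-trace integral $\int_{\mathbb{S}^1}\|F(\cdot\omega^\bot)\|_{L^2}^{2}\, d\omega$: one must correctly identify $(4,2)$ as the sharp Lorentz exponent pairing, which is forced by the homogeneity $|\xi|^{-1} \in L^{2,\infty}(\mathbb{R}^2)$ combined with the quadratic structure of the trace. The mixed-norm embedding used in the third paragraph, though it looks delicate, is essentially Minkowski bookkeeping once the correct exponents are in hand, and no information is lost because the two indices of $L^{4,2}$ are compatible.
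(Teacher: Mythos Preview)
Your proof is correct and follows essentially the same route as the paper. Both arguments reduce (\ref{eq:QplusL42}) to the line-trace quantity $\int_{\mathbb{S}^1}\|\hat f(\cdot\,\omega^\bot)\|_{L^2}^2\,d\omega = C\int |\hat f(\eta)|^2|\eta|^{-1}\,d\eta$ and then invoke $|\eta|^{-1}\in L^{2,\infty}$ together with the Lorentz power rule; for (\ref{eq:BplusL42}) both pass to midpoint--difference coordinates, recognize the $Q^+$ structure in the difference variable, and finish with the embedding $L^{4,2}_{X,r}\hookrightarrow L^4_X L^{4,2}_r$. The only differences are cosmetic: you frame the first reduction via duality with a test function $H$ whereas the paper applies Minkowski directly in $\omega$ and factors the $L^2_\eta$ norm; and you justify the mixed-norm embedding by Minkowski on the layer-cake, while the paper writes out the $L^4_w L^{4,2}_z$ norm and applies Cauchy--Schwarz in $w$ followed by Fubini.
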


\begin{proof}
(Lemma \ref{lem:L42bd})

We apply Minkowski, H{\" o}lder, and Fubini (twice), as follows: 
\begin{equation*}
\begin{aligned}
 \left\Vert \left( Q^+ (f,g) \right)^\wedge (\eta) \right\Vert_{L^2_\eta}
& =
\left\Vert \int_{\mathbb{S}^1} d\omega
\hat{f} \left( \eta_\bot \right)
 \hat{g} \left( \eta_\Vert \right) \right\Vert_{L^2_\eta} \\
& \leq \int_{\mathbb{S}^1} d\omega
\left\Vert \hat{f} \left( \eta_\bot \right)
 \hat{g} \left( \eta_\Vert \right)
\right\Vert_{L^2_\eta} \\
& = \int_{\mathbb{S}^1} d\omega
\left\Vert \hat{f} \left( \eta_\bot \right) \right\Vert_{L^2_{\eta_\bot}}
\left\Vert \hat{g} \left( \eta_\Vert \right)
\right\Vert_{L^2_{\eta_\Vert}} \\
& \leq 
\left\Vert \hat{f} \left( \eta_\bot \right) \right\Vert_{L^2_\omega L^2_{\eta_\bot}}
\left\Vert \hat{g} \left( \eta_\Vert \right)
\right\Vert_{L^2_\omega L^2_{\eta_\Vert}} \\
& =  C \left\Vert \frac{1}{|\eta|^{\frac{1}{2}}}
\hat{f} \left( \eta \right) \right\Vert_{L^2_\eta}
\left\Vert \frac{1}{|\eta|^{\frac{1}{2}}} \hat{g} \left( \eta \right)
\right\Vert_{L^2_\eta} \\
\end{aligned}
\end{equation*}
Then again, because $|\eta|^{-1} \in L^{2,\infty} \left( \mathbb{R}^2 \right)$, 
we may apply the duality $\left( L^{2,1} \right)^{\prime} = L^{2,\infty}$
(\cite{Gra2008} Theorem 1.4.17 (v)),
combined
with the ``power property,'' to deduce
\begin{equation*}
\left\Vert \frac{1}{|\eta|^{\frac{1}{2}}}
\hat{f} \left( \eta \right) \right\Vert_{L^2_\eta} 
= \left\Vert \frac{1}{|\eta|}
\left| \hat{f} \left( \eta \right) \right|^2 \right\Vert_{L^1_\eta}^{\frac{1}{2}}
\lesssim \left\Vert \left| \hat{f} (\eta) \right|^2 \right\Vert^{\frac{1}{2}}_{L^{2,1}_\eta}
\lesssim
\left\Vert \hat{f} (\eta) \right\Vert_{L^{4,2}_\eta \left( \mathbb{R}^2 \right)}
\end{equation*}
hence we obtain
\begin{equation}
\left\Vert \left( Q^+ (f,g) \right)^\wedge (\eta) \right\Vert_{L^2_\eta
\left( \mathbb{R}^2 \right)}
\lesssim
\left\Vert \hat{f} (\eta) \right\Vert_{L^{4,2}_\eta \left( \mathbb{R}^2 \right)}
\left\Vert \hat{g} (\eta) \right\Vert_{L^{4,2}_\eta \left( \mathbb{R}^2 \right)}
\end{equation}
which is (\ref{eq:QplusL42}). \emph{Remark:} The full duality of Lorentz spaces is not
actually necessary at this stage; in fact, a simple application of the Hardy-Littlewood rearrangement
inequality is sufficient.

Using the change of variables
\begin{equation*}
w = \frac{x+x^\prime}{2} \qquad \qquad z=\frac{x-x^\prime}{2}
\end{equation*}
we find that (\ref{eq:BplusL42}) follows immediately from (\ref{eq:QplusL42}) and
H{\" o}lder's inequality, as long
as we can show
\begin{equation} 
\label{eq:L42mixed}
L^{4,2}_{w,z} \left( \mathbb{R}^2 \times \mathbb{R}^2 \right)
\subset
L^4_w \left( \mathbb{R}^2 , L^{4,2}_z \left( \mathbb{R}^2 \right) \right)
\end{equation}
The $L^4_w L^{4,2}_z$ norm of a function
$F (w,z)$ can be controlled directly from the definition of $L^{p,q}$
as follows:
\begin{equation*}
\begin{aligned}
& \left\{ \int_{\mathbb{R}^2} dw
\int_0^\infty \frac{d\lambda}{\lambda} \lambda^2
\left| \left\{ z \in \mathbb{R}^2 \; : \;
\left| F (w,z) \right| \geq \lambda \right\} \right|^{\frac{1}{2}} \times \right. \\
& \qquad \qquad \qquad \qquad \times \left.
\int_0^\infty \frac{d\lambda^\prime}{\lambda^\prime} \left( \lambda^\prime \right)^2
\left| \left\{ z \in \mathbb{R}^2 \; : \;
\left| F (w,z) \right| \geq \lambda^\prime \right\} \right|^{\frac{1}{2}}
\right\}^{\frac{1}{4}}
\end{aligned}
\end{equation*}
Now the idea is to move the $dw$ integral to the \emph{inside} and apply Cauchy-Schwarz
in $w$, followed by Fubini; this leads us to the following quantity:
\begin{equation*}
\begin{aligned}
& \left\{ 
\int_0^\infty \frac{d\lambda}{\lambda} \lambda^2
\left| \left\{ (w,z) \in \mathbb{R}^4 \; : \;
\left| F (w,z) \right| \geq \lambda \right\} \right|^{\frac{1}{2}} \times \right. \\
& \qquad \qquad \qquad  \times \left.
\int_0^\infty \frac{d\lambda^\prime}{\lambda^\prime} \left( \lambda^\prime \right)^2
\left| \left\{ (w,z) \in \mathbb{R}^4 \; : \;
\left| F (w,z) \right| \geq \lambda^\prime \right\} \right|^{\frac{1}{2}}
\right\}^{\frac{1}{4}}
\end{aligned}
\end{equation*}
But this is comparable to the $L^{4,2}_{w,z}$ norm of $F$, so we are done.
\end{proof}

Finally we are ready to prove our main result for this section.

\begin{proof}
(Proposition \ref{prop:gain-eq-bound})

We estimate by Lemma \ref{lem:L42bd}, combined with H{\" o}lder's inequality in time:
\begin{equation}
\label{eq:st11}
\begin{aligned}
& \left\Vert B^+ \left( 
e^{i t \Delta_{\pm}} \gamma_{0,1},
e^{i t \Delta_{\pm}} \gamma_{0,2} \right)
\right\Vert_{L^1_t L^2_{x,x^\prime} \left( \mathbb{R} \times \mathbb{R}^2
\times \mathbb{R}^2 \right) } \\
& \qquad \qquad \qquad \qquad \qquad \qquad
 \leq C \prod_{i=1,2}
\left\Vert e^{i t \Delta_{\pm}} \gamma_{0,i} \right\Vert_{L^2_t L^{4,2}_{x,x^\prime}
\left( \mathbb{R}^2 \times \mathbb{R}^2 \right)}
\end{aligned}
\end{equation}
We apply Theorem 10.1 of Keel-Tao \cite{KT1998}, with $H = L^2_{x,x^\prime} \left( \mathbb{R}^2 \times
\mathbb{R}^2 \right)$,
$B_0 = L^2_{x,x^\prime} \left( \mathbb{R}^2 \times \mathbb{R}^2 \right)$,
$B_1 = L^1_{x,x^\prime} \left( \mathbb{R}^2 \times \mathbb{R}^2 \right)$,
and $\left( q, \sigma, \theta \right) = \left( 2, 2, \frac{1}{2} \right)$ to deduce the
Strichartz estimate (see Appendix \ref{app:strich})
\begin{equation}
\label{eq:st22}
\left\Vert e^{i t \Delta_{\pm}} \gamma_{0} \right\Vert_{L^2_t L^{4,2}_{x,x^\prime}
\left( \mathbb{R}^2 \times \mathbb{R}^2 \right)}
\lesssim \left\Vert \gamma_0 \right\Vert_{L^2_{x,x^\prime}
\left( \mathbb{R}^2 \times \mathbb{R}^2 \right)}
\end{equation}
Here we have used the real interpolation space
\begin{equation}
\left( \left( L^2_{x,x^\prime} , L^1_{x,x^\prime} \right)_{\frac{1}{2}, 2}
\right)^{\prime} = \left( L^{\frac{4}{3},2}_{x,x^\prime} \right)^{\prime} = L^{4,2}_{x,x^\prime}
\end{equation}
e.g. see Chapter 5 of the book \cite{BeLo1976}.

Combining (\ref{eq:st11}) and (\ref{eq:st22}), we are able to conclude.
\end{proof}

\subsection{Back to the gain-only Boltzmann equation}

Combining Proposition \ref{prop:gain-eq-bound} and Plancherel's
theorem, and defining 
$T(t) = e^{-t v \cdot \nabla_x}$,
 we easily deduce the following bound stated in the spatial domain:

\begin{proposition}
\label{prop:kin-gain-bound}
There is a constant $C>0$ such that for any $f_0 , g_0 \in
L^2_{x,v} \left( \mathbb{R}^2 \times \mathbb{R}^2 \right)$,
\begin{equation}
\left\Vert Q^+ \left( T(t) f_0, T(t) g_0 \right) 
\right\Vert_{L^1_t L^2_{x,v} \left( \mathbb{R} \times
\mathbb{R}^2 \times \mathbb{R}^2 \right)} \leq C
\left\Vert f_0 \right\Vert_{L^2_{x,v} \left( \mathbb{R}^2 
\times \mathbb{R}^2 \right)}
\left\Vert g_0 \right\Vert_{L^2_{x,v} \left( \mathbb{R}^2 
\times \mathbb{R}^2 \right)}
\end{equation}
\end{proposition}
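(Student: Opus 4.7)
The plan is simply to pull Proposition \ref{prop:gain-eq-bound} back to the kinetic side via the inverse Wigner transform. The required ingredients are (i) a Plancherel-type identity which makes the Wigner transformation an isometry (up to a harmless dimensional constant) between $L^2_{x,v}$ and $L^2_{x,x^\prime}$; (ii) the intertwining relation which turns the free transport semigroup $T(t)=e^{-tv\cdot\nabla_x}$ into the hyperbolic Schr\"odinger semigroup $e^{it\Delta_{\pm}}$; and (iii) the identification, already exhibited in Section \ref{ssec:gainL2}, of the Wigner transform of $Q^+(f,g)$ with $B^+(\mathcal{W}[f],\mathcal{W}[g])$, up to multiplicative constants.

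Concretely, I would let $\gamma_{0,1}=\mathcal{W}[f_0]$ and $\gamma_{0,2}=\mathcal{W}[g_0]$. Since the Wigner transform is, up to an affine change of variables $(x,x^\prime)\mapsto((x+x^\prime)/2,x-x^\prime)$, a partial Fourier transform in $v$, Plancherel gives
\begin{equation*}
\|\gamma_{0,i}\|_{L^2_{x,x^\prime}}\;\simeq\;\|f_{0,i}\|_{L^2_{x,v}}\qquad (i=1,2),
\end{equation*}
with an absolute constant depending only on the dimension. A direct computation (differentiating $\gamma(x,x^\prime)=\int f((x+x^\prime)/2,v)e^{iv\cdot(x-x^\prime)}\,dv$ and using that $v_j e^{iv\cdot(x-x^\prime)}=\frac{1}{2i}(\partial_{x_j}-\partial_{x_j^\prime})e^{iv\cdot(x-x^\prime)}$) shows that the generator $v\cdot\nabla_x$ is carried by $\mathcal{W}$ into $\tfrac{1}{2i}(\Delta_x-\Delta_{x^\prime})$, so
\begin{equation*}
\mathcal{W}[T(t)h]=e^{ict\Delta_{\pm}}\mathcal{W}[h]
\end{equation*}
for some universal constant $c\neq 0$; rescaling time converts this factor away. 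Finally, comparing the kernel of $Q^+$ with the formula for $B^+$ given in Section \ref{ssec:gainL2}, one has $\mathcal{W}[Q^+(f,g)]=\kappa\,B^+(\mathcal{W}[f],\mathcal{W}[g])$ for a universal nonzero constant $\kappa$.

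Putting these three facts together, at almost every fixed $t$,
\begin{equation*}
\|Q^+(T(t)f_0,T(t)g_0)\|_{L^2_{x,v}}\;\simeq\;\|B^+(e^{ict\Delta_{\pm}}\gamma_{0,1},e^{ict\Delta_{\pm}}\gamma_{0,2})\|_{L^2_{x,x^\prime}},
\end{equation*}
and integrating in $t$ (with the time rescaling absorbed) identifies the left-hand $L^1_tL^2_{x,v}$ norm with the corresponding $L^1_tL^2_{x,x^\prime}$ norm on the right. Invoking Proposition \ref{prop:gain-eq-bound} bounds the latter by $\|\gamma_{0,1}\|_{L^2_{x,x^\prime}}\|\gamma_{0,2}\|_{L^2_{x,x^\prime}}$, and one final application of Plancherel converts back to $\|f_0\|_{L^2_{x,v}}\|g_0\|_{L^2_{x,v}}$, yielding the claim. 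There is no essential obstacle here; the only point of care is the bookkeeping of the multiplicative constants ($\kappa$, the $c$ in the intertwining, and the Plancherel factor), all of which are absolute and so get absorbed into the final constant $C$.
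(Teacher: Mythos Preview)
Your proposal is correct and follows exactly the approach the paper takes: the paper simply states that Proposition \ref{prop:kin-gain-bound} follows by combining Proposition \ref{prop:gain-eq-bound} with Plancherel's theorem, and you have spelled out precisely those steps (the $L^2$ isometry of the Wigner transform, the intertwining of $T(t)$ with $e^{it\Delta_\pm}$, and the identification of $Q^+$ with $B^+$ under $\mathcal{W}$). There is nothing to add.
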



The following theorem is an immediate consequence
of Proposition \ref{prop:kin-gain-bound} and Theorem \ref{thm:abswp}: 
\begin{theorem}
\label{thm:gain-eq-gwp-kin}
For any $f_0 \in L^2_{x,v} \left( \mathbb{R}^2 \times \mathbb{R}^2 \right)$
 with sufficiently small
$L^2_{x,v}$ norm, there exists a unique global ($t \in \mathbb{R}$) mild solution to the equation
\begin{equation}
\label{eq:Qpe000}
\left( \partial_t + v \cdot \nabla_x \right) f (t)
= Q^+ \left( f(t), f(t) \right)
\end{equation}
with $f (0) = f_0$
such that $f \in C_t L^2_{x,v}$ and  $Q^+ \left( f , f \right) \in
L^1_{t,\textnormal{loc}} L^2_{x,v}$. For this solution, it holds that
$f \in L^\infty_{t \in \mathbb{R}} L^2_{x,v}$ and
$Q^+ (f,f) \in L^1_{t \in \mathbb{R}}
L^2_{x,v}$, and the solution scatters in $L^2_{x,v}$
as $t \rightarrow \pm \infty$.
\end{theorem}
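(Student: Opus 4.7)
The plan is to recognize the gain-only Boltzmann equation as an instance of the abstract bilinear Cauchy problem solved by Theorem \ref{thm:abswp}, with $\mathcal{H} = L^2_{x,v}\left(\mathbb{R}^2 \times \mathbb{R}^2\right)$ and $k=2$. Because free transport $T(t) = e^{-t v \cdot \nabla_x}$ is unitary on $L^2_{x,v}$, the natural move is to intertwine: setting $\tilde f(t) = T(-t) f(t)$ reduces \eqref{eq:Qpe000} to the integral equation
\begin{equation*}
\tilde f(t) = f_0 + \int_0^t \mathcal{A}\bigl(\sigma, \tilde f(\sigma), \tilde f(\sigma)\bigr)\, d\sigma,
\qquad
\mathcal{A}(t, h_1, h_2) = T(-t)\, Q^+\!\bigl( T(t) h_1, T(t) h_2 \bigr).
\end{equation*}
This $\mathcal{A}$ is bilinear in $(h_1, h_2)$. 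By unitarity of $T(-t)$ on $L^2_{x,v}$ (pointwise in $t$) and Proposition \ref{prop:kin-gain-bound},
\begin{equation*}
\left\Vert \mathcal{A}(t, h_1, h_2) \right\Vert_{L^1_t L^2_{x,v}}
= \left\Vert Q^+\!\bigl( T(t) h_1, T(t) h_2 \bigr) \right\Vert_{L^1_t L^2_{x,v}}
\leq C \left\Vert h_1 \right\Vert_{L^2_{x,v}} \left\Vert h_2 \right\Vert_{L^2_{x,v}},
\end{equation*}
which is exactly hypothesis \eqref{eq:Abd} of the abstract theorem with $C_0 = C$.

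With this in hand, Theorem \ref{thm:abswp} yields, provided $\left\Vert f_0 \right\Vert_{L^2_{x,v}}$ is below the threshold $M$ determined by $k=2$ and $C_0 = C$, a unique global solution $\tilde f \in \bigcap_{T > 0} W^{1,1}\!\left((-T,T), L^2_{x,v}\right)$, together with the estimate \eqref{eq:absest} on all of $\mathbb{R}$ (the estimate in Proposition \ref{prop:kin-gain-bound} is posed on all of $\mathbb{R}$, so the two-sided global theory comes at no extra cost). Undoing the intertwining, I set $f(t) = T(t) \tilde f(t)$; strong continuity of $T(\cdot)$ on $L^2_{x,v}$ and the embedding $W^{1,1} \hookrightarrow C$ give $f \in C_t L^2_{x,v}$, and the unitarity of $T(\cdot)$ transfers the bounds \eqref{eq:absest} directly to
\begin{equation*}
\left\Vert f \right\Vert_{L^\infty_{t \in \mathbb{R}} L^2_{x,v}}^2 + \left\Vert Q^+(f,f) \right\Vert_{L^1_{t \in \mathbb{R}} L^2_{x,v}} \lesssim \left\Vert f_0 \right\Vert_{L^2_{x,v}}^2.
\end{equation*}
Scattering is then immediate from Theorem \ref{thm:absScattering}: $\tilde f(t)$ has strong limits in $L^2_{x,v}$ as $t \to \pm\infty$, which by construction are precisely the scattering profiles of $f$.

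For uniqueness in the stated class, I would argue that any mild solution $f \in C_t L^2_{x,v}$ with $Q^+(f,f) \in L^1_{t,\mathrm{loc}} L^2_{x,v}$ produces, via $\tilde f = T(-t) f$, a curve satisfying Duhamel's formula for $\mathcal{A}$ with $\frac{d\tilde f}{dt} \in L^1_{t,\mathrm{loc}} L^2_{x,v}$, hence $\tilde f \in \bigcap_{T>0} W^{1,1}_{t \in [0,T]} L^2_{x,v}$; the uniqueness clause of Theorem \ref{thm:abswp} then forces it to agree with the fixed-point solution above. The only real obstacle, and it is mild, is this bookkeeping step: one must verify that the kinetic ``mild solution'' class indeed lifts to the $W^{1,1}$ class on the Hilbert-space side, which is straightforward from the fact that $\frac{d\tilde f}{dt} = \mathcal{A}(t, \tilde f, \tilde f)$ has $L^1_t$ values in $L^2_{x,v}$ precisely when $Q^+(f,f)$ does. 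No new estimates beyond Proposition \ref{prop:kin-gain-bound} and the unitarity of $T(\cdot)$ are needed.
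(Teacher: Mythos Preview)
Your proposal is correct and follows exactly the approach the paper takes: the paper states the theorem as ``an immediate consequence of Proposition \ref{prop:kin-gain-bound} and Theorem \ref{thm:abswp},'' and you have simply (and correctly) unpacked that immediacy via the intertwining $\tilde f(t) = T(-t)f(t)$. One minor remark: scattering is already recorded in the conclusion of Theorem \ref{thm:abswp} itself (from the $L^1_t$ bound on $\mathcal{A}$), so invoking Theorem \ref{thm:absScattering} is not needed, though it is harmless.
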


\begin{remark}
It is not necessary in Theorem \ref{thm:gain-eq-gwp-kin} for $f_0$ to be non-negative.
However, assuming $f_0$ is non-negative, we can show that the solution $f(t)$ of
the $Q^+$ equation
(\ref{eq:Qpe000}) is non-negative for $\textnormal{ a.e. } (t,x,v) \in \left( 0,\infty \right)
\times \mathbb{R}^2 \times \mathbb{R}^2$. Indeed, there is a globally convergent
expansion of $f(t)$ in terms of $f_0$, which comes from iterating Duhamel's formula:
\begin{equation}
\begin{aligned}
& f(t) = T(t) f_0 +
\int_0^t T(t-t_1) Q^+ \left( T(t_1) f_0, T(t_1) f_0 \right) dt_1 + \\
& + \int_0^t \int_0^{t_1} T(t-t_1)
Q^+ \left(  T(t_1-t_2) Q^+ \left( T(t_2) f_0, T(t_2) f_0 \right),
 T(t_1) f_0 \right) dt_2 dt_1 + \dots
\end{aligned}
\end{equation}
If $f_0 \geq 0$ then all the terms in the series are non-negative for $t \geq 0$; hence,
 the solution $f(t)$ is non-negative at positive times.
\end{remark}


\subsection{Short-time estimates.}

The bilinear estimates above will not be suitable for every result we
wish to prove, e.g. uniqueness, where we must rely upon integrability 
properties instead of regularity. For this reason we will require the following
``short-time''  estimates which follow essentially from the dominated
convergence theorem.

\begin{proposition}
\label{prop:shorttime}
Let $f_0 \in L^2_{x,v} \left( \mathbb{R}^2 \times \mathbb{R}^2 \right)$.
Then there holds
\begin{equation}
\limsup_{T \rightarrow 0^+} \sup_{g_0 \in L^2_{x,v},\;
\left\Vert g_0 \right\Vert_{L^2_{x,v}} = 1}
\left\Vert Q^+ \left( T(t) f_0, T(t) g_0 \right) \right\Vert_{L^1_{t \in [-T,T]}
L^2_{x,v}} = 0
\end{equation}
\begin{equation}
\limsup_{T \rightarrow 0^+} \sup_{g_0 \in L^2_{x,v},\;
\left\Vert g_0 \right\Vert_{L^2_{x,v}} = 1}
\left\Vert Q^+ \left( T(t) g_0, T(t) f_0 \right) \right\Vert_{L^1_{t \in [-T,T]}
L^2_{x,v}} = 0
\end{equation}
\end{proposition}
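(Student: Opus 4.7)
The plan is to reuse the bilinear Strichartz proof of Proposition \ref{prop:gain-eq-bound} almost verbatim, but to keep the two $L^2_t L^{4,2}_{x,x^\prime}$ factors arising from Cauchy--Schwarz separate: one of them, depending only on the fixed datum $f_0$, can be driven to zero on short time intervals by the absolute continuity of the Lebesgue integral, while the other uniformly absorbs $g_0$ on the unit sphere of $L^2_{x,v}$ via the global Strichartz estimate.

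In detail, I would first pass to the dispersive side via the Wigner transform (an isometry $L^2_{x,v} \to L^2_{x,x^\prime}$ up to constants), exactly as in the derivation of Proposition \ref{prop:kin-gain-bound} from Proposition \ref{prop:gain-eq-bound}. Writing $\gamma_0, \tilde{\gamma}_0 \in L^2_{x,x^\prime}$ for the Wigner transforms of $f_0$ and $g_0$, it suffices to show that
\begin{equation*}
\limsup_{T \to 0^+} \; \sup_{\left\Vert \tilde{\gamma}_0 \right\Vert_{L^2_{x,x^\prime}} = 1}
\left\Vert B^+ \left( e^{it \Delta_\pm} \gamma_0, e^{it \Delta_\pm} \tilde{\gamma}_0 \right) \right\Vert_{L^1_{t \in [-T,T]} L^2_{x,x^\prime}} = 0,
\end{equation*}
together with its entrywise swap. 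Applying the pointwise-in-time bilinear bound \eqref{eq:BplusL42} and then Cauchy--Schwarz in time gives
\begin{equation*}
\left\Vert B^+ \left( e^{it \Delta_\pm} \gamma_0, e^{it \Delta_\pm} \tilde{\gamma}_0 \right) \right\Vert_{L^1_{t \in [-T,T]} L^2_{x,x^\prime}}
\lesssim
\left\Vert e^{it \Delta_\pm} \gamma_0 \right\Vert_{L^2_{t \in [-T,T]} L^{4,2}_{x,x^\prime}}
\left\Vert e^{it \Delta_\pm} \tilde{\gamma}_0 \right\Vert_{L^2_{t \in [-T,T]} L^{4,2}_{x,x^\prime}}.
\end{equation*}
The Keel--Tao Strichartz bound \eqref{eq:st22} controls the second factor by $C \left\Vert \tilde{\gamma}_0 \right\Vert_{L^2_{x,x^\prime}}$, uniformly in $T$, and it also ensures that the scalar function $t \mapsto \left\Vert e^{it \Delta_\pm} \gamma_0 \right\Vert^2_{L^{4,2}_{x,x^\prime}}$ lies in $L^1(\mathbb{R})$; hence by the absolute continuity of the integral, its mass on $[-T,T]$ tends to zero as $T \to 0^+$, independently of $\tilde{\gamma}_0$. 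The swapped statement follows identically because \eqref{eq:BplusL42} is symmetric in its two entries, and pulling back through the Wigner transform (Plancherel) delivers the kinetic formulation.

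The main point here is not so much an obstacle as a conceptual one: applying the global bilinear bound as a single $L^1_t L^2_{x,x^\prime}$ norm merely recovers uniform boundedness (which is Proposition \ref{prop:kin-gain-bound}) and does not shrink with $T$. What is crucial is to preserve the Cauchy--Schwarz splitting so that the ``gain'' from dominated convergence can be harvested on exactly the one factor which depends on the fixed datum $f_0$. No extra regularity, smallness, or localization of $f_0$ is needed; bare $L^2_{x,v}$ membership is enough.
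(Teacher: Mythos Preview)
Your proof is correct and follows essentially the same approach as the paper's own argument: pass to the Wigner side, split the bilinear $L^1_t L^2_{x,x'}$ bound into two $L^2_t L^{4,2}_{x,x'}$ factors via \eqref{eq:BplusL42} and Cauchy--Schwarz in time, apply the Keel--Tao Strichartz estimate \eqref{eq:st22} uniformly to the $g_0$-factor, and use the integrability of $t \mapsto \Vert e^{it\Delta_\pm}\gamma_0\Vert_{L^{4,2}_{x,x'}}^2$ to send the $f_0$-factor to zero on shrinking intervals. The paper phrases the last step as ``dominated convergence'' rather than ``absolute continuity of the Lebesgue integral,'' but the content is identical.
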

\begin{proof}
We only prove the first bound; the second proceeds similarly.
By the proof of Proposition \ref{prop:gain-eq-bound}, 
for any two density
matrices $\gamma_{0,1}, \gamma_{0,2} \in L^2_{x,x^\prime}$, $B^+$
(the Wigner transform of $Q^+$) satisfies the bilinear estimates
\begin{equation}
\begin{aligned}
& \left\Vert B^+ \left( e^{i t \Delta_{\pm}} \gamma_{0,1},
e^{i t \Delta_{\pm}} \gamma_{0,2} \right) 
\right\Vert_{L^1_{t \in [-T,T]} L^2_{x,x^\prime}} \\
& \qquad \qquad \qquad \qquad \leq C \prod_{i \in \left\{ 1,2 \right\}}
\left\Vert e^{i t \Delta_{\pm}} \gamma_{0,i} 
\right\Vert_{L^2_{t \in [-T,T]} L^{4,2}_{x,x^\prime}}
\end{aligned}
\end{equation}
Apply Strichartz in the \emph{second entry only} to yield
\begin{equation}
\begin{aligned}
& \left\Vert B^+ \left( e^{i t \Delta_{\pm}} \gamma_{0,1},
e^{i t \Delta_{\pm}} \gamma_{0,2} \right) 
\right\Vert_{L^1_{t \in [-T,T]} L^2_{x,x^\prime}} \\
& \qquad \qquad \qquad \qquad \leq C 
\left\Vert e^{i t \Delta_{\pm}} \gamma_{0,1} 
\right\Vert_{L^2_{t \in [-T,T]} L^{4,2}_{x,x^\prime}}
\left\Vert \gamma_{0,2} \right\Vert_{L^2_{x,x^\prime}}
\end{aligned}
\end{equation}
Now observe that since $\gamma_{0,1} \in L^2_{x,x^\prime}$ by assumption,
it follows that $e^{i t \Delta_{\pm}} \gamma_{0,1} \in L^2_t L^{4,2}_{x,x^\prime}$
by Strichartz; therefore, by the dominated convergence theorem,
\begin{equation}
\limsup_{T \rightarrow 0^+}
\left\Vert e^{i t \Delta_{\pm}} \gamma_{0,1} \right\Vert_{L^2_{t \in [-T,T]}
L^{4,2}_{x,x^\prime}} = 0
\end{equation}
We take the sup in $\gamma_{0,2}$, followed by the limsup in $T$, and then
conclude by Plancherel.
\end{proof}

\section{Tools for the analysis of the full Boltzmann equation}


In this section, we present key tools that will allow us to treat the full Boltzmann equation in subsequent sections. 

We start this section by presenting Strichartz estimates for the spatial density 
\begin{equation}
\rho_f (x) = \int_{\mathbb{R}^2} f(x,v) dv
\end{equation}
in Section \ref{ssec:Strich}.

The main challenge for solving Boltzmann's equation (with a constant collision kernel)
 in $L^2_{x,v} \left( \mathbb{R}^2 \times \mathbb{R}^2 \right)$ is that the
spatial density $\rho_f$ is not necessarily well-defined when $f \in L^2_{x,v}$; therefore, since the loss term 
has the form $Q^- (f,f) = f \rho_f$, we find that $Q^-$ might not make sense. 
The ideal way to deal with this situation would be to realize that $Q^-$ \emph{subtracts}
from $f$, and therefore view the loss term as an \emph{unbounded operator} at least
when $t \rightarrow 0^+$. However, it is not clear to us how to implement this strategy,
nor whether it would produce enough integrability to prove uniqueness
(and we are not aware of any full treatment of this problem in the literature).
The simplest way to avoid the issue of unbounded operators is to introduce an auxiliary
norm; one natural possibility would be the $L^1_{x,v}$ norm of $f$ (since it is conserved if
$f_0$ has enough smoothness and decay),
but we have instead elected to impose moment and regularity bounds on $f_0$ so that
we can employ Strichartz estimates \emph{in the auxiliary space}, which we introduce in Section 
\ref{sec:wr}.


\subsection{Strichartz Estimates for the Spatial Density}
\label{ssec:Strich}

The following lemma follows from a velocity averaging argument. We present the details 
following the dispersive context \cite{KM2008} for the reader's convenience.

\begin{lemma}
\label{lem:A1lem}
Fix a sufficiently small number $\delta > 0$.
Let $I \subseteq \mathbb{R}$ be an open interval and let $f (t,x,v)
 : I \times \mathbb{R}^2 \times \mathbb{R}^2
\rightarrow \mathbb{R}$ be a measurable and locally integrable
 function. Then  the following estimates hold whenever 
the respective norms are finite:
\begin{equation}
\begin{aligned}
\label{eq:rhoLinf}
& \left\Vert \rho_f \right\Vert_{L^2_{t \in I} L^\infty_x}
\leq \tilde{C}_\delta
\left( \left\Vert \left< v \right>^{\frac{1}{2}+\delta}
\left< \nabla_x \right>^{\frac{1}{2}+\delta} f
\right\Vert_{L^\infty_{t \in I} L^2_{x,v}}  + \right. \\
& \qquad \qquad \qquad \qquad \qquad
\left. + \left\Vert \left< v \right>^{\frac{1}{2}+\delta}
\left< \nabla_x \right>^{\frac{1}{2}+\delta}
\left( \partial_t + v \cdot \nabla_x \right) f
\right\Vert_{L^1_{t \in I} L^2_{x,v}}\right)
\end{aligned}
\end{equation}
\begin{equation}
\label{eq:rhoL4}
\left\Vert \rho_f \right\Vert_{L^2_{t \in I} L^4_x}
\leq C_\delta
\left( \left\Vert \left< v \right>^{\frac{1}{2}+\delta} f
\right\Vert_{L^\infty_{t \in I} L^2_{x,v}} +
\left\Vert \left< v \right>^{\frac{1}{2}+\delta}
\left( \partial_t + v \cdot \nabla_x \right) f
\right\Vert_{L^1_{t \in I} L^2_{x,v}}\right)
\end{equation}
The constants $C_\delta, \tilde{C}_\delta$ 
do not depend on the interval $I$.
\end{lemma}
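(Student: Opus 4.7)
The plan is to reduce both bounds via Duhamel's formula to free-flow estimates, then to derive the $L^2_t L^\infty_x$ bound from the $L^2_t L^4_x$ bound through a Sobolev embedding in $x$, and finally to prove the surviving free-flow $L^2_t L^4_x$ estimate by passing through the Wigner transform and invoking the Keel-Tao endpoint Strichartz estimate on $\mathbb{R}^4_{x,x'}$, in the same spirit as Section \ref{sec:dispest}.

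Writing $g = (\partial_t + v \cdot \nabla_x) f$ and applying Duhamel's formula, followed by the velocity integration, gives
\begin{equation*}
\rho_f(t,x) = \rho_{T(t) f(0)}(x) + \int_0^t \rho_{T(t-s) g(s,\cdot,\cdot)}(x)\, ds.
\end{equation*}
Minkowski's inequality in $s$, combined with the time-translation invariance of the norm $\|\rho_{T(\cdot) h}\|_{L^2_t L^p_x}$, reduces the lemma to the two homogeneous free-flow estimates
\begin{align*}
\|\rho_{T(t) h}\|_{L^2_t L^\infty_x} &\lesssim_\delta \|\langle v \rangle^{\frac{1}{2}+\delta} \langle \nabla_x \rangle^{\frac{1}{2}+\delta} h\|_{L^2_{x,v}}, \\
\|\rho_{T(t) h}\|_{L^2_t L^4_x} &\lesssim_\delta \|\langle v \rangle^{\frac{1}{2}+\delta} h\|_{L^2_{x,v}}.
\end{align*}
Since $\nabla_x$ commutes with both $T(t)$ and with the $v$-integration defining $\rho$, applying $\langle \nabla_x \rangle^{\frac{1}{2}+\delta}$ to the $L^2_t L^4_x$ bound yields the same bound with $h$ replaced by $\langle \nabla_x \rangle^{\frac{1}{2}+\delta} h$. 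The Sobolev embedding $W^{\frac{1}{2}+\delta, 4}(\mathbb{R}^2) \hookrightarrow L^\infty(\mathbb{R}^2)$ (valid since $4(\frac{1}{2}+\delta) > 2$) then upgrades this to the $L^2_t L^\infty_x$ estimate, so everything is reduced to proving the free-flow $L^2_t L^4_x$ bound.

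For that estimate, I would pass to the Wigner transform of Section \ref{sec:dispest}: with $\gamma_h$ denoting the Wigner transform of $h$, one has $\rho_h(x) = \gamma_h(x,x)$, and $\gamma_{T(t) h}$ evolves by the free hyperbolic Schr\"odinger equation $(i\partial_t + \Delta_x - \Delta_{x'}) \gamma = 0$ on $\mathbb{R}^4_{x,x'}$. The Keel-Tao endpoint Strichartz bound already used in \eqref{eq:st22} provides $\|e^{i t \Delta_\pm} \gamma_0\|_{L^2_t L^{4,2}_{x,x'}} \lesssim \|\gamma_0\|_{L^2_{x,x'}}$, and the remaining task is to recover an $L^4$ bound on the two-dimensional diagonal $x = x'$. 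The velocity weight $\langle v \rangle^{\frac{1}{2}+\delta}$ translates, by Plancherel in $v$, into $\langle \nabla_z \rangle^{\frac{1}{2}+\delta}$ smoothing of $\gamma_h$ in the anti-diagonal variable $z = (x-x')/2$, which is precisely the codimension-$2$ trace gain needed to restrict from $\mathbb{R}^4_{x,x'}$ to the diagonal $z = 0$. Concretely, one decomposes $h$ dyadically in $|v|$, applies the endpoint Strichartz bound on each piece, and sums using the weight.

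The main obstacle lies in this last step. The unweighted $L^2_t L^4_x$ bound for the spatial density of a free-transport solution lies exactly at the scaling endpoint for kinetic Strichartz, which is \emph{false} by Bennett-Bez-Carbery-Ovcharov \cite{Bennettetal2014}. The weight $\langle v \rangle^{\frac{1}{2}+\delta}$ is only $\delta$ above that obstruction, so the Wigner-side argument must arrange that the extra $\delta$ of smoothing in $z$ compensates simultaneously for the codimension-$2$ trace onto $\{z = 0\}$ and for the Lorentz space $L^{4,2}_{x,x'}$ that Keel-Tao delivers in place of the (false) $L^4_{x,x'}$; care with the dyadic summation is needed to avoid logarithmic losses.
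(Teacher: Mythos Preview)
Your Duhamel reduction and the $L^\infty_x \leftarrow L^4_x$ step via Sobolev in $x$ match the paper. The divergence is in the free-flow estimate $\|\rho_{T(t)h}\|_{L^2_t L^4_x} \lesssim \|\langle v\rangle^{1/2+\delta}h\|_{L^2_{x,v}}$: the paper does \emph{not} use the Wigner transform here. It applies one more Sobolev embedding in $x$ to reduce to $\|(-\Delta_x)^{1/4}\rho_{T(t)h}\|_{L^2_{t,x}} \lesssim \|\langle v\rangle^{1/2+\delta}h\|_{L^2_{x,v}}$, then takes the spacetime Fourier transform in $(t,x)$ and lands on an explicit inequality for $H(\xi,v)=\mathcal{F}_x h$ involving $\delta(\tau+v\cdot\xi)$. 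A pointwise Cauchy--Schwarz in the style of Klainerman--Machedon \cite{KM2008} reduces everything to the elementary uniform bound $\sup_{\tau,\xi}|\xi|\int_{\mathbb{R}^2}\delta(\tau+u\cdot\xi)\langle u\rangle^{-1-2\delta}\,du<\infty$, a one-line integral over a line in $u$. Your Wigner route also works, and your closing worries are overcautious: since $\langle\nabla_z\rangle^{1/2+\delta}$ commutes with $e^{it\Delta_\pm}$, Keel--Tao plus the embedding $L^{4,2}_{x,x'}\subset L^4_w L^{4,2}_z\subset L^4_w L^4_z$ (cf.\ \eqref{eq:L42mixed}) plus the fiberwise Sobolev embedding $W^{1/2+\delta,4}(\mathbb{R}^2_z)\hookrightarrow C^0_z$ let you evaluate at $z=0$ directly, with no dyadic summation and no logarithmic loss. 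The paper's argument is shorter and self-contained on the kinetic side; yours is coherent with the machinery of Section~\ref{sec:dispest} but a longer detour for this particular lemma.
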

\begin{proof}
Observe that
(\ref{eq:rhoLinf}) follows immediately from
(\ref{eq:rhoL4}) due to Morrey inequalities \cite{St1970} and
the fact that $\left< \nabla_x \right>$ commutes with
the operators $\left( \partial_t + v\cdot \nabla_x \right)$ and
$f \mapsto \rho_f$. Therefore, we will prove only the estimate (\ref{eq:rhoL4}); moreover,
up to possibly increasing the constant $C_\delta$
 by a fixed factor, we are
free to assume that $I = \mathbb{R}$ by standard approximation arguments.
If the right hand side of (\ref{eq:rhoL4}) is finite, then it immediately
follows that
 $\left< v \right>^{\frac{1}{2}+\delta} f \in C \left( I, L^2_{x,v} \right)$, 
so we can assume $f$ is as regular as necessary by standard approximation arguments.
Finally, by Duhamel's formula we have
\begin{equation*}
f(t) = e^{-t v \cdot \nabla_x} f_0 +
\int_0^t e^{-(t-\sigma) v \cdot \nabla_x} 
\left\{ \left( \partial_t + v\cdot \nabla_x \right) f\right\}
 (\sigma) d\sigma
\end{equation*}
Using Duhamel, along with the linearity of the map
$f \mapsto \rho_f$ and Minkowski's inequality (first in $x$, then in $t$), we obtain
\begin{equation*}
\begin{aligned}
& \left\Vert \rho_f \right\Vert_{L^2_t L^4_x} \leq
\left\Vert \rho \left[ e^{-t v \cdot \nabla_x} f_0 \right] \right\Vert_{L^2_t L^4_x} + \\
& \qquad \qquad \qquad \qquad + 
\int_{\mathbb{R}} \left\Vert
\rho \left[ e^{-(t-\sigma) v \cdot \nabla_x} 
\left\{ \left( \partial_t + v\cdot \nabla_x \right) f\right\}
 (\sigma) \right]
\right\Vert_{L^2_t L^4_x} d\sigma
\end{aligned}
\end{equation*}
and therefore we immediately deduce (\ref{eq:rhoL4}) once the same inequality
holds with
\begin{equation*}
\left( \partial_t + v \cdot \nabla_x \right) f = 0
\end{equation*}
In  words, we can assume $f$ is a solution of the free transport equation.

Altogether, we only need to show that if $f_0 (x,v)$ is smooth and compactly supported
in $\mathbb{R}^2 \times \mathbb{R}^2$ then
\begin{equation}
\left\Vert \rho_{T(t) f_0} \right\Vert_{L^2_t L^4_x
\left( \mathbb{R} \times \mathbb{R}^2 \right)}
\leq C_\delta \left\Vert \left< v \right>^{\frac{1}{2}+\delta}
f_0  \right\Vert_{L^2_{x,v} \left( \mathbb{R}^2 \times \mathbb{R}^2 \right)}
\end{equation}
where $T(t) f_0 = e^{-t v \cdot \nabla_x} f_0$. By the fractional Gagliardo-Nirenberg-Sobolev
inequality \cite{St1970}, it suffices to show
\begin{equation}
\label{eq:H1over2bd}
\left\Vert \left( -\Delta_x \right)^{\frac{1}{4}}
\rho_{T(t) f_0} \right\Vert_{L^2_t L^2_x \left( \mathbb{R} \times \mathbb{R}^2 \right)}
\leq C_\delta \left\Vert 
\left< v \right>^{\frac{1}{2}+\delta}
f_0  \right\Vert_{L^2_{x,v} \left( \mathbb{R}^2 \times \mathbb{R}^2 \right)}
\end{equation}
whenever $f_0$ is smooth and compactly supported in
$\mathbb{R}^2 \times \mathbb{R}^2$.\footnote{Note that if $f_0$ is smooth and compactly supported,
then for any fixed $t \in \mathbb{R}$, $T(t) f_0$ is also smooth and compactly supported.}
 We will establish (\ref{eq:H1over2bd}) using
the spacetime Fourier transform to conclude the lemma.

To prove (\ref{eq:H1over2bd}), we apply Plancherel in $(t,x)$ on the left-hand side,
and in $x$ on the right-hand side; hence, an equivalent bound is:
\begin{equation}
\label{eq:H1over2bdFourier}
\left\Vert \mathcal{F}_{t,x} \left\{
\left( -\Delta_x \right)^{\frac{1}{4}}
\rho_{T(t) f_0} 
\right\} (\tau,\xi)
\right\Vert_{L^2_\tau L^2_\xi }
\leq C_\delta  \left\Vert \left< v \right>^{\frac{1}{2}+\delta}
\mathcal{F}_x \left\{ f_0 \right\} (\xi,v)
  \right\Vert_{L^2_{\xi,v} }
\end{equation}
Let us define
\begin{equation}
H (\xi,v) = \mathcal{F}_x \left\{ f_0 \right\} (\xi,v)
\end{equation}
Then (\ref{eq:H1over2bdFourier}) may be re-cast as the following inequality:
\begin{equation}
\label{eq:H1over2bdFourierH}
\left\Vert 
\left| \xi \right|^{\frac{1}{2}}
\int_{\mathbb{R}^2} dv \delta \left( \tau + v \cdot \xi \right)
H \left( \xi, v\right)
\right\Vert_{L^2_\tau L^2_\xi }^2
\leq C_\delta^2 \left\Vert \left< v \right>^{\frac{1}{2}+\delta} 
H (\xi,v)
  \right\Vert_{L^2_{\xi,v} }^2
\end{equation}
The quantity on the left can be equivalently written
\begin{equation*}
\int_{\mathbb{R}} d\tau \int_{\mathbb{R}^2} d\xi 
\int_{\mathbb{R}^2} dv
\int_{\mathbb{R}^2} du \delta \left( \tau + v \cdot \xi \right)
\delta \left( \tau + u \cdot \xi \right) 
\left| \xi \right|
H \left( \xi,v \right) \overline{H \left( \xi,u \right)}
\end{equation*}
which is the same as:
\begin{equation*}
\begin{aligned}
& \int_{\mathbb{R}} d\tau \int_{\mathbb{R}^2} d\xi 
\int_{\mathbb{R}^2} dv
\int_{\mathbb{R}^2} du \delta \left( \tau + v \cdot \xi \right)
\delta \left( \tau + u \cdot \xi \right) 
\left| \xi \right| \times \\
& \qquad \qquad \qquad \times
\left( \frac{1}{ \left< u \right>^{\frac{1}{2}+\delta}}
\left< v \right>^{\frac{1}{2}+\delta}
H \left( \xi,v \right) \right)
\left( \frac{1}{\left< v \right>^{\frac{1}{2}+\delta}}
\left< u \right>^{\frac{1}{2}+\delta} 
\overline{H \left( \xi,u \right)} \right)
\end{aligned}
\end{equation*}
The idea of \cite{KM2008} is to apply the Cauchy-Schwarz inequality, $AB \leq 
\frac{A^2}{2} + \frac{B^2}{2}$, but \emph{pointwise}
in $(\tau,\xi,v,u)$ (not in the integral sense!) to the two
terms in the large parentheses. Thus we will end up with the sum
of two terms, one involving only $H(\xi,v)$ and the other only
involving $H(\xi,u)$; under the obvious symmetry
$u \leftrightarrow v$, we can discard one of them up to a factor of $2$.

Thus we now only need to prove
\begin{equation*}
\begin{aligned}
& \int_{\mathbb{R}} d\tau \int_{\mathbb{R}^2} d\xi 
\int_{\mathbb{R}^2} dv
\int_{\mathbb{R}^2} du \delta \left( \tau + v \cdot \xi \right)
\delta \left( \tau + u \cdot \xi \right) 
\left| \xi \right| \times \\
& \qquad \qquad \qquad \qquad \qquad \qquad \qquad \qquad \times
\left( \frac{1}{ \left< u \right>^{1 + 2 \delta}}
\left< v \right>^{1 + 2 \delta}
\left| H \left( \xi,v \right) \right|^2 \right) \\
& \qquad \qquad \leq C_\delta^2 \left\Vert
\left< v \right>^{\frac{1}{2}+\delta}
H \left( \xi,v \right) \right\Vert_{L^2_{\xi,v}}^2
\end{aligned}
\end{equation*}
(we can assume $H$ vanishes for $\xi$ close to the origin, so that the integral
on the left certainly makes sense). Hence if we can show that
\begin{equation}
\sup_{\left( \tau,\xi \right) \in \mathbb{R} \times \mathbb{R}^2_{\neq 0}}
\int_{\mathbb{R}^2} du \delta \left( \tau + u \cdot \xi \right)
\frac{ \left| \xi \right| }{\left< u \right>^{1+2\delta}}
< \infty
\end{equation}
then we will be done (note that the other $\delta$-function,
$\delta \left( \tau + v \cdot \xi \right)$, is absorbed
by the integral in $\tau$, but only \emph{after} using the
supremum bound).

Let us define
\begin{equation*}
I  \left( \tau,\xi \right) 
= \int_{\mathbb{R}^2} du \delta \left( \tau + u \cdot \xi \right)
\frac{ \left| \xi \right| }{\left< u \right>^{1+2\delta}}
\end{equation*}
If we denote the line 
\begin{equation*}
P \left( \tau, \xi \right)
= \left\{ u \in \mathbb{R}^2 \; \left|
\; \tau + u \cdot \xi = 0 \right. \right\}
\end{equation*}
then it follows that
\begin{equation*}
I \left( \tau,\xi \right) 
= \int_{u \in P \left( \tau,\xi \right) } d\ell (u)
\frac{ 1 }{\left< u \right>^{1+2\delta}}
\end{equation*}
where $d\ell (u)$ is the induced linear measure.
We can only increase the value of the integral 
of $\left< u \right>^{-1-2\delta}$ by translating the
line $P \left( \tau,\xi \right)$ toward the origin of $\mathbb{R}^2$.
Therefore,
\begin{equation*}
\sup_{(\tau,\xi) \in \mathbb{R} \times \mathbb{R}^2_{\neq 0}}
I \left( \tau,\xi \right)
\leq
\int_{q \in \mathbb{R}} \frac{dq}{\left( 1+q^2
\right)^{\frac{1}{2}+\delta} }
< \infty
\end{equation*}
so we are able to conclude.
\end{proof}

\subsection{Weights and regularity}
\label{sec:wr}

Let $\varepsilon \in \left( 0,1 \right]$ and define the norm
\begin{equation}
\left\Vert f_0 \right\Vert_{H^{1,1}_\varepsilon}^2
= \left\Vert f_0 \right\Vert_{L^2_{x,v}}^2
+ \varepsilon^2 \left\Vert v f_0 \right\Vert_{L^2_{x,v}}^2
+ \varepsilon^2 \left\Vert \nabla_x f_0 \right\Vert_{L^2_{x,v}}^2
+ \varepsilon^4 \left\Vert v \otimes \nabla_x
f_0 \right\Vert_{L^2_{x,v}}^2 
\end{equation}
Note that the space $H^{1,1}_\varepsilon$ is independent of 
$\varepsilon > 0$, but the
norm of a fixed element $f_0 \in H^{1,1}_\varepsilon$
 does depend on $\varepsilon$
in general.
The norm on $H^{1,1}_\varepsilon$ is equivalently written:
\begin{equation}
\left\Vert f_0 \right\Vert_{H^{1,1}_\varepsilon}
= \left\Vert \left( 1 + \varepsilon^2 |v|^2 \right)^{\frac{1}{2}}
\left( 1 + \varepsilon^2 |\xi|^2 \right)^{\frac{1}{2}}
\mathcal{F}_x f_0 (\xi,v) \right\Vert_{L^2_{\xi,v}}
\end{equation}
where $\mathcal{F}_x f_0$ is the Fourier transform of $f_0$ in the spatial variable only.
This may also be written 
\begin{equation}
\left\Vert f_0 \right\Vert_{H^{1,1}_\varepsilon}
= \left\Vert \left< \varepsilon v \right>   \left< \varepsilon \nabla_x \right> 
f_0 \right\Vert_{L^2_{x,v}}
\end{equation}
where $\left< v \right> = \left( 1 + |v|^2 \right)^{\frac{1}{2}}$.
We will use the notation $H^{1,1}
 \equiv H^{1,1}_{1}$ when the dependence on $\varepsilon$
is unimportant. 

More generally, we also define the norms
\begin{equation}
\left\Vert f_0 \right\Vert_{H^{\alpha,\beta}_\varepsilon}
= \left\Vert \left< \varepsilon v \right>^\beta
\left< \varepsilon \nabla_x \right>^\alpha
f_0 \right\Vert_{L^2_{x,v}}
\end{equation}
where the exponents $\alpha,\beta \geq 0$ are chosen
independently.

The following commutation relations are standard:
\begin{equation*}
\nabla_x Q^+ \left( f,g \right) =
Q^+ \left( \nabla_x f, g \right) + Q^+ \left( f, \nabla_x g\right)
\end{equation*}
\begin{equation*}
\nabla_x T(t) f_0 = T(t) \nabla_x f_0 
\end{equation*}
\begin{equation*}
v T(t) f_0 = T(t) \left( v f_0 \right)
\end{equation*}
Additionally, from conservation of energy, we have:
\begin{equation*}
\left| v Q^+ \left( f,g \right) \right|
\lesssim Q^+ \left( \left| v f \right| , \left| g\right| \right)
+ Q^+ \left( \left| f \right|, \left| v g \right|  \right)
\end{equation*}
Using the commutation relations and Proposition \ref{prop:kin-gain-bound},
we have:
\begin{equation}
\label{eq:bdA}
\left\Vert \nabla_x Q^+ \left( T(t) f_0, T(t) g_0 \right)
\right\Vert_{L^1_t L^2_{x,v}} \lesssim
\left\Vert \nabla_x f_0 \right\Vert_{L^2_{x,v}}
\left\Vert g_0 \right\Vert_{L^2_{x,v}}
+ \left\Vert f_0 \right\Vert_{L^2_{x,v}}
\left\Vert \nabla_x g_0 \right\Vert_{L^2_{x,v}}
\end{equation}
\begin{equation}
\label{eq:bdB}
\left\Vert v Q^+ \left( T(t) f_0, T(t) g_0 \right)
\right\Vert_{L^1_t L^2_{x,v}} \lesssim
\left\Vert v f_0 \right\Vert_{L^2_{x,v}}
\left\Vert g_0 \right\Vert_{L^2_{x,v}}
+ \left\Vert f_0 \right\Vert_{L^2_{x,v}}
\left\Vert v g_0 \right\Vert_{L^2_{x,v}}
\end{equation}
and
\begin{equation}
\label{eq:bdC}
\begin{aligned}
& \left\Vert v \otimes \nabla_x
Q^+ \left( T(t) f_0, T(t) g_0 \right) \right\Vert_{L^1_t L^2_{x,v}}
\lesssim
\left\Vert v \otimes \nabla_x f_0 \right\Vert_{L^2_{x,v}}
\left\Vert g_0 \right\Vert_{L^2_{x,v}} + \\
& \;\; + \left\Vert \nabla_x f_0 \right\Vert_{L^2_{x,v}} 
\left\Vert v g_0 \right\Vert_{L^2_{x,v}} +
\left\Vert v f_0 \right\Vert_{L^2_{x,v}}
\left\Vert \nabla_x g_0 \right\Vert_{L^2_{x,v}} +
\left\Vert f_0 \right\Vert_{L^2_{x,v}} 
\left\Vert v \otimes \nabla_x g_0 \right\Vert_{L^2_{x,v}}
\end{aligned}
\end{equation}
Using (\ref{eq:bdA}), (\ref{eq:bdB}) and (\ref{eq:bdC}), and the
definition of $H^{1,1}_\varepsilon$, we obtain the following
estimate:
\begin{equation}
\label{eq:bdCm1}
\left\Vert Q^+ \left( T(t) f_0, T(t) g_0 \right)
\right\Vert_{L^1_t H^{1,1}_\varepsilon} \leq C
\left\Vert f_0 \right\Vert_{H^{1,1}_\varepsilon} 
\left\Vert g_0 \right\Vert_{H^{1,1}_\varepsilon}
\end{equation}
where the constant $C$ does not depend on $\varepsilon \in \left( 0,1 \right]$.

\begin{proposition}
For any $f_0, g_0 \in H^{1,1}$, there holds
\begin{equation}
\left\Vert Q^+ \left( T(t) f_0, T(t) g_0 ) \right)
\right\Vert_{L^1_t H^{1,1}_\varepsilon} \leq
C \left\Vert f_0 \right\Vert_{H^{1,1}_\varepsilon}
\left\Vert g_0 \right\Vert_{H^{1,1}_\varepsilon}
\end{equation}
where $T(t) = e^{-t v \cdot \nabla_x}$.
The constant $C$ is independent of $\varepsilon \in \left( 0,1 \right]$.
\end{proposition}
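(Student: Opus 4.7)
The plan is to recognize that this Proposition is essentially a consolidation of the bilinear estimates \eqref{eq:bdA}, \eqref{eq:bdB}, and \eqref{eq:bdC} derived just above, together with the base $L^2_{x,v}$ bound from Proposition \ref{prop:kin-gain-bound}; the role of the current statement is to repackage these four estimates into the single $\varepsilon$-weighted norm $H^{1,1}_\varepsilon$ with an explicit constant independent of $\varepsilon \in (0,1]$. This mirrors exactly the subcritical regularity argument carried out for cubic NLS in Section \ref{ssec:cubNLSd2reg}, where the $H^1_\varepsilon$ norm was introduced precisely to absorb derivative-cost terms into a small coefficient.

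Concretely, I would decompose
\begin{equation*}
\bigl\Vert Q^+(T(t)f_0,T(t)g_0)\bigr\Vert_{L^1_t H^{1,1}_\varepsilon}^2
\end{equation*}
into the four pieces corresponding to the norm decomposition
\begin{equation*}
\Vert \cdot \Vert_{H^{1,1}_\varepsilon}^2 = \Vert \cdot \Vert_{L^2_{x,v}}^2 + \varepsilon^2 \Vert v \cdot \Vert_{L^2_{x,v}}^2 + \varepsilon^2 \Vert \nabla_x \cdot \Vert_{L^2_{x,v}}^2 + \varepsilon^4 \Vert v \otimes \nabla_x \cdot \Vert_{L^2_{x,v}}^2,
\end{equation*}
and apply Proposition \ref{prop:kin-gain-bound} to the first piece, \eqref{eq:bdB} to the second, \eqref{eq:bdA} to the third, and \eqref{eq:bdC} to the fourth. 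The structural point, identical to the NLS case, is that each commutator identity moves a $\nabla_x$ or a $v$ onto exactly one of the two input factors while leaving the other factor untouched in $L^2_{x,v}$. Consequently, pairing each such derivative or moment with the $\varepsilon$ factor it comes with in $H^{1,1}_\varepsilon$, every term produced on the right-hand side has the shape $(\varepsilon^a D^a f_0)(\varepsilon^b D^b g_0)$ where $\varepsilon^a D^a f_0$ is one of the four summands of $\Vert f_0 \Vert_{H^{1,1}_\varepsilon}$ and likewise for $g_0$; summing, each such product is controlled by $\Vert f_0 \Vert_{H^{1,1}_\varepsilon} \Vert g_0 \Vert_{H^{1,1}_\varepsilon}$.

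There is no real obstacle to overcome here — this is essentially a bookkeeping step. The only subtlety worth verifying is that no hidden $\varepsilon$-dependence leaks into the constant: the commutation identities for $\nabla_x$ and multiplication by $v$ with $T(t)$ and $Q^+$ are purely algebraic and $\varepsilon$-free, and the constant in Proposition \ref{prop:kin-gain-bound} is of course $\varepsilon$-independent. Hence the combined constant is absolute, uniformly in $\varepsilon \in (0,1]$, as claimed. (Indeed, the displayed estimate \eqref{eq:bdCm1} is already the conclusion; the proposition just re-states it in the form most convenient for later use.)
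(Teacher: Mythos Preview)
Your proposal is correct and follows exactly the paper's approach: the Proposition is simply a restatement of \eqref{eq:bdCm1}, which the paper derives by combining Proposition~\ref{prop:kin-gain-bound} with the commutation identities yielding \eqref{eq:bdA}, \eqref{eq:bdB}, \eqref{eq:bdC}, and then reassembling via the definition of the $H^{1,1}_\varepsilon$ norm. The only cosmetic point is that the decomposition should be done at the level of the norm (equivalent to a sum of four pieces) rather than its square, since the outer $L^1_t$ does not commute with squaring; but this is immaterial to the argument.
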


Similarly, we also have:

\begin{proposition}
For any $f_0, g_0 \in H^{0,1}$, there holds
\begin{equation}
\left\Vert \left< \varepsilon v \right>
 Q^+ \left( T(t) f_0, T(t) g_0 ) \right)
\right\Vert_{L^1_t L^2_{x,v}} \leq
C \left\Vert
\left< \varepsilon v \right> f_0 \right\Vert_{L^2_{x,v}}
\left\Vert \left< \varepsilon v \right>
g_0 \right\Vert_{L^2_{x,v}}
\end{equation}
where $T(t) = e^{-t v \cdot \nabla_x}$.
The constant $C$ is independent of $\varepsilon \in \left( 0,1 \right]$.
\end{proposition}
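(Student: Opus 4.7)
The plan is to reduce this bound to the unweighted bilinear estimate Proposition \ref{prop:kin-gain-bound} by exploiting conservation of kinetic energy at a collision, exactly in the spirit of the pointwise bound that was used to pass from the $L^2_{x,v}$ estimate to the $v$-weighted estimate \eqref{eq:bdB}. Since $v$ commutes with free transport ($\langle \varepsilon v\rangle T(t) = T(t) \langle \varepsilon v\rangle$), it suffices to prove an estimate for generic data and then apply it with $f_0, g_0$ replaced by $\langle \varepsilon v\rangle f_0, \langle \varepsilon v\rangle g_0$.

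The main observation is the pointwise inequality
\begin{equation*}
\langle \varepsilon v\rangle \;\leq\; \langle \varepsilon v^*\rangle\,\langle \varepsilon u^*\rangle,
\end{equation*}
which follows from $|v^*|^2+|u^*|^2=|v|^2+|u|^2$, since then
$\langle\varepsilon v^*\rangle^2\langle\varepsilon u^*\rangle^2 \geq 1 + \varepsilon^2(|v^*|^2+|u^*|^2) \geq 1+\varepsilon^2|v|^2$.
Inserting this into the definition \eqref{eqQ+} of $Q^+$ and taking absolute values yields the pointwise bound
\begin{equation*}
\bigl|\langle\varepsilon v\rangle Q^+(f,g)(v)\bigr| \;\leq\; Q^+\!\bigl(\langle\varepsilon v\rangle |f|,\;\langle\varepsilon v\rangle |g|\bigr)(v).
\end{equation*}
(Alternatively one can use the weaker bound $\langle\varepsilon v\rangle \leq \langle\varepsilon v^*\rangle + \langle\varepsilon u^*\rangle$ and split into two terms exactly as in \eqref{eq:bdB}; this gives the same final estimate up to an extra factor of $2$.)

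Applying this pointwise bound with $f=T(t)f_0$, $g=T(t)g_0$, and then using $\langle\varepsilon v\rangle T(t) = T(t)\langle\varepsilon v\rangle$ and $|T(t)h| = T(t)|h|$, we obtain
\begin{equation*}
\bigl|\langle\varepsilon v\rangle Q^+(T(t)f_0,T(t)g_0)\bigr| \;\leq\; Q^+\!\bigl(T(t)|\langle\varepsilon v\rangle f_0|,\;T(t)|\langle\varepsilon v\rangle g_0|\bigr).
\end{equation*}
Taking $L^1_t L^2_{x,v}$ norms and invoking Proposition \ref{prop:kin-gain-bound} directly gives
\begin{equation*}
\bigl\|\langle\varepsilon v\rangle Q^+(T(t)f_0,T(t)g_0)\bigr\|_{L^1_tL^2_{x,v}} \;\leq\; C\,\bigl\||\langle\varepsilon v\rangle f_0|\bigr\|_{L^2_{x,v}}\bigl\||\langle\varepsilon v\rangle g_0|\bigr\|_{L^2_{x,v}},
\end{equation*}
which is the claim with a constant $C$ identical to that in Proposition \ref{prop:kin-gain-bound} and in particular independent of $\varepsilon \in (0,1]$.

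There is essentially no obstacle here: the proof is a one-line reduction, and the only nontrivial ingredient, namely conservation of kinetic energy at collisions, has already been used in \eqref{eq:bdB} and \eqref{eq:bdC}. The $\varepsilon$-uniformity is automatic because the pointwise inequality $\langle\varepsilon v\rangle \leq \langle\varepsilon v^*\rangle\langle\varepsilon u^*\rangle$ holds with constant $1$ for every $\varepsilon \geq 0$.
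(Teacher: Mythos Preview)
Your proof is correct and follows essentially the same approach as the paper, which obtains this proposition ``similarly'' from the preceding argument by combining Proposition~\ref{prop:kin-gain-bound} with the conservation-of-energy pointwise bound already recorded in \eqref{eq:bdB}. Your use of the multiplicative Peetre-type inequality $\langle\varepsilon v\rangle \leq \langle\varepsilon v^*\rangle\langle\varepsilon u^*\rangle$ is a slightly cleaner packaging of the same idea (the additive version you mention as an alternative is exactly \eqref{eq:bdB}), and your observation that the constant is independent of $\varepsilon$ with constant $1$ in the pointwise step is on point.
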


The bounds in the preceding two propositions can be interpolated
against Proposition \ref{prop:kin-gain-bound}, using
Theorem 5.1.2 of the book \cite{BeLo1976}, to obtain:

\begin{proposition}
\label{prop:gain-eq-bound-kin-alpha}
Let $\alpha \in (0,1)$.
For any $f_0, g_0 \in H^{\alpha,\alpha}$, there holds
\begin{equation}
\left\Vert Q^+ \left( T(t) f_0, T(t) g_0 ) \right)
\right\Vert_{L^1_t H^{\alpha,\alpha}_\varepsilon} \leq
C \left\Vert f_0 \right\Vert_{H^{\alpha,\alpha}_\varepsilon}
\left\Vert g_0 \right\Vert_{H^{\alpha,\alpha}_\varepsilon}
\end{equation}
where $T(t) = e^{-t v \cdot \nabla_x}$.
The constant $C$ is independent of $\varepsilon,\alpha$.
\end{proposition}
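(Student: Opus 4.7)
The plan is to apply bilinear complex interpolation to the bilinear map
$\mathcal{Q}(f_0,g_0)(t) = Q^+ (T(t) f_0, T(t) g_0)$,
interpolating between two endpoints that have already been established: namely, Proposition \ref{prop:kin-gain-bound}, which gives
$\mathcal{Q} : L^2_{x,v} \times L^2_{x,v} \to L^1_t L^2_{x,v}$ with some absolute constant $C$, and the first of the two preceding propositions, which gives
$\mathcal{Q} : H^{1,1}_\varepsilon \times H^{1,1}_\varepsilon \to L^1_t H^{1,1}_\varepsilon$ with a constant $C$ independent of $\varepsilon \in (0,1]$. Noting that $H^{0,0}_\varepsilon = L^2_{x,v}$, we want to interpolate at parameter $\theta = \alpha$ to obtain the intermediate bound at the level of $H^{\alpha,\alpha}_\varepsilon$.

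The first step is to identify the intermediate spaces. Let $\Lambda_\varepsilon = \left< \varepsilon v \right> \left< \varepsilon \nabla_x \right>$, viewed as a positive self-adjoint operator on $L^2_{x,v}$ via its tensor-product Fourier multiplier representation (acting by $\left< \varepsilon v \right>$ on the velocity variable and by the Fourier multiplier $\left< \varepsilon \xi \right>$ on the spatial variable). By definition, $H^{\alpha,\alpha}_\varepsilon = \mathrm{Dom}(\Lambda_\varepsilon^\alpha)$ with norm $\Vert \Lambda_\varepsilon^\alpha \,\cdot\, \Vert_{L^2_{x,v}}$. Standard operator-theoretic complex interpolation (alternatively, Plancherel on the spatial side reduces everything to a weighted $L^2$ space, where the identification is immediate) gives
\begin{equation*}
[L^2_{x,v}, H^{1,1}_\varepsilon]_\alpha = H^{\alpha,\alpha}_\varepsilon,
\end{equation*}
with equivalence constants that do not depend on $\varepsilon \in (0,1]$; the point is that the functional calculus for $\Lambda_\varepsilon$ is isometric in the spectral representation, so the interpolation constants are absolute. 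On the target side, the vector-valued interpolation identity
\begin{equation*}
[L^1_t L^2_{x,v}, L^1_t H^{1,1}_\varepsilon]_\alpha = L^1_t \bigl[L^2_{x,v}, H^{1,1}_\varepsilon\bigr]_\alpha = L^1_t H^{\alpha,\alpha}_\varepsilon
\end{equation*}
is a standard consequence of Theorem 5.1.2 of \cite{BeLo1976} applied to the scalar $L^1$ of a Banach-space-valued couple, again with absolute constants.

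The second step is to apply the bilinear complex interpolation theorem (Theorem 5.1.2 of \cite{BeLo1976}) with both endpoint constants equal to the same absolute constant $C$. The resulting bilinear norm is bounded by $C^{1-\alpha} \cdot C^\alpha = C$, which is uniform in both $\alpha \in (0,1)$ and $\varepsilon \in (0,1]$. Combining this with the two identifications above yields the claimed estimate.

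The main technical obstacle is verifying uniformity of the interpolation-space constants in $\varepsilon$. This reduces to the statement that for a positive self-adjoint operator $\Lambda_\varepsilon$ acting in $L^2$, the complex interpolation space $[L^2, \mathrm{Dom}(\Lambda_\varepsilon)]_\alpha$ coincides isomorphically (with absolute constants, not depending on the spectrum of $\Lambda_\varepsilon$) with $\mathrm{Dom}(\Lambda_\varepsilon^\alpha)$ equipped with the graph norm. This is classical and is most transparently seen by reducing to a weighted $L^2$ space, where both the endpoint norms and the interpolation norm can be computed explicitly. Once this identification is settled, every remaining step is mechanical.
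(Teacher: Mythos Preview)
Your proposal is correct and follows essentially the same route as the paper: interpolate the bilinear map $\mathcal{Q}(f_0,g_0)(t)=Q^+(T(t)f_0,T(t)g_0)$ between the $L^2_{x,v}$ endpoint (Proposition~\ref{prop:kin-gain-bound}) and the $H^{1,1}_\varepsilon$ endpoint, invoking Theorem~5.1.2 of \cite{BeLo1976}. Your additional remarks on identifying $[L^2_{x,v},H^{1,1}_\varepsilon]_\alpha = H^{\alpha,\alpha}_\varepsilon$ with $\varepsilon$-independent constants, and on the vector-valued interpolation on the target side, simply make explicit what the paper leaves to the reader.
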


\begin{proposition}
\label{prop:gain-eq-bound-kin-alpha-2}
Let $\alpha \in (0,1)$. For any $f_0, g_0 \in H^{0,\alpha}$, there holds
\begin{equation}
\left\Vert \left< \varepsilon v \right>^\alpha
 Q^+ \left( T(t) f_0, T(t) g_0 ) \right)
\right\Vert_{L^1_t L^2_{x,v}} \leq
C \left\Vert
\left< \varepsilon v \right>^\alpha f_0 \right\Vert_{L^2_{x,v}}
\left\Vert \left< \varepsilon v \right>^\alpha
g_0 \right\Vert_{L^2_{x,v}}
\end{equation}
where $T(t) = e^{-t v \cdot \nabla_x}$.
The constant $C$ is independent of $\varepsilon,\alpha$.
\end{proposition}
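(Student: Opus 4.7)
The plan is to obtain the claimed inequality by complex bilinear interpolation between the two endpoint estimates $\alpha=0$ and $\alpha=1$ already in hand, exactly analogous to the derivation of Proposition \ref{prop:gain-eq-bound-kin-alpha}, but carried out on the weighted scale that uses only the velocity multiplier $\langle \varepsilon v\rangle$ and no spatial derivative. First I would record the two endpoints: at $\alpha=0$, Proposition \ref{prop:kin-gain-bound} gives
\begin{equation*}
\left\Vert Q^+ \left( T(t) f_0, T(t) g_0 \right) \right\Vert_{L^1_t L^2_{x,v}} \leq C \left\Vert f_0 \right\Vert_{L^2_{x,v}} \left\Vert g_0 \right\Vert_{L^2_{x,v}},
\end{equation*}
while at $\alpha=1$ the second of the two propositions stated just before Proposition \ref{prop:gain-eq-bound-kin-alpha} provides the corresponding bound with the weight $\langle \varepsilon v\rangle$ on each factor, with constant independent of $\varepsilon\in(0,1]$.

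Next, I would view both endpoint inequalities uniformly as the statement that the bilinear map $\mathcal{B}(f_0,g_0) = Q^+(T(t)f_0, T(t)g_0)$ is bounded from $X_j \times X_j$ to $Y_j$, where $X_0 = Y_0 = L^2_{x,v}$ and $X_1 = Y_1 = L^2\bigl(\langle \varepsilon v\rangle^2\, dx\, dv\bigr)$. The weighted $L^2$ spaces form a complex interpolation family: by the standard identification of weighted $L^p$ spaces as complex interpolation spaces (Bergh--L\"ofstr\"om, Chapter 5), one has
\begin{equation*}
[L^2_{x,v},\, L^2(\langle \varepsilon v\rangle^2\, dx\, dv)]_\alpha = L^2(\langle \varepsilon v\rangle^{2\alpha}\, dx\, dv)
\end{equation*}
with norm equivalence constants that are independent of both $\varepsilon$ and $\alpha$, since multiplication by $\langle \varepsilon v\rangle^{i\tau}$ is an isometry on $L^2_{x,v}$. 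Applying the bilinear version of complex interpolation (Theorem 4.4.1 of Bergh--L\"ofstr\"om, in the form already invoked in the paper as Theorem 5.1.2) immediately yields
\begin{equation*}
\left\Vert \langle \varepsilon v\rangle^\alpha \mathcal{B}(f_0,g_0) \right\Vert_{L^1_t L^2_{x,v}} \leq C_0^{1-\alpha} C_1^\alpha \left\Vert \langle \varepsilon v\rangle^\alpha f_0 \right\Vert_{L^2_{x,v}} \left\Vert \langle \varepsilon v\rangle^\alpha g_0 \right\Vert_{L^2_{x,v}},
\end{equation*}
which is the claim.

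The interpolation on the target space $L^1_t L^2_{x,v}$ is legitimate because this Bochner space is of the same form for each endpoint, and the bilinear complex interpolation theorem only requires an analytic family of bilinear maps agreeing on the strip; the map $(f_0,g_0)\mapsto \mathcal{B}(f_0,g_0)$ is itself fixed, so this is automatic once one observes that the pointwise multiplier $\langle \varepsilon v\rangle^{i\tau}$ acts unitarily on the source spaces and commutes with the free transport $T(t)$ (since it depends only on $v$), thereby producing the analytic family required to unfold the weights. The main potential obstacle is precisely this $\varepsilon$- and $\alpha$-uniformity of the interpolation constant, but since the unitary action of $\langle\varepsilon v\rangle^{i\tau}$ is $\varepsilon$-independent and the two endpoint constants $C_0, C_1$ are each already $\varepsilon$-independent, the convex combination $C_0^{1-\alpha} C_1^\alpha$ is bounded by $\max(C_0, C_1)$ uniformly in $\alpha \in (0,1)$, completing the proof.
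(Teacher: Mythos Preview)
Your proposal is correct and follows essentially the same approach as the paper: the paper simply states that the result is obtained by interpolating the preceding $\alpha=1$ proposition (the one with the $\langle \varepsilon v\rangle$ weight and no spatial derivative) against Proposition~\ref{prop:kin-gain-bound}, invoking Theorem~5.1.2 of Bergh--L\"ofstr\"om. Your write-up is in fact more detailed than the paper's one-line justification, correctly spelling out the identification of the complex interpolation spaces as weighted $L^2$ and the reason the constant is uniform in $\varepsilon$ and $\alpha$.
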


\subsection{A useful lemma}
\label{ssec:usefullemma}

The following lemma is a consequence of Section \ref{sec:LWPthm}; we record it here
to help clarify the main ideas underlying the present work. Note that the theory of
Section \ref{sec:LWPthm} cannot be applied ``out of box'' to the Boltzmann equation
accounting for the loss term. For this reason, it is crucial to observe that the theory
of Section \ref{sec:LWPthm} rests upon a single bound which can be applied to the
$Q^+$ term in any estimate.

\begin{lemma}
\label{lem:usefullemma}
Let $I = (a,b) \subset \mathbb{R}$ be a nonempty open interval with
$-\infty \leq a < b \leq +\infty$. Furthermore, for $i=1,2$, suppose
$f_i (t,x,v) : I \times \mathbb{R}^2 \times \mathbb{R}^2 \rightarrow \mathbb{R}$ 
is a function such
that $f_i \in L^\infty_{t \in I} L^2_{x,v}$ and
$\left( \partial_t + v \cdot \nabla_x \right) f_i \in L^1_{t \in I} L^2_{x,v}$.
Then the following estimate holds:
\begin{equation}
\begin{aligned}
& \left\Vert Q^+ \left( f_1 (t), f_2 (t) \right) \right\Vert_{L^1_{t \in I} L^2_{x,v}} \\
& \qquad \quad \leq C \prod_{i=1,2} \left(
\left\Vert f_i (t) \right\Vert_{L^\infty_{t \in I} L^2_{x,v}}
+ \left\Vert \left( \partial_t + v \cdot \nabla_x \right)
f_i (t) \right\Vert_{L^1_{t \in I} L^2_{x,v}} \right)
\end{aligned}
\end{equation}
for some constant $C$ which does not depend on $f_1,f_2$ or the interval $I$.
\end{lemma}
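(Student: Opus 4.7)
The plan is to recognize Lemma \ref{lem:usefullemma} as a direct application of the abstract machinery from Section \ref{sec:LWPthm}, specifically Lemma \ref{lem:abswplem}, once $Q^{+}$ is intertwined with the free transport flow. The key observation is that Proposition \ref{prop:kin-gain-bound} provides exactly the bilinear spacetime bound required to invoke Lemma \ref{lem:abswplem} with $\mathcal{H} = L^2_{x,v}$ and $k = 2$.

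First I would set $\mathcal{A}(t, \phi_1, \phi_2) := Q^{+}(T(t)\phi_1, T(t)\phi_2)$ for $\phi_1, \phi_2 \in L^2_{x,v}$, where $T(t) = e^{-t v\cdot\nabla_x}$. Proposition \ref{prop:kin-gain-bound} asserts precisely that $\mathcal{A}$ is a bounded bilinear map $L^2_{x,v} \times L^2_{x,v} \to L^1_t L^2_{x,v}$, so the hypotheses of Lemma \ref{lem:abswplem} are satisfied with $C_0 = C$. Next, given $f_i$ as in the statement, I would intertwine with the free flow by defining
\begin{equation*}
\tilde{f}_i(t) := T(-t) f_i(t), \qquad i = 1, 2.
\end{equation*}
Since $T(-t)$ is an $L^2_{x,v}$ isometry, $\|\tilde{f}_i\|_{L^\infty_{t\in I} L^2_{x,v}} = \|f_i\|_{L^\infty_{t\in I} L^2_{x,v}}$, and a direct computation gives
\begin{equation*}
\frac{d \tilde{f}_i}{dt} = T(-t)\bigl(\partial_t + v \cdot \nabla_x\bigr) f_i,
\end{equation*}
so $\|d\tilde{f}_i/dt\|_{L^1_{t\in I} L^2_{x,v}} = \|(\partial_t + v\cdot\nabla_x) f_i\|_{L^1_{t\in I} L^2_{x,v}}$. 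In particular $\tilde{f}_i \in W^{1,1}(I, L^2_{x,v})$.

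Then I would apply the extension $\tilde{\mathcal{A}}$ provided by Lemma \ref{lem:abswplem} to the pair $\tilde{f}_1(\cdot), \tilde{f}_2(\cdot)$. By the bilinearity and the construction of $\tilde{\mathcal{A}}$, the identity
\begin{equation*}
\tilde{\mathcal{A}}\bigl(t, \tilde{f}_1(t), \tilde{f}_2(t)\bigr) = Q^{+}\bigl(T(t)\tilde{f}_1(t), T(t)\tilde{f}_2(t)\bigr) = Q^{+}\bigl(f_1(t), f_2(t)\bigr)
\end{equation*}
holds for a.e.\ $t$, and Lemma \ref{lem:abswplem} yields the bound
\begin{equation*}
\bigl\|Q^{+}(f_1, f_2)\bigr\|_{L^1_{t\in I} L^2_{x,v}} \leq 3 C \prod_{i=1,2}\Bigl(\|\tilde{f}_i\|_{L^\infty_t L^2_{x,v}} + \|d\tilde{f}_i/dt\|_{L^1_t L^2_{x,v}}\Bigr),
\end{equation*}
which translates back to the desired inequality for $f_i$.

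The only minor obstacle is that Lemma \ref{lem:abswplem} is stated for intervals of the form $(0,T)$ with $T$ finite, whereas here $I = (a,b)$ may be arbitrary (and possibly unbounded). This is routine: by translation invariance in $t$ I may shift $I$ to start at $0$, and for unbounded $I$ I would exhaust by bounded subintervals $I_n \nearrow I$, apply the lemma on each $I_n$ with a constant independent of $n$, and use monotone convergence (since $\|Q^{+}(f_1,f_2)\|_{L^1_{t \in I_n} L^2_{x,v}}$ is monotone in $n$). No additional ideas are needed, and the constant $C$ in the conclusion may be taken as $3C$ where $C$ is the constant from Proposition \ref{prop:kin-gain-bound}.
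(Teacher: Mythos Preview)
Your proof is correct and follows essentially the same route as the paper: intertwine with the free transport flow, set $\mathcal{H}=L^2_{x,v}$, and invoke Lemma~\ref{lem:abswplem} together with the bilinear spacetime bound for $Q^+$. The only cosmetic differences are that the paper includes an extra outer factor $T(-t)$ in its definition of $\mathcal{A}$ (harmless by isometry) and cites Proposition~\ref{prop:gain-eq-bound-kin-alpha} rather than Proposition~\ref{prop:kin-gain-bound}; your citation is in fact the more natural one for the $L^2$ case.
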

\begin{proof}
We may assume without loss that $I = (0,T)$ for some $T > 0$.
The lemma then follows from Proposition \ref{prop:gain-eq-bound-kin-alpha} and 
Lemma \ref{lem:abswplem}, under the following assignments:
$\mathcal{H} = L^2_{x,v}$, $x_j (t) = e^{t v \cdot \nabla_x} f_j (t)$, and
$$
\mathcal{A} (t,x_1,x_2) =
e^{t v \cdot \nabla_x} Q^+
\left( e^{-t v \cdot \nabla_x} x_1, e^{-t v \cdot \nabla_x} x_2 \right)
$$
Here we have used that $e^{-t v \cdot \nabla_x}$ is an isometry on $L^2_{x,v}$ for
any $t \in \mathbb{R}$.
\end{proof}

Similarly we deduce the following result as a consequence of
Proposition \ref{prop:gain-eq-bound-kin-alpha} and
Lemma \ref{lem:abswplem}:

\begin{lemma}
\label{lem:usefullemma2}
Let $\varepsilon \in (0,1]$ and let $\alpha \in (0,1)$. 
Let $I = (a,b) \subset \mathbb{R}$ be a nonempty open interval with
$-\infty \leq a < b \leq +\infty$. Furthermore, for $i=1,2$, suppose
$f_i (t,x,v) : I \times \mathbb{R}^2 \times \mathbb{R}^2 \rightarrow \mathbb{R}$ 
is a function such
that $f_i \in L^\infty_{t \in I} H^{\alpha,\alpha}$ and
$\left( \partial_t + v \cdot \nabla_x \right) f_i \in L^1_{t \in I} 
H^{\alpha,\alpha}$.
Then the following estimate holds:
\begin{equation}
\begin{aligned}
& \left\Vert Q^+ \left( f_1 (t), f_2 (t) \right) \right\Vert_{L^1_{t \in I} 
H^{\alpha,\alpha}_\varepsilon} \\
& \qquad \quad \leq C \prod_{i=1,2} \left(
\left\Vert f_i (t) \right\Vert_{L^\infty_{t \in I} 
H^{\alpha,\alpha}_\varepsilon}
+ \left\Vert \left( \partial_t + v \cdot \nabla_x \right)
f_i (t) \right\Vert_{L^1_{t \in I} H^{\alpha,\alpha}_\varepsilon} \right)
\end{aligned}
\end{equation}
for some constant $C$ which does not depend on $f_1,f_2,\alpha,\varepsilon$ or the interval $I$.
\end{lemma}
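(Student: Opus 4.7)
The plan is to follow the proof of Lemma \ref{lem:usefullemma} essentially verbatim, with the only change being that we replace the Hilbert space $L^2_{x,v}$ by $\mathcal{H} = H^{\alpha,\alpha}_\varepsilon$ and invoke Proposition \ref{prop:gain-eq-bound-kin-alpha} in place of Proposition \ref{prop:kin-gain-bound}. Without loss of generality one may take $I = (0,T)$ for some $T > 0$, since the general case follows by a trivial translation together with a standard approximation/limit argument in $T \to b-a$.

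Concretely, I would set $x_j (t) = e^{t v \cdot \nabla_x} f_j (t)$ for $j=1,2$, and define the bounded bilinear map
$$
\mathcal{A} (t, x_1, x_2) = e^{t v \cdot \nabla_x} Q^+ \bigl( e^{-t v \cdot \nabla_x} x_1, \, e^{-t v \cdot \nabla_x} x_2 \bigr).
$$
The single observation on which everything rests is that $e^{-t v \cdot \nabla_x}$ is an isometry on $\mathcal{H} = H^{\alpha,\alpha}_\varepsilon$. Indeed, the norm on $H^{\alpha,\alpha}_\varepsilon$ is realized by the Fourier multiplier $\langle \varepsilon v \rangle^\alpha \langle \varepsilon \xi \rangle^\alpha$ acting on $\mathcal{F}_x f(\xi,v)$, and both factors commute with the free transport generator $v \cdot \nabla_x$: the first trivially as a multiplication operator in the $v$-variable, the second because $\nabla_x$ commutes with $v \cdot \nabla_x$. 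Hence $\langle \varepsilon v \rangle^\alpha \langle \varepsilon \nabla_x \rangle^\alpha$ commutes with $e^{\pm t v \cdot \nabla_x}$, and the isometry property on $L^2_{x,v}$ propagates to $\mathcal{H}$.

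Granted this isometry, Proposition \ref{prop:gain-eq-bound-kin-alpha} immediately gives the hypothesis of Lemma \ref{lem:abswplem} with an absolute constant $C_0$ independent of $\varepsilon$ and $\alpha$:
$$
\| \mathcal{A} (t, x_{0,1}, x_{0,2}) \|_{L^1_t \mathcal{H}} \leq C_0 \| x_{0,1} \|_{\mathcal{H}} \| x_{0,2} \|_{\mathcal{H}}.
$$
For the curves, the same isometry property gives $\| x_j \|_{L^\infty_{t \in I} \mathcal{H}} = \| f_j \|_{L^\infty_{t \in I} H^{\alpha,\alpha}_\varepsilon}$, while the elementary identity $\frac{d x_j}{dt} = e^{t v \cdot \nabla_x} \bigl( (\partial_t + v \cdot \nabla_x) f_j \bigr)$ yields $\| \tfrac{d x_j}{dt} \|_{L^1_{t \in I} \mathcal{H}} = \| (\partial_t + v \cdot \nabla_x) f_j \|_{L^1_{t \in I} H^{\alpha,\alpha}_\varepsilon}$. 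Finally, the extension $\tilde{\mathcal{A}}$ produced by Lemma \ref{lem:abswplem} evaluates on these curves to $\tilde{\mathcal{A}} (t, x_1(\cdot), x_2(\cdot)) = e^{t v \cdot \nabla_x} Q^+ (f_1(t), f_2(t))$, whose $L^1_t \mathcal{H}$ norm coincides with $\| Q^+(f_1, f_2) \|_{L^1_{t \in I} H^{\alpha,\alpha}_\varepsilon}$ by isometry. Applying the quantitative bound of Lemma \ref{lem:abswplem} with $k = 2$ delivers the claimed estimate with absolute constant $C = 3 C_0$.

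Since the proof is essentially bookkeeping on the Hilbert space structure, there is no genuine obstacle; the only point that requires a moment of care is the uniformity of the constant in $\varepsilon$ and $\alpha$, and this is directly inherited from the uniformity already established in Proposition \ref{prop:gain-eq-bound-kin-alpha}.
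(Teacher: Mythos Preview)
Your proposal is correct and follows exactly the approach the paper indicates: it deduces the lemma from Proposition~\ref{prop:gain-eq-bound-kin-alpha} and Lemma~\ref{lem:abswplem} by repeating the argument of Lemma~\ref{lem:usefullemma} with $\mathcal{H}=H^{\alpha,\alpha}_\varepsilon$. You have in fact supplied more detail than the paper (which merely states the result follows ``similarly''), including the explicit verification that the free transport group is an isometry on $H^{\alpha,\alpha}_\varepsilon$.
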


The following result is similarly straightforward to prove by omitting spatial
derivatives throughout the argument.

\begin{lemma}
\label{lem:usefullemma3}
Let $\varepsilon \in (0,1]$ and let $\alpha \in (0,1)$.
Let $I = (a,b) \subset \mathbb{R}$ be a nonempty open interval with
$-\infty \leq a < b \leq +\infty$. Furthermore, for $i=1,2$, suppose
$f_i (t,x,v) : I \times \mathbb{R}^2 \times \mathbb{R}^2 \rightarrow \mathbb{R}$ 
is a function such
that $\left< v \right>^\alpha f_i \in L^\infty_{t \in I} L^2_{x,v}$ and
$\left< v \right>^\alpha
\left( \partial_t + v \cdot \nabla_x \right) f_i \in L^1_{t \in I} L^2_{x,v}$.
Then the following estimate holds:
\begin{equation}
\begin{aligned}
& \left\Vert \left< \varepsilon v \right>^\alpha
Q^+ \left( f_1 (t), f_2 (t) \right) \right\Vert_{L^1_{t \in I} L^2_{x,v}} \\
&  \quad \leq C \prod_{i=1,2} \left(
\left\Vert \left< \varepsilon v \right>^\alpha
 f_i (t) \right\Vert_{L^\infty_{t \in I} L^2_{x,v}}
+ \left\Vert 
\left< \varepsilon v \right>^\alpha
\left( \partial_t + v \cdot \nabla_x \right)
f_i (t) \right\Vert_{L^1_{t \in I} L^2_{x,v}} \right)
\end{aligned}
\end{equation}
for some constant $C$ which does not depend on $f_1,f_2,\alpha,\varepsilon$ or the interval $I$.
\end{lemma}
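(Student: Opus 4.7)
The plan is to recycle the proof of Lemma \ref{lem:usefullemma2} almost verbatim, substituting Proposition \ref{prop:gain-eq-bound-kin-alpha-2} for Proposition \ref{prop:gain-eq-bound-kin-alpha} so that only the velocity weight $\langle \varepsilon v\rangle^\alpha$ is tracked (no spatial derivative factor). By a standard translation in time I may assume $I=(0,T)$ for some $T\in(0,\infty]$.

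First I would introduce the Hilbert space $\mathcal{H}_{\varepsilon,\alpha}$, obtained by equipping the underlying vector space $L^2_{x,v}(\mathbb{R}^2\times\mathbb{R}^2)$ with the inner product induced by the norm $\|h\|_{\mathcal{H}_{\varepsilon,\alpha}}:=\|\langle\varepsilon v\rangle^\alpha h\|_{L^2_{x,v}}$. The multiplier $\langle\varepsilon v\rangle^\alpha$ depends only on $v$, and the free transport $T(t)=e^{-t v\cdot\nabla_x}$ acts by the measure-preserving change of variables $f(x,v)\mapsto f(x-tv,v)$; these two operations therefore commute, and $T(t)$ is an isometry of $\mathcal{H}_{\varepsilon,\alpha}$ for every $t\in\mathbb{R}$. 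I would then take as bilinear map
$$
\mathcal{A}(t,x_1,x_2) \;:=\; T(-t)\,Q^+\bigl(T(t) x_1,\,T(t) x_2\bigr).
$$
Combining the commutation of the weight with $T(-t)$, the $\mathcal{H}_{\varepsilon,\alpha}$-isometry property, and Proposition \ref{prop:gain-eq-bound-kin-alpha-2}, I would immediately derive the $2$-linear estimate
$$
\bigl\|\mathcal{A}(\cdot,x_1,x_2)\bigr\|_{L^1_t \mathcal{H}_{\varepsilon,\alpha}} \;=\; \bigl\|\langle\varepsilon v\rangle^\alpha Q^+(T(t)x_1,T(t)x_2)\bigr\|_{L^1_t L^2_{x,v}} \;\leq\; C\,\|x_1\|_{\mathcal{H}_{\varepsilon,\alpha}}\,\|x_2\|_{\mathcal{H}_{\varepsilon,\alpha}},
$$
with $C$ independent of both $\varepsilon\in(0,1]$ and $\alpha\in(0,1)$.

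Next I would apply Lemma \ref{lem:abswplem} on $(0,T)$ with $\mathcal{H}=\mathcal{H}_{\varepsilon,\alpha}$, $k=2$, and inputs $x_j(t):=T(-t) f_j(t)$. These inputs lie in $W^{1,1}((0,T),\mathcal{H}_{\varepsilon,\alpha})$ precisely because $T(-t)$ is an $\mathcal{H}_{\varepsilon,\alpha}$-isometry, which yields $\|x_j(t)\|_{\mathcal{H}_{\varepsilon,\alpha}}=\|\langle\varepsilon v\rangle^\alpha f_j(t)\|_{L^2_{x,v}}$ and $dx_j/dt=T(-t)(\partial_t+v\cdot\nabla_x)f_j$ with the analogous norm identity. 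The extension $\tilde{\mathcal{A}}$ produced by Lemma \ref{lem:abswplem} evaluates on the diagonal to $T(-t) Q^+(f_1(t),f_2(t))$, and the $L^1_t\mathcal{H}_{\varepsilon,\alpha}$ norm of this function coincides with $\|\langle\varepsilon v\rangle^\alpha Q^+(f_1,f_2)\|_{L^1_t L^2_{x,v}}$, which is the left-hand side of the claim. The quantitative bound of Lemma \ref{lem:abswplem} (with the universal factor $1+k=3$) then supplies the desired inequality with constant independent of $f_1,f_2,\alpha,\varepsilon$ and $I$.

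I do not anticipate any genuine obstacle. The only delicate point is confirming that the constant from Proposition \ref{prop:gain-eq-bound-kin-alpha-2} is in fact uniform in both $\varepsilon$ and $\alpha$, but that uniformity is built into the statement of that proposition. As the paper itself notes, this is really just the proof of Lemma \ref{lem:usefullemma2} with every $\langle\varepsilon\nabla_x\rangle^\alpha$ factor erased; no additional harmonic-analytic input is needed.
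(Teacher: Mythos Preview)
Your proposal is correct and matches the paper's intended argument essentially verbatim: the paper simply remarks that Lemma~\ref{lem:usefullemma3} follows by repeating the proof of Lemma~\ref{lem:usefullemma2} with spatial derivatives omitted, i.e.\ by combining Proposition~\ref{prop:gain-eq-bound-kin-alpha-2} with Lemma~\ref{lem:abswplem} under the assignments $\mathcal{H}=\{\,h:\langle\varepsilon v\rangle^\alpha h\in L^2_{x,v}\,\}$, $x_j(t)=T(-t)f_j(t)$, exactly as you describe. One cosmetic point: the underlying vector space for $\mathcal{H}_{\varepsilon,\alpha}$ is $H^{0,\alpha}$ rather than all of $L^2_{x,v}$ (the weighted norm can be infinite on $L^2$), but this does not affect the argument since the hypotheses place $f_i(t)$ in $H^{0,\alpha}$ anyway.
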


\section{Uniqueness}
\label{sec:unique}

In this section, we present our main uniqueness result. 

\begin{theorem} 
\label{thm:unique}
There is at most one 
mild solution of the full Boltzmann equation
on an interval $[0,T]$, with given initial data $f_0 $,
 such that the estimates
\begin{equation}
\label{eq:lsA}
\left< v \right>^{\frac{1}{2}+}
Q^+ (f,f) \in L^1_{t \in [0,T]} L^2_{x,v} 
\end{equation}
\begin{equation}
\label{eq:lsB}
\rho_f \in L^2_{t \in [0,T]} L^\infty_x \bigcap L^2_{t \in [0,T]} L^4_x
\end{equation}
\begin{equation}
\label{eq:lsC}
\left< v \right>^{\frac{1}{2}+}  f \in
L^\infty_{t \in [0,T]} L^2_{x,v} \bigcap
L^\infty_{t \in [0,T]} L^4_x L^2_v
\end{equation}
are all verified.
\end{theorem}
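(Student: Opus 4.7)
The plan is to apply a Gronwall-type contraction argument on short intervals and then iterate to cover all of $[0, T]$. Let $f_1, f_2$ be two solutions as in the statement with $f_1(0) = f_2(0) = f_0$, and set $w = f_1 - f_2$. Then $w(0) = 0$ and
\begin{equation*}
(\partial_t + v \cdot \nabla_x) w = Q^+(w, f_1) + Q^+(f_2, w) - 2\pi(w \rho_{f_1} + f_2 \rho_w).
\end{equation*}
Introduce, for $\tau \in (0, T]$,
\begin{equation*}
A(\tau) := \|\langle v\rangle^{1/2+} w\|_{L^\infty_{[0,\tau]} L^2_{x,v}}, \qquad B(\tau) := \|\langle v\rangle^{1/2+}(\partial_t + v \cdot \nabla_x) w\|_{L^1_{[0,\tau]} L^2_{x,v}}.
\end{equation*}
Since $w(0) = 0$ and $\langle v\rangle^{1/2+}$ commutes with $T(t)$, Duhamel's formula gives $A(\tau) \leq B(\tau)$. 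The goal is to establish $B(\tau) \leq \epsilon(\tau)(A(\tau) + B(\tau))$ with $\epsilon(\tau) \to 0^+$ as $\tau \to 0^+$, which forces $w \equiv 0$ on some $[0, \tau_0]$; iterating from $\tau_0$ extends the vanishing to all of $[0, T]$.

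The main device is the decomposition $f_i(t) = T(t) f_0 + R_i(t)$, with $R_i(t) = \int_0^t T(t-s)(\partial_t + v\cdot \nabla_x) f_i(s)\, ds$. Using the hypotheses \eqref{eq:lsA}--\eqref{eq:lsC} and the bound $\|\langle v\rangle^{1/2+} f_i \rho_{f_i}\|_{L^1_{[0,\tau]} L^2} \lesssim \tau^{1/2} \|\langle v\rangle^{1/2+} f_i\|_{L^\infty L^2} \|\rho_{f_i}\|_{L^2 L^\infty_x}$, one checks $\|\langle v\rangle^{1/2+}(\partial_t + v\cdot\nabla_x) f_i\|_{L^1_{[0,\tau]} L^2} \to 0$ as $\tau \to 0$, so both $\|\langle v\rangle^{1/2+} R_i\|_{L^\infty_{[0,\tau]} L^2}$ and $\|\langle v\rangle^{1/2+}(\partial_t + v\cdot\nabla_x) R_i\|_{L^1_{[0,\tau]} L^2}$ vanish in this limit. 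The four right-hand-side terms then split into three types: (i) the loss terms, bounded by Holder in time using $\|\rho_{f_1}\|_{L^2 L^\infty_x}$ for $w\rho_{f_1}$ and Lemma \ref{lem:A1lem} (to control $\|\rho_w\|_{L^2_{[0,\tau]} L^4_x}$ by $A(\tau) + B(\tau)$) combined with $\|\langle v\rangle^{1/2+} f_2\|_{L^\infty L^4_x L^2_v}$ for $f_2 \rho_w$; (ii) the gain-type remainder interactions $Q^+(w, R_1)$ and $Q^+(R_2, w)$, each controlled by Lemma \ref{lem:usefullemma3} since the $R_i$ factor supplies a vanishing constant; and (iii) the gain-type interactions with the free flow, $Q^+(w, T(t) f_0)$ and $Q^+(T(t) f_0, w)$.

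Type (iii) is the main obstacle, since a direct application of Lemma \ref{lem:usefullemma3} would multiply $A(\tau) + B(\tau)$ by the non-small constant $\|\langle v\rangle^{1/2+} f_0\|_{L^2}$. To extract smallness, I exploit $w(0) = 0$: writing $w(t) = \int_0^t T(t-s) g(s)\, ds$ with $g = (\partial_t + v\cdot\nabla_x) w$, bilinearity of $Q^+$ together with the semigroup property of $T$ give
\begin{equation*}
Q^+(w(t), T(t) f_0) = \int_0^t Q^+\bigl(T(t-s) g(s),\, T(t-s)[T(s) f_0]\bigr)\, ds.
\end{equation*}
For each fixed $s$, the integrand is $Q^+$ applied to two free transports from frozen data $g(s)$ and $T(s) f_0$. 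Applying the weighted analogue of Proposition \ref{prop:shorttime} to the (fixed) datum $T(s) f_0$ gives
\begin{equation*}
\|\langle v\rangle^{1/2+} Q^+\bigl(T(\cdot) g(s),\, T(\cdot) T(s) f_0\bigr)\|_{L^1_{t' \in [0, \tau-s]} L^2} \leq \epsilon_s(\tau)\, \|\langle v\rangle^{1/2+} g(s)\|_{L^2}
\end{equation*}
with $\epsilon_s(\tau) \to 0$ as $\tau \to 0$. Absolute continuity of the Strichartz-type norm driving the proof of Proposition \ref{prop:shorttime}, applied to the Wigner transform of $\langle v\rangle^{1/2+} f_0$, upgrades this to a single $\epsilon(\tau) \to 0$ uniform in $s \in [0, T]$. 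Integrating in $s$ with Minkowski yields $\|\langle v\rangle^{1/2+} Q^+(w, T(t) f_0)\|_{L^1_{[0,\tau]} L^2} \leq \epsilon(\tau) B(\tau)$, and the symmetric term $Q^+(T(t) f_0, w)$ is handled identically.

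Combining (i)--(iii) produces $B(\tau) \leq \epsilon(\tau)(A(\tau) + B(\tau)) \leq 2\epsilon(\tau) B(\tau)$, whence $B(\tau_0) = 0$ for $\tau_0$ small, so $w \equiv 0$ on $[0, \tau_0]$. Restarting at $\tau_0$ with the common data $f_1(\tau_0) = f_2(\tau_0)$ and repeating, uniform absolute continuity in the starting time gives a lower bound on each step, so finitely many iterations cover $[0, T]$. The main technical hurdle is therefore establishing the weighted short-time bilinear smallness in step (iii) and verifying its uniformity in the starting time $s$.
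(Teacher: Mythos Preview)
Your proposal is correct and follows essentially the same strategy as the paper: decompose each $f_i$ into its free-flow part and a Duhamel remainder, handle the loss terms by H\"older and Lemma~\ref{lem:A1lem}, handle the gain--remainder interactions by Lemma~\ref{lem:usefullemma3}, and extract smallness from the dangerous gain--free-flow interactions by expanding $w$ via Duhamel (using $w(t_0)=0$) and invoking the short-time bilinear estimate of Proposition~\ref{prop:shorttime}.

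Two minor structural differences are worth noting. First, the paper organizes the argument as a proof by contradiction at $t_0 = \inf\{t : \|w(t)\|_{L^2} > 0\}$ rather than as a forward iteration; this sidesteps entirely the need for a uniform lower bound on step sizes, since one only needs smallness at that single $t_0$. Second, your uniformity-in-$s$ concern in step~(iii) is actually spurious: since $T(t-s)[T(s)f_0] = T(t)f_0$ is independent of $s$, you can apply Proposition~\ref{prop:shorttime} directly with the fixed datum $f_0$ and the varying datum $T(-s)g(s)$, obtaining a single $\delta_{f_0}(\tau)$ with no $s$-dependence. With those simplifications your argument coincides with the paper's Lemma~\ref{lem:unlem}.
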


\begin{remark}
Theorem \ref{thm:unique} makes no assumptions about the non-negativity of either
$f (t)$ or $f(0) = f_0$;
in particular, neither $f$ nor $\rho_f$ needs to be non-negative anywhere
on their respective domains of definition. 
\end{remark}

\subsection{Proof of Theorem \ref{thm:unique}}
\label{ssec:unique-pf}

Let $f,g$ be two mild solutions of Boltzmann's equation on the given
interval $[0,T]$ (each satisfying the bounds stated in the theorem),
 and consider the difference
\begin{equation}
w = f - g
\end{equation}
The function $w$ satisfies the
difference equation
\begin{equation}
\left( \partial_t + v \cdot \nabla_x \right) w
= Q^+ (f,w) + Q^+ (w,g) - w \rho_f - g \rho_w
\end{equation}
with $w(0) = 0$. Now we apply the lemma to follow (it is not hard
to check that all necessary bounds follow from the hypotheses of
the uniqueness theorem and the fact that $f,g$ solve Boltzmann's
equation with $w$ being their difference).

\begin{lemma}
\label{lem:unlem}
Assume that $f_i$, $i = 1,2,3,4$, satisfy the bounds
\begin{equation}
\left< v \right>^{\frac{1}{2}+} f_i \in L^\infty_{t \in [0,T]} L^2_{x,v}
\bigcap L^\infty_{t \in [0,T]} L^4_x L^2_v
\end{equation}
\begin{equation}
\left< v \right>^{\frac{1}{2}+}
 \left( \partial_t + v \cdot \nabla_x \right) f_i \in
L^1_{t \in [0,T]} L^2_{x,v}
\end{equation}
\begin{equation}
\rho_{f_i} \in L^2_{t \in [0,T]} L^\infty_x \bigcap
L^2_{t \in [0,T]} L^4_x
\end{equation}
Also assume that $w$ is a mild solution of the equation
\begin{equation}
\left( \partial_t + v \cdot \nabla_x \right) w
= Q^+ (f_1, w) + Q^+ ( w, f_2 ) +
w \rho_{f_3} + f_4 \rho_w
\end{equation}
for $t \in [0,T]$, and satisfies the bounds
\begin{equation}
\left< v \right>^{\frac{1}{2}+} w \in L^\infty_{t \in [0,T]} L^2_{x,v}
\end{equation}
\begin{equation}
\left< v \right>^{\frac{1}{2}+}
 \left( \partial_t + v \cdot \nabla_x \right) w \in
L^1_{t \in [0,T]} L^2_{x,v}
\end{equation}
Then if $w(t=0)=0$ then $w \equiv 0$ for $ 0 \leq t \leq T$.
\end{lemma}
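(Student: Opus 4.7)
The plan is to prove $w \equiv 0$ on $[0,T]$ by closing a short-time estimate on the composite norm
\[
N(\tau) \;:=\; \bigl\|\langle v\rangle^{\frac{1}{2}+} w\bigr\|_{L^\infty_{t\in[0,\tau]} L^2_{x,v}} \;+\; \bigl\|\langle v\rangle^{\frac{1}{2}+}(\partial_t + v\cdot\nabla_x) w\bigr\|_{L^1_{t\in[0,\tau]} L^2_{x,v}}
\]
and deducing $N(\tau_0)=0$ for some $\tau_0>0$; since the hypotheses are translation-invariant in time, the argument may then be iterated on consecutive subintervals of length $\tau_0$ to cover $[0,T]$. First, the mild-form identity $w(t)=\int_0^t T(t-s)(\partial_t+v\cdot\nabla_x)w(s)\,ds$, combined with the isometry of $T(t)$ on $\langle v\rangle^{\frac{1}{2}+} L^2_{x,v}$, shows that the $L^\infty$ term of $N(\tau)$ is dominated by the $L^1$ term. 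Second, Lemma~\ref{lem:A1lem} applied to $w$ supplies the crucial auxiliary bound
\[
\|\rho_w\|_{L^2_{t\in[0,\tau]} L^\infty_x} + \|\rho_w\|_{L^2_{t\in[0,\tau]} L^4_x} \;\lesssim\; N(\tau),
\]
which converts the missing a priori control of $\rho_w$ into a bound in terms of $N(\tau)$.

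Next I would estimate each term in the difference equation in the norm $\|\langle v\rangle^{\frac{1}{2}+}\,\cdot\,\|_{L^1_{t\in[0,\tau]}L^2_{x,v}}$. The loss-type terms are handled directly by Hölder in time: the factor $\rho_{f_3}\in L^2_t L^\infty_x$ produces a $\tau^{1/2}$ gain against $\|\langle v\rangle^{\frac{1}{2}+} w\|_{L^2_t L^2}\le \tau^{1/2} A(\tau)$, while $f_4 \rho_w$ is estimated using the hypothesis $\langle v\rangle^{\frac{1}{2}+} f_4\in L^\infty_t L^4_x L^2_v$ together with the Strichartz bound above, yielding an overall bound of the form $\tau^{1/2}\,\mathrm{const}\cdot N(\tau)$. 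For the gain terms $Q^+(f_1,w)$ and $Q^+(w,f_2)$, Lemma~\ref{lem:usefullemma3} gives a bilinear bound $C\,\Phi_i(\tau)\,N(\tau)$, where $\Phi_i(\tau)=\|\langle v\rangle^{\frac{1}{2}+}f_i\|_{L^\infty_{[0,\tau]}L^2}+\|\langle v\rangle^{\frac{1}{2}+}(\partial_t+v\cdot\nabla_x)f_i\|_{L^1_{[0,\tau]}L^2}$.

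The main obstacle is that the $L^\infty$-in-time contribution to $\Phi_i(\tau)$ does \emph{not} vanish as $\tau\to 0^+$, so a naive application of Lemma~\ref{lem:usefullemma3} cannot close the Gronwall inequality without a small-data hypothesis on $f_1,f_2$. To overcome this I would unwind the proof of Lemma~\ref{lem:abswplem} taking advantage of the fact that $w(0)=0$: setting $x_j(t)=e^{tv\cdot\nabla_x}f_j(t)$, the ``constant'' term $\mathcal{A}(t,x_1(0),x_2(0))$ from the identity in that proof drops out by bilinearity, and after further expanding $x_1(\sigma)=f_1(0)+\int_0^\sigma \dot x_1$ one arrives at a refined bound in which every surviving summand contains the factor $\|\langle v\rangle^{\frac{1}{2}+}(\partial_t+v\cdot\nabla_x)w\|_{L^1_{[0,\tau]}L^2}$, which tends to $0$ as $\tau\to 0^+$ by absolute continuity of the Bochner integral, paired against one of $\|\langle v\rangle^{\frac{1}{2}+}f_1(0)\|_{L^2}$ or $\|\langle v\rangle^{\frac{1}{2}+}F_1\|_{L^1_{[0,\tau]}L^2}$. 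Combined with a parallel decomposition in the second argument and with an additional pointwise-in-$t$ bilinear estimate extracted from Lemma~\ref{lem:L42bd} (using $\langle v\rangle^{\frac{1}{2}+}f_i\in L^\infty_t L^4_xL^2_v$), this refined bilinear control reduces the gain-term contribution to a quantity of the form $\varepsilon(\tau) N(\tau)$ with $\varepsilon(\tau)\to 0$.

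Putting everything together, one obtains an inequality $N(\tau)\le \varepsilon(\tau)\,N(\tau)$ with $\varepsilon(\tau)\to 0$ as $\tau\to 0^+$; choosing $\tau_0$ so that $\varepsilon(\tau_0)<1$ forces $N(\tau_0)=0$, hence $w\equiv 0$ on $[0,\tau_0]$. Repeating the same argument with initial time $\tau_0$ (where again $w(\tau_0)=0$ and the hypotheses on $f_1,f_2,f_3,f_4$ are translation-invariant) and iterating finitely many times yields $w\equiv 0$ on $[0,T]$, proving the lemma. The hardest step in practice is verifying that the refined bilinear estimate for the gain term genuinely produces a short-time-small coefficient; the combinatorics of the Duhamel expansions are somewhat delicate, and one must carefully balance which factor supplies the $L^1_t$ smallness in each summand.
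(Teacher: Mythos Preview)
Your overall structure is correct and matches the paper: define a composite norm combining $L^\infty_t$ and $L^1_t$ pieces, handle the loss terms by H\"older against $\rho_{f_3}\in L^2_t L^\infty_x$ and $\langle v\rangle^{\frac12+}f_4\in L^\infty_t L^4_x L^2_v$ together with Lemma~\ref{lem:A1lem} for $\rho_w$, and then close a short-time inequality. The difficulty, as you say, is the gain term, and here your proposed fix has a genuine gap.

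After unwinding Lemma~\ref{lem:abswplem} and using $w(0)=0$, you still obtain a summand of the form
\[
\int_0^\tau \bigl\|Q^+\bigl(T(t)f_1(0),\,T(t)T(-\sigma)\zeta(\sigma)\bigr)\bigr\|_{L^1_{t\in[0,\tau]}L^2_{x,v}}\,d\sigma,
\]
where $\zeta=(\partial_t+v\cdot\nabla_x)w$. Your claim is that this is bounded by $C\|f_1(0)\|_{L^2}\cdot\|\zeta\|_{L^1_{[0,\tau]}L^2}$ and that the second factor ``tends to $0$ by absolute continuity.'' But $\|\zeta\|_{L^1_{[0,\tau]}L^2}$ is exactly (part of) $N(\tau)$; saying it is small does not help, because the inequality you are trying to close is $N(\tau)\le \varepsilon(\tau)N(\tau)$, and the \emph{coefficient} you have produced is $C\|f_1(0)\|_{L^2}$, which is not small. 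This is circular, and the appeal to Lemma~\ref{lem:L42bd} via $\langle v\rangle^{\frac12+}f_i\in L^\infty_t L^4_x L^2_v$ does not repair it: that lemma lives on the Fourier side in $v$ (the $L^{4,2}$ is a Lorentz norm of $\hat f(\eta)$), and there is no pointwise-in-time bound of the form $\|Q^+(f,g)\|_{L^2_{x,v}}\lesssim \|f\|_{L^4_xL^2_v}\|g\|_{L^4_xL^2_v}$.

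What the paper does instead is precisely to make the coefficient small: by Proposition~\ref{prop:shorttime}, for \emph{fixed} $f_1(t_0)\in L^2_{x,v}$ one has
\[
\sup_{\|g_0\|_{L^2}=1}\bigl\|Q^+\bigl(T(t)f_1(t_0),T(t)g_0\bigr)\bigr\|_{L^1_{t\in[-s,s]}L^2_{x,v}} \;=\; \delta_{f_1(t_0)}(s)\;\longrightarrow\;0
\]
as $s\to 0^+$, via dominated convergence on the Strichartz norm $\|e^{it\Delta_\pm}\gamma_{0,1}\|_{L^2_{t\in[-s,s]}L^{4,2}_{x,x'}}$. This yields a bound $\delta_{f_1(t_0)}(s)\cdot N(s)$ with a genuinely small coefficient depending on the \emph{profile} of $f_1(t_0)$, not its norm. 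The remaining piece $Q^+\bigl(f_1-T(t-t_0)f_1(t_0),w\bigr)$ is then handled exactly by Lemma~\ref{lem:usefullemma3}, since now the first argument has small $L^\infty_tL^2$ and small $L^1_tL^2$ transport-derivative norms on $[t_0,t_0+s]$. Once you insert Proposition~\ref{prop:shorttime} for this ``frozen'' piece, your outline goes through; without it, the gain-term estimate cannot close.
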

\begin{proof}
The bounds imposed on $w$ immediately imply that
$\left< v \right>^{\frac{1}{2}+} w \in C \left(
[0,T], L^2_{x,v}\right)$. 
Let us suppose the conclusion fails and define
\begin{equation}
t_0 = \inf \left\{ t \in [0,T] \left| \left\Vert
w ( t ) \right\Vert_{L^2_{x,v}} > 0 \right. \right\}
\end{equation}
Then $0 \leq t_0 < T$, and $w \equiv 0$ for all
$0 \leq t \leq t_0$ by continuity.

Let us define the error, for $0 \leq s \leq T - t_0$,
\begin{equation}
e_{t_0} (s) = \left\Vert 
\left< v \right>^{\frac{1}{2}+} w
\right\Vert_{L^\infty_{t \in [t_0,t_0+s]} L^2_{x,v}}
+ \left\Vert \left< v
 \right>^{\frac{1}{2}+} \left( \partial_t
+ v \cdot \nabla_x \right) w
\right\Vert_{L^1_{t \in [t_0,t_0+s]} L^2_{x,v}}
\end{equation}
and note that $e_{t_0} (s) < +\infty$ by hypothesis.
We re-write the equation for $w$ as follows:
\begin{equation}
\begin{aligned}
& \left( \partial_t + v \cdot \nabla_x \right) w\\
& \qquad = Q^+ \left( f_1 - T(t-t_0) f_1 ( t_0),w \right) 
+ Q^+ \left( T(t-t_0) f_1 (t_0), w \right) \\
&\qquad+ Q^+ \left( w,f_2 - T(t-t_0) f_2 (t_0) \right)
+ Q^+ \left( w, T(t-t_0) f_2 (t_0) \right) \\
& \qquad + w \rho_{f_3} + f_4 \rho_w
\end{aligned}
\end{equation}
The most dangerous terms are
\begin{equation}
\label{eq:dan1}
 Q^+ \left( T(t-t_0) f_1 (t_0), w \right) 
\end{equation}
and
\begin{equation}
\label{eq:dan2}
Q^+ \left( w, T(t-t_0) f_2 (t_0) \right)
\end{equation}
because a quantitative estimate will always be proportional
to 
$$\left\Vert f_i ( t_0 ) \right\Vert_{L^2_{x,v}} \times e_{t_0} (s)$$
which is not necessarily a small multiple of $e_{t_0} (s)$ (unless
$\left\Vert f_i ( t_0 ) \right\Vert_{L^2_{x,v}}$ is small). We
will address this problem using the short-time estimates from
Proposition \ref{prop:shorttime}.

We will show how to estimate (\ref{eq:dan1}); the alternative
term (\ref{eq:dan2}) is dealt with similarly. To begin, let 
us define
$$ \zeta = \left( \partial_t + v \cdot \nabla_x \right) w $$
then use Duhamel's formula to write
\begin{equation}
w (t) = \int_{t_0}^t T\left(t-\sigma \right)
 \zeta ( \sigma ) d\sigma
\end{equation}
since $w ( t_0 ) = 0$. Due to the bilinearity of $Q^+$, we can
now write
\begin{equation}
\begin{aligned}
& Q^+ \left( T ( t-t_0 ) f_1 ( t_0 ), w \right) =
\int_{t_0}^t Q^+ \left( T ( t-t_0 ) f_1 (t_0), 
T (t-\sigma) \zeta (\sigma) \right) d\sigma \\
& = \int_{t_0}^t Q^+ \left( T ( t-t_0 ) f_1 (t_0), 
T (t-t_0) T ( t_0 - \sigma ) \zeta (\sigma) \right) d\sigma
\end{aligned}
\end{equation}
Now by Minkowski's inequality we have
\begin{equation}
\begin{aligned}
& \left\Vert 
Q^+ \left( T ( t-t_0 ) f_1 ( t_0 ), w \right)
\right\Vert_{L^1_{t \in [t_0, t_0 + s]} L^2_{x,v}}  \leq \\
&  \int_{t_0}^{t_0+s} 
\left\Vert Q^+ \left( T ( t-t_0 ) f_1 (t_0), 
T (t-t_0) T ( t_0 - \sigma ) \zeta ( \sigma ) \right)
\right\Vert_{L^1_{t \in [t_0,t_0 + s]} L^2_{x,v}} d\sigma
\end{aligned}
\end{equation}
Apply Proposition \ref{prop:shorttime} to obtain
\begin{equation}
\begin{aligned}
& \left\Vert 
Q^+ \left( T ( t-t_0 ) f_1 ( t_0 ), w \right)
\right\Vert_{L^1_{t \in [t_0, t_0 + s]} L^2_{x,v}}  \\
& \leq \int_{t_0}^{t_0+s} 
\delta_{f_1 ( t_0 )} ( s ) \left\Vert
 T ( t_0 - \sigma ) \zeta ( \sigma) 
\right\Vert_{L^2_{x,v}} d\sigma \\
& = \int_{t_0}^{t_0+s} 
\delta_{f_1 ( t_0 )} ( s ) \left\Vert
  \zeta ( \sigma) 
\right\Vert_{L^2_{x,v}} d\sigma
=  
\delta_{f_1 ( t_0 )} ( s ) \left\Vert
  \zeta ( t) 
\right\Vert_{L^1_{t \in [t_0,t_0 + s]} L^2_{x,v}}  \\
& \leq \delta_{f_1 ( t_0 )} (s) e_{t_0} (s)
\end{aligned}
\end{equation}
where for each $f_1 (t_0) \in L^2_{x,v}$,
$$ \limsup_{s \rightarrow 0^+} \delta_{f_1 (t_0)} (s) = 0 $$
The same argument can be applied with a weight
$\left< v \right>^{\frac{1}{2}+}$, to yield
\begin{equation}
\left\Vert \left< v \right>^{\frac{1}{2}+} Q^+ \left(
T ( t-t_0) f_1 ( t_0 ), w \right)
\right\Vert_{L^1_{t \in [t_0, t_0 + s ]} L^2_{x,v}} \leq
\tilde{\delta}_{f_1 ( t_0 )} (s) 
e_{t_0} (s)
\end{equation}
where for each $f_1 (t_0)$ with
$\left< v \right>^{\frac{1}{2}+} f_1 (t_0) \in L^2_{x,v}$ there holds
$$ \limsup_{s \rightarrow 0^+}
 \tilde{\delta}_{f_1 (t_0)} (s) = 0 $$

Next we consider the term
\begin{equation}
Q^+ \left( f_1 - T(t-t_0) f_1 (t_0), w \right)
\end{equation}
(the corresponding term involving $Q^+$ and $f_2$ is dealt with
similarly). Here we use Lemma \ref{lem:usefullemma3} to write
\begin{equation*}
\begin{aligned}
& \left\Vert \left< v \right>^{\frac{1}{2}+}
Q^+ \left( f_1 - T(t-t_0) f_1 (t_0), w \right)
\right\Vert_{L^1_{t \in [t_0,t_0+s]} L^2_{x,v}} \\
& \leq C \left( \left\Vert 
\left< v \right>^{\frac{1}{2}+} \left( f_1 - T(t-t_0) f_1 (t_0)
\right)
\right\Vert_{L^\infty_{t \in [t_0,t_0+s]} L^2_{x,v}}\right. \\
& \qquad \qquad \qquad \qquad \qquad \left.
+ \left\Vert \left< v \right>^{\frac{1}{2}+}
\left( \partial_t + v \cdot \nabla_x \right) f_1
\right\Vert_{L^1_{t \in [t_0,t_0+s]} L^2_{x,v}} \right)  \\
& \times\left( \left\Vert \left< v \right>^{\frac{1}{2}+} w
\right\Vert_{L^\infty_{t \in [t_0,t_0+s]} L^2_{x,v}} +
\left\Vert \left< v \right>^{\frac{1}{2}+} \left( \partial_t
+ v \cdot \nabla_x \right) w \right\Vert_{L^1_{t \in [t_0,t_0+s]}
L^2_{x,v}} \right) \\
& \leq C
\left\Vert \left< v \right>^{\frac{1}{2}+}
\left( \partial_t + v \cdot \nabla_x \right) f_1
\right\Vert_{L^1_{t \in [t_0,t_0+s]} L^2_{x,v}}\times
e_{t_0} (s)
\end{aligned}
\end{equation*}

Now let us consider the term
$$ w \rho_{f_3} $$
We have by H{\" o}lder's inequality
\begin{equation*}
\begin{aligned}
 \left\Vert \left< v \right>^{\frac{1}{2}+}
 w \rho_{f_3} \right\Vert_{L^1_{t \in [t_0,t_0+s]}
L^2_{x,v}} & \leq
\left\Vert \left< v \right>^{\frac{1}{2}+} w
\right\Vert_{L^\infty_{t \in [t_0,t_0+s]} L^2_{x,v}}
\left\Vert \rho_{f_3}
\right\Vert_{L^1_{t \in [t_0,t_0+s]} L^\infty_x} \\
& \leq s^{\frac{1}{2}}
\left\Vert \rho_{f_3} \right\Vert_{L^2_{t \in [0,T]}
L^\infty_x} e_{t_0} (s)
\end{aligned}
\end{equation*}

Finally consider the term
$$ f_4 \rho_w. $$
We have by H{\" o}lder's inequality and Lemma
\ref{lem:A1lem},
\begin{equation}
\begin{aligned}
& \left\Vert \left< v \right>^{\frac{1}{2}+} f_4 \rho_w 
\right\Vert_{L^1_{t \in [t_0,t_0+s]} L^2_{x,v}} \leq
\left\Vert \left< v \right>^{\frac{1}{2}+} f_4
\right\Vert_{L^\infty_{t \in [t_0,t_0+s]} L^4_x L^2_v}
\left\Vert \rho_w \right\Vert_{L^1_{t \in [t_0,t_0+s]} L^4_x} \\
& \qquad \qquad \leq
s^{\frac{1}{2}}
\left\Vert \left< v \right>^{\frac{1}{2}+} f_4
\right\Vert_{L^\infty_{t \in [0,T]} L^4_x L^2_v}
\left\Vert \rho_w \right\Vert_{L^2_{t \in [t_0,t_0+s]} L^4_x} \\
& \qquad \qquad \leq
C s^{\frac{1}{2}}
\left\Vert \left< v \right>^{\frac{1}{2}+} f_4
\right\Vert_{L^\infty_{t \in [0,T]} L^4_x L^2_v}
\times e_{t_0} (s)
\end{aligned}
\end{equation}

Altogether we can conclude the following bound:
$$ e_{t_0} (s) \leq c (s) e_{t_0} (s) $$
where
\begin{equation}
\begin{aligned}
c (s) = & C  
\sum_{i=1,2}\left( \tilde{\delta}_{f_i (t_0)} (s) +
\left\Vert \left< v \right>^{\frac{1}{2}+}
\left( \partial_t + v \cdot \nabla_x \right) f_i
\right\Vert_{L^1_{t \in [t_0,t_0+s]} L^2_{x,v}} \right)
\\ 
& + C s^{\frac{1}{2}} \left\Vert \rho_{f_3}
\right\Vert_{L^2_{t \in [0,T]} L^\infty_x}
+ C s^{\frac{1}{2}} \left\Vert
\left< v \right>^{\frac{1}{2}+}
f_4 \right\Vert_{L^\infty_{t \in [0,T]} L^4_x L^2_v}
\end{aligned}
\end{equation}
Clearly $c(s) \rightarrow 0$ as $s \rightarrow 0^+$; hence,
taking $s$ small enough, we shall have $c(s) < 1$. This implies
that $e_{t_0} (s) < e_{t_0} (s)$; since $e_{t_0} (s)$ is finite,
we can conclude that $e_{t_0} (s) = 0$ for some $s>0$ sufficiently
small. This contradicts the definition of $t_0$, so we are done.
\end{proof}

\section{The Kaniel-Shinbrot Iteration}


The problem we encounter in trying to solve Boltzmann's equation is that
we are unable to prove Proposition \ref{prop:kin-gain-bound} with
$Q^-$ in place of $Q^+$. Indeed, it is not even clear whether $Q^- (f,f)$ is
meaningful, in general, when $f$ is a mild solution of the gain-only equation obtained from
Theorem \ref{thm:gain-eq-gwp-kin}. On the other hand, it is definitely possible to
solve uniquely the full Boltzmann equation (with constant collision kernel in $d=2$)
\emph{locally in time} if we assume:
\begin{equation*}
\left< v \right>^{\frac{1}{2}+}  \left<
\nabla_x \right>^{\frac{1}{2}+}  f_0 \in
L^2_{x,v} \left( \mathbb{R}^2 \times \mathbb{R}^2 \right)
\end{equation*}
The challenge, therefore, is to propagate sufficient regularity 
for the gain-only equation, \emph{assuming a smallness condition only for
the $L^2_{x,v}$ norm}. To this end, we will need to 
employ a small parameter $\varepsilon \in \left( 0,1 \right]$ to
encode the fact that higher derivatives may be much larger than
the $L^2_{x,v}$ norm of $f_0$.

We proceed by first establishing regularity of the gain-only equation in Section \ref{sec:gainReg}.
Then, in Section \ref{sec:KanielShinbrot}, we present a novel application of the iterative method of Kaniel-Shinbrot 
to establish existence of global solution to the Boltzmann equation.


\subsection{Regularity for the Gain-Only Equation}
\label{sec:gainReg}

\begin{theorem}
\label{thm:gain-eq-gwp-kin-alpha} 
There exists a number $\eta \in \left( 0, 1 \right)$ such that all the following is true: \\
\begin{enumerate}[label=(\roman*)]
\item For any $f_0 \in H^{\alpha,\alpha}$, $\alpha \in (0,1)$,
  with $\left\Vert f_0 \right\Vert_{L^2_{x,v}} < \eta$,
 there exists a unique global ($t \in \mathbb{R}$) mild solution to the
\emph{gain-only} Boltzmann equation
\begin{equation}
\left( \partial_t + v \cdot \nabla_x \right)
f(t) = Q^+ \left( f(t), f(t) \right)
\end{equation}
with $f(0) = f_0$ such that $f \in C_{t,\textnormal{loc}}
 H^{\alpha,\alpha}$ and $Q^+ (f,f) \in L^1_{t,\textnormal{loc}}
H^{\alpha,\alpha}$. For this solution, it holds that
$f \in L^\infty_t H^{\alpha,\alpha}$
 and $Q^+ (f,f) \in L^1_t H^{\alpha,\alpha}$, and the solution
scatters in $H^{\alpha,\alpha}$ as $t \rightarrow \pm \infty$. 

\item For any $f_0 \in H^{\alpha,\alpha}$ with 
$\left\Vert f_0 \right\Vert_{L^2_{x,v}} < \eta$,
 we have the following estimate:
\begin{equation}
\label{eq:L2globest}
\left\Vert f \right\Vert_{L^\infty_{t \in \mathbb{R}} L^2_{x,v}}^2
+ \left\Vert Q^+ \left( f,f \right)
\right\Vert_{L^1_{t \in \mathbb{R}} L^2_{x,v}}
\leq C \left\Vert f_0 \right\Vert_{L^2_{x,v}}^2
\end{equation}
for the solution $f$ of the gain-only Boltzmann equation (note, this bound only depends
on the $L^2_{x,v}$ norm of $f_0$). Also, if $f_0 (x,v) \geq 0$
$\textnormal{a.e.}-(x,v)$ then
$f (t,x,v) \geq 0$ for $\textnormal{a.e.}-(t,x,v)$ such that $t \geq 0$.

\item If $\alpha > \frac{1}{2}$, then we have
$\left< v \right>^{\frac{1}{2}+} f
 \in L^\infty_t L^4_x L^2_v$ and $\rho_f \in L^2_t L^\infty_x
\bigcap L^2_t L^4_x$. Combining these estimates, the loss term
$Q^- (f,f) = \rho_f f$ (although \emph{not} appearing in the equation for $f$)
satisfies 
$$
\left< v \right>^{\frac{1}{2}+}
 Q^- (f,f) \in L^2_{t \in \mathbb{R}} L^2_{x,v}.
 $$
\end{enumerate}
\end{theorem}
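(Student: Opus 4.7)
For part (i), my plan is to import the strategy used for cubic NLS in Section \ref{sec:cubNLSd2}. The scale of norms $H^{\alpha,\alpha}_\varepsilon$ is equivalent to $H^{\alpha,\alpha}$ for each fixed $\varepsilon\in(0,1]$ (with $\varepsilon$-dependent constants), but tends monotonically to the $L^2_{x,v}$ norm as $\varepsilon\to 0^+$. I would apply Theorem \ref{thm:abswp} in the Hilbert space $\mathcal{H}=H^{\alpha,\alpha}_\varepsilon$ with the bilinear map
\[
\mathcal{A}(t,f_0,g_0)=e^{t\,v\cdot\nabla_x}Q^+\bigl(T(t)f_0,T(t)g_0\bigr),
\]
whose required spacetime bound is supplied by Proposition \ref{prop:gain-eq-bound-kin-alpha}. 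The decisive point is that the bilinear constant there is \emph{independent of $\varepsilon$}, so the smallness threshold produced by Theorem \ref{thm:abswp} is an absolute constant $\eta>0$. Given any $f_0\in H^{\alpha,\alpha}$ with $\left\Vert f_0\right\Vert_{L^2_{x,v}}<\eta$, I would choose $\varepsilon$ small enough (depending on $\left\Vert f_0\right\Vert_{H^{\alpha,\alpha}}$) so that $\left\Vert f_0\right\Vert_{H^{\alpha,\alpha}_\varepsilon}<\eta$, yielding the global-in-time solution together with $H^{\alpha,\alpha}$-scattering via Theorem \ref{thm:absScattering}.

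For part (ii), applying the same abstract scheme directly in $\mathcal{H}=L^2_{x,v}$, using Proposition \ref{prop:kin-gain-bound}, produces a global $L^2$-solution together with the quantitative bound (\ref{eq:absest}) of Theorem \ref{thm:abswp}; this is exactly (\ref{eq:L2globest}) once rewritten on the kinetic side. Any $H^{\alpha,\alpha}$ solution from part (i) automatically satisfies the hypotheses of the $L^2$-uniqueness clause of Theorem \ref{thm:gain-eq-gwp-kin}, so the two constructions agree, and the $H^{\alpha,\alpha}$ solution inherits (\ref{eq:L2globest}). Non-negativity is handled exactly as in the remark following Theorem \ref{thm:gain-eq-gwp-kin}: the contraction-mapping iterates in the fixed-point scheme are built from Duhamel expansions involving only $T(\cdot)$ and $Q^+$, both of which preserve pointwise non-negativity, and strong $L^2_{x,v}$ convergence preserves the sign of the limit almost everywhere.

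For part (iii), when $\alpha>\tfrac{1}{2}$ the Sobolev embedding $H^{\alpha}_x(\mathbb{R}^2)\hookrightarrow L^4_x(\mathbb{R}^2)$ combined with Minkowski's inequality (exploiting $4\geq 2$) converts the $L^\infty_t H^{\alpha,\alpha}$ bound of part (i) into $\langle v\rangle^{\frac{1}{2}+}f\in L^\infty_t L^4_x L^2_v$. For $\rho_f$, I would invoke Lemma \ref{lem:A1lem}: its two hypotheses, namely an $L^\infty_t$ bound on $\langle v\rangle^{\frac{1}{2}+}\langle\nabla_x\rangle^{\frac{1}{2}+}f$ and an $L^1_t$ bound on the same weighted quantity applied to $(\partial_t+v\cdot\nabla_x)f=Q^+(f,f)$, are controlled by parts (i) and (ii) provided $\alpha$ is chosen slightly above $\tfrac{1}{2}$. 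The loss-term conclusion follows from H\"older:
\[
\bigl\Vert \langle v\rangle^{\frac{1}{2}+}\rho_f\, f\bigr\Vert_{L^2_t L^2_{x,v}}\le \left\Vert \rho_f\right\Vert_{L^2_t L^\infty_x}\bigl\Vert \langle v\rangle^{\frac{1}{2}+}f\bigr\Vert_{L^\infty_t L^2_{x,v}}<\infty.
\]

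The only genuinely delicate step in the whole argument is reconciling a smallness hypothesis involving only the $L^2_{x,v}$ norm with existence in the strictly larger space $H^{\alpha,\alpha}$; this is exactly the purpose of the $\varepsilon$-rescaled norms together with the $\varepsilon$-uniform bilinear estimate of Proposition \ref{prop:gain-eq-bound-kin-alpha}. Everything else reduces to bookkeeping with commutators, Sobolev embedding, and H\"older's inequality.
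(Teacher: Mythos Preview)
Your proposal is correct and follows essentially the same approach as the paper: apply Theorem~\ref{thm:abswp} in $\mathcal{H}=H^{\alpha,\alpha}_\varepsilon$ with $\varepsilon=\varepsilon(f_0)$ chosen small (using the $\varepsilon$-uniform constant in Proposition~\ref{prop:gain-eq-bound-kin-alpha}), reconcile with the $L^2$ theory via the uniqueness clause of Theorem~\ref{thm:gain-eq-gwp-kin}, and deduce part~(iii) from Sobolev embedding, Lemma~\ref{lem:A1lem}, and H\"older. Your identification of the $\varepsilon$-rescaling trick as the only genuinely delicate step is exactly right.
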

\begin{proof}
Parts \emph{(i)} and \emph{(ii)}
 are direct consequences of Proposition \ref{prop:gain-eq-bound-kin-alpha},
combined with Theorem \ref{thm:abswp} taking
$\mathcal{H} = H^{\alpha,\alpha}_\varepsilon$ where
$\varepsilon = \varepsilon \left( f_0 \right)$ is sufficiently small; 
 here we have used
the fact that the constant
$C$ in Proposition \ref{prop:gain-eq-bound-kin-alpha} does not
depend on $\varepsilon$.
Note that $L^2_{x,v} \subset H^{\alpha,\alpha}$, so
the uniqueness in $L^2_{x,v}$ implies that $L^2_{x,v}$ and
$H^{\alpha,\alpha}$ 
solutions coincide globally in time (as long as the
$L^2_{x,v}$ norm of $f_0$ is small enough).

For part \emph{(iii)}, to see
that $\left< v \right>^{\frac{1}{2}+}
  f \in L^\infty_t L^4_x L^2_v$, we may
 observe that $\left< v \right>^{\frac{1}{2}+}
\left< \nabla_x \right>^{\frac{1}{2}+}
 f \in L^\infty_t L^2_{x,v}$ and apply
the Sobolev embedding theorem in the $x$ variable. 
On the other hand, the estimate $\rho_f \in L^2_t L^\infty_x 
\bigcap L^2_t L^4_x$
follows directly from Lemma \ref{lem:A1lem} and the estimates
from part \emph{(i)}.  The
estimate on $Q^- (f,f)$ then follows from H{\" o}lder's inequality.
Note that, contrary to part \emph{(ii)},
 all the bounds from part \emph{(iii)} depend explicitly on 
the $H^{\alpha,\alpha}$ norm of $f_0$.
\end{proof}

\subsection{The full equation via Kaniel-Shinbrot iteration}
\label{sec:KanielShinbrot}

The iteration of Kaniel and Shinbrot constructs a decreasing sequence
$g_n (t,x,v)$ and an increasing sequence $h_n (t,x,v)$ with
$0 \leq h_n \leq g_n$. The goal is to show that $\lim_n g_n = \lim_n h_n = f$,
with $f$ being a solution of the full Boltzmann equation. One can view
the functions $g_n, h_n$ as being ``barriers'' which progressively limit
the possible oscillation of $f$, until eventually there is no room
left in which to wiggle.

Recall the convenient notation
\begin{equation}
\rho_f (x) = \int_{\mathbb{R}^2} f (x,v) dv
\end{equation}
The iteration is as follows:
\begin{equation*}
\begin{aligned}
& \left( \partial_t + v \cdot \nabla_x + \rho_{g_n}\right) h_{n+1}
= Q^+ \left( h_n, h_n \right) \\
&  \; \left( \partial_t + v \cdot \nabla_x + \rho_{h_n} \right) g_{n+1}
= Q^+ \left( g_n, g_n \right) \\
& \;\;\; g_{n+1} (0) = h_{n+1} (0) = f_0
\end{aligned}
\end{equation*}
For each $n$, observe that we are simply solving \emph{linear} differential
equations (with the initial data always fixed at $f_0$),
so the existence of the iteration is typically not a big problem.
It is possible to show, using monotonicity, that if
\begin{equation}
0 \leq h_{n-1} \leq h_n \leq g_n \leq g_{n-1}
\end{equation}
holds globally, then
\begin{equation} \label{eq:mon}
0 \leq h_n \leq h_{n+1} \leq g_{n+1} \leq g_n
\end{equation}
Hence, in order to exploit monotonicity, we must at least have
\begin{equation}
0 \leq h_1 \leq h_2 \leq g_2 \leq g_1
\end{equation}
where
\begin{equation} \label{eq:g2h2}
\begin{aligned}
& \left( \partial_t + v \cdot \nabla_x + \rho_{g_1}\right) h_{2}
= Q^+ \left( h_1, h_1 \right) \\
&  \; \left( \partial_t + v \cdot \nabla_x + \rho_{h_1} \right) g_{2}
= Q^+ \left( g_1, g_1 \right) \\
& \;\;\; g_{2} (0) = h_{2} (0) = f_0
\end{aligned}
\end{equation}
and this is the so-called \emph{beginning condition} (note that no initial conditions
are imposed for $(h_1,g_1)$). Note that the beginning
condition has to be verified \emph{for all time} (or at least on the full
time interval for which the iteration is to be employed). For this reason, establishing
the beginning condition is considered the most difficult part of the Kaniel-Shinbrot iteration.


We choose  $h_1$ as follows 
$$h_1 \equiv 0,$$ 
and we choose $g_1$ to solve the gain only equation  
\begin{equation} \label{eq:g1}
\left( \partial_t + v \cdot \nabla_x \right) g_1 = Q^+ \left( g_1, g_1 \right),\qquad
\qquad g_1 (0) = f_0.
\end{equation}
Then we compute $h_2$ and $g_2$ according to \eqref{eq:g2h2} to obtain 
\begin{equation} \label{eq:h2}
h_2(t) = f_0 e^{-\int_0^t \rho_{g_1}(\tau) d\tau} 
\end{equation}
and 
\begin{equation} \label{eq:g2}
g_2(t) = f_0  + \int_0^t Q^{+}(g_1,g_1)(\tau) \; d\tau. 
\end{equation}
Therefore the condition 
\begin{equation} \label{eq:beg1}
0 \leq h_1(t)  \leq h_2(t) \leq g_2(t),
\end{equation} 
is satisfied for all $t \geq 0$. 
On the other hand, since $h_1 \equiv 0$ we see from \eqref{eq:g2h2} and \eqref{eq:g1} that $g_2$ and $g_1$ solve the same initial value problem. 
Therefore 
\begin{equation} \label{eq:beg2}
g_2(t) =  g_1(t)
\end{equation} 
for all $t \geq 0$ for which we can make sense of the gain only equation. 
We conclude that for our choice of $h_1$ and $g_1$, the beginning condition follows \eqref{eq:beg1} and \eqref{eq:beg2}. 

Since all the $g_n,h_n$ are bounded
by $g_1$, under the conditions of 
Theorem \ref{thm:gain-eq-gwp-kin-alpha} 
with $f_0 \in H^{\frac{1}{2}+,\frac{1}{2}+}$ 
we automatically have
\begin{equation*}
\sup_n \left\Vert h_n \right\Vert_{L^\infty_t L^2_{x,v}} \leq
\sup_n \left\Vert g_n \right\Vert_{L^\infty_t L^2_{x,v}} < \infty 
\end{equation*}
\begin{equation*}
\sup_n \left\Vert Q^+ \left( h_n, h_n \right) \right\Vert_{L^1_{t \geq 0} L^2_{x,v}}
\leq \sup_n \left\Vert Q^+ \left( g_n, g_n \right) \right\Vert_{L^1_{t \geq 0} L^2_{x,v}}
< \infty
\end{equation*}
\begin{equation*}
\sup_n \left\Vert Q^- \left( h_n, h_n \right)
\right\Vert_{L^1_{t \in [0,T]} L^2_{x,v}}
\leq \sup_n \left\Vert Q^- \left( g_n, g_n \right) \right\Vert_{L^1_{t \in [0,T]} L^2_{x,v}}
< \infty
\end{equation*}
assuming the iteration makes sense. Moreover, since the functions $h_n$ are
increasing and the $g_n$ are decreasing, we can define their pointwise limits
\begin{equation*}
 g = \lim_n g_n \qquad \qquad h = \lim_n h_n
\end{equation*}
Since $0 \leq h_n \leq g_n \leq g_1$,
 and $Q^\pm (g_1,g_1) \in \left( L^1_{t,x,v} \right)_{\textnormal{loc}}$,
an easy application of the dominated convergence theorem shows that
\begin{equation*}
Q^\pm (h_n,h_n) \rightarrow Q^\pm (h,h) \qquad \qquad Q^\pm (g_n,g_n) 
\rightarrow Q^\pm (g,g)
\end{equation*}
in the sense of distributions. Mixed terms such as $Q^- (h_n,g_n)$ are handled similarly.
Altogether we conclude that the limits $g,h$ satisfy
\begin{equation*}
\begin{aligned}
& \left( \partial_t + v \cdot \nabla_x + \rho_g \right) h
= Q^+ \left( h,h \right) \\
& \left( \partial_t + v \cdot \nabla_x + \rho_h \right) g
= Q^+ \left( g,g \right) \\
& g(0) = h(0) = f_0
\end{aligned}
\end{equation*}
in the sense of distributions.

We have yet to show that $h = g$ in order to conclude the convergence of the
Kaniel-Shinbrot iteration. Let us define
\begin{equation}
w (t,x,v) = g (t,x,v) - h(t,x,v) \geq 0
\end{equation}
and note that $w \leq g_1$. The function $w$ satisfies the following equation
in the sense of distributions:
\begin{equation*}
\left( \partial_t + v \cdot \nabla_x \right) w
= Q^+ \left( g,w \right) + Q^+ \left( w,h \right)
+ \rho_w h - \rho_h w
\end{equation*}
\begin{equation*}
w (0) = 0
\end{equation*}
The goal is to show that $w=0$ globally in $t\geq 0$. This follows
from Lemma \ref{lem:unlem} as long as we can show
\begin{equation}
\left< v \right>^{\frac{1}{2}+} Q^+ (g_1,g_1) \in
L^1_{t \in [0,T]} L^2_{x,v}
\end{equation}
\begin{equation}
\rho_{g_1} \in L^2_{t \in [0,T]} L^\infty_x \bigcap
L^2_{t \in [0,T]} L^4_x
\end{equation}
\begin{equation}
\left< v \right>^{\frac{1}{2}+} g_1 \in
L^\infty_{t \in [0,T]} L^2_{x,v}
\bigcap L^\infty_{t \in [0,T]} L^4_x L^2_v
\end{equation}
but these bounds follow from Theorem \ref{thm:gain-eq-gwp-kin-alpha}
since we assume $f_0 \in H^{\frac{1}{2}+,\frac{1}{2}+}$.
We can conclude that the Kaniel-Shinbrot iteration converges to
a solution of Boltzmann's equation.

As a final crucial remark, let us note that since $0 \leq f \leq g_1$ (by
construction), and $f_0 \in H^{\frac{1}{2}+,\frac{1}{2}+}$,
by Theorem \ref{thm:gain-eq-gwp-kin-alpha} we have the following
estimates for the \emph{full Boltzmann equation} with small $L^2_{x,v}$ norm:
\begin{equation*}
\left< v \right>^{\frac{1}{2}+}  Q^+ (f,f) \in L^1_t L^2_{x,v}
\end{equation*}
\begin{equation*}
\left< v \right>^{\frac{1}{2}+}  Q^- (f,f) \in L^2_t L^2_{x,v}
\end{equation*}
\begin{equation*}
\left< v \right>^{\frac{1}{2}+}  f \in 
L^\infty_t L^2_{x,v} \bigcap L^\infty_t L^4_x L^2_v
\end{equation*}
\begin{equation*}
\rho_f \in L^2_t L^\infty_x \bigcap L^2_t L^4_x
\end{equation*}
\begin{equation*}
\left\Vert f \right\Vert_{L^\infty_t L^2_{x,v}}^2 +
\left\Vert Q^+ (f,f) \right\Vert_{L^1_t L^2_{x,v}} \leq C
\left\Vert f_0 \right\Vert_{L^2_{x,v}}^2
\end{equation*}
Let us emphasize that we have \emph{not} established that
$f \in L^\infty_{t \in [0,T]} H^{\frac{1}{2}+,\frac{1}{2}+}$
 so it is not valid to
apply Lemma \ref{lem:A1lem} directly to the solution $f$ of Boltzmann's equation
in order to deduce that $\rho_f \in L^2_{t \in [0,T]} L^\infty_x$. Rather,
we are using the fact that $ 0 \leq \rho_f \leq \rho_{g_1}$ combined with
the propagation of regularity for the gain-only equation,
$g_1 \in L^\infty_t H^{\frac{1}{2}+,\frac{1}{2}+}$, and applying
Lemma \ref{lem:A1lem} to $g_1$. Indeed, to obtain the best possible bounds,
we are required to convert all regularity information on $g_1$ into
\emph{integrability} information via the Sobolev embedding, at which point
it becomes useful information for the solution $f$ of the full Boltzmann equation.
This is a strange situation because we are using the regularity condition
$f_0 \in H^{\frac{1}{2}+,\frac{1}{2}+}$
 to construct global solutions $f(t)$ for which
\emph{a priori} the $H^{\frac{1}{2}+,\frac{1}{2}+}$
 norm could blow up to $+\infty$ in finite time
(we will show later by an independent   argument
that this blow-up scenario cannot happen).

\section{Scattering in $L^2_{x,v}$}
\label{sec:scatteringL2}

The idea is to use the non-negativity of $f$ in a rather strong way. We can write the
solution of Boltzmann's equation as follows:
\begin{equation}
T(-t) f(t) + \int_0^t T(-\sigma) Q^- \left( f(\sigma), f(\sigma)\right) d\sigma
= f_0 + \int_0^t T(-\sigma) Q^+ \left( f(\sigma),f(\sigma) \right) d\sigma
\end{equation}
Everything on either side is non-negative (we are assuming $t \geq 0$), so we can write
\begin{equation}
 \int_0^t T(-\sigma) Q^- \left( f(\sigma), f(\sigma)\right) d\sigma
\leq f_0 + \int_0^t T(-\sigma) Q^+ \left( f(\sigma),f(\sigma) \right) d\sigma
\end{equation}
which implies
\begin{equation}
 \int_0^t T(-\sigma) Q^- \left( f(\sigma), f(\sigma)\right) d\sigma
\leq f_0 + \int_0^\infty T(-\sigma) Q^+ \left( f(\sigma),f(\sigma) \right) d\sigma
\end{equation}
Then by monotone convergence in $t$, for almost every $(x,v)$ we have
\begin{equation}
 \int_0^\infty T(-\sigma) Q^- \left( f(\sigma), f(\sigma)\right) d\sigma
\leq f_0 + \int_0^\infty T(-\sigma) Q^+ \left( f(\sigma),f(\sigma) \right) d\sigma
\end{equation}
Taking the $L^2_{x,v}$ norm of both sides and applying Minkowski 
on the \emph{right hand side} only, and using the fact that $T(t)$ preserves
$L^2_{x,v}$, we obtain:
\begin{equation}
\label{eq:QminusSbd}
\left\Vert T(-t) Q^- \left( f(t),f(t) \right) \right\Vert_{L^2_{x,v} L^1_{t\geq 0}}
\leq \left\Vert f_0 \right\Vert_{L^2_{x,v}}
+ \left\Vert Q^+ \left( f(t), f(t) \right) \right\Vert_{L^1_{t\geq 0} L^2_{x,v}}
\end{equation}
We have
\begin{equation}
\label{eq:QplusSbd}
Q^+ \left( f(t),f(t) \right) \in L^1_{t\geq 0} L^2_{x,v}
\end{equation}
because (\ref{eq:QplusSbd}) holds for the solution of the
 gain-only Boltzmann equation (with small
data $f_0 \in L^2_{x,v}$), and the solution of the full Boltzmann equation
is bounded above by the solution of the gain-only Boltzmann equation as a result
of the Kaniel-Shinbrot construction.

We can combine (\ref{eq:QminusSbd}) and (\ref{eq:QplusSbd}) to conclude
\begin{equation}
T(-t) Q^{\pm} \left( f(t), f(t) \right) \in L^2_{x,v} L^1_{t\geq 0}
\end{equation}
and this implies that the limit in norm
\begin{equation}
\lim_{t \rightarrow + \infty} \int_0^t
T(-\sigma) Q \left( f(\sigma),f(\sigma) \right) d\sigma
\end{equation}
exists in $L^2_{x,v}$, by the dominated convergence theorem. Indeed, the
$L^2_{x,v}$ remainder is bounded by
\begin{equation}
\int dx dv \int_t^\infty d\sigma \int_t^\infty
d\sigma^\prime
\left\{ T ( -\sigma) \left| Q \left( f(\sigma),f(\sigma)\right) \right|\right\}
\left\{ T (-\sigma^\prime) \left| Q \left( f(\sigma^\prime),f(\sigma^\prime)\right)
\right| \right\}
\end{equation}
and this clearly tends to zero as $t \rightarrow + \infty$.

As a result of the convergence argument detailed above, if we define
\begin{equation}
f_{+ \infty} = f_0 + 
\lim_{t \rightarrow + \infty} \int_0^t
T(-\sigma) Q \left( f(\sigma),f(\sigma) \right) d\sigma
\end{equation}
then it follows that $f_{+ \infty} \in L^2_{x,v}$ and
\begin{equation}
\lim_{t \rightarrow + \infty}
\left\Vert T(-t) f(t) - f_{+ \infty} \right\Vert_{L^2_{x,v}} = 0
\end{equation}
The same argument implies the following slightly more general result
(which does not require uniqueness, nor that $f_0$ necessarily have
small $L^2_{x,v}$ norm):
\begin{theorem}
\label{thm:scatteringL2}
Suppose $f \in \bigcap_{T > 0} L^\infty_{t \in [0,T]} L^2_{x,v}$
is a \emph{non-negative} mild solution of the full Boltzmann equation,
\begin{equation}
\left( \partial_t + v \cdot \nabla_x \right) f
= Q^+ \left( f,f \right) - f \rho_f
\end{equation}
such that, along the solution $f (t)$, the \emph{gain operator} $Q^+$ satisfies
\begin{equation}
T(-t) Q^+ \left( f(t),f(t) \right) \in L^2_{x,v} L^1_{t \geq 0}
\end{equation}
Then $f(t)$ scatters in $L^2_{x,v}$ as $t \rightarrow +\infty$;
that is, there exists a function
$f_{+ \infty} \in L^2_{x,v}$ such that the following limit
\begin{equation}
\lim_{t \rightarrow +\infty}
\left\Vert  f (t) - T(t) f_{+ \infty} \right\Vert_{L^2_{x,v}} = 0
\end{equation}
holds.
\end{theorem}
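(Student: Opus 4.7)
\medskip

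\textbf{Proof proposal for Theorem \ref{thm:scatteringL2}.}

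My plan is to follow the template laid out in the discussion preceding the theorem, adapted so that the only quantitative input is the stated bound on $T(-t)Q^+(f(t),f(t))$ and the non-negativity of $f$. First, I would write Duhamel's formula intertwined with the free flow, namely
\begin{equation*}
T(-t) f(t) + \int_0^t T(-\sigma) Q^-(f(\sigma),f(\sigma))\,d\sigma
= f_0 + \int_0^t T(-\sigma) Q^+(f(\sigma),f(\sigma))\,d\sigma,
\end{equation*}
where every quantity is non-negative almost everywhere because $f\geq 0$ and the free transport group $T(\pm\sigma)$ preserves pointwise non-negativity. Dropping the first (non-negative) term on the left and letting $t$ be arbitrary, I then pass to the pointwise monotone limit in $t$ to get the pointwise bound
\begin{equation*}
\int_0^\infty T(-\sigma) Q^-(f(\sigma),f(\sigma))\,d\sigma
\leq f_0 + \int_0^\infty T(-\sigma) Q^+(f(\sigma),f(\sigma))\,d\sigma
\qquad \textnormal{a.e.}\; (x,v).
\end{equation*}

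Next, I take the $L^2_{x,v}$ norm of both sides and apply Minkowski only on the right to exchange the $L^2_{x,v}$ norm with the $d\sigma$ integral, using that $T(-\sigma)$ is an $L^2_{x,v}$-isometry. Combined with the hypothesis $T(-t)Q^+(f(t),f(t))\in L^2_{x,v}L^1_{t\geq 0}$, this yields
\begin{equation*}
\bigl\Vert T(-t) Q^-(f(t),f(t)) \bigr\Vert_{L^2_{x,v} L^1_{t\geq 0}}
\leq \left\Vert f_0 \right\Vert_{L^2_{x,v}}
+ \bigl\Vert T(-t) Q^+(f(t),f(t)) \bigr\Vert_{L^2_{x,v} L^1_{t\geq 0}} < \infty,
\end{equation*}
so that $T(-t)Q^\pm(f(t),f(t))$ both sit in $L^2_{x,v}L^1_{t\geq 0}$. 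This is the crucial gain: non-negativity converts the one-sided $Q^+$ assumption into a two-sided bound controlling the full collision integral.

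Finally, I would verify that
\begin{equation*}
F(t) := f_0 + \int_0^t T(-\sigma) Q(f(\sigma),f(\sigma))\,d\sigma
\end{equation*}
is Cauchy in $L^2_{x,v}$ as $t\to\infty$. For $t_2 > t_1$, squaring the $L^2_{x,v}$ norm of the tail and using Fubini, the remainder is dominated by the double integral
\begin{equation*}
\int dx\,dv \int_{t_1}^{t_2} d\sigma \int_{t_1}^{t_2} d\sigma'\;
\bigl\{ T(-\sigma)|Q(f(\sigma),f(\sigma))|\bigr\}
\bigl\{ T(-\sigma')|Q(f(\sigma'),f(\sigma'))|\bigr\},
\end{equation*}
which equals $\bigl(\int_{t_1}^{t_2} \Vert T(-\sigma)|Q(f(\sigma),f(\sigma))|\Vert_{L^2_{x,v}}\,d\sigma\bigr)^2$ (after pulling out the norm via Fubini) and vanishes as $t_1,t_2\to\infty$ thanks to the integrability just established. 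Setting $f_{+\infty} = \lim_{t\to\infty} F(t)$ gives the claimed scattering: $\Vert T(-t)f(t) - f_{+\infty} \Vert_{L^2_{x,v}} \to 0$, which by isometry of $T$ is the same as $\Vert f(t) - T(t) f_{+\infty}\Vert_{L^2_{x,v}} \to 0$.

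The main (minor) obstacle is the bookkeeping around exchanging integrals and norms: one must ensure Minkowski is applied in the direction that respects the non-negativity-driven inequality, and that the mild-solution/Duhamel identity is available in the required sense for a general non-negative $L^\infty_{t,\textnormal{loc}}L^2_{x,v}$ solution. Everything else is a straightforward dominated/monotone convergence argument, and no smallness or uniqueness of $f$ is used.
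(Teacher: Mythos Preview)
Your approach is exactly the paper's, and almost everything is right, but the final convergence step contains a real error. The double integral
\[
\int dx\,dv \int_{t_1}^{t_2} d\sigma \int_{t_1}^{t_2} d\sigma'\;
\bigl\{ T(-\sigma)|Q(f(\sigma),f(\sigma))|\bigr\}
\bigl\{ T(-\sigma')|Q(f(\sigma'),f(\sigma'))|\bigr\}
\]
does \emph{not} equal $\bigl(\int_{t_1}^{t_2} \| T(-\sigma)|Q(f,f)|\|_{L^2_{x,v}}\,d\sigma\bigr)^2$; Fubini gives you inner products $\langle g(\sigma),g(\sigma')\rangle_{L^2_{x,v}}$, not products of norms. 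At best Cauchy--Schwarz yields an inequality, but that upper bound is the $L^1_{t}L^2_{x,v}$ norm of $|Q|$ squared, and the hypothesis of the theorem does \emph{not} control this: you only have $T(-t)Q^\pm(f,f)\in L^2_{x,v}L^1_{t}$, the strictly weaker mixed norm. So as written, your tail bound could be $+\infty$.

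The fix (and this is what the paper does) is dominated convergence. The double integral equals $\bigl\|\int_{t_1}^{t_2} T(-\sigma)|Q(f,f)|\,d\sigma\bigr\|_{L^2_{x,v}}^2 \leq \|H_{t_1}\|_{L^2_{x,v}}^2$, where $H_{t}(x,v):=\int_{t}^\infty T(-\sigma)|Q(f,f)|\,d\sigma$. Since $H_{t}\to 0$ pointwise a.e.\ and $0\leq H_{t}\leq H_0\in L^2_{x,v}$ (the latter being precisely the $L^2_{x,v}L^1_t$ bound you established), DCT gives $\|H_{t}\|_{L^2_{x,v}}\to 0$.

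One smaller point: in the middle step your prose says ``apply Minkowski on the right to exchange the $L^2_{x,v}$ norm with the $d\sigma$ integral,'' but your displayed inequality (correctly) keeps the $L^2_{x,v}L^1_t$ norm on the right, matching the hypothesis. No Minkowski is needed there --- by non-negativity the $L^2_{x,v}$ norm of the time integral \emph{is} the $L^2_{x,v}L^1_t$ norm, and the triangle inequality splits off $f_0$. Applying Minkowski would again land you in $L^1_tL^2_{x,v}$, which you do not have.
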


\begin{remark}
The gain-only Boltzmann equation scatters in 
$H^{\frac{1}{2}+,\frac{1}{2}+}$, assuming
only that $f_0 \in H^{\frac{1}{2}+,\frac{1}{2}+} \cap B_\eta^{L^2}$;
 of course, this implies that solutions of the gain-only
equation remain uniformly bounded in $H^{\frac{1}{2}+,\frac{1}{2}+}$
 as $t \rightarrow +\infty$.
However, we do not know whether the full Boltzmann
equation scatters in $H^{\frac{1}{2}+,\frac{1}{2}+}$; indeed, whereas we show in Section \ref{sec:reg}
that the solution of the full Boltzmann equation propagates 
$H^{\frac{1}{2}+,\frac{1}{2}+}$ for
small $L^2_{x,v}$ solutions, we do not even know whether the 
$H^{\frac{1}{2}+,\frac{1}{2}+}$ norm
(for the full Boltzmann equation)
remains \emph{bounded} in time as $t \rightarrow +\infty$.
\end{remark}

\begin{remark}
Due to the lack of $L^1_t L^2_{x,v}$ \emph{bilinear} spacetime estimates for
$Q^- (f,f)$, we cannot use Theorem \ref{thm:absScattering} (or its proof) to describe qualitatively the
correspondence between $f_0$ and $f_{+ \infty}$ for the full Boltzmann equation (though Theorem
\ref{thm:absScattering} clearly applies to the \emph{gain-only} equation).
\end{remark}

\section{Propagation of Regularity for the full equation}
\label{sec:reg}

Recall that some extra regularity for the \emph{gain-only} equation 
was required to produce enough \emph{integrability}
to close the Kaniel-Shinbrot iteration and prove uniqueness. However, so far we have said nothing
about the regularity of the full Boltzmann equation. The point of this section is to prove that, for
all the regularity which we required to construct a solution, such regularity is indeed propagated
by the solution itself.

\begin{remark}
It is important to observe that it is \emph{not necessary} to propagate regularity for the full
Boltzmann equation in order to close the Kaniel-Shinbrot iteration. Thus, the regularity for the
full equation is propagated \emph{a posteriori}.
\end{remark}

\subsection{Loss operator bounds.}
\label{sec:reg-loss}

Recall the loss operator
\begin{equation}
Q^- \left( f, g \right) = f \rho_g
\end{equation}

\begin{lemma}
\label{lem:LossBilin}
Let $\alpha \in \left( \frac{1}{2}, 1 \right]$.
For any two measurable and locally integrable functions $f_0 (x,v),\; g_0 (x,v)$ such
that $\left< v \right>^\alpha \left< \nabla_x \right>^\alpha
f_0,\; \left< v \right>^\alpha \left< \nabla_x \right>^\alpha
g_0 \in L^2_{x,v}$, the function $Q^- \left( T(t) f_0, T(t) g_0 \right)$ is
in $L^2_{t,x,v} \left( \mathbb{R} \times \mathbb{R}^2
\times \mathbb{R}^2 \right)$ and the following estimate
holds:
\begin{equation}
\begin{aligned}
& \left\Vert \left< v \right>^\alpha
\left< \nabla_x \right>^\alpha 
Q^- \left( T(t) f_0, T(t) g_0 \right)
\right\Vert_{L^2_{t,x,v}} \leq \\
& \qquad \qquad \qquad \qquad \qquad
\leq  C \left\Vert \left< v \right>^\alpha
\left< \nabla_x \right>^\alpha f_0 \right\Vert_{L^2_{x,v}}
\left\Vert \left< v \right>^\alpha
\left< \nabla_x \right>^\alpha g_0 \right\Vert_{L^2_{x,v}}
\end{aligned}
\end{equation}
\end{lemma}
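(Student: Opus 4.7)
The plan is to exploit the product structure $Q^-(T(t)f_0, T(t)g_0) = (T(t)f_0)\cdot\rho_{T(t)g_0}$, and reduce to a fractional Leibniz computation where the density factor is handled by the velocity-averaging bounds of Lemma \ref{lem:A1lem} while the remaining factor is handled by Sobolev embedding in $x$. The operators $\left<v\right>^\alpha$ and $\left<\nabla_x\right>^\alpha$ act on disjoint Fourier variables so they commute; each commutes with the free flow $T(t)=e^{-tv\cdot\nabla_x}$ (on the Fourier side $T(t)$ is multiplication by $e^{-it\xi\cdot v}$); and $\left<v\right>^\alpha$ slides past the $v$-independent factor $\rho_{T(t)g_0}$. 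Setting $F(t,x,v) = T(t)(\left<v\right>^\alpha f_0)$ and $G(t,x) = \rho_{T(t)g_0}$, the goal becomes controlling $\left\Vert\left<\nabla_x\right>^\alpha(FG)\right\Vert_{L^2_t L^2_{x,v}}$ by the stated product of norms.

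Next I would apply the Kato--Ponce fractional Leibniz rule in $x$, pointwise in $(t,v)$, with the H\"older pairings $(L^2_x,L^\infty_x)$ and $(L^4_x,L^4_x)$ (both are admissible in $d=2$). Taking the $L^2_v$ norm, invoking Minkowski's integral inequality $\left\Vert F\right\Vert_{L^2_v L^4_x}\le\left\Vert F\right\Vert_{L^4_x L^2_v}$ (valid since $4\ge 2$), and finally using H\"older $L^\infty_t\cdot L^2_t$ in time, we arrive at the bound
\begin{equation*}
\left\Vert\left<\nabla_x\right>^\alpha(FG)\right\Vert_{L^2_{t,x,v}} \lesssim \left\Vert\left<\nabla_x\right>^\alpha F\right\Vert_{L^\infty_t L^2_{x,v}}\left\Vert G\right\Vert_{L^2_t L^\infty_x} + \left\Vert F\right\Vert_{L^\infty_t L^4_x L^2_v}\left\Vert\left<\nabla_x\right>^\alpha G\right\Vert_{L^2_t L^4_x}.
\end{equation*}

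Each of the four resulting factors is now explicit. For the $F$-factors, $T(t)$ is an $L^2_{x,v}$-isometry, so $\left\Vert\left<\nabla_x\right>^\alpha F\right\Vert_{L^\infty_t L^2_{x,v}} = \left\Vert\left<v\right>^\alpha\left<\nabla_x\right>^\alpha f_0\right\Vert_{L^2_{x,v}}$; and the $L^4_x L^2_v$ norm is controlled by the same quantity via the Sobolev embedding $H^\alpha_x\hookrightarrow L^4_x$ in $\mathbb{R}^2$, which holds precisely when $\alpha > 1/2$. For the $G$-factors, commute $\left<\nabla_x\right>^\alpha$ through both $\rho$ and $T(t)$ to write $\left<\nabla_x\right>^\alpha G = \rho_{T(t)\left<\nabla_x\right>^\alpha g_0}$, then invoke Lemma \ref{lem:A1lem} with the forcing $(\partial_t+v\cdot\nabla_x)$ vanishing along the free flow: this produces $\left\Vert G\right\Vert_{L^2_t L^\infty_x}\lesssim\left\Vert\left<v\right>^{\frac{1}{2}+}\left<\nabla_x\right>^{\frac{1}{2}+}g_0\right\Vert_{L^2_{x,v}}$ and $\left\Vert\left<\nabla_x\right>^\alpha G\right\Vert_{L^2_t L^4_x}\lesssim\left\Vert\left<v\right>^{\frac{1}{2}+}\left<\nabla_x\right>^\alpha g_0\right\Vert_{L^2_{x,v}}$, both of which are absorbed into $\left\Vert\left<v\right>^\alpha\left<\nabla_x\right>^\alpha g_0\right\Vert_{L^2_{x,v}}$ since $\alpha>1/2$.

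The work is really just bookkeeping. The only substantive tool is the Kato--Ponce fractional Leibniz rule in the form $\left\Vert\left<\nabla\right>^\alpha(fg)\right\Vert_{L^2}\lesssim\left\Vert\left<\nabla\right>^\alpha f\right\Vert_{L^2}\left\Vert g\right\Vert_{L^\infty}+\left\Vert f\right\Vert_{L^4}\left\Vert\left<\nabla\right>^\alpha g\right\Vert_{L^4}$, which is standard for $\alpha\in(0,1]$. The potential obstacle — and the only place the hypothesis $\alpha > 1/2$ is actually used — is that this threshold enters in two independent ways: once for the Sobolev embedding $H^\alpha_x\hookrightarrow L^4_x$ in two dimensions, and once to dominate the $\left<v\right>^{\frac{1}{2}+}$ weights intrinsic to Lemma \ref{lem:A1lem}. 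Both are sharp at $\alpha = 1/2$, consistent with the stated hypothesis.
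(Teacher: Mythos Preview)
Your proposal is correct and follows essentially the same approach as the paper: both exploit the product structure $Q^-=f\rho_g$, apply a fractional Leibniz rule in $x$ with the H\"older pairings $(L^2_x,L^\infty_x)$ and $(L^4_x,L^4_x)$, and close using Lemma~\ref{lem:A1lem} on the density factor together with Sobolev embedding $H^\alpha_x\hookrightarrow L^4_x$. The only cosmetic difference is that the paper splits $\langle\nabla_x\rangle^\alpha$ into $L^2_x+\dot H^\alpha_x$ and invokes the Kenig--Ponce--Vega commutator estimate (Theorem~\ref{thm:FracLeibKPV}) rather than the Kato--Ponce product rule directly, but the resulting terms and their treatment are identical.
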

\begin{proof}
We will assume $\alpha \in \left( \frac{1}{2}, 1 \right)$; the case $\alpha = 1$
follows in a similar manner by using the Leibniz differentiation rule
(note that $H^1_x = L^2_x \cap \dot{H}^1_x$, 
and that $\left| \nabla_x \right|$ can be replaced by $\nabla_x$ in defining
the $\dot{H}^1_x$ semi-norm).

We begin with the $L^2_x$ estimate. We have
\begin{equation*}
\begin{aligned}
 \left\Vert \left< v \right>^\alpha Q^-
\left( T(t) f_0, T(t) g_0 \right) \right\Vert_{L^2_{t,x,v}}
& = \left\Vert \left< v \right>^\alpha
\left\{ T(t) f_0 \right\} \rho_{T(t) g_0} \right\Vert_{L^2_{t,x,v}} 
 \\
& \leq \left\Vert \left< v \right>^\alpha T(t) f_0 \right\Vert_{L^\infty_t L^2_{x,v}}
\left\Vert \rho_{T(t) g_0} \right\Vert_{L^2_t L^\infty_x} \\
& \leq \left\Vert \left< v \right>^\alpha f_0 \right\Vert_{L^2_{x,v}}
\left\Vert \left< v \right>^\alpha \left< \nabla_x \right>^\alpha
g_0 \right\Vert_{L^2_{x,v}} \\
& \leq \left\Vert \left< v \right>^\alpha
\left< \nabla_x \right>^\alpha f_0 \right\Vert_{L^2_{x,v}}
\left\Vert \left< v \right>^\alpha \left< \nabla_x \right>^\alpha
g_0 \right\Vert_{L^2_{x,v}}
\end{aligned}
\end{equation*}
where we have used that $\alpha > \frac{1}{2}$ in order
to apply Lemma \ref{lem:A1lem}.

Let us now turn to the $\dot{H}^\alpha_x$ estimate; by Theorem \ref{thm:FracLeibKPV}
(due to Kenig-Ponce-Vega \cite{KPV1993})
we have
\begin{equation*}
\begin{aligned}
& \left\Vert \left< v \right>^\alpha 
\left| \nabla_x \right|^\alpha Q^-
\left( T(t) f_0, T(t) g_0 \right) \right\Vert_{L^2_x}
 = \left\Vert 
\left| \nabla_x \right|^\alpha \Big( 
\left\{ \left< v \right>^\alpha
T(t) f_0 \right\} \rho_{T(t) g_0} \Big)\right\Vert_{L^2_x} 
 \\
& \qquad \qquad \qquad \leq 
\left\Vert \Big( \left< v \right>^\alpha T(t) f_0 \Big)
\left| \nabla_x \right|^\alpha
\rho_{T(t) g_0} \right\Vert_{L^2_x} + \\
& \qquad \qquad \qquad \qquad
+ C \left\Vert \rho_{T(t) g_0} \right\Vert_{L^\infty_x}
\left\Vert \left< v \right>^\alpha
\left| \nabla_x \right|^\alpha T(t) f_0
\right\Vert_{L^2_x}.
\end{aligned}
\end{equation*}
Now we take the $L^2_{t,v}$ norm of both sides, and then apply H{\" o}lder's inequality
and Lemma \ref{lem:A1lem} (which is justified because $\alpha > \frac{1}{2}$).
\begin{equation*}
\begin{aligned}
& \left\Vert \left< v \right>^\alpha 
\left| \nabla_x \right|^\alpha Q^-
\left( T(t) f_0, T(t) g_0 \right) \right\Vert_{L^2_{t,x,v}} \leq \\
& \qquad \qquad \qquad \leq 
\left\Vert \Big( \left< v \right>^\alpha T(t) f_0 \Big)
\left| \nabla_x \right|^\alpha
\rho_{T(t) g_0} \right\Vert_{L^2_{t,x,v}} + \\
& \qquad \qquad \qquad \qquad \qquad
+ C \Bigg\Vert \left\Vert \rho_{T(t) g_0} \right\Vert_{L^\infty_x}
\left\Vert \left< v \right>^\alpha
\left| \nabla_x \right|^\alpha T(t) f_0
\right\Vert_{L^2_x} \Bigg\Vert_{L^2_{t,v}} \\
& \qquad \qquad \qquad \leq 
\left\Vert \left< v \right>^\alpha T(t) f_0 \right\Vert_{L^\infty_t L^4_x L^2_v}
\left\Vert \left| \nabla_x \right|^\alpha
\rho_{T(t) g_0} \right\Vert_{L^2_t L^4_x} + \\
& \qquad \qquad \qquad \qquad \qquad
+ C \left\Vert \rho_{T(t) g_0} \right\Vert_{L^2_t L^\infty_x}
\left\Vert \left< v \right>^\alpha
\left| \nabla_x \right|^\alpha T(t) f_0
\right\Vert_{L^\infty_t L^2_{x,v}}  \\
& \qquad \qquad \qquad \leq 
\left\Vert \left< v \right>^\alpha T(t) f_0 \right\Vert_{L^\infty_t L^4_x L^2_v}
\left\Vert \left< v \right>^\alpha \left| \nabla_x \right|^\alpha
g_0 \right\Vert_{L^2_{x,v}} + \\
& \qquad \qquad \qquad \qquad \qquad
+ C \left\Vert \left< v \right>^\alpha \left< \nabla_x \right>^\alpha
g_0 \right\Vert_{L^2_{x,v}}
\left\Vert \left< v \right>^\alpha
\left| \nabla_x \right|^\alpha T(t) f_0
\right\Vert_{L^\infty_t L^2_{x,v}}  \\
& \qquad \qquad \qquad \leq 
\left\Vert \left< v \right>^\alpha
\left< \nabla_x \right>^\alpha T(t) f_0 \right\Vert_{L^\infty_t L^2_{x,v}}
\left\Vert \left< v \right>^\alpha \left< \nabla_x \right>^\alpha
g_0 \right\Vert_{L^2_{x,v}} + \\
& \qquad \qquad \qquad \qquad \qquad
+ C \left\Vert \left< v \right>^\alpha \left< \nabla_x \right>^\alpha
g_0 \right\Vert_{L^2_{x,v}}
\left\Vert \left< v \right>^\alpha
\left< \nabla_x \right>^\alpha T(t) f_0
\right\Vert_{L^\infty_t L^2_{x,v}}  \\
\end{aligned}
\end{equation*}
Note that $\left| \nabla_x \right|$ commutes with $\rho_{(\cdot)}$, and we
have used the Sobolev embedding
$H^{\frac{1}{2}}_x \left( \mathbb{R}^2 \right)
\subset L^4_x \left( \mathbb{R}^2 \right)$ in the last step. We finally use
the fact that $T(t)$ preserves $H^{\alpha,\beta}$ to obtain:
\begin{equation*}
\begin{aligned}
& \left\Vert \left< v \right>^\alpha 
\left| \nabla_x \right|^\alpha Q^-
\left( T(t) f_0, T(t) g_0 \right) \right\Vert_{L^2_{t,x,v}}
\leq \\
& \qquad \qquad \qquad \qquad \qquad
\leq C \left\Vert \left< v \right>^\alpha \left< \nabla_x \right>^\alpha
f_0 \right\Vert_{L^2_{x,v}}
\left\Vert \left< v \right>^\alpha \left< \nabla_x \right>^\alpha g_0
\right\Vert_{L^2_{x,v}}
\end{aligned}
\end{equation*}
Combining the $L^2_x$ and $\dot{H}^\alpha_x$ estimates allows us to conclude.
\end{proof}

The next lemma is a refinement of Lemma \ref{lem:LossBilin} which only
places a spatial gradient on one argument at a time.

\begin{lemma}
\label{lem:LossBilinUnsym}
Let $\alpha \in \left( \frac{1}{2}, 1 \right]$, and let
$I \subseteq \mathbb{R}$ be an open interval (either bounded or unbounded).
Let $f (t,x,v) : I \times \mathbb{R}^2 \times \mathbb{R}^2 \rightarrow \mathbb{C}$ be
a measurable and locally integrable function such that
\begin{equation}
\left< v \right>^\alpha
\left< \nabla_x \right>^\alpha f \in L^\infty \left( I, L^2_{x,v} \right)
\end{equation}
and
\begin{equation}
\left< v \right>^\alpha \left< \nabla_x \right>^\alpha
\left( \partial_t + v \cdot \nabla_x \right) f
\in L^1 \left( I, L^2_{x,v} \right)
\end{equation}
Then the following estimate holds:
\begin{equation}
\begin{aligned}
& \left\Vert \left< v \right>^\alpha
\left| \nabla_x \right|^\alpha Q^- (f,f)
\right\Vert_{L^2 \left( I, L^2_{x,v} \right)} \leq \\
& \qquad \qquad \leq C \times \left\{
\left\Vert \rho_f \right\Vert_{L^2 \left( I, L^\infty_x \right)}
\left\Vert \left< v \right>^\alpha
\left| \nabla_x \right|^\alpha f \right\Vert_{L^\infty
\left( I, L^2_{x,v}\right)} + \right. \\
& \qquad \qquad \quad \left. +
\left\Vert \left< v \right>^\alpha f
\right\Vert_{L^\infty \left( I, L^4_x L^2_v \right)}
\left\Vert \left< v \right>^\alpha \left| \nabla_x \right|^\alpha
f \right\Vert_{L^\infty \left( I, L^2_{x,v} \right)} + \right. \\ 
& \qquad \qquad \quad \left. +
\left\Vert \left< v \right>^\alpha f
\right\Vert_{L^\infty \left( I, L^4_x L^2_v \right)}
\left\Vert \left< v \right>^\alpha 
\left| \nabla_x \right|^\alpha
\left( \partial_t + v \cdot \nabla_x \right) f
\right\Vert_{L^1 \left( I, L^2_{x,v} \right)} \right\}
\end{aligned}
\end{equation}
The constant $C$ does not depend on the interval $I$, but it may depend
on $\alpha$.
\end{lemma}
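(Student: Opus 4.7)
The plan is to mimic the proof of Lemma \ref{lem:LossBilin} but to track carefully which factor of $f$ carries the derivative $|\nabla_x|^\alpha$. Since $\langle v\rangle^\alpha$ is multiplication by a function of $v$ alone, it commutes with $|\nabla_x|^\alpha$, so I can rewrite $\langle v\rangle^\alpha |\nabla_x|^\alpha Q^-(f,f) = |\nabla_x|^\alpha\bigl((\langle v\rangle^\alpha f)\,\rho_f\bigr)$. Applying the Kenig--Ponce--Vega fractional Leibniz rule pointwise in $(t,v)$ and then taking the $L^2_x$ norm, I will obtain the pointwise-in-$(t,v)$ inequality
$$
\bigl\|\langle v\rangle^\alpha |\nabla_x|^\alpha Q^-(f,f)\bigr\|_{L^2_x}
\leq \bigl\|(\langle v\rangle^\alpha f)\,|\nabla_x|^\alpha \rho_f\bigr\|_{L^2_x}
+ C\|\rho_f\|_{L^\infty_x}\,\bigl\|\langle v\rangle^\alpha |\nabla_x|^\alpha f\bigr\|_{L^2_x},
$$
which is exactly the asymmetric analogue of the decomposition used in the proof of Lemma \ref{lem:LossBilin}. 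It then suffices to bound each summand in $L^2\bigl(I, L^2_v\bigr)$.

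For the ``commutator'' piece I will take the $L^2_v$ norm and then apply H\"older in time with the pairing $L^2_t \cdot L^\infty_t$, producing
$$
\|\rho_f\|_{L^2(I, L^\infty_x)}\,\bigl\|\langle v\rangle^\alpha |\nabla_x|^\alpha f\bigr\|_{L^\infty(I, L^2_{x,v})},
$$
which is precisely the first term of the claimed bound.

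For the main piece I will apply H\"older in $x$ (with the splitting $L^2_x = L^4_x \cdot L^4_x$) and then $L^2$ in $t$ to deduce
$$
\bigl\|(\langle v\rangle^\alpha f)\,|\nabla_x|^\alpha \rho_f\bigr\|_{L^2(I, L^2_{x,v})}
\leq \bigl\|\langle v\rangle^\alpha f\bigr\|_{L^\infty(I, L^4_x L^2_v)}\,\bigl\||\nabla_x|^\alpha \rho_f\bigr\|_{L^2(I, L^4_x)}.
$$
The second factor equals $\|\rho_{|\nabla_x|^\alpha f}\|_{L^2(I, L^4_x)}$ because $|\nabla_x|^\alpha$ passes through the $v$-integration that defines $\rho$. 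Since $|\nabla_x|^\alpha$ commutes with $\partial_t + v\cdot \nabla_x$ ($v$ being a parameter for the spatial derivative), I can apply Lemma \ref{lem:A1lem} to the function $|\nabla_x|^\alpha f$ to obtain
$$
\bigl\||\nabla_x|^\alpha \rho_f\bigr\|_{L^2(I, L^4_x)}
\lesssim \bigl\|\langle v\rangle^{1/2+}|\nabla_x|^\alpha f\bigr\|_{L^\infty(I, L^2_{x,v})}
+ \bigl\|\langle v\rangle^{1/2+}|\nabla_x|^\alpha(\partial_t + v\cdot \nabla_x)f\bigr\|_{L^1(I, L^2_{x,v})}.
$$
Using the hypothesis $\alpha > \tfrac{1}{2}$, I will choose the $+$ exponent in Lemma \ref{lem:A1lem} small enough that $\langle v\rangle^{1/2+} \leq \langle v\rangle^\alpha$, which upgrades each weight to $\langle v\rangle^\alpha$ and delivers exactly the second and third terms of the bound after multiplication by $\|\langle v\rangle^\alpha f\|_{L^\infty(I, L^4_x L^2_v)}$.

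The main obstacles I anticipate are minor. The first is verifying that the fractional Leibniz rule can be applied with the weight $\langle v\rangle^\alpha$ absorbed into one factor; this is fine because $|\nabla_x|^\alpha$ acts only on $x$, so $v$ is inert and the usual Kenig--Ponce--Vega estimate goes through uniformly in $v$ before we take the $L^2_v$ norm. The second is ensuring that all constants in Lemma \ref{lem:A1lem} are independent of the interval $I$, which is guaranteed by the statement of that lemma. Crucially, the argument uses no propagation-of-regularity result for $f$ itself, only the finiteness of the norms in the hypotheses, which is why this asymmetric bound will be useful for the subsequent propagation argument of Section \ref{sec:reg}.
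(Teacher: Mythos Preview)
Your proposal is correct and follows essentially the same approach as the paper's proof: apply the Kenig--Ponce--Vega fractional Leibniz rule (Theorem \ref{thm:FracLeibKPV}) to $|\nabla_x|^\alpha\bigl((\langle v\rangle^\alpha f)\rho_f\bigr)$ pointwise in $(t,v)$, take $L^2_{t\in I}L^2_v$ of both sides, handle the commutator piece by H\"older $L^2_t\cdot L^\infty_t$, and for the main piece use the $L^4_x\cdot L^4_x$ splitting followed by Lemma \ref{lem:A1lem} applied to $|\nabla_x|^\alpha f$ (using that $|\nabla_x|^\alpha$ commutes with both $\rho_{(\cdot)}$ and $\partial_t + v\cdot\nabla_x$, and that $\alpha > \tfrac12$). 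The paper's argument is identical in structure and in the choice of exponents.
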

\begin{proof}
As in the proof of Lemma \ref{lem:LossBilin}, we will assume
$\alpha \in \left( \frac{1}{2}, 1 \right)$. The case $\alpha = 1$ may be
checked directly in a similar fashion.

We begin by applying Theorem \ref{thm:FracLeibKPV}, which is due
to Kenig-Ponce-Vega \cite{KPV1993}:
\begin{equation*}
\begin{aligned}
& \left\Vert \left< v \right>^\alpha
\left| \nabla_x \right|^\alpha Q^- (f,f) \right\Vert_{L^2_x} 
= \left\Vert \left| \nabla_x \right|^\alpha
\bigg( \left< v \right>^\alpha f \rho_f \bigg)
\right\Vert_{L^2_x} \\
& \qquad \qquad \leq
\left\Vert \left< v \right>^\alpha f \left| \nabla_x \right|^\alpha \rho_f
\right\Vert_{L^2_x} + C
\left\Vert \rho_f \right\Vert_{L^\infty_x}
\left\Vert \left< v \right>^\alpha
\left| \nabla_x \right|^\alpha f \right\Vert_{L^2_x}
\end{aligned}
\end{equation*}
We take the $L^2_{t \in I} L^2_v$ norm of both sides,
followed by H{\" o}lder's inequality:
\begin{equation*}
\begin{aligned}
& \left\Vert \left< v \right>^\alpha
\left| \nabla_x \right|^\alpha Q^- (f,f) \right\Vert_{L^2_{t \in I} L^2_{x,v}} 
\leq \\
& \qquad \quad \leq
\left\Vert \left< v \right>^\alpha f \left| \nabla_x \right|^\alpha \rho_f
\right\Vert_{L^2_{t \in I} L^2_{x,v}} + C
\bigg\Vert \left\Vert \rho_f \right\Vert_{L^\infty_x}
\left\Vert \left< v \right>^\alpha
\left| \nabla_x \right|^\alpha f \right\Vert_{L^2_x}
\bigg\Vert_{L^2_{t \in I} L^2_v} \\
& \qquad \quad = C \bigg\Vert \left\Vert \rho_f \right\Vert_{L^\infty_x}
\left\Vert \left< v \right>^\alpha
\left| \nabla_x \right|^\alpha f \right\Vert_{L^2_x}
\bigg\Vert_{L^2_{t \in I} L^2_v} +
\left\Vert \left< v \right>^\alpha f \left| \nabla_x \right|^\alpha \rho_f
\right\Vert_{L^2_{t \in I} L^2_{x,v}} 
\end{aligned}
\end{equation*}
Finally we apply H{\" o}lder's inequality, followed by Lemma
\ref{lem:A1lem} since $\alpha > \frac{1}{2}$; we are using the
fact that $\left| \nabla_x \right|^\alpha$ commutes with
$\rho_{(\cdot)}$. This yields:
\begin{equation*}
\begin{aligned}
& \left\Vert \left< v \right>^\alpha \left| \nabla_x \right|^\alpha
Q^- (f,f) \right\Vert_{L^2_{t \in I} L^2_{x,v}} \leq \\
& \qquad \quad \leq C \left\Vert \rho_f \right\Vert_{L^2_{t \in I} L^\infty_x}
\left\Vert \left< v \right>^\alpha \left| \nabla_x \right|^\alpha
f \right\Vert_{L^\infty_{t \in I} L^2_{x,v}} + \\
& \qquad \qquad \qquad \quad  + 
\left\Vert \left< v \right>^\alpha f \right\Vert_{L^\infty_{t \in I} L^4_x L^2_v}
\left\Vert \left| \nabla_x \right|^\alpha \rho_f 
\right\Vert_{L^2_{t \in I} L^4_x} \\
& \qquad \quad \leq C \left\Vert \rho_f \right\Vert_{L^2_{t \in I} L^\infty_x}
\left\Vert \left< v \right>^\alpha \left| \nabla_x \right|^\alpha
f \right\Vert_{L^\infty_{t \in I} L^2_{x,v}} + \\
& \qquad \qquad \qquad \quad + 
C \left\Vert \left< v \right>^\alpha f \right\Vert_{L^\infty_{t \in I} L^4_x L^2_v}
\left\Vert \left< v \right>^\alpha \left| \nabla_x \right|^\alpha f 
\right\Vert_{L^\infty_{t \in I} L^2_{x,v}} \\
& \qquad \qquad \qquad \quad  + 
C \left\Vert \left< v \right>^\alpha f \right\Vert_{L^\infty_{t \in I} L^4_x L^2_v}
\left\Vert \left< v \right>^\alpha \left| \nabla_x \right|^\alpha 
\left( \partial_t + v \cdot \nabla_x \right) f 
\right\Vert_{L^1_{t \in I} L^2_{x,v}} \\
\end{aligned}
\end{equation*}
hence the conclusion.
\end{proof}

\subsection{Gain operator bounds.}

The proof of Lemma \ref{lem:LossBilinUnsym}, which allows to apply spatial gradients to
one entry at a time in $Q^- (f,f)$, does not work for the gain operator $Q^+ (f,f)$
 in our formulation.
The difficulty is that we do not have an exact
 commutation rule for $\left| \nabla_x \right|^\alpha$
and $Q^+ (f,f)$, and the multilinear Riesz-Thorin theorem does not apply.

Nevertheless, it \emph{is} possible to recover a useful inequality in ``Peter-Paul'' form (before
optimizing) which estimates  fractional spatial derivatives
of the gain operator, which will be essential for the global propagation of regularity
to be proven in Subsection \ref{sec:reg-global}. The strategy is to apply the 
multilinear Riesz-Thorin theorem to well-chosen \emph{inhomogeneous} norms with
a suitable $\varepsilon$-dependent weight; then, we divide out powers of $\varepsilon$ from both sides,
and optimize over $\varepsilon$.
In this way, we are able to avoid any problem-specific commutator estimates, which would not
be in keeping with the spirit of our approach.

\begin{lemma}
\label{lem:GainPP}
Let $\alpha \in \left[0, 1 \right]$, and let
$I \subseteq \mathbb{R}$ be an open interval (either bounded or unbounded).
Let $f (t,x,v) : I \times \mathbb{R}^2 \times \mathbb{R}^2 \rightarrow \mathbb{C}$ be
a measurable and locally integrable function such that
\begin{equation}
\left< v \right>^\alpha
\left< \nabla_x \right>^\alpha f \in L^\infty \left( I, L^2_{x,v} \right)
\end{equation}
and
\begin{equation}
\left< v \right>^\alpha \left< \nabla_x \right>^\alpha
\left( \partial_t + v \cdot \nabla_x \right) f
\in L^1 \left( I, L^2_{x,v} \right)
\end{equation}
Then for any $q \in \left( 0, \infty \right)$ the following estimate holds:
\begin{equation}
\begin{aligned}
& \left\Vert \left< q v \right>^\alpha
\left| \nabla_x \right|^\alpha Q^+ (f,f) \right\Vert_{L^1 \left( I, L^2_{x,v} \right)} \leq \\
& \leq
C \left\Vert \left< q v \right>^\alpha f 
\right\Vert_{L^\infty \left( I, L^2_{x,v} \right)}
\left\Vert \left< q v \right>^\alpha \left| \nabla_x \right|^\alpha  f 
\right\Vert_{L^\infty \left( I, L^2_{x,v} \right)} + \\
& +
C \left\Vert \left<  q v \right>^\alpha ( \partial_t + v \cdot \nabla_x ) f 
\right\Vert_{L^1 \left( I, L^2_{x,v} \right)}
\left\Vert \left< q v \right>^\alpha \left| \nabla_x \right|^\alpha  f 
\right\Vert_{L^\infty \left( I, L^2_{x,v} \right)} + \\
& +
C \left\Vert \left< q v \right>^\alpha f 
\right\Vert_{L^\infty \left( I, L^2_{x,v} \right)}
\left\Vert \left< q v \right>^\alpha \left| \nabla_x \right|^\alpha \left( \partial_t
+ v \cdot \nabla_x \right)  f 
\right\Vert_{L^1 \left( I, L^2_{x,v} \right)} + \\& +
C \left\Vert \left< q v \right>^\alpha  \left( \partial_t + v \cdot \nabla_x \right) f 
\right\Vert_{L^1 \left( I, L^2_{x,v} \right)}
\left\Vert \left< q v \right>^\alpha \left| \nabla_x \right|^\alpha \left( \partial_t
+ v \cdot \nabla_x \right)  f 
\right\Vert_{L^1 \left( I, L^2_{x,v} \right)}
\end{aligned}
\end{equation}
The constant $C$ is independent of $I,q,\alpha$.
\end{lemma}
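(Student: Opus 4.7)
The plan is to introduce an auxiliary parameter $\varepsilon>0$, \emph{independent of $q$}, and work with the mixed-parameter norm $\|\langle qv\rangle^\alpha \langle \varepsilon\nabla_x\rangle^\alpha\,\cdot\,\|_{L^2_{x,v}}$. The point is that $\langle \varepsilon\nabla_x\rangle^\alpha$ interpolates, as $\varepsilon$ varies in $(0,\infty)$, between the identity ($\varepsilon\to 0$) and a genuine power of $|\nabla_x|^\alpha$ (after dividing by $\varepsilon^\alpha$), so by optimizing in $\varepsilon$ we can isolate the purely homogeneous piece $|\nabla_x|^\alpha$ appearing on the left-hand side of the lemma.

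First I would establish the mixed-parameter bilinear bound
\begin{equation*}
\bigl\|\langle qv\rangle^\alpha \langle \varepsilon\nabla_x\rangle^\alpha Q^+(T(t)f_0,T(t)g_0)\bigr\|_{L^1_t L^2_{x,v}}
\leq C\,
\bigl\|\langle qv\rangle^\alpha \langle \varepsilon\nabla_x\rangle^\alpha f_0\bigr\|_{L^2_{x,v}}
\bigl\|\langle qv\rangle^\alpha \langle \varepsilon\nabla_x\rangle^\alpha g_0\bigr\|_{L^2_{x,v}}
\end{equation*}
with $C$ independent of $q,\varepsilon\in(0,\infty)$ and $\alpha\in[0,1]$. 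This is a minor variant of Propositions \ref{prop:gain-eq-bound-kin-alpha} and \ref{prop:gain-eq-bound-kin-alpha-2}: the velocity weight is handled via energy conservation $|v|^2+|u|^2=|v^*|^2+|u^*|^2$ (which yields $\langle qv\rangle^\alpha \lesssim \langle qv^*\rangle^\alpha+\langle qu^*\rangle^\alpha$ for $\alpha\in[0,1]$, uniformly in $q$), and the spatial derivative is handled via the exact commutation $\nabla_x Q^+ = Q^+(\nabla_x,\cdot)+Q^+(\cdot,\nabla_x)$ combined with Proposition \ref{prop:kin-gain-bound}, yielding the integer-order version at $\alpha=1$; the fractional case then follows by complex interpolation against the plain $L^2$ estimate. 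The key observation is that the constants produced by these steps depend neither on $q$ nor on $\varepsilon$. Promoting this to time-dependent inputs via the argument of Lemma \ref{lem:usefullemma2}, with $\mathcal{H}$ chosen to be $L^2_{x,v}$ weighted by $\langle qv\rangle^\alpha\langle\varepsilon\nabla_x\rangle^\alpha$, yields
\begin{equation*}
\bigl\|\langle qv\rangle^\alpha \langle \varepsilon\nabla_x\rangle^\alpha Q^+(f,f)\bigr\|_{L^1(I,L^2_{x,v})} \leq C \left( A + \varepsilon^\alpha B \right)^{2},
\end{equation*}
where $A:=\|\langle qv\rangle^\alpha f\|_{L^\infty_t L^2_{x,v}}+\|\langle qv\rangle^\alpha(\partial_t+v\cdot\nabla_x)f\|_{L^1_t L^2_{x,v}}$ and $B:=\|\langle qv\rangle^\alpha|\nabla_x|^\alpha f\|_{L^\infty_t L^2_{x,v}}+\|\langle qv\rangle^\alpha|\nabla_x|^\alpha(\partial_t+v\cdot\nabla_x)f\|_{L^1_t L^2_{x,v}}$; here I have used the elementary two-sided comparison $\|\langle\varepsilon\nabla_x\rangle^\alpha F\|_{L^2}\sim \|F\|_{L^2}+\varepsilon^\alpha\||\nabla_x|^\alpha F\|_{L^2}$ (uniform in $\varepsilon$), together with the fact that $\langle qv\rangle^\alpha$ (a multiplier in $v$) commutes with $\langle\varepsilon\nabla_x\rangle^\alpha$ (a multiplier in $\xi$).

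To conclude, I would apply the same two-sided comparison \emph{on the left-hand side} to extract the term $\varepsilon^\alpha\|\langle qv\rangle^\alpha|\nabla_x|^\alpha Q^+(f,f)\|_{L^1_t L^2_{x,v}}$, and divide through:
\begin{equation*}
\bigl\|\langle qv\rangle^\alpha|\nabla_x|^\alpha Q^+(f,f)\bigr\|_{L^1(I,L^2_{x,v})} \leq C\bigl(\varepsilon^{-\alpha}A^2 + 2AB + \varepsilon^{\alpha}B^2\bigr).
\end{equation*}
Optimizing in $\varepsilon>0$ (the choice $\varepsilon^\alpha=A/B$ when $B>0$, the $B=0$ case being trivial since then $|\nabla_x|^\alpha f\equiv 0$) gives the bound $\lesssim AB$. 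Expanding $A=X_1+X_2$ and $B=Y_1+Y_2$ into the four cross terms reproduces exactly the four summands on the right-hand side of the lemma, and the constant is manifestly independent of $q,\alpha,\varepsilon$ and the interval $I$.

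The main obstacle is Step 1: checking that the interpolation producing the fractional-$\alpha$ bilinear estimate respects \emph{both} weight parameters simultaneously, with constants independent of both. This requires re-running the interpolation chain behind Proposition \ref{prop:gain-eq-bound-kin-alpha} with $q$ and $\varepsilon$ tracked independently---straightforward since the commutation and moment bounds for $Q^+$ are linear in each weight separately, but not literally a black-box invocation of the propositions stated in the paper. Once that is in place, the $\varepsilon$-optimization trick replaces any commutator estimate for the fractional operator $|\nabla_x|^\alpha$ against $Q^+$, keeping entirely in the spirit of the present approach.
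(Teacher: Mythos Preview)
Your proposal is correct and follows essentially the same approach as the paper: establish the two-parameter bilinear estimate $\|\langle qv\rangle^\alpha\langle\varepsilon\nabla_x\rangle^\alpha Q^+(T(t)f_0,T(t)g_0)\|_{L^1_tL^2_{x,v}}\le C\|\cdot\|\,\|\cdot\|$ with $C$ independent of $q,\varepsilon,\alpha$ by checking the endpoints $\alpha=0,1$ and interpolating, promote to time-dependent inputs via Lemma~\ref{lem:abswplem}, then use the comparison $\varepsilon^\alpha|\nabla_x|^\alpha\lesssim\langle\varepsilon\nabla_x\rangle^\alpha\lesssim 1+\varepsilon^\alpha|\nabla_x|^\alpha$ on both sides, divide by $\varepsilon^\alpha$, and optimize. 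Your identification of the main obstacle (tracking both weight parameters through the interpolation) is exactly the point the paper flags, and your $\varepsilon$-optimization is precisely the ``Peter-Paul'' step the paper uses.
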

\begin{proof}
Adapting the proof of Proposition \ref{prop:gain-eq-bound-kin-alpha}
as necessary, by using the multilinear Riesz-Thorin
theorem we are able to show that for any $f_0 , g_0 \in H^{\alpha,\alpha}$,
$\alpha \in [0,1]$, and 
$q,\varepsilon \in \left( 0,\infty \right)$,
\begin{equation}
\label{eq:Qpe1}
\begin{aligned}
& \left\Vert \left< q v \right>^\alpha \left< \varepsilon \nabla_x \right>^\alpha
Q^+ \left( T(t) f_0, T(t) g_0 \right) \right\Vert_{L^1_t L^2_{x,v}} \leq \\
& \qquad \qquad \qquad 
\leq C \left\Vert \left< q v \right>^\alpha \left< \varepsilon \nabla_x \right>^\alpha
f_0 \right\Vert_{L^2_{x,v}}
\left\Vert \left< q v \right>^\alpha \left< \varepsilon \nabla_x \right>^\alpha
g_0 \right\Vert_{L^2_{x,v}}
\end{aligned}
\end{equation}
where the constant $C$ does not depend on $\alpha,q,\varepsilon$. It suffices to
check the endpoints $\alpha = 0$ and $\alpha = 1$, viewing $\varepsilon,q \in \left( 0,\infty \right)$
as arbitrary constants.

Having verified (\ref{eq:Qpe1}), 
let $f,g$ be time-dependent functions as in the statement of the lemma.
Combining (\ref{eq:Qpe1}) and Lemma \ref{lem:abswplem}, and using the
fact that $T(t)$ is an isometry on $L^2_{x,v}$ for each $t \in \mathbb{R}$, 
 we deduce the following estimate, up to increasing the constant by an absolute factor:
\begin{equation}
\begin{aligned}
& \left\Vert \left< q v \right>^\alpha
\left< \varepsilon \nabla_x \right>^\alpha 
Q^+ (f,g) \right\Vert_{L^1 \left( I, L^2_{x,v} \right)} \leq \\
& \qquad \leq C \prod_{h \in \left\{ f, \; g \right\} } \Bigg(
\left\Vert \left< q v \right>^\alpha \left< \varepsilon \nabla_x \right>^\alpha h 
\right\Vert_{L^\infty \left( I, L^2_{x,v} \right)} +  \\
& \qquad \qquad \qquad \qquad \qquad 
+ \left\Vert \left< q v \right>^\alpha \left< \varepsilon \nabla_x \right>^\alpha
\left( \partial_t + v \cdot \nabla_x \right) h
\right\Vert_{L^1 \left( I, L^2_{x,v}\right)} \Bigg) \\
\end{aligned}
\end{equation}
Now  may we specialize to the case $g = f$.
\begin{equation}
\begin{aligned}
& \left\Vert \left< q v \right>^\alpha
\left< \varepsilon \nabla_x \right>^\alpha 
Q^+ (f,f) \right\Vert_{L^1 \left( I, L^2_{x,v} \right)} \leq \\
& \qquad \leq C  \Bigg(
\left\Vert \left< q v \right>^\alpha \left< \varepsilon \nabla_x \right>^\alpha f
\right\Vert^2_{L^\infty \left( I, L^2_{x,v} \right)} +  \\
& \qquad \qquad \qquad \qquad \qquad 
+ \left\Vert \left< q v \right>^\alpha \left< \varepsilon \nabla_x \right>^\alpha
\left( \partial_t + v \cdot \nabla_x \right) f
\right\Vert^2_{L^1 \left( I, L^2_{x,v}\right)} \Bigg) \\
\end{aligned}
\end{equation}

At this point we need to estimate $\varepsilon^\alpha \left| \nabla_x \right|^\alpha \lesssim
\left< \varepsilon \nabla_x \right>^\alpha$ on the \emph{left}, and
$\left< \varepsilon \nabla_x \right>^\alpha \lesssim
1 + \varepsilon^\alpha \left| \nabla_x \right|^\alpha$ on the \emph{right} (and note the
squares!), and
finally, divide throughout by $\varepsilon^\alpha$. Hence we obtain the following
``Peter-Paul'' inequality:
\begin{equation}
\label{eq:GainPeterPaul}
\begin{aligned}
& \left\Vert \left< q v \right>^\alpha
\left| \nabla_x \right|^\alpha Q^+ (f,f) \right\Vert_{L^1 \left( I, L^2_{x,v} \right)} \leq \\
& \leq \frac{C}{\varepsilon^\alpha} \Bigg(
\left\Vert \left< q v \right>^\alpha f 
\right\Vert^2_{L^\infty \left( I, L^2_{x,v} \right)} +
\left\Vert \left< q v \right>^\alpha \left( \partial_t + v \cdot \nabla_x \right) f
\right\Vert^2_{L^1 \left( I, L^2_{x,v}\right)} \Bigg) + \\
& + C \varepsilon^\alpha \Bigg(
\left\Vert \left< q v \right>^\alpha \left| \nabla_x \right|^\alpha  f 
\right\Vert^2_{L^\infty \left( I, L^2_{x,v} \right)} +
\left\Vert \left< q v \right>^\alpha 
\left| \nabla_x \right|^\alpha  \left( \partial_t + v \cdot \nabla_x \right) f
\right\Vert^2_{L^1 \left( I, L^2_{x,v}\right)} \Bigg) \\
\end{aligned}
\end{equation}
The conclusion then follows by optimal choice of $\varepsilon$ and trivial manipulations.
\end{proof}

\subsection{Local propagation of regularity.}
\label{sec:reg-local}

The idea for proving local propagation of regularity is to construct
a local solution in the more regular space
$H^{\alpha,\alpha}$ with $\alpha > \frac{1}{2}$, and then appeal to
uniqueness via Theorem \ref{thm:unique} to conclude that the $H^{\alpha,\alpha}$
solution coincides with the small $L^2_{x,v}$ solution obtained from Kaniel-Shinbrot. The various estimates
required to apply Theorem \ref{thm:unique} to $H^{\alpha,\alpha}$ solutions
 follow immediately from the
local well-posedness theory in $H^{\alpha,\alpha}$ for
$\alpha > \frac{1}{2}$, combined with the Sobolev embedding theorem and
Lemma \ref{lem:A1lem}.\footnote{Interestingly, it was the local $H^{\alpha,\alpha}$ theory
with $\alpha > \frac{1}{2}$ which served as the inspiration for
Theorem \ref{thm:unique} in the first place (and, by extension, the proof of convergence
of the Kaniel-Shinbrot scheme).} The local theory presented here relies on the
$L^2_{x,v}$ norm remaining small, which is parallel to the assumption
for the uniqueness theorem, Theorem \ref{thm:unique}; however, the
$H^{\alpha,\alpha}$ norm may be very large and the local theory will
still be valid. The time of existence for local solutions given
$f_0 \in H^{\alpha,\alpha} \cap B_\eta^{L^2}$ is  determined solely by the magnitude of the
$H^{\alpha,\alpha}$ norm. A separate argument (discussed in
subsection \ref{sec:reg-global}) is required to obtain the propagation of
regularity on arbitrarily large time intervals.

Recall the $H^{\alpha,\alpha}$ norm with $\varepsilon$ dependence,
\begin{equation}
\left\Vert f \right\Vert_{H^{\alpha,\alpha}_\varepsilon} =
\left\Vert \left< \varepsilon v \right>^\alpha
\left< \varepsilon \nabla_x \right>^\alpha f 
\right\Vert_{L^2_{x,v}} 
\end{equation}
We know that the gain term $Q^+$ obeys the following estimate, by Proposition 
\ref{prop:gain-eq-bound-kin-alpha}
\begin{equation}
\label{eq:Qpbe}
\left\Vert Q^+ \left( T(t) f_0, T(t) g_0 \right) \right\Vert_{L^1_t H^{\alpha,\alpha}_\varepsilon}
\leq C \left\Vert f_0 \right\Vert_{H^{\alpha,\alpha}_\varepsilon}
\left\Vert g_0 \right\Vert_{H^{\alpha,\alpha}_\varepsilon}
\end{equation}
and the constant does not depend on $\alpha, \varepsilon \in \left( 0,1 \right]$. With respect
to the loss term, we cannot expect bounds independent of $\varepsilon$, but we can use
Lemma \ref{lem:LossBilin} to prove the following:
\begin{equation}
\left\Vert Q^- \left( T(t) f_0, T(t) g_0 \right) \right\Vert_{L^2_t H^{\alpha,\alpha}_\varepsilon}
\leq C_\varepsilon \left\Vert f_0 \right\Vert_{H^{\alpha,\alpha}_\varepsilon}
\left\Vert g_0 \right\Vert_{H^{\alpha,\alpha}_\varepsilon}
\end{equation}
 Hence by H{\" o}lder's inequality,
\begin{equation}
\label{eq:lossL1T}
\left\Vert Q^- \left( T(t) f_0, T(t) g_0 \right) \right\Vert_{L^1_{t \in [0,T]}
 H^{\alpha,\alpha}_\varepsilon}
\leq C_\varepsilon T^{\frac{1}{2}} \left\Vert f_0 \right\Vert_{H^{\alpha,\alpha}_\varepsilon}
\left\Vert g_0 \right\Vert_{H^{\alpha,\alpha}_\varepsilon}
\end{equation}
Note that the size of the constant $C_\varepsilon$ in (\ref{eq:lossL1T})
is irrelevant for our analysis, but it can
be estimated as $C_\varepsilon \lesssim \varepsilon^{- 4 \alpha}$ when $\varepsilon \rightarrow
0^+$. The point is that the large factor of $C_\varepsilon$ 
 can always be balanced in (\ref{eq:lossL1T}) by letting
$T$ be small. Since the parameter $\varepsilon$ reflects (in this instance) the size of the
$H^{\alpha,\alpha}$ norm at a given time $t_0$, we can apply Theorem \ref{thm:abswp} using
(\ref{eq:Qpbe}) and (\ref{eq:lossL1T}) to deduce local well-posedness for the full
Boltzmann equation 
in $H^{\alpha,\alpha} \cap B_\eta^{L^2_{x,v}}$ (for some constant $\eta > 0$), 
with existence time depending only
on the $H^{\alpha,\alpha}$ norm.

\begin{remark}
We can say nothing for $f_0$ outside the $\eta$-ball of $L^2$ by the above logic,
due to the fact that the constant $C$ in (\ref{eq:Qpbe}) remains fixed regardless of
any localization in time.
\end{remark}

As a result of the preceding discussion, we may conclude:
\begin{theorem}
\label{thm:BELWPalpha}
There exists a number $\eta > 0$ such that all the following is true:

Let $\alpha \in \left( \frac{1}{2}, 1 \right]$ and $f_0 \in H^{\alpha,\alpha}$, and further
suppose that
\begin{equation}
\label{eq:f0eta91}
\left\Vert f_0 (x,v) \right\Vert_{L^2_{x,v}} < \eta
\end{equation}
Then there exists a time $T > 0$ such that, for $t \in [0,T]$, the full Boltzmann equation
\begin{equation}
\left( \partial_t + v \cdot \nabla_x \right) f = Q^+ (f,f) - f \rho_f
\end{equation}
has a unique mild solution $f(t)$ such that $f \in L^\infty_{t \in [0,T]} H^{\alpha,\alpha}$,
 $Q^{\pm} (f,f) \in L^1_{t \in [0,T]} H^{\alpha,\alpha}$ and $f(0) = f_0$ all hold.
The solution is continuous, in the sense that $f \in C \left( [0,T],
H^{\alpha,\alpha} \right)$.
 Additionally,
the time $T$ may be chosen to depend only on the $H^{\alpha,\alpha}$ norm of $f_0$;
that is, the lower bound
\begin{equation*}
T \geq T_0 \left( \left\Vert f_0 \right\Vert_{H^{\alpha,\alpha}} \right) > 0
\qquad \textnormal{ assuming } \qquad
\left\Vert f_0 (x,v) \right\Vert_{L^2_{x,v}} < \eta
\end{equation*}
may be assumed.
\end{theorem}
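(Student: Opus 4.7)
My plan is to formalize the sketch in the discussion immediately preceding the theorem by running a contraction argument in the Hilbert space $\mathcal{H} = H^{\alpha,\alpha}_\varepsilon$, where the scaling parameter $\varepsilon$ is chosen as a function of $\|f_0\|_{H^{\alpha,\alpha}}$. After intertwining with the free flow via $\psi(t) = T(-t) f(t)$, a mild solution of the full Boltzmann equation on $[0,T]$ is a fixed point of
\begin{equation*}
\psi(t) = f_0 + \int_0^t \mathcal{A}(\sigma,\psi(\sigma),\psi(\sigma))\, d\sigma,
\end{equation*}
where $\mathcal{A}(t,x_1,x_2) = T(-t)\bigl[Q^+(T(t)x_1,T(t)x_2) - Q^-(T(t)x_1,T(t)x_2)\bigr]$, and I would produce it via Theorem \ref{thm:abswp}.

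The two required bilinear bounds are already in hand. Proposition \ref{prop:gain-eq-bound-kin-alpha} yields the critical estimate
\begin{equation*}
\|\mathcal{A}^+(\cdot,x_1,x_2)\|_{L^1_{t\in[0,T]} H^{\alpha,\alpha}_\varepsilon} \leq C_1 \|x_1\|_{H^{\alpha,\alpha}_\varepsilon} \|x_2\|_{H^{\alpha,\alpha}_\varepsilon}
\end{equation*}
with $C_1$ independent of $\alpha,\varepsilon,T$. For the loss part, Lemma \ref{lem:LossBilin} (whose hypothesis $\alpha > \tfrac{1}{2}$ is in force) combined with the trivial comparison $\|g\|_{H^{\alpha,\alpha}_1} \lesssim \varepsilon^{-2\alpha} \|g\|_{H^{\alpha,\alpha}_\varepsilon}$ for $\varepsilon \in (0,1]$, and H\"older in time, gives
\begin{equation*}
\|\mathcal{A}^-(\cdot,x_1,x_2)\|_{L^1_{t\in[0,T]} H^{\alpha,\alpha}_\varepsilon} \leq C_2(\varepsilon)\, T^{1/2} \|x_1\|_{H^{\alpha,\alpha}_\varepsilon} \|x_2\|_{H^{\alpha,\alpha}_\varepsilon},
\end{equation*}
with $C_2(\varepsilon) \lesssim \varepsilon^{-4\alpha}$. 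Adding the two contributions, $\mathcal{A}$ obeys (\ref{eq:Abd}) on $[0,T]$ with constant $C_0 = C_1 + C_2(\varepsilon) T^{1/2}$.

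It remains to tune the parameters. Fix $\eta > 0$ small enough that any $x_0 \in \mathcal{H}$ with norm at most $2\eta$ sits strictly inside the ball of contraction furnished by Theorem \ref{thm:abswp} with constant $2C_1$ in place of $C_0$. Given $f_0 \in H^{\alpha,\alpha}$ with $\|f_0\|_{L^2_{x,v}} < \eta$, the multiplier estimate $\langle \varepsilon v\rangle^\alpha \langle \varepsilon \nabla_x\rangle^\alpha \lesssim 1 + \varepsilon^\alpha \langle v\rangle^\alpha \langle \nabla_x\rangle^\alpha$ yields
\begin{equation*}
\|f_0\|_{H^{\alpha,\alpha}_\varepsilon} \leq \|f_0\|_{L^2_{x,v}} + C_\alpha\, \varepsilon^\alpha \|f_0\|_{H^{\alpha,\alpha}},
\end{equation*}
so I choose $\varepsilon = \varepsilon(\|f_0\|_{H^{\alpha,\alpha}}) \in (0,1]$ small enough to ensure $\|f_0\|_{H^{\alpha,\alpha}_\varepsilon} < 2\eta$, and then $T = T(\varepsilon) > 0$ small enough that $C_2(\varepsilon) T^{1/2} \leq C_1$. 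Theorem \ref{thm:abswp} applies, producing a unique $\psi \in W^{1,1}_{t\in[0,T]} \mathcal{H}$, and $f(t) = T(t)\psi(t)$ is the desired mild solution: the bounds $f \in L^\infty_t H^{\alpha,\alpha}$, $Q^\pm(f,f) \in L^1_t H^{\alpha,\alpha}$, and time-continuity all follow from the fixed-point class, the two bilinear estimates, and Duhamel's formula. The only delicate point—and the only non-routine aspect of the proof—is ensuring that $\varepsilon$ and hence $T$ depend on $f_0$ solely through $\|f_0\|_{H^{\alpha,\alpha}}$ rather than its finer profile, but the explicit comparison displayed above makes this automatic.
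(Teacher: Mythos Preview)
Your proposal is correct and follows essentially the same route as the paper: both arguments apply Theorem~\ref{thm:abswp} in $\mathcal{H}=H^{\alpha,\alpha}_\varepsilon$ after intertwining with the free flow, combining the $\varepsilon$-uniform gain estimate of Proposition~\ref{prop:gain-eq-bound-kin-alpha} with the $\varepsilon$-dependent loss estimate obtained from Lemma~\ref{lem:LossBilin} and H\"older in time, and then choose $\varepsilon$ as a function of $\lVert f_0\rVert_{H^{\alpha,\alpha}}$ to force $\lVert f_0\rVert_{H^{\alpha,\alpha}_\varepsilon}$ below the contraction threshold. You have simply made explicit the comparison $\lVert f_0\rVert_{H^{\alpha,\alpha}_\varepsilon}\le \lVert f_0\rVert_{L^2_{x,v}}+C_\alpha\,\varepsilon^\alpha\lVert f_0\rVert_{H^{\alpha,\alpha}}$ that the paper leaves implicit.
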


\subsection{Global propagation of regularity.}
\label{sec:reg-global}

The key observation to round out our discussion of regularity is that we do
not have to propagate the \emph{entire} $H^{\alpha,\alpha}$ norm, because part of it
is given to us \emph{for free} by the Kaniel-Shinbrot iteration. Indeed we
already know that $\left< v \right>^\alpha f \in L^\infty_{t \geq 0} L^2_{x,v}$,
and similarly $\left< v \right>^\alpha Q^+ (f,f) \in L^1_{t \geq 0} L^2_{x,v}$
and $\left< v \right>^\alpha Q^- (f,f) \in L^2_{t \geq 0} L^2_{x,v}$.
(See Theorem \ref{thm:gain-eq-gwp-kin-alpha} and Section \ref{sec:KanielShinbrot}.)
Hence, we have only to show that
\begin{equation}
\forall T \in ( 0,\infty), \; \qquad
\left\Vert \left< v \right>^\alpha
\left| \nabla_x \right|^\alpha f \right\Vert_{L^\infty_{t \in [0,T]} L^2_{x,v}} < \infty
\end{equation}
and
\begin{equation}
\forall T \in ( 0,\infty), \; \qquad
\left\Vert \left< v \right>^\alpha
\left| \nabla_x \right|^\alpha Q^{\pm} (f,f) \right\Vert_{L^1_{t \in [0,T]} L^2_{x,v}} < \infty
\end{equation}
Note that Theorem \ref{thm:unique}, combined with Sobolev embedding, implies 
 that the local $H^{\alpha,\alpha}$ solution from Theorem \ref{thm:BELWPalpha}
 coincides with the solution obtained via Kaniel-Shinbrot. (This is due to
the fact that Theorem \ref{thm:unique} refers only to
\emph{integrability} properties, not \emph{regularity} properties,
in the $(x,v)$ domain.) Therefore, we can assume
that the $H^{\alpha,\alpha}$ norms are finite on small time intervals. We can then
use continuity arguments, combined with
 Lemma \ref{lem:LossBilinUnsym} and Lemma \ref{lem:GainPP},
 to extend the $H^{\alpha,\alpha}$ time up to a \emph{larger}
small time interval which now only depends on controlled quantities which do not contain
$\left| \nabla_x \right|^\alpha$. Finally, a standard iteration in time provides the
desired result.

\begin{theorem}
\label{thm:fullreg}
There exists an absolute constant $\eta > 0$ such that the following is true:

Let $T \in  \left( 0,\infty \right)$
and $\alpha \in \left( \frac{1}{2}, 1 \right]$, and suppose $f(t) \in C \left( [0,T], L^2_{x,v} \right)$
 is a mild solution of the full Boltzmann equation satisfying all of the following
estimates:
\begin{equation}
\left\Vert f \right\Vert_{L^\infty_{t \in [0,T]} L^2_{x,v}}
+ \left\Vert Q^+ \left( f,f \right) \right\Vert_{L^1_{t \in [0,T]} L^2_{x,v}}
< \eta
\end{equation}
\begin{equation}
\left< v \right>^\alpha Q^+ (f,f) \in L^1_{t \in [0,T]} L^2_{x,v}
\end{equation} 
\begin{equation}
\rho_f \in L^2_{t \in [0,T]} L^\infty_x \cap L^2_{t \in [0,T]} L^4_x
\end{equation}
\begin{equation}
\left< v \right>^\alpha f \in L^\infty_{t \in [0,T]} L^2_{x,v} \cap L^\infty_{t \in [0,T]} L^4_x L^2_v
\end{equation}
and $f (0) = f_0$. If in addition $f_0 \in H^{\alpha,\alpha}$, then
$f \in L^\infty_{t \in [0,T]} H^{\alpha,\alpha}$ and
$Q^{\pm} (f,f) \in L^1_{t \in [0,T]} H^{\alpha,\alpha}$.
\end{theorem}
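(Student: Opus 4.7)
The plan is to promote the local $H^{\alpha,\alpha}$ well-posedness of Theorem~\ref{thm:BELWPalpha} into a statement valid on all of $[0,T]$ via a continuation argument driven by the off-diagonal bilinear estimates of Lemmas~\ref{lem:LossBilinUnsym} and~\ref{lem:GainPP}. Since $\|f_0\|_{L^2_{x,v}} \leq \|f\|_{L^\infty_{[0,T]} L^2_{x,v}} < \eta$, Theorem~\ref{thm:BELWPalpha} produces a local solution $\tilde f$ on some $[0,T_1]$ with $\tilde f \in L^\infty H^{\alpha,\alpha}$ and $Q^{\pm}(\tilde f,\tilde f) \in L^1 H^{\alpha,\alpha}$. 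Because $\alpha > \tfrac{1}{2}$, Sobolev embedding in $x$ gives $\langle v\rangle^\alpha \tilde f \in L^\infty L^4_x L^2_v$, Lemma~\ref{lem:A1lem} gives $\rho_{\tilde f} \in L^2 L^\infty \cap L^2 L^4$, and Proposition~\ref{prop:gain-eq-bound-kin-alpha} gives $\langle v\rangle^\alpha Q^+(\tilde f,\tilde f) \in L^1 L^2$; these are exactly the integrability hypotheses \eqref{eq:lsA}--\eqref{eq:lsC} of Theorem~\ref{thm:unique}, so uniqueness forces $\tilde f = f$ on $[0,T_1]$.

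Define
\[
T^\star = \sup\big\{ t \in [0,T] : f \in L^\infty_{[0,t]} H^{\alpha,\alpha}\text{ and } Q^{\pm}(f,f) \in L^1_{[0,t]} H^{\alpha,\alpha}\big\}.
\]
The same local-existence-plus-uniqueness argument applied at any interior $t_0 < T^\star$ (with initial data $f(t_0)$) shows that the supremum above is achieved on an open set. It therefore suffices to show $T^\star = T$, for which I produce a quantitative extension whose time-step is independent of the (potentially very large) $H^{\alpha,\alpha}$ norm of $f(t_0)$.

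For this extension, on any $I = [t_0, t_0 + \delta] \subset [0,T]$ I set
\[
D(I) = \|\langle v\rangle^\alpha |\nabla_x|^\alpha f\|_{L^\infty(I) L^2_{x,v}},\qquad
Y^{\pm}(I) = \|\langle v\rangle^\alpha |\nabla_x|^\alpha Q^{\pm}(f,f)\|_{L^1(I) L^2_{x,v}}.
\]
Duhamel gives $D(I) \leq \|\langle v\rangle^\alpha|\nabla_x|^\alpha f(t_0)\|_{L^2_{x,v}} + Y^+(I) + Y^-(I)$. I estimate $Y^-(I)$ by Lemma~\ref{lem:LossBilinUnsym} (converting its $L^2_t$ output to $L^1_t$ by H{\" o}lder with factor $\delta^{1/2}$) and $Y^+(I)$ by Lemma~\ref{lem:GainPP}. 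Every factor lacking $|\nabla_x|^\alpha$ in the resulting expression is an \emph{a priori} controlled quantity $\rho_f$, $\langle v\rangle^\alpha f$, or $\langle v\rangle^\alpha Q^{\pm}(f,f)$ from the hypotheses, and most of them are additionally small on short intervals by absolute continuity of the integral. Choosing $\delta_\star > 0$ depending only on those controlled quantities (and exploiting the $\eta$-smallness of $\|f\|_{L^\infty L^2}$ together with the free Peter-Paul parameter in Lemma~\ref{lem:GainPP}), I arrange the coefficient of $D(I) + Y^+(I) + Y^-(I)$ on the right to be strictly less than $\tfrac12$ and absorb to obtain
\[
D(I) + Y^+(I) + Y^-(I) \leq 2\|\langle v\rangle^\alpha|\nabla_x|^\alpha f(t_0)\|_{L^2_{x,v}} + C_\star,
\]
with $C_\star$ and $\delta_\star$ uniform in $t_0 \in [0,T]$.

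Iteration in $\lceil T/\delta_\star\rceil$ consecutive steps then propagates finiteness of $D$ and $Y^\pm$, and hence of the full $H^{\alpha,\alpha}$ norm of $f$ and the $L^1 H^{\alpha,\alpha}$ norm of $Q^\pm$, all the way to $t = T$; the norms may grow from step to step, but they stay finite, which suffices. The main obstacle is the choice of absorbing constant in the extension step: the coefficient $\|\langle v\rangle^\alpha f\|_{L^\infty L^2_{x,v}}$ that appears in Lemma~\ref{lem:GainPP} is bounded (by hypothesis) but not small, and reconciling it with a sub-unit coefficient on a step-size $\delta_\star$ independent of $\|f(t_0)\|_{H^{\alpha,\alpha}}$ is precisely what the Peter-Paul flexibility of Lemma~\ref{lem:GainPP} and the off-diagonal structure of both bilinear estimates are designed to make possible.
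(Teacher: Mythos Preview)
Your outline is essentially the paper's own argument: invoke local $H^{\alpha,\alpha}$ existence plus uniqueness to start, then close a time-stepping continuation using Lemma~\ref{lem:LossBilinUnsym} for $Q^-$ and Lemma~\ref{lem:GainPP} for $Q^+$, with step size depending only on the \emph{a priori} controlled integrability quantities. The structure, the lemmas invoked, and the absorption strategy all match.

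There is one point where your write-up is imprecise enough to count as a gap. You define $D(I)$ and $Y^{\pm}(I)$ with the full weight $\langle v\rangle^\alpha$, and then appeal to ``the free Peter-Paul parameter in Lemma~\ref{lem:GainPP}'' to make the coefficient in front of $D(I)$ small. But the free parameter in the \emph{statement} of Lemma~\ref{lem:GainPP} is the velocity-weight scale $q$, not the internal $\varepsilon$ (which has already been optimized out into a bilinear product). With $q=1$ the term $C\,\|\langle v\rangle^\alpha f\|_{L^\infty L^2}\cdot D(I)$ has a coefficient that is merely bounded, not small, and it cannot be absorbed regardless of how short $I$ is. The paper's fix is to choose $q\in(0,1)$ small enough that
\[
\|\langle q v\rangle^\alpha f\|_{L^\infty_{[0,T]} L^2_{x,v}} + \|\langle q v\rangle^\alpha Q^+(f,f)\|_{L^1_{[0,T]} L^2_{x,v}} < \eta,
\]
which is possible since these quantities approach their unweighted counterparts as $q\to 0$; the entire iteration is then run with the $\langle qv\rangle^\alpha$ weight (so $D(I)$ and $Y^{\pm}(I)$ should carry $\langle qv\rangle^\alpha$, not $\langle v\rangle^\alpha$). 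This is exactly the mechanism that links the $\eta$-smallness of $\|f\|_{L^\infty L^2}$ to a sub-unit absorption constant, and it is what your final paragraph is gesturing at; but as written your definitions do not implement it, so the absorption step would not close.
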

\begin{remark}
We emphasize the ordering of quantifiers: A single $\eta > 0$ works simultaneously
for all $T > 0$. Also, the supplied estimates automatically
imply $\left< v \right>^\alpha Q^- (f,f) \in L^2_{t \in [0,T]} L^2_{x,v}$, by
H{\" o}lder's inequality.
\end{remark}
\begin{proof}
In view of Theorem \ref{thm:BELWPalpha}, Theorem \ref{thm:unique}, and the Sobolev embedding
theorem, we only need to formally estimate
$\left< v \right>^\alpha \left| \nabla_x \right|^\alpha f \in L^\infty_{t \in [0,T]} L^2_{x,v}$
and $\left< v \right>^\alpha \left| \nabla_x \right|^\alpha Q^{\pm} (f,f)
\in L^2_{x,v}$. Additionally, due to
 Lemma \ref{lem:LossBilin}, Proposition \ref{prop:gain-eq-bound-kin-alpha},
Lemma \ref{lem:abswplem}, and Duhamel's formula with $f_0 \in H^{\alpha,\alpha}$, it will be
enough to show:
\begin{equation}
\left< v \right>^\alpha \left| \nabla_x \right|^\alpha
\left( \partial_t + v \cdot \nabla_x \right) f \in L^1_{t \in [0,T]} L^2_{x,v}
\end{equation}

Suppose $0 \leq t_0 < T$ and $0 < s \leq T-t_0$, and let $e_{t_0} (s)$ denote the quantity
\begin{equation}
e_{t_0} (s) = \left\Vert \left< q v \right>^\alpha
\left| \nabla_x \right|^\alpha \left( \partial_t + v \cdot \nabla_x \right) f
\right\Vert_{L^1_{t \in [t_0, t_0 + s]} L^2_{x,v}}
\end{equation}
whenever it is well-defined, or $+\infty$ otherwise. Note that $e_0 (s) < +\infty$ for
some $s>0$ by Theorem \ref{thm:BELWPalpha} and Theorem \ref{thm:unique}. Additionally,
if $e_{t_0} (s) < +\infty$, then
$\lim_{s \rightarrow 0^+} e_{t_0} (s) = 0$ by the dominated convergence theorem.
We want to show that $e_0 (T) < +\infty$.

We define for convenience
\begin{equation}
M = \left\Vert \left< v \right>^\alpha f \right\Vert_{L^\infty_{t \in [0,T]} L^2_{x,v}}
+ \left\Vert \left< v \right>^\alpha f \right\Vert_{L^\infty_{t \in [0,T]} L^4_x L^2_v}
+ \left\Vert \rho_f \right\Vert_{L^2_{t \in [0,T]} L^\infty_x} +
\left\Vert \rho_f \right\Vert_{L^2_{t \in [0,T]} L^4_x}
\end{equation}

Pick a number $q \in (0,1)$ such that
\begin{equation}
\left\Vert \left< q v \right>^\alpha f \right\Vert_{L^\infty_{t \in [0,T]} L^2_{x,v}}
+ \left\Vert \left< q v \right>^\alpha
 Q^+ \left( f,f \right) \right\Vert_{L^1_{t \in [0,T]} L^2_{x,v}}
< \eta
\end{equation}
where $\eta$ is as in the statement of the theorem (the size of $\eta$ may be determined by
tracking constants through the proof).
 
Suppose $t_0, s$ are such that $e_0 (s + t_0) = e_0 (t_0) + e_{t_0} (s) < + \infty$ 
(here the allowable values of
 $t_0,s$ are determined by the
solution $f$ itself, not necessarily by the statement of Theorem \ref{thm:BELWPalpha}).
Since $f$ solves Boltzmann's equation, we clearly have
\begin{equation}
\begin{aligned}
& e_{t_0} (s) \leq  \\
&  \left\Vert \left< q v \right>^\alpha
\left| \nabla_x \right|^\alpha Q^+ (f,f)
\right\Vert_{L^1_{t \in [t_0, t_0 + s]} L^2_{x,v}}
+ \left\Vert \left< q v \right>^\alpha
\left| \nabla_x \right|^\alpha Q^- (f,f)
\right\Vert_{L^1_{t \in [t_0, t_0 + s]} L^2_{x,v}}
\end{aligned}
\end{equation}

We have, as an immediate consequence of Lemma \ref{lem:LossBilinUnsym}, the estimate
\begin{equation}
\begin{aligned}
& \left\Vert \left< q v \right>^\alpha
\left| \nabla_x \right|^\alpha Q^- (f,f) \right\Vert_{L^1_{t \in [t_0, t_0 + s]} L^2_{x,v}} \leq \\
& \qquad \qquad \qquad
 \leq C M s^{\frac{1}{2}} \left( \left\Vert f_0 \right\Vert_{H^{\alpha,\alpha}}
+ \frac{1}{q^\alpha} e_0 (t_0) + \frac{1}{q^{\alpha}} e_{t_0} (s) \right)
\end{aligned}
\end{equation}
Note that $s$ can be chosen, depending only on the parameters
$M,q$ fixed as above, to make the prefactor on $e_{t_0} (s)$ as small as
we like.

By Lemma \ref{lem:GainPP}, we have
\begin{equation}
\begin{aligned}
&  \left\Vert \left< q v \right>^\alpha
\left| \nabla_x \right|^\alpha Q^+ (f,f) \right\Vert_{L^1_{t \in [t_0, t_0 + s]} L^2_{x,v}}
\leq \\
& \qquad \qquad
\leq \textnormal{const.} \times  \left( \eta + M s^{\frac{1}{2}} \right) \Bigg(
\left\Vert f_0 \right\Vert_{H^{\alpha,\alpha}} + e_0 (t_0) + e_{t_0} (s) \Bigg)
\end{aligned}
\end{equation}
Combining estimates (and picking $\eta$ small enough once and for all), 
we find that there exists a number $\tilde{s} > 0$, depending on
the solution $f$ only through $M,q$, with the following
property: if $e_0 (t_0) < \infty$, then $e_{t_0} (s) < \infty$. This is sufficient to conclude
the theorem.
\end{proof}

\section{The local well-posedness theorem}
\label{sec:lwpthm}

In view of the Kaniel-Shinbrot iteration, in order to prove 
Theorem \ref{thm:lwp} it suffices to prove a suitable local
well-posedness theorem for the \emph{gain-only} Boltzmann
equation. This theorem will require $\alpha = \frac{1}{2}+$ 
regularity on $f_0$ but the time of existence will depend only
on the $H^{s,s}$ norm for given $s \in (0, \frac{1}{2} )$. 

Let us define the norms, for $\alpha \in (\frac{1}{2},1)$,
 $0 < \theta < 1$ and $\varepsilon > 0$,
\begin{equation}
\begin{aligned}
& \left\Vert f \right\Vert_{H^{\alpha,\alpha}_{\varepsilon,\theta}} = \\
& \left\Vert \left( 1 + \varepsilon^2 |v|^2 \right)^{\frac{\alpha}{2}(1-\theta)}
\left( 1 + \varepsilon^2 | \xi |^2 \right)^{\frac{\alpha}{2}(1-\theta)}
\left( 1 + |v|^2 \right)^{\frac{\alpha}{2}\theta}
\left( 1 + |\xi|^2 \right)^{\frac{\alpha}{2}\theta}
\mathcal{F}_x f \left( \xi, v \right) \right\Vert_{L^2_{\xi,v}}
\end{aligned}
\end{equation}
The $H^{\alpha,\alpha}_{\varepsilon,\theta}$ norm is equivalent
(up to powers of $\varepsilon$) to the $H^{\alpha,\alpha}$ norm,
but for small $\varepsilon$ the $H^{\alpha,\alpha}_{\varepsilon,\theta}$
norm is nearly equal to the $H^{s,s}$ norm where 
$s = \alpha \theta$. Also note that
$$
\left(
H^{\alpha,\alpha}_\varepsilon,
H^{\alpha,\alpha} \right)_\theta =
H^{\alpha,\alpha}_{\varepsilon,\theta}
$$
with equality of norms.

The following bilinear estimate is proven in
\cite{CDP2017}:

\begin{proposition}
Let $\alpha > \frac{1}{2}$. Then there is a constant
$C = C(\alpha)$ such that, for the constant collision kernel
in dimension $d=2$,
\begin{equation}
\left\Vert
Q^+ \left( T(t) f_0, T(t) g_0 \right) 
\right\Vert_{L^2_{t \in \mathbb{R}} H^{\alpha,\alpha}}
\leq C
\left\Vert f_0 \right\Vert_{H^{\alpha,\alpha}}
\left\Vert g_0 \right\Vert_{H^{\alpha,\alpha}}
\end{equation}
\end{proposition}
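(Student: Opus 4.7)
My plan is to mirror the proof of Proposition \ref{prop:gain-eq-bound} (the $L^1_t L^2_{x,v}$ bilinear estimate) but replace the Cauchy--Schwarz trick that produced the $L^1_t$ norm with a Hölder exponent that produces $L^2_t$, paying the cost in additional regularity on each input. Passing to the Wigner side via the inverse Wigner transform, and using that the kinetic $H^{\alpha,\alpha}$ norm is equivalent (after the linear change of variables $w=(x+x')/2$, $z=(x-x')/2$) to a product-type Sobolev norm on $\mathbb{R}^4$ which embeds into $H^{1/2}(\mathbb{R}^4)$ whenever $\alpha>1/2$, the core bound to establish is
\[
\bigl\|B^+(e^{it\Delta_\pm}\gamma_1,\,e^{it\Delta_\pm}\gamma_2)\bigr\|_{L^2_t L^2_{x,x'}}\lesssim \|\gamma_1\|_{H^{1/2}(\mathbb{R}^4)}\|\gamma_2\|_{H^{1/2}(\mathbb{R}^4)}.
\]

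To get this, I would invoke the pointwise-in-$t$ bilinear estimate from Lemma \ref{lem:L42bd},
\[
\|B^+(\gamma_1,\gamma_2)\|_{L^2_{x,x'}}\lesssim \|\gamma_1\|_{L^{4,2}_{x,x'}}\|\gamma_2\|_{L^{4,2}_{x,x'}},
\]
and follow it with Cauchy--Schwarz in $t$ to reduce to controlling $\|e^{it\Delta_\pm}\gamma_i\|_{L^4_t L^{4,2}_{x,x'}}$. The required Strichartz upgrade is obtained by complex interpolation between the Keel--Tao endpoint $e^{it\Delta_\pm}:L^2(\mathbb{R}^4)\to L^2_t L^{4,2}_{x,x'}$ (already used in Proposition \ref{prop:gain-eq-bound}) and the trivial bound $e^{it\Delta_\pm}:H^1(\mathbb{R}^4)\to L^\infty_t L^{4,2}_{x,x'}$, the latter arising from isometry together with the refined Sobolev--Lorentz embedding $H^1(\mathbb{R}^4)\hookrightarrow L^{4,2}$. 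Interpolating at $\theta=1/2$ delivers exactly $\|e^{it\Delta_\pm}\gamma\|_{L^4_t L^{4,2}}\lesssim \|\gamma\|_{H^{1/2}(\mathbb{R}^4)}$, and the hypothesis $\alpha>1/2$ is exactly what lets the $H^{\alpha,\alpha}$ norm dominate this.

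To recover the full $L^2_t H^{\alpha,\alpha}$ output norm, I would follow the template of Proposition \ref{prop:gain-eq-bound-kin-alpha}: prove the bound first at $\alpha=1$ by distributing derivatives and velocity weights using the standard commutation rules on the kinetic side,
\[
\nabla_x Q^+(f,g)=Q^+(\nabla_x f,g)+Q^+(f,\nabla_x g),\qquad |v\,Q^+(f,g)|\lesssim Q^+(|vf|,|g|)+Q^+(|f|,|vg|)
\]
(the second coming from conservation of energy), so that each resulting bilinear term is controlled by the base $L^2_t L^2$ estimate applied to inputs carrying one derivative or one weight. One then interpolates by the multilinear Riesz--Thorin theorem between $\alpha$ slightly above $\tfrac12$ and $\alpha=1$ to cover the full range stated in the proposition.

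The principal technical obstacle is preserving the Lorentz refinement throughout. Lemma \ref{lem:L42bd} genuinely needs $L^{4,2}$ rather than $L^4$ (because it rests on the pointwise identity with $|\eta|^{-1}\in L^{2,\infty}$ and the duality $(L^{2,1})'=L^{2,\infty}$), so one cannot get away with the simpler Sobolev upgrade of the admissible $L^4_t L^{8/3}$ Strichartz, which only reaches $L^4_t L^4$. This forces the interpolation to be anchored on the sharp embedding $H^1(\mathbb{R}^4)\hookrightarrow L^{4,2}$, and one must verify that complex interpolation commutes with the mixed-norm structure $L^p_t L^{4,2}_x$; the rest of the argument then parallels Proposition \ref{prop:gain-eq-bound} with $L^2_t$ in place of $L^1_t$ and an extra half-derivative absorbed into each input.
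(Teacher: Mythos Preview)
The paper does not prove this proposition; it is quoted from \cite{CDP2017}. So there is no in-paper argument to compare against, only the question of whether your sketch is sound.

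Your step~1 (the base estimate $\|B^+(e^{it\Delta_\pm}\gamma_1,e^{it\Delta_\pm}\gamma_2)\|_{L^2_tL^2}\lesssim\|\gamma_1\|_{H^{1/2}}\|\gamma_2\|_{H^{1/2}}$ via Lemma~\ref{lem:L42bd}, H\"older in $t$, and an interpolated Strichartz bound $L^4_tL^{4,2}$ from $H^{1/2}$) is plausible. The Lorentz--Sobolev embedding $H^1(\mathbb{R}^4)\hookrightarrow L^{4,2}$ is indeed available, and interpolating Bochner spaces $L^p_t(X)$ with fixed $X=L^{4,2}$ is harmless.

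The genuine gap is in step~2, where you try to upgrade the output to $H^{\alpha,\alpha}$ by mimicking Proposition~\ref{prop:gain-eq-bound-kin-alpha}. That template works only because the $L^1_t$ base estimate is $L^2\times L^2\to L^1_tL^2$: after distributing $\langle v\rangle\langle\nabla_x\rangle$ at $\alpha=1$, each input (e.g.\ $v\nabla_x f_0$) lands in $L^2$, which is exactly what the base estimate needs. Your $L^2_t$ base estimate, by contrast, is $H^{1/2}\times H^{1/2}\to L^2_tL^2$. After distribution at $\alpha=1$, the input $v\nabla_x f_0$ is only in $L^2$ when $f_0\in H^{1,1}$, not in $H^{1/2}(\mathbb{R}^4)$, so the base estimate does not apply. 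You might hope to rescue the diagonal terms $Q^+(v\nabla_x f_0,g_0)$ by using an asymmetric H\"older split $(L^2_t,L^\infty_t)$ in time, which requires only $L^2\times H^1(\mathbb{R}^4)$; this does work for those terms since $H^{1,1}\hookrightarrow H^1(\mathbb{R}^4)$. But the cross terms $Q^+(\nabla_x f_0, v g_0)$ cannot be closed: any H\"older split $(p,p')$ with $1/p+1/p'=1/2$ forces one factor to carry isotropic regularity of order $2/p'$ on top of the weight or derivative already present, and for instance $\nabla_x f_0\in H^{s}(\mathbb{R}^4)$ with $s>0$ requires $|\xi_w|^{1+s}$ control in the $w$-frequency, which $H^{1,1}$ does not provide. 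Symmetrically, $vg_0\in H^{s}(\mathbb{R}^4)$ fails in the $z$-frequency. So neither endpoint of your interpolation is established, and the multilinear Riesz--Thorin step has nothing to interpolate between.

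In short: the distribute-then-apply-base-estimate mechanism of Proposition~\ref{prop:gain-eq-bound-kin-alpha} does not transfer to the $L^2_t$ setting, because the half-derivative cost of the base estimate is additive to the weights and derivatives you have already spent. The proof in \cite{CDP2017} proceeds differently, working directly with the weighted bilinear form rather than reducing to an unweighted base estimate.
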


On the other hand, from
Proposition \ref{prop:gain-eq-bound-kin-alpha} we know that
\begin{equation}
\left\Vert
Q^+ \left( T(t) f_0, T(t) g_0 \right) 
\right\Vert_{L^1_{t \in \mathbb{R}} 
H^{\alpha,\alpha}_\varepsilon}
\leq C
\left\Vert f_0 \right\Vert_{H^{\alpha,\alpha}_\varepsilon}
\left\Vert g_0 \right\Vert_{H^{\alpha,\alpha}_\varepsilon}
\end{equation}
where the constant $C$ is independent of $\varepsilon$.

Interpolating these two estimates yields
\begin{equation}
\label{eq:interpQplus03}
\left\Vert
Q^+ \left( T(t) f_0, T(t) g_0 \right) 
\right\Vert_{L^{p_\theta}_{t \in \mathbb{R}} 
H^{\alpha,\alpha}_{\varepsilon,\theta}}
\leq C
\left\Vert f_0 \right\Vert_{H^{\alpha,\alpha}_{\varepsilon,\theta}}
\left\Vert g_0 \right\Vert_{H^{\alpha,\alpha}_{\varepsilon,\theta}}
\end{equation}
where $C$ is independent of $\varepsilon$ and 
$1/p_\theta = 1-\frac{1}{2}\theta$; note that
$p_\theta > 1$ for each $\theta \in (0,1)$.

The chain of reasoning is as follows. Let
$\alpha = \frac{1}{2}+$ and fix a desired regularity
$s \in (0,\frac{1}{2})$; then, $\theta$ is fixed so that
$s = \alpha \theta$. Let $f_0 \in H^{\alpha,\alpha}$ be an
arbitrary initial datum. By choosing $\varepsilon$ very small,
we can let the $H^{\alpha,\alpha}_{\varepsilon,\theta}$ norm
approach the $H^{s,s}$ norm of $f_0$, while the constant $C$ remains
fixed. This implies that the local time of existence depends only
on the $H^{s,s}$ norm of $f_0$, by an application of 
Theorem \ref{thm:abswp} (we have localized in time using that
$p_\theta > 1$). Altogether we will be able to conclude:

\begin{theorem}
\label{thm:GOlwp-s}
Let $f_0 \in H^{\frac{1}{2}+,\frac{1}{2}+}$ and fix
$s \in (0,\frac{1}{2})$. The gain-only Boltzmann equation
\begin{equation}
\label{eq:GOlwp-s-eqn}
\left( \partial_t + v \cdot \nabla_x \right)
f = Q^+ ( f,f )
\end{equation}
has a mild solution $f \in C \left( [0,T], H^{\frac{1}{2}+,
\frac{1}{2}+}\right)$
such that
$Q^+ (f,f) \in L^1_{t \in [0,T]} H^{\frac{1}{2}+,\frac{1}{2}+}$
and $f( t=0) = f_0$.
The solution is unique in the class of all mild solutions with
the same initial data 
satisfying $Q^+ (f,f) \in L^1_{t \in [0,T]} 
H^{\frac{1}{2}+,\frac{1}{2}+}$. The existence time $T$ depends
only on $s$ and the $H^{s,s}$ norm of $f_0$.
\end{theorem}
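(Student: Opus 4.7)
The plan is to apply Theorem \ref{thm:abswp} to the gain-only Boltzmann equation, using the interpolated family of Hilbert spaces $H^{\alpha,\alpha}_{\varepsilon,\theta}$ that sit between $H^{\alpha,\alpha}_\varepsilon$ and $H^{\alpha,\alpha}$. The crucial features of \eqref{eq:interpQplus03} are that the spacetime exponent $p_\theta > 1$ is \emph{subcritical} in the sense of Remark \ref{rem:locwp}, which allows us to gain a positive power of $T$ by localizing in time, and that the constant $C$ there is independent of $\varepsilon$. These two ingredients together allow us to trade large $H^{\alpha,\alpha}$ norms for something close to the $H^{s,s}$ norm of $f_0$.

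First, fix some $\alpha \in (\tfrac{1}{2}, 1)$ with $\alpha \leq \tfrac{1}{2}+$ and set $\theta = s/\alpha \in (0,1)$, so that $\alpha \theta = s$. Localize $\mathcal{A}(t, f_0, g_0) = T(-t) Q^+(T(t) f_0, T(t) g_0)$ in time by multiplying by a bump function supported in $[-T,T]$; H\"older's inequality in time then upgrades \eqref{eq:interpQplus03} to an $L^1_t H^{\alpha,\alpha}_{\varepsilon,\theta}$ estimate with constant $C_0 \lesssim C T^{1/p_\theta'}$, where $C$ is still independent of $\varepsilon$. Applying Theorem \ref{thm:abswp} with $\mathcal{H} = H^{\alpha,\alpha}_{\varepsilon,\theta}$ produces a unique fixed-point solution whenever
\begin{equation*}
\| f_0 \|_{H^{\alpha,\alpha}_{\varepsilon,\theta}} \lesssim M \sim \big( C T^{1/p_\theta'} \big)^{-1},
\end{equation*}
equivalently, whenever $T \lesssim \big( C \, \| f_0 \|_{H^{\alpha,\alpha}_{\varepsilon,\theta}} \big)^{-p_\theta'}$.

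The key step is now to take $\varepsilon = \varepsilon(f_0)$ small. For any $f_0 \in H^{\alpha,\alpha}$ and $\varepsilon \in (0,1]$, the integrand defining $\| f_0 \|^2_{H^{\alpha,\alpha}_{\varepsilon,\theta}}$ is dominated pointwise by $\langle v \rangle^{2\alpha} \langle \xi \rangle^{2\alpha} |\widehat{f_0}(\xi,v)|^2$, which is integrable by assumption, and converges pointwise as $\varepsilon \to 0^+$ to $\langle v \rangle^{2s} \langle \xi \rangle^{2s} |\widehat{f_0}(\xi,v)|^2$. Dominated convergence gives
\begin{equation*}
\lim_{\varepsilon \to 0^+} \| f_0 \|_{H^{\alpha,\alpha}_{\varepsilon,\theta}} = \| f_0 \|_{H^{s,s}},
\end{equation*}
so picking $\varepsilon$ small enough we may assume $\| f_0 \|_{H^{\alpha,\alpha}_{\varepsilon,\theta}} \leq 2 \| f_0 \|_{H^{s,s}}$. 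The allowable time of existence then depends only on $\| f_0 \|_{H^{s,s}}$ and $s$ (through $\theta$, $p_\theta'$, and the universal constant $C$). Since $H^{\alpha,\alpha}_{\varepsilon,\theta}$ is topologically equal to $H^{\alpha,\alpha} = H^{\frac{1}{2}+, \frac{1}{2}+}$ for fixed $\varepsilon > 0$, the solution lies in $C([0,T], H^{\frac{1}{2}+,\frac{1}{2}+})$ with $Q^+(f,f) \in L^1_{t \in [0,T]} H^{\frac{1}{2}+, \frac{1}{2}+}$.

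Uniqueness in the stated class follows from Theorem \ref{thm:abswp}: any mild solution $f$ with $Q^+(f,f) \in L^1_t H^{\alpha,\alpha}$ automatically satisfies $T(-t) f(t) \in W^{1,1}((0,T), H^{\alpha,\alpha}_{\varepsilon,\theta})$ by Duhamel, and the abstract uniqueness then compares any two such solutions after possibly shrinking the time interval using the subcritical power of $T$. The main obstacle is securing the $\varepsilon$-independence of $C$ in \eqref{eq:interpQplus03}, which is the linchpin of the entire argument; both endpoints (the $L^1_t H^{\alpha,\alpha}_\varepsilon$ bound from Proposition \ref{prop:gain-eq-bound-kin-alpha} and the $L^2_t H^{\alpha,\alpha}$ bound from \cite{CDP2017}) have $\varepsilon$-independent constants, and bilinear complex interpolation preserves this, so there is no hidden dependence. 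A secondary subtlety is that the smallness threshold in Theorem \ref{thm:abswp} must be traversed via $\| \cdot \|_{H^{\alpha,\alpha}_{\varepsilon,\theta}}$ rather than $\| \cdot \|_{H^{s,s}}$; the dominated convergence argument above is precisely what bridges this gap.
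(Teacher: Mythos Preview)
Your proof is correct and follows essentially the same route as the paper: localize in time via a bump function, use H\"older to convert the interpolated $L^{p_\theta}_t H^{\alpha,\alpha}_{\varepsilon,\theta}$ bound \eqref{eq:interpQplus03} into an $L^1_t$ estimate with constant $\sim C T^{\theta/2}$ (note $1/p_\theta' = \theta/2$), apply Theorem~\ref{thm:abswp} with $\mathcal{H} = H^{\alpha,\alpha}_{\varepsilon,\theta}$, and then send $\varepsilon \to 0$ so that the smallness condition becomes a bound on $\|f_0\|_{H^{s,s}}$. Your dominated-convergence justification of $\|f_0\|_{H^{\alpha,\alpha}_{\varepsilon,\theta}} \to \|f_0\|_{H^{s,s}}$ and your remark that uniqueness extends to arbitrary solutions by shrinking the time interval (exploiting $p_\theta > 1$) are useful clarifications that the paper leaves implicit.
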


Once we have Theorem \ref{thm:GOlwp-s}, we
repeat the Kaniel-Shinbrot iteration as in 
subsection \ref{sec:KanielShinbrot} to conclude
Theorem \ref{thm:lwp}.

\begin{proof}
(Theorem \ref{thm:GOlwp-s})
Since $s \in (0,\frac{1}{2})$ and $\alpha = \frac{1}{2}+$, we can fix
$\theta \in (0,1)$ so that $s = \alpha \theta$; then, we have
$p_\theta > 1$ where $1 / p_\theta = 1 - \frac{1}{2} \theta$.

Under the change of variables
$$ \tilde{f} (t) = T(-t) f(t) $$
the equation (\ref{eq:GOlwp-s-eqn}) is transformed into
$$
\partial_t \tilde{f} (t) =
T(-t) Q^+ \left( T(t) \tilde{f} (t), T(t) \tilde{f} (t) \right)
$$
Fix a smooth, even function
$\psi (t) : \mathbb{R} \rightarrow \mathbb{R}$,
which is decreasing on $(0,\infty)$, equals $1$ on $(0,T)$, and equals
$0$ on $(2T,\infty)$. Then consider the equation
\begin{equation}
\label{eq:truncGO}
\partial_t \tilde{f} (t) = \mathcal{A} (t, \tilde{f}(t), \tilde{f}(t))
\end{equation}
where
$$
\mathcal{A} (t,f_0,g_0) =
\psi (t) T(-t) Q^+ \left( T(t) f_0, T(t) g_0 \right)
$$
and $\tilde{f} (t=0) = f_0$.
By (\ref{eq:interpQplus03}), the definition of $\psi$, and H{\" o}lder's inequality,
there holds
\begin{equation}
\left\Vert \mathcal{A} \left( t, f_0, g_0 \right) \right\Vert_{L^1_t 
H^{\alpha,\alpha}_{\varepsilon, \theta}} \leq C 
T^{\theta / 2}
\left\Vert f_0 \right\Vert_{H^{\alpha,\alpha}_{\varepsilon,\theta}}
\left\Vert g_0 \right\Vert_{H^{\alpha,\alpha}_{\varepsilon,\theta}}
\end{equation}
where the constant $C$ is independent of $\varepsilon$. By Theorem
\ref{thm:abswp}, equation (\ref{eq:truncGO}) is well-posed as long as
\begin{equation}
\left\Vert f_0 \right\Vert_{H^{\alpha,\alpha}_{\varepsilon,\theta}} \leq
C T^{- \theta  / 2}
\end{equation}
where the constant $C$ is again independent of $\varepsilon$. Letting $\varepsilon$
tend to zero, this condition becomes
\begin{equation}
\left\Vert f_0 \right\Vert_{H^{s,s}} \leq C T^{-\theta / 2}
\end{equation}
which was what we wanted.
\end{proof}

\appendix

\section{An Endpoint Strichartz Estimate}
\label{app:strich}

We recall Theorem 10.1 from \cite{KT1998}:

\begin{theorem}
\label{thm:KT}
\cite{KT1998}
Let $\sigma > 0$, $H$ be a Hilbert space and $B_0, B_1$ be Banach spaces.
Suppose that for each time $t$ we have an operator
$U(t) : H \rightarrow B_0^*$ such that
\begin{equation}
\left\Vert U(t) \right\Vert_{H \rightarrow B_0^*} \lesssim 1
\end{equation}
\begin{equation}
\left\Vert U(t) \left( U(s) \right)^* 
\right\Vert_{B_1 \rightarrow B_1^*} \lesssim |t-s|^{-\sigma}
\end{equation}
Let $B_\theta$ denote the real interpolation space
$\left( B_0, B_1 \right)_{\theta,2}$. Then we have the estimates
\begin{equation}
\left\Vert U(t) f \right\Vert_{L_t^q B_\theta^*} \lesssim
\left\Vert f \right\Vert_H
\end{equation}
\begin{equation}
\left\Vert \int \left( U(s) \right)^* F(s) ds \right\Vert_H
\lesssim \left\Vert F \right\Vert_{L_t^{q^\prime} B_\theta}
\end{equation}
\begin{equation}
\left\Vert
\int_{s < t} U(t) \left( U(s) \right)^* F(s) ds
\right\Vert_{L_t^q B_\theta^*} \lesssim
\left\Vert F \right\Vert_{L_t^{\tilde{q}^\prime} B_{\tilde{\theta}}^*}
\end{equation}
whenever $0 \leq \theta \leq 1$, $2 \leq q = \frac{2}{\sigma \theta}$,
$(q,\theta,\sigma) \neq (2,1,1)$, and similarly for
$( \tilde{q}, \tilde{\theta} )$. If the decay estimate is
strengthened to
\begin{equation}
\left\Vert U(t) \left( U(s) \right)^* \right\Vert_{B_1 \rightarrow B_1^*}
\lesssim \left( 1 + |t-s| \right)^{-\sigma}
\end{equation}
then the requirement $q = \frac{2}{\sigma \theta}$ can be relaxed to
$q \geq \frac{2}{\sigma \theta}$, and similarly for
$( \tilde{q}, \tilde{\theta} )$.
\end{theorem}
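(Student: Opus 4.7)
The plan is to reduce all four claims to a single bilinear estimate on the kernel $K(t,s) = U(t) U(s)^*$ via the $TT^*$ method, then prove that bilinear estimate by real interpolation combined with, at the endpoint, the Keel--Tao dyadic atomic decomposition.

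First, via Cauchy--Schwarz in $H$, the homogeneous estimate $\|U(t) f\|_{L^q_t B_\theta^*} \lesssim \|f\|_H$ is equivalent both to its adjoint $\|\int U(s)^* F(s)\, ds\|_H \lesssim \|F\|_{L^{q'}_t B_\theta}$ and to the bilinear form
\begin{equation*}
\left| \iint \bigl\langle U(t) U(s)^* F(s),\, G(t) \bigr\rangle\, ds\, dt \right|
\lesssim \|F\|_{L^{q'}_t B_\theta} \|G\|_{L^{q'}_t B_\theta}.
\end{equation*}
The two inhomogeneous statements follow from the same bilinear control on $K$, so it suffices to prove this bilinear estimate together with its retarded analogue.

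Real-interpolating the hypothesized bounds on $K(t,s)$ (namely $B_0\to B_0^*$ with norm $\lesssim 1$ and $B_1\to B_1^*$ with norm $\lesssim |t-s|^{-\sigma}$) yields $\|K(t,s)\|_{B_\theta \to B_\theta^*} \lesssim |t-s|^{-\sigma\theta}$. In the non-endpoint range $\sigma\theta < 1$ (equivalently $q > 2$), a direct application of Hardy--Littlewood--Sobolev in time with exponents $1/q' + 1/q' = 1 + (1-\sigma\theta)$ closes the bilinear estimate. The improved decay $(1+|t-s|)^{-\sigma}$ additionally localizes the kernel for small $|t-s|$, which is what upgrades HLS to the full range $q \geq 2/(\sigma\theta)$.

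The hard step is the endpoint $q = 2$, $\sigma\theta = 1$, with $(q,\theta,\sigma)\neq (2,1,1)$, where HLS degenerates. Here I would execute the Keel--Tao atomic argument: split the bilinear form into dyadic annuli $|t-s| \sim 2^j$, bound each piece in two different ways using the $B_0$ and $B_1$ endpoint kernel bounds applied to level-set characteristic functions of $F$ and $G$, and bilinearly real-interpolate between the two. The key gain over HLS is that the Lorentz-norm bookkeeping produces a net exponential excess at \emph{both} dyadic extremes, so the sum over $j$ converges absolutely; the excluded triple $(2,1,1)$ is precisely the configuration where this excess collapses. The retarded operator $\int_{s<t}$ then follows from the non-retarded bilinear form via the Christ--Kiselev lemma when $(q,\theta)\neq(\tilde q,\tilde\theta)$, and by imposing the retardation inside the dyadic decomposition in the diagonal case.

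The principal obstacle is this endpoint step: both the time exponent and the interpolation parameter are simultaneously critical, so any proof based on soft interpolation out of the classical Hausdorff--Young-type Strichartz estimates must fail. The Lorentz-space refinement in the atomic decomposition is what carries the argument through, and it is also what dictates the exclusion of the $(2,1,1)$ triple.
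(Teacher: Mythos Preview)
The paper does not prove this theorem at all: it is quoted verbatim as Theorem~10.1 of \cite{KT1998} and then simply \emph{applied} with $H = B_0 = L^2_{x,x'}$, $B_1 = L^1_{x,x'}$, $(q,\sigma,\theta) = (2,2,\tfrac12)$. Your sketch, by contrast, is an outline of the actual Keel--Tao proof and is broadly correct: the $TT^*$ reduction, the interpolated kernel bound, Hardy--Littlewood--Sobolev in the non-endpoint range, and the dyadic bilinear decomposition at the endpoint are exactly the ingredients of \cite{KT1998}. Your handling of the retarded estimate is also right in spirit---Christ--Kiselev covers the off-diagonal cases, while at the double endpoint $q=\tilde q=2$ the truncation must be carried through the dyadic sum directly, as Keel and Tao do. So there is nothing to compare: you have supplied (a sketch of) a proof where the paper deliberately supplies none.
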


For our application, we will need to think of 
$\gamma_0 (x,x^\prime)$ as an arbitrary measurable complex-valued function of
$x,x^\prime \in \mathbb{R}^2$. Let us take
$H = L^2_{x,x^\prime} \left( \mathbb{R}^2 \times \mathbb{R}^2 \right)$, 
$B_0 = L^2_{x,x^\prime} \left( \mathbb{R}^2 \times \mathbb{R}^2 \right)$,
and
$B_1 = L^1_{x,x^\prime} \left( \mathbb{R}^2 \times \mathbb{R}^2 \right)$.
We employ the notation $\Delta_{\pm} = \Delta_{x} - \Delta_{x^\prime}$. The
energy estimate
\begin{equation}
\left\Vert e^{i t \Delta_{\pm}} \gamma_0 \right\Vert_{L^2_{x,x^\prime}}
\lesssim \left\Vert \gamma_0 \right\Vert_{L^2_{x,x^\prime}}
\end{equation}
is immediate. The dispersive estimate
\begin{equation}
\left\Vert e^{i (t-s) \Delta_{\pm}} \gamma_0 
\right\Vert_{L^\infty_{x,x^\prime}} \lesssim
|t-s|^{-2}
\left\Vert \gamma_0 (x,x^\prime) \right\Vert_{L^1_{x,x^\prime}}
\end{equation}
follows from writing the fundamental solution of
$\left( i \partial_t + \Delta_{\pm} \right) \gamma = 0$, that is
$$
\frac{1}{t^2}
 e^{i \left( |x|^2 - |x^\prime |^2 \right) / t}
$$
for initial data $\delta (x) \delta (x^\prime)$,
and applying
Young's inequality. The relevant parameters for Theorem
\ref{thm:KT}
are $q = 2$,
$\theta = \frac{1}{2}$ and $\sigma = 2$. 
The real interpolation space $(B_0, B_1)_{\theta,2}$ is
the Lorentz space $L^{\frac{4}{3},2}_{x,x^\prime}$
(\cite{BeLo1976} Theorem 5.3.1), and its dual is
$L^{4,2}_{x,x^\prime}$ (\cite{Gra2008} Theorem 1.4.17 (vi)), so we obtain
\begin{equation}
\left\Vert e^{i t \Delta_{\pm}} \gamma_0 \right\Vert_{L^2_t
L^{4,2}_{x,x^\prime} \left( \mathbb{R}^2 \times \mathbb{R}^2 \right)}
\lesssim \left\Vert \gamma_0 \right\Vert_{L^2_{x,x^\prime}
\left( \mathbb{R}^2 \times \mathbb{R}^2 \right)}
\end{equation}
which is the desired inequality.

\section{Fractional Leibniz Formulas}
\label{app:FracLeib}

\begin{theorem} 
\label{thm:FracLeibKPV}
Let $s \in \left( 0,1 \right)$ and $n \in \left\{ 2, 3, 4, 5, \dots \right\}$. 
Then if $f(x),g(x) : \mathbb{R}^n \rightarrow \mathbb{R}$ are measurable functions
such that $f \in H^s \left( \mathbb{R}^n \right)$ and
$g \in L^\infty \left( \mathbb{R}^n \right)$, 
then $\left( - \Delta \right)^{\frac{s}{2}} \left( fg \right)$ and
$f \left( -\Delta \right)^{\frac{s}{2}} g$ are
canonically identified with well-defined tempered distributions,
and their difference is in $L^2 \left( \mathbb{R}^n \right)$ and the
following estimate holds:
\begin{equation}
\label{eq:frac-est}
\left\Vert 
\left( -\Delta \right)^{\frac{s}{2}} \left( f g \right)
- f \left( - \Delta \right)^{\frac{s}{2}} g \right\Vert_{L^2 \left( \mathbb{R}^n \right)}
\leq
C \left( n,s \right)
\left\Vert \left( - \Delta \right)^{\frac{s}{2}} f
\right\Vert_{L^2 \left( \mathbb{R}^n \right)}
\left\Vert g \right\Vert_{L^\infty \left( \mathbb{R}^n \right)}
\end{equation}
\end{theorem}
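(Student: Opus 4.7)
The estimate is a classical Kato--Ponce type fractional Leibniz/commutator inequality; my strategy is to reduce it to an $L^2$ bound on a singular integral and then apply a paraproduct decomposition.

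First, I check the distributional meaning. Since $f \in H^s \subset L^2$ and $g \in L^\infty$, the product $fg$ lies in $L^2$, so $(-\Delta)^{s/2}(fg)\in H^{-s}$ is a well-defined tempered distribution. The term $f(-\Delta)^{s/2}g$ is defined by the distributional pairing $\langle f(-\Delta)^{s/2}g,\varphi\rangle := \langle (-\Delta)^{s/2}g, f\varphi\rangle$, using that $f\varphi \in H^s$ whenever $\varphi$ is Schwartz. For $s\in(0,1)$ I then use the Bochner singular-integral representation
\[
(-\Delta)^{s/2} u(x) = c_{n,s}\,\mathrm{p.v.}\!\int_{\mathbb{R}^n}\frac{u(x)-u(y)}{|x-y|^{n+s}}\,dy
\]
(valid pointwise for Schwartz $u$). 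Applying this to $u=fg$ and to $u=g$ (multiplied by $f(x)$), the $f(x)g(x)$ and $f(x)g(y)$ contributions cancel algebraically and one obtains the pointwise commutator identity
\[
(-\Delta)^{s/2}(fg)(x) - f(x)(-\Delta)^{s/2}g(x) = c_{n,s}\,\mathrm{p.v.}\!\int_{\mathbb{R}^n}\frac{(f(x)-f(y))\,g(y)}{|x-y|^{n+s}}\,dy
\]
for Schwartz $f,g$; the general case $f \in H^s$, $g\in L^\infty$ follows by density after the $L^2$ estimate is established.

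The main step is to bound this singular integral in $L^2_x$ by $\|(-\Delta)^{s/2}f\|_{L^2}\|g\|_{L^\infty}$. I would employ Bony's paraproduct decomposition $fg = T_f g + T_g f + R(f,g)$, with $T_f g = \sum_j S_{j-2}f\cdot\Delta_j g$ (and $\Delta_j$, $S_j$ the standard Littlewood--Paley operators), together with the corresponding decomposition of $f(-\Delta)^{s/2}g$. For the piece $T_g f = \sum_j S_{j-2}g\cdot\Delta_j f$, the factor $g$ sits at low frequency, so $\|S_{j-2}g\|_{L^\infty} \lesssim \|g\|_{L^\infty}$; the associated contribution to the commutator is then controlled by $\|g\|_{L^\infty}\|(-\Delta)^{s/2}f\|_{L^2}$ via the Littlewood--Paley square function characterization of the Sobolev norm. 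The diagonal remainder $R(f,g)$ is handled analogously by direct frequency localization.

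The main obstacle is the piece $T_f g$, where $f$ sits at low frequency and $g$ at high frequency: the naive estimate of $\|2^{js} S_{j-2}f\cdot\Delta_j g\|_{L^2}$ would require $L^\infty$ control on $S_{j-2}f$, which is not available under our hypotheses. This is overcome by the commutator expansion
\[
(-\Delta)^{s/2}\bigl(S_{j-2}f\cdot\Delta_j g\bigr) = S_{j-2}f\cdot (-\Delta)^{s/2}\Delta_j g + \bigl[(-\Delta)^{s/2},\, S_{j-2}f\bigr]\Delta_j g,
\]
in which the first term cancels exactly against the corresponding paraproduct piece of $f(-\Delta)^{s/2}g$, leaving only the commutator error. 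Using a Coifman--Meyer type kernel analysis that exploits the frequency separation (spectrum of $S_{j-2}f$ in $\{|\xi|\lesssim 2^{j-2}\}$ versus spectrum of $\Delta_j g$ in $\{|\xi|\sim 2^j\}$), one symbolically expands $(-\Delta)^{s/2}$ around the frequency $|\xi|\sim 2^j$ and gains a factor that compensates the $2^{js}$ growth; almost-orthogonality and Plancherel then permit summation in $j$ to yield the desired bound $\lesssim \|(-\Delta)^{s/2}f\|_{L^2}\|g\|_{L^\infty}$. The essential feature throughout is that $f$ is only ever used via its $\dot H^s$ norm (encoded in Littlewood--Paley square functions) and never through an unavailable $L^\infty$ bound.
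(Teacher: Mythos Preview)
Your approach is correct in outline and is in fact the standard route to proving Kato--Ponce commutator estimates from scratch; the paraproduct decomposition, the identification of the $T_f g$ piece as the delicate one, and the resolution via the commutator $[(-\Delta)^{s/2}, S_{j-2}f]$ with symbol expansion around frequency $2^j$ are all the right moves. However, your proof is genuinely different from the paper's. The paper does not attempt to establish the core inequality \eqref{eq:frac-est} at all: for Schwartz $f,g$ it simply cites Kenig--Ponce--Vega \cite{KPV1993} (Appendix A, Theorem A.12) and a multidimensional extension. The paper's actual contribution is a careful three-step density argument extending the Schwartz-class estimate to $f\in H^s$, $g\in L^\infty$, paying attention to how each term is to be interpreted as a tempered distribution in the limit. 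In particular, the paper defines $f(-\Delta)^{s/2}g$ not by the duality pairing you propose, but indirectly by writing it as $(-\Delta)^{s/2}(fg)$ minus the commutator, both of which have already been shown to make sense.

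What your approach buys is self-containment: you do not rely on an external reference for the heart of the matter. What the paper's approach buys is brevity and a cleaner treatment of the low-regularity endpoint, since the density passage is where the subtlety actually lies for the theorem as stated (Schwartz-class Kato--Ponce being classical). Your density remark ``the general case follows by density after the $L^2$ estimate is established'' skips over exactly the part the paper works hardest on; if you want a complete proof in the stated generality you should flesh out that step, and the paper's trick of defining $f(-\Delta)^{s/2}g$ via the commutator identity is worth borrowing.
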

\begin{proof} 
The estimate follows formally from \cite{KPV1993}, Appendix A,
Theorem A.12, in the one-dimensional case, for Schwartz functions $f,g$. 
(Also see 
\cite{2013arXiv1309.3291K} problem 5.1 and pp. 105--110 for the
multidimensional case.)
The objective here is to ensure that the result remains true in suitable
inhomogeneous Sobolev spaces; the argument is broken into three parts.

\emph{(i)}
 For $f,g$ in the Schwartz class, the estimate
(\ref{eq:frac-est}) is true due
to \cite{KPV1993}.

\emph{(ii)}
Keeping $f$ fixed in the Schwartz class, we can pass to the 
\emph{distributional} limit 
$g_n \rightharpoonup g \in L^\infty \left( \mathbb{R}^n \right)$
in (\ref{eq:frac-est}),
where each $g_n$ is Schwartz and uniformly bounded in $L^\infty$.
 Every term makes sense because 
$g$ is a tempered distribution and $f$ is Schwartz.

\emph{(iii)}
We need to pass to the limit $f_n \rightarrow f \in H^s \left(
\mathbb{R}^n \right)$ in (\ref{eq:frac-est}),
where the $f_n$ are Schwartz and uniformly bounded
in $H^s$, but $g \in
L^\infty \left( \mathbb{R}^n \right)$ is now \emph{fixed}.
Now $f_n , f $ are uniformly bounded in $H^s \left( \mathbb{R}^n \right)$,
 hence
uniformly bounded in $L^r \left( \mathbb{R}^n \right)$ where
$\frac{1}{2} - \frac{s}{n} = \frac{1}{r}$, by the Sobolev embedding
theorem. Hence
$f_n g$ and $fg$ are uniformly bounded in $L^r \left( 
\mathbb{R}^n \right)$, so they are well-defined tempered distributions,
as is $\left( -\Delta \right)^{\frac{s}{2}} \left( fg \right)$. 
For any Schwartz function $\psi$, the estimate
\begin{equation}
\begin{aligned}
\int \psi 
\left( -\Delta \right)^{\frac{s}{2}} \left( fg \right)
& \leq C 
\left\Vert
\left( -\Delta \right)^{\frac{s}{2}} \psi  \right\Vert_{L^{r^\prime}
\left( \mathbb{R}^n \right)} \left\Vert
f \right\Vert_{L^r \left(
\mathbb{R}^n \right)}
\left\Vert g \right\Vert_{L^\infty \left( \mathbb{R}^n \right)} \\
& \leq C \left\Vert
\left( -\Delta \right)^{\frac{s}{2}} \psi  \right\Vert_{L^{r^\prime}
\left( \mathbb{R}^n \right)} \left\Vert
\left( -\Delta \right)^{\frac{s}{2}} f \right\Vert_{L^2 \left(
\mathbb{R}^n \right)}
\left\Vert g \right\Vert_{L^\infty \left( \mathbb{R}^n \right)}
\end{aligned}
\end{equation}
where $\frac{1}{2} - \frac{s}{n} = \frac{1}{r}$,
 follows from duality, H{\" o}lder's inequality, and Sobolev's inequality.

Finally we deal with the term $f \left( -\Delta \right)^{\frac{s}{2}} g$. The
idea is to re-write it in the following  way:
\begin{equation}
f \left( -\Delta \right)^{\frac{s}{2}} g
= - \left\{ 
\left( -\Delta \right)^{\frac{s}{2}} \left( f g \right)
- f \left( - \Delta \right)^{\frac{s}{2}} g
\right\}
+ \left( -\Delta \right)^{\frac{s}{2}} \left( fg \right)
\end{equation}
so it is a difference of two things which apparently make sense. 
Using this difference formula and the commutator estimate of
Kenig-Ponce-Vega from the theorem statement, we can prove
the estimate
\begin{equation}
\begin{aligned}
& \int \psi  f
\left( -\Delta \right)^{\frac{s}{2}} g  \leq \\
& \qquad \quad \leq C 
\left(
\left\Vert \psi \right\Vert_{L^2 \left( \mathbb{R}^n \right)} + 
\left\Vert
\left( -\Delta \right)^{\frac{s}{2}} \psi  \right\Vert_{L^{r^\prime}
\left( \mathbb{R}^n \right)}
\right)
 \left\Vert
\left( -\Delta \right)^{\frac{s}{2}} f \right\Vert_{L^2 \left(
\mathbb{R}^n \right)}
\left\Vert g \right\Vert_{L^\infty \left( \mathbb{R}^n \right)}
\end{aligned}
\end{equation}
where $g \in
L^\infty \left( \mathbb{R}^n \right)$ and $f , \psi$ are in the
Schwartz class. 
We conclude (by density of Schwartz functions
in $H^s \left( \mathbb{R}^n \right)$) that $f \left( -\Delta \right)^{\frac{s}{2}} g$ is
canonically identified with
a well-defined tempered distribution whenever
$ f \in H^s \left( \mathbb{R}^n \right)$
and $g \in L^\infty \left( \mathbb{R}^n \right)$; moreover,
 we can take distributional
limits in $f_n$
 where needed (keeping $g \in L^\infty \left( \mathbb{R}^n \right)$
 fixed) to derive the desired estimate in this
class.
\end{proof}

\section{Some general estimates in $L^2 \bigcap L^1$}
\label{app:apriori}

Assume throughout this appendix that $0 \leq f_0 \in L^2_{x,v} \cap L^1_{x,v}$.
As is typical for a kinetic equation, we will consider a suitable
\emph{mollification} (with the same, i.e. \emph{unmollified}, initial
data $f_0$), which takes the following form:
\begin{equation}
\label{eq:moll1}
\left( \partial_t + v \cdot \nabla_x \right) f^n =
\frac{Q(f^n,f^n)}{1+\frac{1}{n} \rho_{f^n}}
\end{equation}
Here $n=1,2,3,\dots$ and $f^n (t=0) = f_0$. Note that we are not allowed
to mollify the data in general, because that would change the profile of
the data, and we are looking for local well-posedness in the critical
space $L^2$ (with an auxiliary $L^1$ estimate). It is well-known that
the mollified equation (\ref{eq:moll1}) is globally well-posed for
initial data $f_0 \in L^1$; the proof is by a Picard iteration and
time-stepping procedure. \cite{DPL1989}

Since $Q = Q^+ - Q^-$ (both non-negative) and $\rho_{f^n} \geq 0$,
we can conclude
\begin{equation}
\left( \partial_t + v \cdot \nabla_x \right) f^n \leq Q^+ (f^n,f^n)
\end{equation}
which implies
\begin{equation}
f^n (t) \leq T(t) f_0 + \int_0^t T(t-t^\prime)
Q^+ ( f^n,f^n) (t^\prime) dt^\prime
\end{equation}
where $T(t) = e^{-t v \cdot \nabla_x}$. In particular, for $0 \leq t \leq T$,
\begin{equation}
f^n (t) \leq T(t) f_0 + \int_0^T T(t-t^\prime)
Q^+ ( f^n,f^n) (t^\prime) dt^\prime
\end{equation}
 Apply $Q^+ (\cdot,\cdot)$ to both
sides of this inequality and apply monotonicity to obtain
\begin{equation}
\begin{aligned}
& Q^+ (f^n,f^n) \leq Q^+ \left( T(t) f_0, T(t) f_0 \right) \\
& + \int_0^T Q^+ \left( T(t) f_0, T(t-t^\prime) 
Q^+ (f^n,f^n) (t^\prime) \right) dt^\prime \\
& + \int_0^T Q^+ \left( T(t-t^\prime) Q^+ (f^n,f^n) (t^\prime), T(t) f_0\right) 
dt^\prime \\
& + \int_0^T \int_0^T Q^+ \left(
T(t-t^\prime) Q^+ (f^n,f^n) (t^\prime),
T(t-t^{\prime \prime}) Q^+ (f^n,f^n) (t^{\prime \prime}) \right) dt^\prime
dt^{\prime \prime}
\end{aligned}
\end{equation}
Now we take the $L^1_{t \in [0,T]} L^2_{x,v}$ norm of both sides (noting that
this quantity might be infinite), and 
apply Minkowski's inequality.
\begin{equation}
\begin{aligned}
& \left\Vert Q^+ (f^n,f^n) \right\Vert_{L^1_{t \in [0,T]} L^2_{x,v}}
 \leq \left\Vert Q^+ \left( T(t) f_0, T(t) f_0 \right)
\right\Vert_{L^1_{t \in [0,T]} L^2_{x,v}} \\
& + \int_0^T \left\Vert Q^+ \left( T(t) f_0, T(t-t^\prime) 
Q^+ (f^n,f^n) (t^\prime) \right)\right\Vert_{L^1_{t \in [0,T]} L^2_{x,v}} dt^\prime \\
& + \int_0^T \left\Vert
Q^+ \left( T(t-t^\prime) Q^+ (f^n,f^n) (t^\prime), T(t) f_0\right)
\right\Vert_{L^1_{t \in [0,T]} L^2_{x,v}} 
dt^\prime \\
& + \int_0^T \int_0^T dt^\prime
dt^{\prime \prime} \times \\
& \left\Vert Q^+ \left(
T(t-t^\prime) Q^+ (f^n,f^n) (t^\prime),
T(t-t^{\prime \prime}) Q^+ (f^n,f^n) (t^{\prime \prime}) \right) 
\right\Vert_{L^1_{t \in [0,T]} L^2_{x,v}} 
\end{aligned}
\end{equation}
Apply Proposition \ref{prop:kin-gain-bound} to the last term,
and Proposition \ref{prop:shorttime} to the first three terms, to obtain:
\begin{equation}
\begin{aligned}
& \left\Vert Q^+ (f^n,f^n) \right\Vert_{L^1_{t \in [0,T]} L^2_{x,v}}
 \leq  \delta_{f_0} (T)  \left\Vert f_0 \right\Vert_{L^2_{x,v}} \\
& + 2 \int_0^T \delta_{f_0} (T) \left\Vert
 T(-t^\prime) Q^+ (f^n,f^n) (t^\prime)
\right\Vert_{ L^2_{x,v}} 
dt^\prime \\
& + \int_0^T \int_0^T  C \left\Vert 
T(-t^{\prime}) Q^+ (f^n,f^n) (t^{\prime}) 
\right\Vert_{ L^2_{x,v}} 
\left\Vert 
T(-t^{\prime \prime}) Q^+ (f^n,f^n) (t^{\prime \prime}) 
\right\Vert_{ L^2_{x,v}} dt^\prime
dt^{\prime \prime} 
\end{aligned}
\end{equation}
where $\limsup_{T \rightarrow 0^+} \delta_{f_0} (T) = 0$ (note that
$\delta_{f_0} (T)$ depends on the profile of the data for any fixed $T>0$).

Overall we conclude
\begin{equation}
\begin{aligned}
 & \left\Vert Q^+ (f^n,f^n) \right\Vert_{L^1_{t \in [0,T]} L^2_{x,v}}
 \leq 
 \delta_{f_0} (T) \left\Vert f_0 \right\Vert_{L^2_{x,v}} \\
& \qquad + 2 \delta_{f_0} (T) 
\left\Vert Q^+ (f^n,f^n) \right\Vert_{L^1_{t \in [0,T]} L^2_{x,v}}
+ C
\left\Vert Q^+ (f^n,f^n) \right\Vert_{L^1_{t \in [0,T]} L^2_{x,v}}^2
\end{aligned}
\end{equation}
where $\limsup_{T \rightarrow 0^+} \delta_{f_0} (T) = 0$.
In the case that $\left\Vert Q^+ (f^n,f^n) \right\Vert_{L^1_{t \in [0,T_0]} L^2_{x,v}}$
is finite for \emph{some} $T_0 > 0$, a standard continuity argument allows
us to bound this quantity \emph{uniformly in $n$} up to some other small
time $T>0$ which depends on $f_0$. We can state this is an alternative:
there are numbers $C(f_0),T(f_0)$ such that, for each $n$, exactly
 one of the following holds:
\begin{enumerate}
\item \emph{Case 1:} 
$\left\Vert Q^+ (f^n,f^n) \right\Vert_{L^1_{t \in [0,\sigma]} L^2_{x,v}} = \infty$ for
every $\sigma >0$

\item \emph{Case 2:}
 $\left\Vert Q^+ (f^n,f^n) \right\Vert_{L^1_{t \in [0,T (f_0)]} L^2_{x,v}}
\leq C (f_0) $
\end{enumerate}
In particular $C(f_0),T(f_0)$ are independent of $n$; hence, as long as Case 2
holds for infinitely many $n$, we can hope for a 
compactness argument. 
Note that once $Q^+ (f^n,f^n)$ is placed uniformly
in $L^1_{t \in [0,T (f_0)]} L^2_{x,v}$, the method of Section \ref{sec:scatteringL2}
implies that 
$$T(-t) \frac{Q^- (f^n,f^n)(t)}{1+\frac{1}{n} \rho_{f^n (t)}}$$
 is uniformly bounded in 
$L^2_{x,v} L^1_{t \in [0,T (f_0)]}$; in particular, 
$\left( \partial_t + v \cdot \nabla_x \right) f^n$
is locally
integrable in $(t,x,v)$, boundedly with respect to $n$. Moreover, on $[0,T(f_0)]$,
 $f^n$ satisfies the full range of
Strichartz estimates expected for $L^2$ solutions of the free transp1ort 
equation, uniformly in $n$.

\begin{remark}
The classical $L^1$
 velocity averaging lemma used in \cite{DPL1989} requires that
\emph{both} $f^n$ and $\left( \partial_t + v \cdot \nabla_x \right) f^n$
 are relatively weakly compact in $L^1 (K)$
for compact sets $K \subset [0,\infty) \times \mathbb{R}^2_x \times
\mathbb{R}^2_v$. 
However, a refinement cited as Lemma 4.1 in \cite{Ar2011} states that,
under the condition that $f^n$ is relatively weakly compact in $L^1 (K)$ for
compact sets $K$, it suffices for
$\left( \partial_t + v \cdot \nabla_x \right) f^n$
 to be uniformly \emph{bounded} in $L^1 (K)$ for compact $K$.
\end{remark}

\begin{bibdiv}
\begin{biblist}

\bib{AMUXY2011}{article}{
      author={Alexandre, R.},
      author={Morimoto, Y.},
      author={Ukai, S.},
      author={Xu, C.-J.},
      author={Yang, T.},
       title={Global existence and full regularity of the \uppercase{B}oltzmann
  equation without angular cutoff},
        date={2011},
     journal={Comm. Math. Phys.},
      volume={304},
      number={2},
       pages={513\ndash 581},
}

\bib{Alonso2009}{article}{
      author={Alonso, Ricardo~J.},
      author={Gamba, Irene~M.},
       title={Distributional and classical solutions to the \uppercase{C}auchy
  \uppercase{B}oltzmann problem for soft potentials with integrable angular
  cross section},
        date={2009},
        ISSN={1572-9613},
     journal={Journal of Statistical Physics},
      volume={137},
      number={5},
       pages={1147},
         url={https://doi.org/10.1007/s10955-009-9873-3},
}

\bib{Ar2011}{article}{
      author={Arsenio, D.},
       title={On the global existence of mild solutions to the
  \uppercase{B}oltzmann equation for small data in \uppercase{$L^D$}},
        date={2011},
     journal={Comm. Math. Phys.},
      volume={302},
      number={2},
       pages={453\ndash 476},
}

\bib{Bennettetal2014}{article}{
      author={Bennett, Jonathan},
      author={Bez, Neal},
      author={Gutiérrez, Susana},
      author={Lee, Sanghyuk},
       title={On the \uppercase{S}trichartz estimates for the kinetic transport
  equation},
        date={2014},
     journal={Communications in Partial Differential Equations},
      volume={39},
      number={10},
       pages={1821\ndash 1826},
}

\bib{BeLo1976}{book}{
      author={Bergh, J.},
      author={L{\" o}fstr{\" o}m, J.},
       title={Interpolation spaces},
      series={Grundlehren der mathematischen Wissenschaften},
   publisher={Springer-Verlag Berlin Heidelberg},
        date={1976},
      volume={223},
}

\bib{CIP1994}{book}{
      author={Cercignani, C.},
      author={Illner, R.},
      author={Pulvirenti, M.},
       title={The mathematical theory of dilute gases},
   publisher={Springer Verlag},
        date={1994},
}

\bib{CDP2017}{article}{
      author={{Chen}, T.},
      author={{Denlinger}, R.},
      author={{Pavlovic}, N.},
       title={{Local well-posedness for Boltzmann's equation and the Boltzmann
  hierarchy via Wigner transform}},
        date={2019},
     journal={Communications in Mathematical Physics},
      volume={368},
}

\bib{CDP-DCDS2019}{article}{
      author={{Chen}, T.},
      author={{Denlinger}, R.},
      author={{Pavlovic}, N.},
       title={{Moments and Regularity for a Boltzmann Equation via Wigner
  Transform}},
        date={2019},
     journal={Discrete and Continuous Dynamical Systems A},
      volume={39},
      number={9},
       pages={4979\ndash 5015},
}

\bib{DPL1989}{article}{
      author={DiPerna, R.~J.},
      author={Lions, P.-L.},
       title={On the \uppercase{C}auchy problem for \uppercase{B}oltzmann
  equations: \uppercase{G}lobal existence and weak stability},
        date={1989},
     journal={Ann. Math.},
      volume={130},
      number={2},
       pages={321\ndash 366},
}

\bib{Gra2008}{book}{
      author={Grafakos, L.},
       title={Classical \uppercase{F}ourier analysis},
     edition={2},
      series={Graduate Texts in Mathematics},
   publisher={Springer-Verlag New York},
        date={2008},
      volume={249},
}

\bib{GS2011}{article}{
      author={Gressman, P.~T.},
      author={Strain, R.~M.},
       title={Global classical solutions of the \uppercase{B}oltzmann equation
  without angular cut-off},
        date={2011},
     journal={J. Amer. Math. Soc.},
      volume={24},
      number={3},
       pages={771\ndash 847},
}

\bib{Gu2003}{article}{
      author={Guo, Y.},
       title={Classical solutions to the \uppercase{B}oltzmann equation for
  molecules with an angular cutoff},
        date={2003},
     journal={Archive for Rational Mechanics and Analysis},
      volume={169},
      number={4},
       pages={305\ndash 353},
}

\bib{HeJiang2017}{article}{
      author={He, Lingbing},
      author={Jiang, Jin-Cheng},
       title={Well-posedness and scattering for the \uppercase{B}oltzmann
  equations: Soft potential with cut-off},
        date={2017Jul},
     journal={Journal of Statistical Physics},
      volume={168},
      number={2},
       pages={470\ndash 481},
}

\bib{KS1984}{article}{
      author={Kaniel, S.},
      author={Shinbrot, M.},
       title={The \uppercase{B}oltzmann equation: \uppercase{I}.
  \uppercase{U}niqueness and local existence},
        date={1978},
     journal={Communications in Mathematical Physics},
      volume={58},
      number={1},
       pages={65\ndash 84},
}

\bib{KT1998}{article}{
      author={Keel, M.},
      author={Tao, T.},
       title={Endpoint \uppercase{S}trichartz estimates},
        date={1998},
     journal={Amer. J. Math},
      volume={120},
      number={5},
       pages={955\ndash 980},
}

\bib{2013arXiv1309.3291K}{article}{
      author={{Kenig}, C.},
       title={{The Cauchy problem for the quasilinear Schrodinger equation}},
        date={2013-09},
     journal={ArXiv e-prints},
      eprint={1309.3291},
}

\bib{KPV1993}{article}{
      author={Kenig, Carlos~E.},
      author={Ponce, Gustavo},
      author={Vega, Luis},
       title={Well-posedness and scattering results for the generalized
  \uppercase{K}orteweg-de \uppercase{V}ries equation via the contraction
  principle},
        date={1993},
     journal={Communications on Pure and Applied Mathematics},
      volume={46},
      number={4},
       pages={527\ndash 620},
}

\bib{KM2008}{article}{
      author={Klainerman, S.},
      author={Machedon, M.},
       title={On the uniqueness of solutions to the
  \uppercase{G}ross-\uppercase{P}itaevskii hierarchy},
        date={2008},
     journal={Comm. Math. Phys.},
      volume={279},
      number={1},
       pages={169\ndash 185},
}

\bib{KM1993}{article}{
      author={S., Klainerman},
      author={M., Machedon},
       title={Space‐time estimates for null forms and the local existence
  theorem},
        date={1993},
     journal={Communications on Pure and Applied Mathematics},
      volume={46},
      number={9},
       pages={1221\ndash 1268},
}

\bib{St1970}{book}{
      author={Stein, E.~M.},
       title={Singular integrals and differentiability properties of
  functions},
   publisher={Princeton University Press},
        date={1970},
        ISBN={9780691080796},
}

\bib{To1988}{article}{
      author={Toscani, G.},
       title={Global solution of the initial value problem for the
  \uppercase{B}oltzmann equation near a local \uppercase{M}axwellian},
        date={1988},
     journal={Archive for Rational Mechanics and Analysis},
      volume={102},
      number={3},
       pages={231\ndash 241},
}

\bib{Uk1974}{article}{
      author={Ukai, S.},
       title={On the existence of global solutions of mixed problem for the
  non-linear \uppercase{B}oltzmann equation},
        date={1974},
     journal={Proc. Japan Acad.},
      volume={50},
      number={3},
       pages={179\ndash 184},
}

\end{biblist}
\end{bibdiv}


\end{document}